\theoremstyle{plain}
\newtheorem{thm}{Theorem}[section]
\newtheorem{theorem}[thm]{Theorem}
\newtheorem{lemma}[thm]{Lemma}
\newtheorem{proposition}[thm]{Proposition}
\theoremstyle{definition}
\newtheorem{remark}[thm]{Remark}
\newtheorem{remarks}[thm]{Remarks}
\newtheorem{notation}[thm]{Notation}
\newtheorem{definition}[thm]{Definition}
\newtheorem{example}[thm]{Example}
\numberwithin{equation}{section}
\newcommand{\wt}{\widetilde}
\newcommand{\ti}{\times}
\newcommand{\rt}{\rtimes}
\newcommand{\xfi}{{\xi_5}}
\newcommand{\aod}{{A_1:=}}
\newcommand{\atd}{{A_2:=}}
\newcommand{\ahd}{{A_3:=}}
\newcommand{\la}{{\langle}}
\newcommand{\ra}{{\rangle}}
\newcommand{\vp}{\varphi}
\newcommand{\s}{\sigma}
\newcommand{\Hom}{{\rm Hom}}
\newcommand{\Res}{{\rm Res}}
\newcommand{\Tra}{{\rm Tra}}
\newcommand{\Aut}{{\rm Aut}}
\newcommand{\Ker}{{\rm Ker}}
\newcommand{\Diag}{{\rm diag}}
\newcommand{\Rank}{{\rm rank}}
\newcommand{\Gcd}{{\rm gcd}}
\newcommand{\Mod}{{\rm mod\; }}
\newcommand{\Ord}{{\rm ord}}
\newcommand{\SmallGroup}{{\rm SmallGroup}}
\newcommand{\GL}{{\rm GL}}
\newcommand{\PGL}{{\rm PGL}}
\newcommand{\PSL}{{\rm PSL}}
\newcommand{\Det}{{\rm det}}
\newcommand{\SL}{{\rm SL}}
\newcommand{\Max}{{\rm max}}
\newcommand{\Min}{{\rm min}}
\newcommand{\lmd}{{\lambda}}
\newcommand{\lra}{\longrightarrow}
\newcommand{\C}{{\mathbb C}}
\renewcommand{\P}{{\mathbb P}}
 \title{Automorphism groups of smooth quintic threefolds}
 \author{Keiji Oguiso and Xun Yu} 
\address{Department of Mathematics, Osaka University, Toyonaka 560-0043, Osaka, Japan and Korea Institute for Advanced Study, Hoegiro 87, Seoul, 
133-722, Korea}
\email{oguiso@math.sci.osaka-u.ac.jp}
\address{Center for Geometry and its Applications, POSTECH, Pohang, 790-784, Korea}
\email{yxn100135@postech.ac.kr}
\thanks{The first author is supported by JSPS Grant-in-Aid (S) No 25220701, JSPS Grant-in-Aid (S) No 22224001, JSPS Grant-in-Aid (B) No 22340009, and by KIAS Scholar Program.}
\dedicatory{Dedicated to Professor  Shigeru Mukai on the occasion of his sixtieth birthday.}
\date{May 4, 2015}
\begin{document}
\begin{abstract}  We study automorphism groups of smooth quintic threefolds. Especially, we describe all the maximal ones with explicit examples of target quintic threefolds. There are exactly $22$ such groups.

\end{abstract}

\maketitle

\tableofcontents

\section{Introduction}\label{Intro}

Throughout this paper, we work over the complex number field $\C$. 

The aim of this paper, is to study the automorphism group ${\rm Aut}\, (X)$ of a smooth quintic threefold $X$, a most basic example of Calabi-Yau threefolds. Our main results are Theorems \ref{thm:Main} and \ref{thm:goren}.  It turns out that there are exactly $22$ maximal groups which act faithfully on some smooth quintic threefolds.  This answers a question raised by \cite{LOP13}.

From now, we just call a smooth quintic threefold, simplly a QCY3 (quintic Calbai-Yau threefold). 

Let $X$ be a QCY3 defined by a homogeneous polynomial $F$ of degree $5$.

By a result of Matsumura-Monsky (\cite{MM63}), 
$${\rm Aut}\, (X) = \{\varphi \in \PGL(5, \C) | \varphi(X) = X\}\,\, ,$$
 ${\rm Aut}\, (X)$ is always a {\it finite} group, and two QCY3s are isomorphic if and only if they are projective linearly isomorphic, i.e., by a suitable change of homogeneous coordinates, the defining equations  are the same. Moreover, ${\rm Bir}\, (X) = {\rm Aut}\, (X)$ as $X$ is a projective minimal model of Picard number one (\cite{Ka08}). So, the classification of all possible groups of birational automorphisms of QCY3 is equivalent to the following {\it projective} linear algebra problem in the classical invariant theory:

Find all finite subgroups $G \subset \PGL(5, \C)$ 
such that there is a homogeneous polynomial $F$ of degree $5$, which is {\it smooth}, such that for each $g \in G$ there is $A \in \GL(5, \C)$ such that $[A] = g$, i.e., $A$ a representative of $g$, and $A(F) = F$. 

This is in principle possible but practically hard in general, which we first explain. As usual in the finite group theory, we proceed our classification of $G$ in the following three steps:

(i) determine all possible prime orders of elements of ${\rm Aut}\, (X)$ of some QCY3 $X$ (Section \ref{ss:primeorders}). It turns out that they are $2$, $3$, $5$, $13$, $17$, $41$;

(ii) determine all possible Sylow $p$-subgroups of ${\rm Aut}\, (X)$ of some QCY3 $X$ (Sections \ref{ss:primeorders}, \ref{ss:Sylow2}, \ref{ss:Sylow5});

(iii) determine all possible \lq\lq{}combinations\rq\rq{} of Sylow $p$-subgroups which result in ${\rm Aut}\, (X)$  for some QCY3 $X$ (Sections \ref{ss:primeorders}, \ref{ss:Sylow2}, \ref{ss:Sylow5}, \ref{ss:2^63^25^2}).

In each step:

(I) the existence of smooth $F$ is essential. In fact, many groups are excluded by showing that they can act only on {\it singular} quintic threefolds. For this, we will give some useful criterions for (non-)smoothness of hypersurfaces (Lemma \ref{lem:nonsmooth}, Proposition \ref{pp:nonsmoothquintic}) in Section \ref{ss:differentialmethod}.  We also mildly use Mathematica to check smoothness. 

If there is a subgroup $\tilde{G} \subset \GL(5, \C)$ such that 
$\tilde{G} \simeq G$ under $A \mapsto [A]$, such that $A(F) = F$ for all $A \in \tilde{G}$, we call $G$ {\it $F$-liftable}.  The problem is further reduced to a linear algebraic problem of the classical invariant theory {\it for $F$-liftable $G$}. 

The other involving issues are group theorectical ones (II), (III):

(II) the fact that in general $G$ are not $F$-liftable when $5 | G$ . 

In our case, with a help of Schur multiplier theory (see Section \ref{ss:Schurandliftable}), we find that there are only very few groups which are not $F$-liftable. 

(III) The numbers of groups whose orders are the product of powers of smaller primes are too huge to control just by hand.

As in Mukai's (\cite{Mu88}) classification of finite symplectic action on K3 surfaces, (III) is a combersome problem. In his case, it arises for groups of order $2^a3^b$, especially the cases $2^n$, which is overcome with a help of classifications of $2$-groups of order $\le 2^6$ in his paper. In our case, problems arise for order $2^a3^b5^c$. We treat them by using now a quite useful tool, GAP software. GAP shows all possible linear 
representation and subgroups etc. in the range of order $\le 2000$ (except 1024). With its help, we control the cases where $\le 2000$ and then larger orders cases are reduced to these cases in PC free way.

 On the other hand, the case where $G$ has a larger prime order element, Theorem of Brauer (see Theorem \ref{thm:Brauer}) is quite effective to determine possible $G$ in our case. We also note that this is used in \cite{Ad78} to show the simplicity of the automorphism group of the Klein cubic threefold. In our case, we use it to determine $G$ of which order is divisible by larger primes, say, $13$, $17$ or $41$. It is also worth noticing that the full automorphism group is never cyclic of prime orders in these cases, i.e., there is no QCY3 $X$ such that ${\rm Aut}\, (X) \simeq C_{13}$, $C_{17}$ or $C_{41}$ (Theorem \ref{thm:containorder13}, Theorem \ref{thm:containorder17}, Theorem \ref{thm:containorder41}), while there are QCY3 whose automorphism group is isomorphic to $\{e\}$, $C_2$, $C_3$ and $C_5$ (see Remark \ref{rmk:Autverysmall}).

Throughout this paper, we use so-called {\it the differential method} (Theorem \ref{lem:differentialmethod}, Theorem \ref{thm:Aut(X)1-16}) to compute full automorphism group $\Aut(X)$ when $F$ is \lq\lq{}special\rq\rq{}.

 We believe that our methods to determine $G$ can be applied to classify automorphism groups of smooth hypersurfaces of other types, especially automorphism groups of smooth cubic threefolds (cf. \cite{Ad78}) and those of smooth cubic fourfolds (cf. \cite{GL11}), the later of which may also be applicable to study interesting automorphisms of hyperkaehler fourfolds of ${\rm (K3)}^{[2]}$ type (\cite{BCS14}).

To close Introduction, we remark some possible applications and motivations.

From a group theoretical point, it is particularly interested in the solvability of the groups and what kind of non-commutative simple groups are realized as an automorphism subgroup of QCY3. Our project actually started by motivating a discovery of simple non-commutative groups acting on QCY3. It turns out that there is only one non-commutative simple group $A_5$, the simple non-commutative group of the smallest order, and that all automorphism subgroups are solvable unless they do contain $A_5$ as a subgroup. In fact, there are exactly 8 non-solvable ones: $A_5$, $S_5$, $A_5\times C_5$, $S_5\times C_5$, $C_5^3\rt A_5$, $C_5^3\rt S_5$,  $C_5^4\rt A_5$, $C_5^4\rt S_5$.

In the topological mirror symmetry of Calabi-Yau threefolds, finite Gorenstein automorphisms played important roles in constructing mirror families (\cite{CDP93}, \cite{BD96}). For example, a natural mirror family of quintic Calabi-Yau threefolds is given by a crepant resolution $Y_{\lambda}$ of the Gorenstein  quotient of 
$$X_{\lambda} = (x_1^5 + x_2^5 + x_3^5 + x_4^5 + x_5^5 - 5\lambda x_1x_2x_3x_4x_5 = 0)$$
by $\mu_5^3 \simeq \mu_5^4/\mu_5$. The  manifolds $Y_{\lambda}$ ($\lambda^5 \not= 1$) are smooth Calabi-Yau threefolds of 
$$h^{1,1}(Y_{\lambda}) = 101 = h^{1,2}(X_{\lambda})\,\, ,\,\, h^{1,2}(Y_{\lambda}) = 1 = h^{1,1}(X_{\lambda})\,\, .$$
Now $Y_{\lambda}$ is also understood by the derived McKay correspondece developed by \cite{BKR01} and also computation of these Hodge numbrs are given by the classical McKay correspondeces due to \cite{IR96}, \cite{BD96}. It will be also interesting to see what kind of manifolds appear as the non-Gorenstein quotients. 

In this paper, we will not touch these interesting questions. However, we hope that our classification with explicit equations of target QCYs will provide handy useful global test examples for further study in birational geometry of threefolds such as McKay correspondence problems mentioned above or its possible generalizations for non-Gorenstein quotient. \\[.3cm]

 \noindent
{\bf Notations and conventions.}  We use the following notations to describe groups.\\[.2cm] In this paper, if $A\in \GL(n,\mathbb{C})$, then we use $[A]$ denote the corresponding element in $\PGL(n, \mathbb{C})$. 

  $I_n:=$ the identity matrix of rank $n$;
  
  $\xi_k:=e^{\frac{2\pi i}{k}}$ a $k$-th primitive root of unity, where $k$ is a positive integer;
  
%

    If $A\in \GL(n, \mathbb{C})$ and $\alpha_1,...,\alpha_n$ are eigenvalues (considering multiplicities) of $A$, then  we use $\chi_A(t)=(t-\alpha_1)\cdot\cdot\cdot (t-\alpha_n)$ to denote the characteristic polynomial of $A$. If $B_1,...,B_k$ are square matrices, then we use $\Diag(B_1,...,B_k)$ to denote the obvious block diagonal matrix.
    
     We use $\pi: \GL(n,\mathbb{C})\longrightarrow \PGL(n,\mathbb{C})$ to denote the natural quotient map.
     
     Let $G$ be a finite group and $p$ be a prime. If no confusion causes, we use $G_p$ to denote a Sylow $p$-subgroup of $G$.
     
    The following is the list of symbols of finite groups used in this article:
     
     $C_n$: a cyclic group of order $n$,
     
     $D_{2n}$: a dihedral group of order $2n$,
     
     $S_n(A_n)$: a symmetric (alternative) group of degree $n$,
     
     $Q_8$: a quaternion group of order 8.
     \\[.5cm]

\section{Examples of group actions and main Theorem}\label{ss:22examples}

Let us begin with explicit examples (1)-(22). It turns out that the 22 groups essentially classify the all automorphism groups of smooth quintic 3-folds (see Theorem \ref{thm:Main} for a precise statement).

\begin{example}\label{mainex}
(1) Fermat quintic threefold $X$: $F=x_1^5+x_2^5+x_3^5+x_4^5+x_5^5=0$. Let $G$ be the subgroup of $\PGL(5,\mathbb{C})$ generated by the following seven matrices:

$A_1=
    \begin{pmatrix} 
    0&1&0&0&0 \\ 
    1&0&0&0&0 \\ 
    0&0&1&0&0 \\ 
    0&0&0&1&0 \\ 
    0&0&0&0&1 \\ 
    \end{pmatrix} A_2=
    \begin{pmatrix} 
    0&1&0&0&0 \\ 
    0&0&1&0&0 \\ 
    1&0&0&0&0 \\ 
    0&0&0&1&0 \\ 
    0&0&0&0&1 \\ 
    \end{pmatrix} A_3=
    \begin{pmatrix} 
    0&1&0&0&0 \\ 
    0&0&1&0&0 \\ 
    0&0&0&1&0 \\ 
    0&0&0&0&1 \\ 
    1&0&0&0&0 \\ 
    \end{pmatrix}$
    
    $A_4=
    \begin{pmatrix} 
    1&0&0&0&0 \\ 
    0&\xi_5&0&0&0 \\ 
    0&0&1&0&0 \\ 
    0&0&0&1&0 \\ 
    0&0&0&0&1 \\ 
    \end{pmatrix} A_5=
    \begin{pmatrix} 
    1&0&0&0&0 \\ 
    0&1&0&0&0 \\ 
    0&0&\xi_5&0&0 \\ 
    0&0&0&1&0 \\ 
    0&0&0&0&1 \\ 
    \end{pmatrix} A_6=
    \begin{pmatrix} 
    1&0&0&0&0 \\ 
    0&1&0&0&0 \\ 
    0&0&1&0&0 \\ 
    0&0&0&\xi_5&0 \\ 
    0&0&0&0&1 \\ 
    \end{pmatrix}$
    
    $A_7=
    \begin{pmatrix} 
    1&0&0&0&0 \\ 
    0&1&0&0&0 \\ 
    0&0&1&0&0 \\ 
    0&0&0&1&0 \\ 
    0&0&0&0&\xi_5 \\ 
    \end{pmatrix} $

Then $G$ acts on $X$, $G$ is isomorphic to $C_5^4\rtimes S_5$ and $|G|=2^3\cdot 3\cdot 5^5=75000$.
\\[.2cm]

(2) Let $X: F=x_1^4x_2+x_2^5+x_3^5+x_4^5+x_5^5=0$ and let $G$ be the subgroup of $\PGL(5,\mathbb{C})$ generated by the following six matrices: 

$A_1=
    \begin{pmatrix} 
    \xi_4&0&0&0&0 \\ 
    0&1&0&0&0 \\ 
    0&0&1&0&0 \\ 
    0&0&0&1&0 \\ 
    0&0&0&0&1 \\ 
    \end{pmatrix} A_2=
    \begin{pmatrix} 
    1&0&0&0&0 \\ 
    0&1&0&0&0 \\ 
    0&0&0&1&0 \\ 
    0&0&1&0&0 \\ 
    0&0&0&0&1 \\ 
    \end{pmatrix} A_3=
    \begin{pmatrix} 
    1&0&0&0&0 \\ 
    0&1&0&0&0 \\ 
    0&0&0&1&0 \\ 
    0&0&0&0&1 \\ 
    0&0&1&0&0 \\ 
    \end{pmatrix}$
    
    $A_4=
    \begin{pmatrix} 
    1&0&0&0&0 \\ 
    0&1&0&0&0 \\ 
    0&0&\xi_5&0&0 \\ 
    0&0&0&1&0 \\ 
    0&0&0&0&1 \\ 
    \end{pmatrix} A_5=
    \begin{pmatrix} 
    1&0&0&0&0 \\ 
    0&1&0&0&0 \\ 
    0&0&1&0&0 \\ 
    0&0&0&\xi_5&0 \\ 
    0&0&0&0&1 \\ 
    \end{pmatrix} A_6=
    \begin{pmatrix} 
    1&0&0&0&0 \\ 
    0&1&0&0&0 \\ 
    0&0&1&0&0 \\ 
    0&0&0&1&0 \\ 
    0&0&0&0&\xi_5 \\ 
    \end{pmatrix}$

  Then $G$ acts on $X$, $G\cong C_4\times (C_5^3\rtimes S_3)$, and $|G|=2^3\cdot 3\cdot 5^3=3000$.\\[.2cm]
  
 (3) Let $X: F=x_1^4x_2+x_2^5+x_3^4x_4+x_4^5+x_5^5=0$, and let $G$ be the subgroup of $\PGL(5,\mathbb{C})$ generated by the following five matrices:
 
 $A_1=
    \begin{pmatrix} 
    \xi_4&0&0&0&0 \\ 
    0&1&0&0&0 \\ 
    0&0&1&0&0 \\ 
    0&0&0&1&0 \\ 
    0&0&0&0&1 \\ 
    \end{pmatrix} A_2=
    \begin{pmatrix} 
    1&0&0&0&0 \\ 
    0&1&0&0&0 \\ 
    0&0&\xi_4&0&0 \\ 
    0&0&0&1&0 \\ 
    0&0&0&0&1 \\ 
    \end{pmatrix} A_3=
    \begin{pmatrix} 
    \xi_5&0&0&0&0 \\ 
    0&\xi_5&0&0&0 \\ 
    0&0&1&0&0 \\ 
    0&0&0&1&0 \\ 
    0&0&0&0&1 \\ 
    \end{pmatrix}$ 
    
    $A_4=
    \begin{pmatrix} 
    1&0&0&0&0 \\ 
    0&1&0&0&0 \\ 
    0&0&\xi_5&0&0 \\ 
    0&0&0&\xi_5&0 \\ 
    0&0&0&0&1 \\ 
    \end{pmatrix} A_5=
    \begin{pmatrix} 
    0&0&1&0&0 \\ 
    0&0&0&1&0 \\ 
    1&0&0&0&0 \\ 
    0&1&0&0&0 \\ 
    0&0&0&0&1 \\ 
    \end{pmatrix}$ 
 
 Then $G$ acts on $X$, $G$ is isomorphic to $(C_5^2\times C_4^2)\rtimes C_2$ and $|G|=2^5\cdot 5^2=800$. \\[.2cm]

 (4) Let $X: F=x_1^4x_2+x_2^4x_3+x_3^5+x_4^5+x_5^5=0$ and let $G$ be the subgroup of $\PGL(5,\mathbb{C})$ generated by the following four matrices:
 
 $A_1=
    \begin{pmatrix} 
    \xi_{16}&0&0&0&0 \\ 
    0&\xi_{16}^{-4}&0&0&0 \\ 
    0&0&1&0&0 \\ 
    0&0&0&1&0 \\ 
    0&0&0&0&1 \\ 
    \end{pmatrix} A_2=
    \begin{pmatrix} 
    1&0&0&0&0 \\ 
    0&1&0&0&0 \\ 
    0&0&1&0&0 \\ 
    0&0&0&\xi_5&0 \\ 
    0&0&0&0&1 \\ 
    \end{pmatrix} A_3=
    \begin{pmatrix} 
    1&0&0&0&0 \\ 
    0&1&0&0&0 \\ 
    0&0&1&0&0 \\ 
    0&0&0&1&0 \\ 
    0&0&0&0&\xi_5 \\ 
    \end{pmatrix}$ 
    
     $A_4=
    \begin{pmatrix} 
    1&0&0&0&0 \\ 
    0&1&0&0&0 \\ 
    0&0&1&0&0 \\ 
    0&0&0&0&1 \\ 
    0&0&0&1&0 \\ 
    \end{pmatrix} $
 
 Then $G$ acts on $X$, $G\cong C_{16}\times (C_5^2\rtimes C_2)$ and $|G|=2^5\cdot 5^2=800$.\\[.2cm]
 
 (5) Let $X:F=x_1^4x_2+x_2^4x_1+x_3^5+x_4^5+x_5^5=0$ and let $G$ be the subgroup of $\PGL(5,\mathbb{C})$ generated by the following seven matrices:
 
 $A_1=
    \begin{pmatrix} 
    1&0&0&0&0 \\ 
    0&1&0&0&0 \\ 
    0&0&0&1&0 \\ 
    0&0&1&0&0 \\ 
    0&0&0&0&1 \\ 
    \end{pmatrix} A_2=
    \begin{pmatrix} 
    1&0&0&0&0 \\ 
    0&1&0&0&0 \\ 
    0&0&0&1&0 \\ 
    0&0&0&0&1 \\ 
    0&0&1&0&0 \\ 
    \end{pmatrix} A_3=
    \begin{pmatrix} 
    1&0&0&0&0 \\ 
    0&1&0&0&0 \\ 
    0&0&\xi_5&0&0 \\ 
    0&0&0&1&0 \\ 
    0&0&0&0&1 \\ 
    \end{pmatrix}$
    
    $A_4=
    \begin{pmatrix} 
    1&0&0&0&0 \\ 
    0&1&0&0&0 \\ 
    0&0&1&0&0 \\ 
    0&0&0&\xi_5&0 \\ 
    0&0&0&0&1 \\ 
    \end{pmatrix} A_5=
    \begin{pmatrix} 
    1&0&0&0&0 \\ 
    0&1&0&0&0 \\ 
    0&0&1&0&0 \\ 
    0&0&0&1&0 \\ 
    0&0&0&0&\xi_5 \\ 
    \end{pmatrix} A_6=
    \begin{pmatrix} 
    \xi_3&0&0&0&0 \\ 
    0&\xi_3^2&0&0&0 \\ 
    0&0&1&0&0 \\ 
    0&0&0&1&0 \\ 
    0&0&0&0&1 \\ 
    \end{pmatrix}$
    
    $A_7=
    \begin{pmatrix} 
    0&1&0&0&0 \\ 
    1&0&0&0&0 \\ 
    0&0&1&0&0 \\ 
    0&0&0&1&0 \\ 
    0&0&0&0&1 \\ 
    \end{pmatrix} $

 Then $G$ acts on $X$, $G\cong S_3\times (C_5^3\rtimes S_3)$ and $|G|=2^2\cdot 3^2\cdot 5^3=4500$.\\[.2cm]
 
 (6) Let $X: F=x_1^4x_2+x_2^4x_3+x_3^5+x_4^4x_5+x_5^5=0$, and let $G$ be the subgroup of $\PGL(5,\mathbb{C})$ generated by the following three matrices:
 
  $A_1=
    \begin{pmatrix} 
    \xi_{16}&0&0&0&0 \\ 
    0&\xi_{16}^{-4}&0&0&0 \\ 
    0&0&1&0&0 \\ 
    0&0&0&1&0 \\ 
    0&0&0&0&1 \\ 
    \end{pmatrix} A_2=
    \begin{pmatrix} 
    1&0&0&0&0 \\ 
    0&1&0&0&0 \\ 
    0&0&1&0&0 \\ 
    0&0&0&\xi_4&0 \\ 
    0&0&0&0&1 \\ 
    \end{pmatrix} A_6=
    \begin{pmatrix} 
    \xi_5&0&0&0&0 \\ 
    0&\xi_5&0&0&0 \\ 
    0&0&\xi_5&0&0 \\ 
    0&0&0&1&0 \\ 
    0&0&0&0&1 \\ 
    \end{pmatrix}$
    
 Then $G$ acts on $X$, $G\cong C_5\times C_{16}\times C_4$ and $|G|=2^6\cdot 5=320$.\\[.2cm]

 (7) Let $X: F=x_1^4x_2+x_2^4x_3+x_3^4x_4+x_4^5+x_5^5=0$, and let $G$ be the subgroup of $\PGL(5,\mathbb{C})$ generated by the following two matrices:
 
    $A_1=
    \begin{pmatrix} 
    \xi_{64}&0&0&0&0 \\ 
    0&\xi_{64}^{-4}&0&0&0 \\ 
    0&0&\xi_{64}^{16}&0&0 \\ 
    0&0&0&1&0 \\ 
    0&0&0&0&1 \\ 
    \end{pmatrix} A_2=
    \begin{pmatrix} 
    1&0&0&0&0 \\ 
    0&1&0&0&0 \\ 
    0&0&1&0&0 \\ 
    0&0&0&1&0 \\ 
    0&0&0&0&\xi_5 \\ 
    \end{pmatrix}$

  Then $G$ acts on $X$, $G\cong C_{64}\times C_5$ and $|G|=2^6\cdot 5=320$.\\[.2cm]

  (8) Let $X: F=x_1^4x_2+x_2^5+x_3^4x_4+x_4^4x_3+x_5^5=0$, and let $G$ be the subgroup of $\PGL(5,\mathbb{C})$ generated by the following five matrices:
  
   $A_1=
    \begin{pmatrix} 
    \xi_{4}&0&0&0&0 \\ 
    0&1&0&0&0 \\ 
    0&0&1&0&0 \\ 
    0&0&0&1&0 \\ 
    0&0&0&0&1 \\ 
    \end{pmatrix} A_2=
    \begin{pmatrix} 
    \xi_5&0&0&0&0 \\ 
    0&\xi_5&0&0&0 \\ 
    0&0&1&0&0 \\ 
    0&0&0&1&0 \\ 
    0&0&0&0&1 \\ 
    \end{pmatrix} A_3=
    \begin{pmatrix} 
    1&0&0&0&0 \\ 
    0&1&0&0&0 \\ 
    0&0&0&1&0 \\ 
    0&0&1&0&0 \\ 
    0&0&0&0&1 \\ 
    \end{pmatrix}$
    
      $A_4=
    \begin{pmatrix} 
    1&0&0&0&0 \\ 
    0&1&0&0&0 \\ 
    0&0&\xi_3&0&0 \\ 
    0&0&0&\xi_3^2&0 \\ 
    0&0&0&0&1 \\ 
    \end{pmatrix} A_5=
    \begin{pmatrix} 
    1&0&0&0&0 \\ 
    0&1&0&0&0 \\ 
    0&0&1&0&0 \\ 
    0&0&0&1&0 \\ 
    0&0&0&0&\xi_5 \\ 
    \end{pmatrix}$
  
  Then $G$ acts on $X$, $G\cong C_5^2\times C_4\times S_3$ and $|G|=2^3\cdot 3\cdot 5^2=600$.\\[.2cm]

  (9) Let $X: F=x_1^4x_2+x_2^4x_3+x_3^4x_1+x_4^5+x_5^5=0$, and let $G$ be the subgroup of $\PGL(5,\mathbb{C})$ generated by the following five matrices:
  
  $A_1=
    \begin{pmatrix} 
    0&1&0&0&0 \\ 
    0&0&1&0&0 \\ 
    1&0&0&0&0 \\ 
    0&0&0&1&0 \\ 
    0&0&0&0&1 \\ 
    \end{pmatrix} A_2=
    \begin{pmatrix} 
    \xi_{13}&0&0&0&0 \\ 
    0&\xi_{13}^{-4}&0&0&0 \\ 
    0&0&\xi_{13}^3&0&0 \\ 
    0&0&0&1&0 \\ 
    0&0&0&0&1 \\ 
    \end{pmatrix} A_3=
    \begin{pmatrix} 
    1&0&0&0&0 \\ 
    0&1&0&0&0 \\ 
    0&0&1&0&0 \\ 
    0&0&0&0&1 \\ 
    0&0&0&1&0 \\ 
    \end{pmatrix}$
    
      $A_4=
    \begin{pmatrix} 
    1&0&0&0&0 \\ 
    0&1&0&0&0 \\ 
    0&0&1&0&0 \\ 
    0&0&0&\xi_5&0 \\ 
    0&0&0&0&1 \\ 
    \end{pmatrix} A_5=
    \begin{pmatrix} 
    1&0&0&0&0 \\ 
    0&1&0&0&0 \\ 
    0&0&1&0&0 \\ 
    0&0&0&1&0 \\ 
    0&0&0&0&\xi_5 \\ 
    \end{pmatrix}$
  
  Then $G$ acts on $X$, $G\cong (C_5^2\rtimes C_2)\times (C_{13}\rtimes C_3)$ and $|G|=2\cdot 3\cdot 5^2\cdot 13=1950$.\\[.2cm]
  
  (10)  Let $X: F=x_1^4x_2+x_2^4x_3+x_3^5+x_4^4x_5+x_5^4x_4=0$, and let $G$ be the subgroup of $\PGL(5,\mathbb{C})$ generated by the following four matrices:
  
   $A_1=
    \begin{pmatrix} 
    \xi_{16}&0&0&0&0 \\ 
    0&\xi_{16}^{-4}&0&0&0 \\ 
    0&0&1&0&0 \\ 
    0&0&0&1&0 \\ 
    0&0&0&0&1 \\ 
    \end{pmatrix} A_2=
    \begin{pmatrix} 
    1&0&0&0&0 \\ 
    0&1&0&0&0 \\ 
    0&0&1&0&0 \\ 
    0&0&0&0&1 \\ 
    0&0&0&1&0 \\ 
    \end{pmatrix} A_3=
    \begin{pmatrix} 
    1&0&0&0&0 \\ 
    0&1&0&0&0 \\ 
    0&0&1&0&0 \\ 
    0&0&0&\xi_3&0 \\ 
    0&0&0&0&\xi_3^2 \\ 
    \end{pmatrix}$ 
    
     $A_4=
    \begin{pmatrix} 
    1&0&0&0&0 \\ 
    0&1&0&0&0 \\ 
    0&0&1&0&0 \\ 
    0&0&0&\xi_5&0 \\ 
    0&0&0&0&\xi_5 \\ 
    \end{pmatrix} $
 
 Then $G$ acts on $X$, $G\cong C_{16}\times (C_5\times S_3)$ and $|G|=2^5\cdot 3\cdot 5=480$. \\[.2cm]

  (11) Let $X: F=x_1^4x_2+x_2^4x_3+x_3^4x_4+x_4^4x_5+x_5^5=0$  and let $G$ be the subgroup of $\PGL(5,\mathbb{C})$ generated by the following matrix:
  
    $A_1=
    \begin{pmatrix} 
    \xi_{256}&0&0&0&0 \\ 
    0&\xi_{256}^{-4}&0&0&0 \\ 
    0&0&\xi_{256}^{16}&0&0 \\ 
    0&0&0&\xi_{256}^{-64}&0 \\ 
    0&0&0&0&1 \\ 
    \end{pmatrix} $
 
 Then $G$ acts on $X$, $G\cong C_{256}$ and $|G|=2^8=256$.\\[.2cm]
 
 (12) Let $X: F=x_1^4x_2+x_2^5+x_3^4x_4+x_4^4x_5+x_5^4x_3=0$, and let $G$ be the subgroup of $\PGL(5,\mathbb{C})$ generated by the following four matrices:
 
  $A_1=
    \begin{pmatrix} 
    \xi_{4}&0&0&0&0 \\ 
    0&1&0&0&0 \\ 
    0&0&1&0&0 \\ 
    0&0&0&1&0 \\ 
    0&0&0&0&1 \\ 
    \end{pmatrix} A_2=
    \begin{pmatrix} 
    \xi_5&0&0&0&0 \\ 
    0&\xi_5&0&0&0 \\ 
    0&0&1&0&0 \\ 
    0&0&0&1&0 \\ 
    0&0&0&0&1 \\ 
    \end{pmatrix} A_3=
    \begin{pmatrix} 
    1&0&0&0&0 \\ 
    0&1&0&0&0 \\ 
    0&0&0&1&0 \\ 
    0&0&0&0&1 \\ 
    0&0&1&0&0 \\ 
    \end{pmatrix}$ 
    
     $A_4=
    \begin{pmatrix} 
    1&0&0&0&0 \\ 
    0&1&0&0&0 \\ 
    0&0&\xi_{13}&0&0 \\ 
    0&0&0&\xi_{13}^{-4}&0 \\ 
    0&0&0&0&\xi_{13}^3 \\ 
    \end{pmatrix} $

   Then $G$ acts on $X$, $G\cong C_{4}\times C_{5}\times (C_{13}\rtimes C_3)$ and $|G|=2^2\cdot 3\cdot 5\cdot 13=780$.
\\[.2cm]
   
   (13) Let $X: F=x_1^4x_2+x_2^4x_3+x_3^4x_4+x_4^4x_1+x_5^5=0$, and let $G$ be the subgroup of $\PGL(5,\mathbb{C})$ generated by the following four matrices:

     $A_1=
    \begin{pmatrix} 
    \xi_{17}&0&0&0&0 \\ 
    0&\xi_{17}^{-4}&0&0&0 \\ 
    0&0&\xi_{17}^{16}&0&0 \\ 
    0&0&0&\xi_{17}^{4}&0 \\ 
    0&0&0&0&1 \\ 
    \end{pmatrix} A_2=
    \begin{pmatrix} 
    \xi_3&0&0&0&0 \\ 
    0&\xi_3^2&0&0&0 \\ 
    0&0&\xi_3&0&0 \\ 
    0&0&0&\xi_3^2&0 \\ 
    0&0&0&0&1 \\ 
    \end{pmatrix} A_3=
    \begin{pmatrix} 
    0&1&0&0&0 \\ 
    0&0&1&0&0 \\ 
    0&0&0&1&0 \\ 
    1&0&0&0&0 \\ 
    0&0&0&0&1 \\ 
    \end{pmatrix}$ 
    
     $A_4=
    \begin{pmatrix} 
    1&0&0&0&0 \\ 
    0&1&0&0&0 \\ 
    0&0&1&0&0 \\ 
    0&0&0&1&0 \\ 
    0&0&0&0&\xi_5 \\ 
    \end{pmatrix} $

   Then $G$ acts on $X$, $G\cong C_{5}\times (C_{51}\rtimes C_4)$ and $|G|=2^2\cdot 3\cdot 5\cdot 17=1020$.\\[.2cm]

   (14) Let $X: F=x_1^4x_2+x_2^4x_1+x_3^4x_4+x_4^4x_3+x_5^5=0$ and let $G$ be the subgroup of $\PGL(5,\mathbb{C})$ generated by the following seven matrices:

     $A_1=
    \begin{pmatrix} 
    \xi_{3}&0&0&0&0 \\ 
    0&\xi_{3}^{2}&0&0&0 \\ 
    0&0&1&0&0 \\ 
    0&0&0&1&0 \\ 
    0&0&0&0&1 \\ 
    \end{pmatrix} A_2=
    \begin{pmatrix} 
    \xi_5&0&0&0&0 \\ 
    0&\xi_5&0&0&0 \\ 
    0&0&1&0&0 \\ 
    0&0&0&1&0 \\ 
    0&0&0&0&1 \\ 
    \end{pmatrix} A_3=
    \begin{pmatrix} 
    1&0&0&0&0 \\ 
    0&1&0&0&0 \\ 
    0&0&\xi_3&0&0 \\ 
    0&0&0&\xi_3^2&0 \\ 
    0&0&0&0&1 \\ 
    \end{pmatrix}$ 
    
      $A_4=
    \begin{pmatrix} 
    1&0&0&0&0 \\ 
    0&1&0&0&0 \\ 
    0&0&\xi_{5}&0&0 \\ 
    0&0&0&\xi_{5}&0 \\ 
    0&0&0&0&1 \\ 
    \end{pmatrix} A_5=
    \begin{pmatrix} 
    0&1&0&0&0 \\ 
    1&0&0&0&0 \\ 
    0&0&1&0&0 \\ 
    0&0&0&1&0 \\ 
    0&0&0&0&1 \\ 
    \end{pmatrix} A_6=
    \begin{pmatrix} 
    1&0&0&0&0 \\ 
    0&1&0&0&0 \\ 
    0&0&0&1&0 \\ 
    0&0&1&0&0 \\ 
    0&0&0&0&1 \\ 
    \end{pmatrix}$ 
    
     $A_7=
    \begin{pmatrix} 
    0&0&1&0&0 \\ 
    0&0&0&1&0 \\ 
    0&1&0&0&0 \\ 
    1&0&0&0&0 \\ 
    0&0&0&0&1 \\ 
    \end{pmatrix} $

   Then $G$ acts on $X$, $G\cong (C_{5}^2\times C_3^2)\rtimes D_8$ and $|G|=2^3\cdot 3^2\cdot 5^2=1800$.
  \\[.2cm]
   
   (15) Klein quintic threefold $X: F=x_1^4x_2+x_2^4x_3+x_3^4x_4+x_4^4x_5+x_5^4x_1=0$ and let $G$ be the subgroup of $\PGL(5,\mathbb{C})$ generated by the following three matrices:

     $A_1=
    \begin{pmatrix} 
    0&1&0&0&0 \\ 
    0&0&1&0&0 \\ 
    0&0&0&1&0 \\ 
    0&0&0&0&1 \\ 
    1&0&0&0&0 \\ 
    \end{pmatrix} A_2=
    \begin{pmatrix} 
    1&0&0&0&0 \\ 
    0&\xi_5&0&0&0 \\ 
    0&0&\xi_5^2&0&0 \\ 
    0&0&0&\xi_5^3&0 \\ 
    0&0&0&0&\xi_5^4 \\ 
    \end{pmatrix} A_3=
    \begin{pmatrix} 
    \xi_{41}&0&0&0&0 \\ 
    0&\xi_{41}^{-4}&0&0&0 \\ 
    0&0&\xi_{41}^{16}&0&0 \\ 
    0&0&0&\xi_{41}^{18}&0 \\ 
    0&0&0&0&\xi_{41}^{10} \\ 
    \end{pmatrix}$ 
 
  Then $G$ acts on $X$, $G\cong C_{205}\rtimes C_5$ and $|G|=5^2\cdot 41=1025$.
   \\[.2cm]
  
  (16) Let $X: F=x_1^4x_2+x_2^4x_3+x_3^4x_1+x_4^4x_5+x_5^4x_4=0$ and let $G$ be the subgroup of $\PGL(5,\mathbb{C})$ generated by the following five matrices:

   $A_1=
    \begin{pmatrix} 
    \xi_{13}&0&0&0&0 \\ 
    0&\xi_{13}^{-4}&0&0&0 \\ 
    0&0&\xi_{13}^3&0&0 \\ 
    0&0&0&1&0 \\ 
    0&0&0&0&1 \\ 
    \end{pmatrix} A_2=
    \begin{pmatrix} 
    0&1&0&0&0 \\ 
    0&0&1&0&0 \\ 
    1&0&0&0&0 \\ 
    0&0&0&1&0 \\ 
    0&0&0&0&1 \\ 
    \end{pmatrix} A_3=
    \begin{pmatrix} 
    1&0&0&0&0 \\ 
    0&1&0&0&0 \\ 
    0&0&1&0&0 \\ 
    0&0&0&0&1 \\ 
    0&0&0&1&0 \\ 
    \end{pmatrix}$
    
      $A_4=
    \begin{pmatrix} 
    1&0&0&0&0 \\ 
    0&1&0&0&0 \\ 
    0&0&1&0&0 \\ 
    0&0&0&\xi_3&0 \\ 
    0&0&0&0&\xi_3^2 \\ 
    \end{pmatrix} A_5=
    \begin{pmatrix} 
    1&0&0&0&0 \\ 
    0&1&0&0&0 \\ 
    0&0&1&0&0 \\ 
    0&0&0&\xi_5&0 \\ 
    0&0&0&0&\xi_5 \\ 
    \end{pmatrix}$
  
  Then $G$ acts on $X$, $G\cong C_5\times S_3\times (C_{13}\rtimes C_3)$ and $|G|=2\cdot 3^2\cdot 5\cdot 13=1170$.\\[.2cm]
  
  (17) Let $X: F=((x_1^4+x_2^4)+(2+4\xi_3^2)x_1^2x_2^2)x_3+(-(x_1^4+x_2^4)+(2+4\xi_3^2)x_1^2x_2^2)x_4+x_3^4x_4+x_4^4x_3+x_5^5=0$ and let $G$ be the subgroup of $\PGL(5,\mathbb{C})$ generated by the following five matrices :

   $ A_1=
    \begin{pmatrix} 
    0&1&0&0&0 \\ 
    -1&0&0&0&0 \\ 
    0&0&1&0&0 \\ 
    0&0&0&1&0 \\ 
    0&0&0&0&1 \\ 
    \end{pmatrix} A_2=
    \begin{pmatrix} 
    \xi_8^3&0&0&0&0 \\ 
    0&\xi_8&0&0&0 \\ 
    0&0&0&1&0 \\ 
    0&0&1&0&0 \\ 
    0&0&0&0&1 \\ 
    \end{pmatrix}  A_3=
    \begin{pmatrix} 
    -\frac{1}{\sqrt{2}}\xi_8&\frac{1}{\sqrt{2}}\xi_8&0&0&0 \\ 
    \frac{1}{\sqrt{2}}\xi_8^3& \frac{1}{\sqrt{2}}\xi_8^3&0&0&0 \\ 
    0&0&\xi_3&0&0 \\ 
    0&0&0&\xi_3^2&0 \\ 
    0&0&0&0&1 \\ 
    \end{pmatrix}$
    
      $A_4=
    \begin{pmatrix} 
    -1&0&0&0&0 \\ 
    0&1&0&0&0 \\ 
    0&0&1&0&0 \\ 
    0&0&0&1&0 \\ 
    0&0&0&0&1 \\ 
    \end{pmatrix} A_5=
    \begin{pmatrix} 
    1&0&0&0&0 \\ 
    0&1&0&0&0 \\ 
    0&0&1&0&0 \\ 
    0&0&0&1&0 \\ 
    0&0&0&0&\xi_5 \\ 
    \end{pmatrix}$
  
  Then $G$ acts on $X$, $G\cong C_5\times ((\SL(2,3).C_2)\rtimes C_2)$ and $|G|=2^5\cdot 3\cdot 5=480$.
  \\[.2cm]

    (18) Let $X: F=((x_1^4+x_2^4)+(2+4\xi_3^2)x_1^2x_2^2)x_3+((x_1^4+x_2^4)-(2+4\xi_3^2)x_1^2x_2^2)x_4+x_3^4x_4+x_4^4x_3+x_3^2x_4^2x_5+x_5^5=0$ and let $G$ be the subgroup of $\PGL(5,\mathbb{C})$ generated by the following four matrices:

   $A_1=
    \begin{pmatrix} 
    0&1&0&0&0 \\ 
    -1&0&0&0&0 \\ 
    0&0&1&0&0 \\ 
    0&0&0&1&0 \\ 
    0&0&0&0&1 \\ 
    \end{pmatrix} A_2=
    \begin{pmatrix} 
    -\xi_4&0&0&0&0 \\ 
    0&\xi_4&0&0&0 \\ 
    0&0&1&0&0 \\ 
    0&0&0&1&0 \\ 
    0&0&0&0&1 \\ 
    \end{pmatrix} A_3=
    \begin{pmatrix} 
    -\frac{1}{\sqrt{2}}\xi_8&\frac{1}{\sqrt{2}}\xi_8&0&0&0 \\ 
    \frac{1}{\sqrt{2}}\xi_8^3& \frac{1}{\sqrt{2}}\xi_8^3&0&0&0 \\ 
    0&0&\xi_3&0&0 \\ 
    0&0&0&\xi_3^2&0 \\ 
    0&0&0&0&1 \\ 
    \end{pmatrix}$
    
     $ A_4=
    \begin{pmatrix} 
    \xi_4&0&0&0&0 \\ 
    0&1&0&0&0 \\ 
    0&0&0&1&0 \\ 
    0&0&1&0&0 \\ 
    0&0&0&0&1 \\ 
    \end{pmatrix}$
  
  Then $G$ acts on $X$, $G\cong \SL(2,3)\rtimes C_4$ and $|G|=2^5\cdot 3=96$.\\[.2cm]

  (19) Let $X: F=x_1^4x_2+x_2^4x_1+x_3^4x_2+x_4^4x_1+x_5^5+x_2^3x_3x_4-x_1^3x_3x_4=0$ and let $G$ be the subgroup of $\PGL(5,\mathbb{C})$ generated by the following four matrices:

   $A_1=
    \begin{pmatrix} 
    1&0&0&0&0 \\ 
    0&1&0&0&0 \\ 
    0&0&1&0&0 \\ 
    0&0&0&1&0 \\ 
    0&0&0&0&\xi_5 \\ 
    \end{pmatrix} A_2=
    \begin{pmatrix} 
    \xi_3&0&0&0&0 \\ 
    0&\xi_3^2&0&0&0 \\ 
    0&0&\xi_3&0&0 \\ 
    0&0&0&\xi_3^2&0 \\ 
    0&0&0&0&1 \\ 
    \end{pmatrix} A_3=
    \begin{pmatrix} 
    0&1&0&0&0 \\ 
    1&0&0&0&0 \\ 
    0&0&0&1&0 \\ 
    0&0&-1&0&0 \\ 
    0&0&0&0&1 \\ 
    \end{pmatrix}$
    
      $A_4=
    \begin{pmatrix} 
    1&0&0&0&0 \\ 
    0&1&0&0&0 \\ 
    0&0&\xi_4&0&0 \\ 
    0&0&0&\xi_4^3&0 \\ 
    0&0&0&0&1 \\ 
    \end{pmatrix}$
  
   Then $G$ acts on $X$, $G\cong C_5 \times (C_3\rtimes Q_8)$ and $|G|=2^3\cdot 3\cdot 5=120$.
   \\[.2cm]
   
   (20) Let $X: F=x_1^4x_2+x_2^4x_1+x_3^4x_2+x_4^4x_1+x_5^5+x_1^3x_3x_4+x_2^3x_3x_4=0$ and let $G$ be the subgroup of $\PGL(5,\mathbb{C})$ generated by the following four matrices:

    $A_1=
    \begin{pmatrix} 
    1&0&0&0&0 \\ 
    0&1&0&0&0 \\ 
    0&0&1&0&0 \\ 
    0&0&0&1&0 \\ 
    0&0&0&0&\xi_5 \\ 
    \end{pmatrix} A_2=
    \begin{pmatrix} 
    \xi_3&0&0&0&0 \\ 
    0&\xi_3^2&0&0&0 \\ 
    0&0&\xi_3&0&0 \\ 
    0&0&0&\xi_3^2&0 \\ 
    0&0&0&0&1 \\ 
    \end{pmatrix} A_3=
    \begin{pmatrix} 
    0&1&0&0&0 \\ 
    1&0&0&0&0 \\ 
    0&0&0&1&0 \\ 
    0&0&1&0&0 \\ 
    0&0&0&0&1 \\ 
    \end{pmatrix}$
    
      $A_4=
    \begin{pmatrix} 
    1&0&0&0&0 \\ 
    0&1&0&0&0 \\ 
    0&0&\xi_4&0&0 \\ 
    0&0&0&\xi_4^3&0 \\ 
    0&0&0&0&1 \\ 
    \end{pmatrix}$
  
   Then $G$ acts on $X$, $G\cong C_5 \times D_{24}$ and $|G|=2^3\cdot 3\cdot 5=120$.
   \\[.2cm]
   
   (21) Let $X: \{x_1^5+x_2^5+x_3^5+x_4^5+x_5^5+x_6^5=x_1+x_2+x_3+x_4+x_5=0\}\subseteq \mathbb{P}^5$ and let $G$ be the subgroup of $\PGL(6,\C)$ generated by the following four matrices:

   $A_1=
    \begin{pmatrix} 
    0&1&0&0&0&0 \\ 
    1&0&0&0&0&0 \\ 
    0&0&1&0&0&0 \\ 
    0&0&0&1&0&0 \\ 
    0&0&0&0&1&0 \\
    0&0&0&0&0&1 \\ 
    \end{pmatrix} A_2=
    \begin{pmatrix} 
    0&1&0&0&0&0 \\ 
    0&0&1&0&0&0 \\ 
    1&0&0&0&0&0 \\ 
    0&0&0&1&0&0 \\ 
    0&0&0&0&1&0 \\
    0&0&0&0&0&1 \\ 
    \end{pmatrix} A_3=
    \begin{pmatrix} 
    0&1&0&0&0&0 \\ 
    0&0&1&0&0&0 \\ 
    0&0&0&1&0&0 \\ 
    0&0&0&0&1&0 \\ 
    1&0&0&0&0&0 \\
    0&0&0&0&0&1 \\ 
    \end{pmatrix}$
    
      $A_4=
    \begin{pmatrix} 
   1&0&0&0&0&0 \\ 
    0&1&0&0&0&0 \\ 
    0&0&1&0&0&0 \\ 
    0&0&0&1&0&0 \\ 
    0&0&0&0&1&0 \\
    0&0&0&0&0&\xi_5 \\ 
    \end{pmatrix}$

   Then $G$ acts on $X$, $G\cong C_5 \times S_5$ and $|G|=2^3\cdot 3\cdot 5^2=600$.   \\[.2cm]

 (22) Let $X: F=x_1^4x_2+x_2^4x_3+x_3^4x_4+x_4^5+x_5^4x_3+x_2^2x_4x_5^2=0$, and let $G$ be the subgroup of $\PGL(5,\mathbb{C})$ generated by the following two matrices:
 
    $A_1=
    \begin{pmatrix} 
    \xi_{32}&0&0&0&0 \\ 
    0&\xi_{32}^{-4}&0&0&0 \\ 
    0&0&\xi_{32}^{16}&0&0 \\ 
    0&0&0&1&0 \\ 
    0&0&0&0&\xi_{32}^4 \\ 
    \end{pmatrix} A_2=
    \begin{pmatrix} 
    1&0&0&0&0 \\ 
    0&1&0&0&0 \\ 
    0&0&1&0&0 \\ 
    0&0&0&1&0 \\ 
    0&0&0&0&-1 \\ 
    \end{pmatrix}$

  Then $G$ acts on $X$, $G\cong C_{32}\times C_2$ and $|G|=2^6=64$.
  
  \end{example}

  Our main theorem is the following (cf. \cite{Mu88}):
  \begin{theorem}\label{thm:Main}
  
  For a finite group $G$, the following two conditions are equivalent to each other: 
  
  (i) $G$ is isomorphic to a subgroup of one of the $22$ groups above, and 
  
  (ii) $G$ has a faithful action on a smooth quintic threefold.
\end{theorem}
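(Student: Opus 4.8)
The plan is to prove the two implications separately. The implication (i) $\Rightarrow$ (ii) is essentially immediate from Example \ref{mainex}: if $G$ is isomorphic to a subgroup of one of the $22$ groups $\Gamma_1,\dots,\Gamma_{22}$, and $\Gamma_j$ acts faithfully on the displayed smooth quintic threefold $X_j$, then $G$ inherits a faithful action on the same $X_j$ by restriction. So all I would need to check is that the $22$ examples are genuinely correct: that each defining polynomial $F$ is smooth (verified by the non-smoothness criteria of Lemma \ref{lem:nonsmooth} and Proposition \ref{pp:nonsmoothquintic}, with Mathematica assistance), that the listed generators preserve $F$ (a direct substitution), that the quoted abstract isomorphism type is correct (a presentation or GAP check), and that the resulting subgroup of $\PGL(5,\C)$ is faithful of the stated order. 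None of this is conceptually hard; it is bookkeeping on the explicit list.

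The substance is the converse (ii) $\Rightarrow$ (i). By Matsumura--Monsky, $\Aut(X)$ is a \emph{finite} subgroup $G\subset\PGL(5,\C)$ preserving $F$, so the task is to classify all such $G$ and embed each into some $\Gamma_j$. I would follow the three-step Sylow strategy announced in the introduction. First I would determine the possible prime orders of elements: an element of prime order $p$ lifts to a diagonalizable $A\in\GL(5,\C)$ whose eigenvalues are $p$-th roots of unity, and the requirement that some smooth $F$ be fixed by $A$ constrains the eigenvalue multiset; the non-smoothness criteria then exclude every prime outside $\{2,3,5,13,17,41\}$. Next I would pin down the Sylow $p$-subgroups prime by prime. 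For the large primes $p\in\{13,17,41\}$, Brauer's theorem (Theorem \ref{thm:Brauer}) controls the centralizer and normalizer of an order-$p$ element and forces the surrounding structure, yielding exactly the pieces $C_{13}\rt C_3$, $C_{51}\rt C_4$, and $C_{205}\rt C_5$ and nothing larger; for $p\in\{2,3,5\}$ I would read off the admissible Sylow subgroups directly from the eigenvalue and liftability analysis.

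The third step is to determine which \emph{combinations} of these Sylow subgroups actually arise. Here, for $|G|\le 2000$ I would enumerate the candidate abstract groups in GAP, realize their faithful $5$-dimensional projective representations, and test each for an invariant smooth quintic using the differential method; for larger orders I would reduce to the $\le 2000$ case by passing to suitable subgroups and quotients, as indicated in the introduction. Throughout, one must watch the $F$-liftability subtlety: a subgroup $G\subset\PGL(5,\C)$ need not lift to a linear group fixing $F$, but Schur multiplier theory shows that failure of liftability happens only for a short explicit list (the groups forcing the central $C_5$-extensions), which I would treat by hand.

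I expect the main obstacle to be exactly the combinatorial explosion in the third step: the number of abstract groups of order $2^a3^b5^c$ is far too large to control directly, and the presence of several competing primes makes the assembly delicate. The resolution is the GAP enumeration below the bound together with the reduction arguments above it, keeping smoothness as a constant filter so that groups acting only on singular quintics are discarded at each stage. Once the complete list of groups admitting a faithful action on some smooth quintic is in hand, I would finish by a maximality check: verifying that every such group embeds into one of $\Gamma_1,\dots,\Gamma_{22}$ and that no $\Gamma_j$ embeds into another, so that these $22$ are precisely the maximal ones. This closes the equivalence (i) $\Leftrightarrow$ (ii).
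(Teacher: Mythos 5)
Your proposal follows essentially the same route as the paper: the (i)$\Rightarrow$(ii) direction is read off from Example \ref{mainex}, and the converse is proved by exactly the three-step Sylow strategy the paper uses — prime orders restricted to $\{2,3,5,13,17,41\}$ via smoothness criteria, Brauer's theorem for the large primes, Schur-multiplier control of $F$-liftability, GAP enumeration below order $2000$ with reduction arguments above it, and the differential method as the final filter. This is precisely how the paper assembles Theorems \ref{thm:containorder13}, \ref{thm:containorder17}, \ref{thm:containorder41}, \ref{thm:greaterthan125}, \ref{thm:C128} and \ref{orderdivides2^63^25^2} in Section \ref{ss:proofmainthm}.
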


We will prove Theorem \ref{thm:Main} in Section \ref{ss:proofmainthm}.

\begin{remark}
Let $Y$ be a smooth quintic threefold. Let $X_i$ be the smooth quintic threefold in Example (i) above, $1\leq i\leq 22$. It turns out that if $i\leq 17$,  then $\Aut(Y)\cong \Aut(X_i)$  if and only if, up to change of coordinates, $Y$ and $X_i$  are the same.

This remark is a byproduct of our proof of Theorem \ref{thm:Main}.
\end{remark}

\section{Smoothness of hypersurfaces and the differential method}\label{ss:differentialmethod}
\noindent

\begin{definition}
   Let $F=F(x_1,...,x_n)$ be a homogeneous polynomial of degree $d> 0$ and let $m=m(x_1,...,x_n)$ be a monomial of degree $d$. Then we say $m$ is in $F$ (or $m\in F$) if the coefficient of $m$ is not zero in the expression of $F$. 
\end{definition}

\begin{lemma}\label{lem:nonsmooth}
Let $F_d=F_d(x_1,...,x_{n+1})$ be a nonzero homogeneous polynomial of degree $d\geq 3$ and let $M:=\{F_d=0\}\subseteq \mathbb{P}^n$. Let $a$ and $b$ be two nonnegative integers, and $2a+b\leq n$. The hypersurface $M$ is not smooth if there exist $a+b$ distinct variables $x_{i_1},\ldots,x_{i_{a+b}}$ such that $F_d\in ( x_{i_1},\ldots,x_{i_a}) +( x_{i_{a+1}},\ldots,x_{i_{a+b}}) ^2$, where $( x_{k_1},\cdots,x_{k_m}) $ means the ideal of $\mathbb{C}[x_1,...,x_{n+1}]$ generated by $x_{k_1},\cdots,x_{k_m}$.

\end{lemma}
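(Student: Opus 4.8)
The plan is to exhibit a singular point of $M$ directly, via the Jacobian criterion. By Euler's identity $\sum_{i} x_i\,\partial F_d/\partial x_i = d\,F_d$, with $d\geq 3$ invertible in $\mathbb{C}$, any nonzero common zero of all the partial derivatives $\partial F_d/\partial x_i$ automatically lies on $M$; so it suffices to produce a point $p\neq 0$ at which the whole gradient of $F_d$ vanishes. After relabelling coordinates I may assume the $a+b$ distinguished variables are $x_1,\ldots,x_{a+b}$, with $x_1,\ldots,x_a$ the \lq\lq{}linear\rq\rq{} group and $x_{a+1},\ldots,x_{a+b}$ the \lq\lq{}quadratic\rq\rq{} group, so that by hypothesis every monomial of $F_d$ either is divisible by some $x_j$ with $j\leq a$, or has total degree at least $2$ in $x_{a+1},\ldots,x_{a+b}$.

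The key idea is to look for $p$ on the coordinate linear subspace $\Lambda:=\{x_1=\cdots=x_{a+b}=0\}\cong\mathbb{P}^{\,n-(a+b)}$. First I would check that most partials vanish identically on $\Lambda$: for an index $i$ lying in the quadratic group or outside both groups, differentiating a monomial of $F_d$ by $x_i$ leaves it either still divisible by some $x_j$ ($j\leq a$), or still of degree $\geq 1$ in the quadratic variables, hence vanishing once $x_1=\cdots=x_{a+b}=0$. Thus $(\partial F_d/\partial x_i)|_{\Lambda}=0$ for every $i>a$, and the only partials that can survive on $\Lambda$ are the $a$ forms $\Phi_j:=(\partial F_d/\partial x_j)|_{\Lambda}$, $j=1,\ldots,a$, each homogeneous of degree $d-1\geq 2$ in the coordinates of $\Lambda$. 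Consequently any common zero of $\Phi_1,\ldots,\Phi_a$ inside $\Lambda$ is a point at which the entire gradient of $F_d$ vanishes.

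It then remains to produce such a common zero, and this is exactly where the numerical hypothesis $2a+b\leq n$ is used. The $\Phi_j$ are $a$ homogeneous forms on $\Lambda\cong\mathbb{P}^{\,n-(a+b)}$, so by the projective dimension theorem their common zero locus is nonempty (of dimension $\geq n-2a-b$) as soon as $a\leq n-(a+b)$, i.e. precisely when $2a+b\leq n$. Choosing a point $p$ there and invoking Euler as above gives $p\in M$ with the gradient vanishing, so $p\in\mathrm{Sing}\,M$ and $M$ is not smooth. The only genuinely substantive step is the bookkeeping of the second paragraph, namely confirming that differentiating in a quadratic-group or external direction cannot simultaneously destroy both defining features of a monomial; the dimension count is routine once the problem has been cut down to the $a$ forms $\Phi_j$ on $\Lambda$. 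The degenerate cases $a=0$ (no forms survive, so every point of the nonempty $\Lambda$ works) and $b=0$ deserve a remark but present no difficulty, since $2a+b\leq n$ already forces $n-(a+b)\geq a\geq 0$.
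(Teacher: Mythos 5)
Your proof is correct and follows essentially the same route as the paper's: both locate a singular point on the coordinate subspace $\{x_{i_1}=\cdots=x_{i_{a+b}}=0\}$, observe that the partial derivatives in the quadratic-group and external directions vanish identically there while the $a$ remaining partials restrict to forms of degree $d-1$ (the paper's $G_j$), and use $2a+b\leq n$ to guarantee these have a common zero in projective space. The only cosmetic difference is that you invoke Euler's identity explicitly to place the point on $M$, a step the paper leaves implicit.
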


\begin{proof}
Without loss of generality, we may assume that $i_j=j$ for $1\leq j\leq a+b$. So $F_d$ can be written as $F_d=x_1G_1+\ldots +x_aG_a+H$ for some $G_i\in \C[x_1,...,x_{n+1}],  H\in ( x_{a+1},\ldots,x_{a+b}) ^2.$

We take the partial derivatives of $F_d$:

$\frac{\partial F_d}{\partial x_1}=G_1+x_1\frac{\partial G_1}{\partial x_1}+x_2\frac{\partial G_2}{\partial x_1}+\ldots +x_a\frac{\partial G_a}{\partial x_1}+\frac{\partial H}{\partial x_1}$;

$\vdots$

$\frac{\partial F_d}{\partial x_a}=x_1\frac{\partial G_1}{\partial x_a}+x_2\frac{\partial G_2}{\partial x_a}+\ldots +G_a+ x_a\frac{\partial G_a}{\partial x_a}+\frac{\partial H}{\partial x_a}$;

$\frac{\partial F_d}{\partial x_{a+1}}=x_1\frac{\partial G_1}{\partial x_{a+1}}+x_2\frac{\partial G_2}{\partial x_{a+1}}+\ldots +x_a\frac{\partial G_a}{\partial x_{a+1}}+\frac{\partial H}{\partial x_{a+1}}$;

$\vdots$

$\frac{\partial F_d}{\partial x_{n+1}}=x_1\frac{\partial G_1}{\partial x_{n+1}}+x_2\frac{\partial G_2}{\partial x_{n+1}}+\ldots +x_a\frac{\partial G_a}{\partial x_{n+1}}+\frac{\partial H}{\partial x_{n+1}}$.

Define $Z:=\{x_1=\cdots =x_{a+b}=G_1=\cdots=G_{a}=0\}\subseteq \mathbb{P}^n$. $Z\neq \emptyset $ since $2a+b\leq n$.

Then $\frac{\partial F_d}{\partial x_1}=\cdots=\frac{\partial F_d}{\partial x_{n+1}}=0$ at $Z$ and hence $M$ is singular at $Z$.
\end{proof}

\begin{proposition}\label{pp:smoothnessandmonomial}
Let $M=\{F_d=0\}\subseteq \mathbb{P}^n$ be a smooth hypersurface of degree $d\geq 3$. Then for $i$ such that $1\leq i\leq n+1$,  $x_i^{d-1}x_j\in F_d$ for some $j=j(i)$.
\end{proposition}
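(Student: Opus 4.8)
The plan is to argue by contraposition. Fix an index $i$ with $1 \le i \le n+1$ and suppose that \emph{none} of the monomials $x_i^{d-1}x_j$ (for $1 \le j \le n+1$) occurs in $F_d$; I will deduce that $M$ is singular, contradicting the hypothesis. The key initial observation is that the list $\{x_i^{d-1}x_j\}_j$ already includes the pure power $x_i^{d}$ (take $j=i$). Hence the negation of the desired conclusion says exactly that no monomial of $F_d$ has $x_i$-degree equal to $d$ or $d-1$, i.e. every monomial of $F_d$ has $x_i$-degree at most $d-2$.

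Granting that assumption, I would rewrite each monomial $m\in F_d$ as $m = x_i^{e}m'$ with $e \le d-2$, where $m'$ is a monomial in the $n$ remaining variables $\{x_k : k \ne i\}$ of degree $\deg m' = d-e \ge 2$. Thus each such $m$, and therefore $F_d$, lies in the square of the ideal generated by those $n$ variables: setting $\{x_{i_1},\dots,x_{i_n}\} = \{x_k : k \ne i\}$, we obtain the containment $F_d \in (x_{i_1},\dots,x_{i_n})^{2}$.

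Now I would apply Lemma \ref{lem:nonsmooth} with $a = 0$, $b = n$, and distinguished variables $x_{i_1},\dots,x_{i_n}$. The numerical hypothesis reads $2a+b = n \le n$, which holds (with equality), and the ideal-membership hypothesis is precisely the containment just established. The Lemma then forces $M$ to be singular, which is the contradiction we sought. Therefore $x_i^{d-1}x_j \in F_d$ for some $j = j(i)$, as claimed.

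I expect no serious obstacle; the whole argument is essentially bookkeeping. The two points that require care are the translation of ``no $x_i^{d-1}x_j$ occurs in $F_d$'' into the $x_i$-degree bound $\le d-2$ (and hence degree $\ge 2$ in the other variables), and the verification that the number $n$ of remaining variables meets the constraint $2a+b \le n$ of Lemma \ref{lem:nonsmooth}; the hypothesis $d \ge 3$ is exactly what permits invoking that Lemma. As a self-contained alternative not using the Lemma, one can instead apply the Jacobian criterion directly at the coordinate point $p_i = [0:\cdots:0:1:0:\cdots:0]$: a term-by-term differentiation shows
\[
\frac{\partial F_d}{\partial x_i}(p_i) = d\,c_{x_i^d}, \qquad \frac{\partial F_d}{\partial x_j}(p_i) = c_{x_i^{d-1}x_j}\quad (j \ne i),
\]
where $c_m$ denotes the coefficient of the monomial $m$ in $F_d$, so that the simultaneous vanishing of all the coefficients $c_{x_i^{d-1}x_j}$ would make $p_i$ a point of $M$ at which the whole gradient vanishes, i.e. a singular point.
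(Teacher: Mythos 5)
Your proof is correct and takes essentially the same route as the paper's: the paper likewise argues that if no monomial $x_i^{d-1}x_j$ occurs in $F_d$, then $F_d \in (x_k : k \neq i)^2$, and then invokes Lemma \ref{lem:nonsmooth} with $a=0$, $b=n$ to contradict smoothness. Your supplementary direct Jacobian computation at the coordinate point $p_i$ is a valid self-contained alternative (it is in effect the proof of Lemma \ref{lem:nonsmooth} specialized to this case), but it is not needed.
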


\begin{proof}
We may assume $i=1$. If otherwise, $F_d\in ( x_2,\cdots ,x_{n+1}) ^2$. Then by Lemma \ref{lem:nonsmooth} with $a=0, b=n$, $M$ is singular, a contradiction to smoothness of $M$.
\end{proof}

\begin{proposition}\label{pp:nonsmoothquintic}
Let $M$ be a hypersurface in $\mathbb{P}^4$ defined by a nonzero homogeneous polynomial $F_5$ of degree 5. Then $M$ is singular if one of the following three conditions is true:

(1) There exists $ 1\leq i\leq 5$, such that for all $ 1\leq j\leq 5,  x_i^4x_j\notin F_5$;

(2) There exists $ 1\leq p, q \leq 5, p\neq q,$  such that  $F_5\in ( x_p,x_q) $;

(3) There exist three distinct variables $x_i,x_j,x_k$, such that $F_5\in ( x_i) +( x_j,x_k) ^2$.

\end{proposition}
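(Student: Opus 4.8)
The plan is to derive each of the three singularity conditions as a direct consequence of Lemma~\ref{lem:nonsmooth}, with $d=5$ and $n=4$ (so $n+1=5$ homogeneous coordinates on $\mathbb{P}^4$), by exhibiting, in each case, a choice of nonnegative integers $a,b$ with $2a+b\le 4$ and a selection of $a+b$ distinct variables for which the hypothesis $F_5\in (x_{i_1},\ldots,x_{i_a})+(x_{i_{a+1}},\ldots,x_{i_{a+b}})^2$ holds. Once the correct $(a,b)$ and variables are identified, smoothness fails by the Lemma and the argument is complete. So the real content is just matching each stated condition to an instance of the Lemma.

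For condition (3), I would take $a=1$, $b=2$, and the variables $x_i,x_j,x_k$ in that order, i.e. put $x_{i_1}=x_i$ (the $a$-part) and $x_{i_2}=x_j$, $x_{i_3}=x_k$ (the $b$-part). Then $2a+b=4\le 4$ and the hypothesis of the Lemma reads exactly $F_5\in (x_i)+(x_j,x_k)^2$, which is condition (3) verbatim; hence $M$ is singular. For condition (2), the assumption $F_5\in(x_p,x_q)$ is handled with $a=2$, $b=0$, taking $x_{i_1}=x_p$, $x_{i_2}=x_q$; then $2a+b=4\le 4$ and $(x_{i_1},x_{i_2})=(x_p,x_q)$, so again the Lemma applies directly.

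Condition (1) requires a small preliminary step rather than a one-line citation, and this is where the only mild obstacle lies. The claim is that if some $x_i$ satisfies $x_i^4x_j\notin F_5$ for every $j$ (including $j=i$, so in particular $x_i^5\notin F_5$), then $M$ is singular. I would normalize to $i=1$ and show that the stated vanishing of all degree-$5$ monomials divisible by $x_1^4$ forces $F_5\in(x_2,x_3,x_4,x_5)^2$: concretely, every monomial of degree $5$ in $x_1,\ldots,x_5$ either is one of $x_1^4x_j$ (which by hypothesis does not occur in $F_5$) or else is divisible by $x_1^m$ with $m\le 3$, and any such surviving monomial carries at least two factors among $x_2,\ldots,x_5$, hence lies in $(x_2,\ldots,x_5)^2$. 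Therefore $F_5\in(x_2,x_3,x_4,x_5)^2$, and applying the Lemma with $a=0$, $b=4$ (so $2a+b=4\le 4$) and variables $x_2,x_3,x_4,x_5$ yields singularity of $M$. This combinatorial check on monomials is the one genuinely substantive point; the other two conditions are immediate specializations of Lemma~\ref{lem:nonsmooth}.
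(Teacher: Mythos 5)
Your proof is correct and follows essentially the same route as the paper: cases (2) and (3) are the same direct applications of Lemma~\ref{lem:nonsmooth} with $(a,b)=(2,0)$ and $(1,2)$, and your treatment of case (1) simply inlines the paper's Proposition~\ref{pp:smoothnessandmonomial}, whose proof is exactly your observation that the absence of all monomials $x_1^4x_j$ forces $F_5\in(x_2,\ldots,x_5)^2$, followed by the Lemma with $(a,b)=(0,4)$.
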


\begin{proof}
We check case by case:

(1)  We may assume $i=1$. Then for all  $1\leq j\leq 5, x_1^4x_j\notin F_5$. Then $M$ is singular by Proposition above.

(2) We may assume $F_d\in ( x_1,x_2) $. Then by Lemma \ref{lem:nonsmooth} with $a=2, b=0$, $M$ is singular.

(3) We may assume $F_d\in ( x_1) +( x_2,x_3) ^2$. Then by Lemma \ref{lem:nonsmooth} with $a=1, b=2$, $M$ is singular.

\end{proof}

Let $F=F(x_1,x_2,...,x_n)$ be a nonzero homogeneous polynomial. In general, it is an interesting but  difficult problem to determine all the projective linear automorphisms of the hypersurface $\{F=0\}\subset \mathbb{P}^{(n-1)}$. The differential method which we shall introduce below is a powerful general method especially when $F$ has a very few monomials like Fermat.

  Let $F=F(x_1,x_2,...,x_n)$ be a homogeneous polynomial of degree $d\geq 1$ in terms of variables $x_1,...,x_n$, and $G=G(y_1,y_2,...,y_n)$ be a homogeneous polynomial of degree $d$ in terms of variables $y_1,...,y_n$.

For $1\leq i\leq d$, we have the natural $i$-th order differential map naturally defined by $F$:

$$D_i^F:\mathcal{D}_i(x_1,...,x_n)\rightarrow \mathbb{C}[x_1,...,x_n]\,\, ,$$

where $\mathcal{D}_i(x_1,...,x_n)$ is the vector space of $i$-th order differential operators. For example, $D_1^F(\frac{\partial}{\partial x_j})=\frac{\partial F}{\partial x_j}$,  $D_2^F(\frac{\partial^2}{\partial x_i\partial x_j})=\frac{\partial^2 F}{\partial x_i\partial x_j}$, and so on. Obviously, $D_i^F$ is a $\mathbb{C}$-linear map, and we denote the dimension of the image space of $D_i^F$ as $\Rank(D_i^F)$.

The next Theorem, which we call the differential method, is quite effective.

\begin{theorem}\label{lem:differentialmethod}
If $F(x_1,...,x_n)=G(y_1,...,y_n)$ under an invertible linear change of coordinates, in other words, there exists an invertible matrix $L=(l_{ij})_{1\leq i, j\leq n}$, such that $F(x_1,...,x_n)=G(\displaystyle \sum_{i=1}^{n}l_{1i}x_i,...,\displaystyle \sum_{i=1}^{n}l_{ni}x_i)$, then $\Rank(D_i^F)=\Rank(D_i^G)$, for all $ 1\leq i\leq d$.
\end{theorem}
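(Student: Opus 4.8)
The plan is to recast the quantity $\Rank(D_i^F)$ geometrically and then transport it across the change of coordinates. Recall that $\Rank(D_i^F)=\dim\mathrm{Image}(D_i^F)$ is exactly the dimension of the linear span of all $i$-th order partial derivatives $\partial^\alpha F$ with $|\alpha|=i$, a subspace of the degree-$(d-i)$ homogeneous piece $\mathbb{C}[x_1,\ldots,x_n]_{d-i}$; likewise for $G$. I would show that $\mathrm{Image}(D_i^F)$ and $\mathrm{Image}(D_i^G)$ correspond to one another under an explicit \emph{linear isomorphism} of these degree-$(d-i)$ pieces. Since a linear isomorphism preserves dimension, this immediately yields $\Rank(D_i^F)=\Rank(D_i^G)$.

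\emph{Two isomorphisms.} First, the substitution $y_j\mapsto\sum_{i}l_{ji}x_i$ defines a graded $\mathbb{C}$-algebra isomorphism $\sigma_L\colon\mathbb{C}[y_1,\ldots,y_n]\to\mathbb{C}[x_1,\ldots,x_n]$, $(\sigma_L H)(x)=H(Lx)$, with inverse $\sigma_{L^{-1}}$; in particular it restricts to a linear isomorphism on each homogeneous component. Second, the classical chain rule $\partial_{x_k}=\sum_{j}l_{jk}\partial_{y_j}$ (valid because $L$ has constant entries) suggests defining $\Phi_1\colon\mathcal{D}_1(x)\to\mathcal{D}_1(y)$ by $\Phi_1(\partial_{x_k})=\sum_j l_{jk}\partial_{y_j}$. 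Its matrix in the bases $(\partial_{x_k})$, $(\partial_{y_j})$ is $L$, so $\Phi_1$ is an isomorphism. Since the constant-coefficient operators form the polynomial algebras $\mathbb{C}[\partial_{x_1},\ldots,\partial_{x_n}]$ and $\mathbb{C}[\partial_{y_1},\ldots,\partial_{y_n}]$, with $\mathcal{D}_i$ the degree-$i$ piece, $\Phi_1$ extends to an algebra homomorphism $\Phi$, and I set $\Phi_i:=\mathrm{Sym}^i\Phi_1\colon\mathcal{D}_i(x)\to\mathcal{D}_i(y)$, again an isomorphism.

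\emph{Higher chain rule.} The heart of the argument is the identity
\[
(P F)(x) = \big(\Phi_i(P)\, G\big)(Lx) \qquad \text{for all } P \in \mathcal{D}_i(x).
\]
I would prove this by induction on $i$. The case $i=1$ is precisely the chain rule applied to $F(x)=G(Lx)$. For the inductive step, write a degree-$i$ operator as $\partial_{x_k}P'$ with $P'\in\mathcal{D}_{i-1}(x)$; applying $\partial_{x_k}$ to $(P'F)(x)=(\Phi_{i-1}(P')G)(Lx)$ and invoking the first-order chain rule once more gives $((\partial_{x_k}P')F)(x)=((\sum_j l_{jk}\partial_{y_j})\Phi_{i-1}(P')G)(Lx)=(\Phi_i(\partial_{x_k}P')G)(Lx)$, the last step being the multiplicativity of $\Phi$. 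Since $\mathcal{D}_i(x)$ is spanned by such products and both sides are $\mathbb{C}$-linear in $P$, the identity holds on all of $\mathcal{D}_i(x)$.

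\emph{Conclusion and main obstacle.} As $P$ ranges over $\mathcal{D}_i(x)$, $\Phi_i(P)$ ranges over all of $\mathcal{D}_i(y)$ (since $\Phi_i$ is surjective), so the identity gives $\mathrm{Image}(D_i^F)=\sigma_L\big(\mathrm{Image}(D_i^G)\big)$; as $\sigma_L$ is a linear isomorphism on the degree-$(d-i)$ piece, the two images have equal dimension, i.e. $\Rank(D_i^F)=\Rank(D_i^G)$. The only genuinely delicate point is the higher-order chain rule displayed above; it comes out clean exactly because $L$ is a constant (linear) substitution, so the first-order operators transform by the fixed matrix $L$ and the passage to order $i$ is just multiplicativity. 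I would therefore take care to check that $\Phi$ is well defined as an algebra map — which holds since the images $\sum_j l_{jk}\partial_{y_j}$ of the commuting generators $\partial_{x_k}$ again commute — so that $\mathrm{Sym}^i\Phi_1$ makes sense and inherits invertibility from $\Phi_1$.
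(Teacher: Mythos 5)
Your proof is correct and takes essentially the same approach as the paper: the same commuting square built from the chain-rule map on operators (your $\Phi_1$, the paper's $p$) and the substitution map on polynomials (your $\sigma_L$, the paper's $q$), with equality of ranks following because both vertical maps are linear isomorphisms. The only difference is completeness — the paper writes out the diagram only for $i=1$ and asserts the general case follows from linearity and the chain rule, whereas you actually construct $\Phi_i=\mathrm{Sym}^i\Phi_1$ and prove the higher-order chain rule by induction, thereby filling in the detail the paper leaves implicit.
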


\begin{remark}
For example, if $F(x_1,x_2)=x_1^2+2x_1x_2+x_2^2$, $G(y_1,y_2)=y_1^2$, then the image space of $D_1^F$ spanned by $2x_1+2x_2$, and the image space of $D_1^G$ is spanned by $2y_1$. So  $\Rank(D_1^F)=\Rank(D_1^G)$. This equality also follows from Theorem \ref{lem:differentialmethod}, because $F(x_1,x_2)=G(x_1+x_2,x_2)$. Theorem \ref{lem:differentialmethod} is inspired by \cite{Po05}.
\end{remark}

\begin{proof}
The result simply follows from the linearality of the change of coordinates and the chain rule. To convince the readers, we shall give a detailed proof for $i=1$. Consider the following diagram,

\[\xymatrixcolsep{5pc}\xymatrix{
\mathcal{D}_1(x_1,...,x_n) \ar[d]^p \ar[r]^{D_1^F} &\mathbb{C}[x_1,...,x_n]\\
\mathcal{D}_1(y_1,...,y_n) \ar[r]^{D_1^G} &\mathbb{C}[y_1,...,y_n]\,\,\,\, ,\ar[u]^q}\]

where the map $p$ is given by chain rule, more explicitly, $$p(\frac{\partial}{\partial x_i})=\displaystyle \sum_{j=1}^{n}l_{ji}\frac{\partial}{\partial y_j}\, ,$$  and the map $q$ is induced by the linear change of coordinantes, more explicitly, if $H=H(y_i,...,y_n)$ is a polynomial in terms of $y_1,...,y_n$, then $$q(H)=H(\displaystyle \sum_{i=1}^{n}l_{1i}x_i,...,\displaystyle \sum_{i=1}^{n}l_{ni}x_i)\, .$$ This diagram commutes by definition and the chain rule. As both $p$ and $q$ are isomorphisms, it follows that $\Rank(D_1^F)=\Rank(D_1^G)$.
\end{proof}

\begin{definition}
Let $A\in \GL(n,\mathbb{C})$. We say $A$ is {\it semi-permutation} if $A$ is a diagonal matrix up to permutation of columns, or equivalently, $A$ has exactly $n$ nonzero entries. In literatures, such an $A$ is also called a generalized permutation matrix and a monomial matrix.
\end{definition}
  
  \begin{theorem}\label{thm:Aut(X)1-16}
  Suppose  $X$ is one of the Examples (1)-(16) in Example \ref{mainex}. Then the group $G$ in the  same example is the full automorphism group $\Aut(X)$ of $X$.
  \end{theorem}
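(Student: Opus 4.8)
The plan is to show, for each of the sixteen examples, two things: first, that the displayed group $G$ actually fixes $F$ (so $G \subseteq \Aut(X)$), and second, the reverse containment $\Aut(X) \subseteq G$, which is the substantial direction. The first is a routine verification: one checks that each listed generator $A_i$ satisfies $A_i(F) = F$ (or $A_i(F) = \lambda F$ for a scalar $\lambda$, which is the same in $\PGL$), and that the abstract isomorphism type claimed for $\la A_1, \dots \ra$ is correct; this I would relegate to direct computation, possibly citing GAP for the group-theoretic identifications. The real content is proving that no automorphism of $X$ lies outside $G$.

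For the hard direction I would exploit the very special shape of each defining polynomial $F$. By the Matsumura--Monsky description quoted in the introduction, every $\varphi \in \Aut(X)$ is represented by some $A \in \GL(5,\C)$ with $A(F) = F$ (after scaling). The strategy is to pin down the \emph{form} of such an $A$ using the differential method of Theorem \ref{lem:differentialmethod}: the ranks $\Rank(D_i^F)$ are invariants preserved under linear changes of coordinates, and because these $F$ are built from Fermat-type powers $x_i^4 x_j$ and $x_i^5$ with very few mixed monomials, the associated rank data is rigid enough to force $A$ to be \emph{semi-permutation} (a monomial matrix) after a suitable normalization. Concretely, I expect the argument to run: apply Proposition \ref{pp:smoothnessandmonomial} and the smoothness of $X$ to see which monomials $x_i^4 x_j$ must occur, then analyze how $A$ permutes the coordinate hyperplanes or the distinguished variables, using the differential ranks to exclude genuinely non-monomial $A$.

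Once $A$ is known to be semi-permutation, the problem becomes finite and combinatorial: $A$ is a diagonal matrix times a permutation matrix, and the condition $A(F) = \lambda F$ translates into a system of equations matching monomials of $F$ to their images. The permutation part is constrained to preserve the ``monomial graph'' of $F$ (which variable feeds into which, e.g.\ the cyclic structure $x_1^4 x_2 + x_2^4 x_3 + \cdots$ in the Klein-type examples), and the diagonal part must satisfy root-of-unity relations coming from the coefficients. Solving these yields exactly the generators of $G$, establishing $\Aut(X) \subseteq G$.

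The main obstacle I anticipate is the step forcing $A$ to be semi-permutation: for the examples with extra ``non-Fermat'' monomials (such as $x_2^3 x_3 x_4$ in (19)--(20) or the mixed quartic terms in (17)--(18)), the polynomial is no longer diagonalizable into pure powers, and the differential-rank computation is more delicate — one must check that the invariants $\Rank(D_i^F)$ genuinely distinguish these from the surrounding monomials and leave no room for an off-diagonal entry in $A$. I would organize the proof by treating the examples in families according to the combinatorial type of $F$ (pure Fermat, chains $x_i^4 x_{i+1}$, cycles, and the perturbed cases), handling the cleanest cases in detail and indicating how the same rank-and-normalization machinery applies to the rest, so that the bulk of the casework reduces to the single semi-permutation reduction carried out once per family.
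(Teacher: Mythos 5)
Your proposal follows essentially the same route as the paper: one first checks that $G$ is exactly the subgroup generated by the semi-permutation matrices preserving $F$, and then uses the differential method (applying $\partial/\partial x_i$ to the identity $F(x)=F(y)$ under the coordinate change and comparing the ranks $\Rank(D_1^{\cdot})$, which Theorem \ref{lem:differentialmethod} makes invariant) to force every representative matrix $L$ of an automorphism to have exactly one nonzero entry per column, i.e.\ to be semi-permutation, after which the identification $\Aut(X)=G$ is a finite combinatorial check. The only slip is your anticipated difficulty with the perturbed equations of Examples (17)--(20): those lie outside the theorem, which deliberately restricts to Examples (1)--(16) precisely because there every $F$ is a sum of monomials $x_i^4x_j$, so the rank argument applies uniformly.
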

  
  \begin{proof}
 By easy computation $G$ is the subgroup of $\Aut (X)$ generated by semi-permutation matrices preserving $X$. So in order to prove the theorem we are reduced to show that the full automorphism group $\Aut(X)$ is generated by semi-permutation matrices.
  
  We give a full proof when  $X$ is the Example (15) (Klein quintic threefold). If $X$ is one of the other 15 examples, then by similar arguments (the differential method) as below we can also show that the full automorphism group $\Aut(X)$ is generated by semi-permutation matrices.
  
  Suppose $L=(l_{ij})\in \GL(5,\mathbb{C})$ induces an automorphism of $X$. Denote change of coordinates: $y_i=\displaystyle \sum_{j=1}^{5}l_{ij}x_j$, for $1\leq i\leq 5$. Then we may assume

\begin{equation}\label{x=y}
  x_1^4x_2+x_2^4x_3+x_3^4x_4+x_4^4x_5+x_5^4x_1= y_1^4y_2+y_2^4y_3+y_3^4y_4+y_4^4y_5+y_5^4y_1.
\end{equation}

Then apply the operator $\frac{\partial}{\partial x_1}$ to both sides of the identity (\ref{x=y}), we get

\begin{equation}\label{dx1}
  (4x_1^3x_2+x_5^4)=(4y_1^3y_2+y_5^4)l_{11}+(4y_2^3y_3+y_1^4)l_{21}+(4y_3^3y_4+y_2^4)l_{31}+(4y_4^3y_5+y_3^4)l_{41}+(4y_5^3y_1+y_4^4)l_{51}
\end{equation}

Let us denote the polynomial on the right hand side of the identity (\ref{dx1}) as $h=h(y_1,y_2,y_3,y_4,y_5)$.

Notice that $\Rank(D_1^{ 4x_1^3x_2+x_5^4})=3$. We expect $\Rank(D_1^h)$ is also equal to 3.

\begin{lemma}\label{claim}
$\Rank(D_1^h)=3$ if and only if exactly one of the complex numbers $l_{11},l_{21},l_{31},l_{41},l_{51}$ is not equal to zero.
\end{lemma}

\begin{proof}
``if '' part is trivial. We only need to show ``only if'' part.

Suppose $\Rank(D_1^h)=3$. Then, at least one of $l_{i1}$ is not equal to zero. Without loss of generality, we may assume $l_{11}\neq 0$. Let us compute $\frac{\partial h}{\partial y_i}$ ( see Table \ref{table:partials}):

$\frac{\partial h}{\partial y_1}=12l_{11}y_1^2y_2+4l_{21}y_1^3+4l_{51}y_5^3$,

$\frac{\partial h}{\partial y_2}=4l_{11}y_1^3+12l_{21}y_2^2y_3+4l_{31}y_2^3$,

$\frac{\partial h}{\partial y_3}=4l_{21}y_2^3+12l_{31}y_3^2y_4+4l_{41}y_3^3$,

$\frac{\partial h}{\partial y_4}=4l_{31}y_3^3+12l_{41}y_4^2y_5+4l_{51}y_4^3$,

$\frac{\partial h}{\partial y_5}=4l_{11}y_5^3+4l_{41}y_4^3+12l_{51}y_5^2y_1$.

\begin{center}
\begin{longtable}{|p{1cm}|c|c|c|c|c|c|c|c|c|c|}
\caption{Matrix form of the coefficients of the partial derivatives}\label{table:partials}\\
 \hline 
 &$y_1^2y_2$&$y_1^3$&$y_5^3$&$y_2^2y_3$&$y_2^3$& $y_3^2y_4$&$y_3^3$&$y_4^2y_5$&$y_4^3$&$y_5^2y_1$\\
 \hline 
$\frac{\partial h}{\partial y_1}$& $12l_{11}$ & $4l_{21}$ &$4l_{51}$& & & & & & & \\
 \hline 
$\frac{\partial h}{\partial y_2}$&  & $4l_{11}$ & &$12l_{21}$ & $4l_{31}$ & & & & & \\
 \hline 
$\frac{\partial h}{\partial y_3}$&  & & & & $4l_{21}$ &$12l_{31}$ &$4l_{41}$ & & & \\
 \hline 
$\frac{\partial h}{\partial y_4}$&   &  & & & & &$4l_{31}$ &$12l_{41}$ &$4l_{51}$ & \\
 \hline 
$\frac{\partial h}{\partial y_5}$&  &   &$4l_{11}$ & & & & & &$4l_{41}$ & $12l_{51}$\\
 \hline

\end{longtable}%

\end{center}

As $\frac{\partial h}{\partial y_1},\frac{\partial h}{\partial y_2},\frac{\partial h}{\partial y_5}$ are linearly independent, $\Rank(D_1^h)\geq 3$. It is not hard to see that if one of $l_{21}, l_{31},l_{41},l_{51}$ is not also equal to zero, then $\Rank(D_1^h)\geq 4$. Therefore,  $l_{21}, l_{31},l_{41},l_{51}$ are all zero.
\end{proof}

By symmetry and Lemma \ref{claim}, it is clear that each column of the matrix $L$ has exactly one nonzero entry. Since $L$ is invertible, $L$ must be semi-permutation. Therefore, the full automorphism group $\Aut(X)$ is generated by semi-permutation matrices.

\end{proof}

  \begin{remark}
  Notice that the efficiency of the differential method depends on the dimension and the degree of the hypersurface in question. For example, let $X: x_1^2x_2+x_2^2x_3+x_3^2x_4+x_4^2x_5+x_5^2x_1=0$ be the Klein cubic threefold in $\P^4$. Then, $\Aut(X)$ is not generated by semi-permutation matrices by the main result of \cite{Ad78} based on Klein\rq{}s work. However, it turns out that for quintic threefolds, our differential method is quite useful. 
  \end{remark}

  \begin{remarks}\label{rmk:Autverysmall}
 Using the differential method, we can give explicit examples of smooth quintic threefolds such that the defining equations are of simple forms but the full automorphism groups are very small. We believe that these examples are of their interest.
 
 a) If $X:=\{x_1^5+x_2^4x_1+x_3^4x_2+x_4^4x_3+x_5^4x_4+x_5^5=0\}$ then $\Aut(X)$ is trivial (cf. \cite[Table~1]{Po05}).
 
 b) If $X:=\{x_1^4x_2+x_2^4x_3+x_3^4x_4+x_4^4x_5+x_5^5+x_1^2x_5^3=0\}$ then $\Aut(X)\cong C_2$ and $[A]\in \Aut(X)$, where $A=\Diag(-1,1,1,1,1)$
 
 c) If $X:=\{x_1^4x_2+x_2^4x_1+x_3^5+x_4^4x_3+x_5^4x_4+x_1^3x_4x_5=0\}$ then $\Aut(X)\cong C_3$ and $[A]\in \Aut(X)$, where $A:=\Diag(\xi_3,\xi_3^2,1,1,1)$.

 d) If $X:=\{x_1^5+x_2^4x_1+x_3^4x_2+x_4^4x_3+x_4^5+x_5^5=0\}$ then $\Aut(X)\cong C_5$ and $[A]\in \Aut(X)$, where $A:=\Diag(1,1,1,1,\xi_5)$.
 
 However,  there do not exist smooth quintic threefolds whose full automorphism group  $\Aut(X)$ are $C_{p}$ if $p$ is a prime larger than $5$ (see Theorem \ref{thm:containorder17}, Theorem \ref{thm:containorder41}, and Theorem \ref{thm:containorder13}).
 
\end{remarks}
  
  \section{Schur multiplier and liftablility of group actions}\label{ss:Schurandliftable}
  
  Let $G$ be a finite group. The Schur multiplier is by definition the second cohomology group $H^2(G, \mathbb{C}^*)$. We denote it by $M(G)$.
  
  \begin{theorem}\label{HSES}
(Hochschild-Serre exact sequence) (See, for example, \cite[ Chapter~2,~(7.29)]{Su82}) Let $H$ be a central subgroup of $G$. Consider the natural exact sequence: $$1\longrightarrow H\longrightarrow G\longrightarrow G/H\longrightarrow 1\, .$$ Then the induced sequence 
  
  $$1\rightarrow \Hom(G/H,\mathbb{C}^*)\rightarrow \Hom(G,\mathbb{C}^*)\rightarrow \Hom(H,\mathbb{C}^*)\rightarrow M(G/H)\rightarrow M(G)$$ is exact.
  \end{theorem}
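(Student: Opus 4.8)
The plan is to recognize the sequence as the five-term exact sequence of low-degree terms attached to the Lyndon--Hochschild--Serre spectral sequence of the extension $1 \to H \to G \to G/H \to 1$, with coefficients in the trivial module $\mathbb{C}^*$. Writing $Q := G/H$, and recalling that $M(\Gamma) = H^2(\Gamma, \mathbb{C}^*)$ while $\Hom(\Gamma, \mathbb{C}^*) = H^1(\Gamma, \mathbb{C}^*)$ for any finite group $\Gamma$ (as $\mathbb{C}^*$ carries the trivial action), the asserted sequence is precisely
$$0 \to H^1(Q, \mathbb{C}^*) \to H^1(G, \mathbb{C}^*) \to H^1(H, \mathbb{C}^*)^{Q} \to H^2(Q, \mathbb{C}^*) \to H^2(G, \mathbb{C}^*).$$

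First I would invoke the spectral sequence $E_2^{p,q} = H^p(Q, H^q(H, \mathbb{C}^*)) \Rightarrow H^{p+q}(G, \mathbb{C}^*)$, whose low-degree terms yield the standard exact sequence
$$0 \to E_2^{1,0} \to H^1(G, \mathbb{C}^*) \to E_2^{0,1} \to E_2^{2,0} \to H^2(G, \mathbb{C}^*),$$
the middle arrow being the transgression. It then remains to identify the five $E_2$-terms. Since $\mathbb{C}^*$ has trivial $H$-action, $H^0(H,\mathbb{C}^*) = \mathbb{C}^*$, so $E_2^{1,0} = H^1(Q, \mathbb{C}^*) = \Hom(G/H, \mathbb{C}^*)$ and $E_2^{2,0} = H^2(Q, \mathbb{C}^*) = M(G/H)$; likewise $E_2^{0,1} = H^0(Q, H^1(H, \mathbb{C}^*)) = \Hom(H, \mathbb{C}^*)^{Q}$.

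The key step, and the only place the hypothesis is used, is to remove the $Q$-invariants from $E_2^{0,1}$: because $H$ is central in $G$, the conjugation action of $G$ on $H$ is trivial, hence so is the induced action of $Q = G/H$ on $\Hom(H, \mathbb{C}^*)$, giving $\Hom(H, \mathbb{C}^*)^{Q} = \Hom(H, \mathbb{C}^*)$. Substituting this identification converts the five-term sequence into the one claimed, and replacing the initial $0$ by $1$ is purely notational since all groups involved are abelian.

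Alternatively, one can bypass the spectral sequence and construct the three maps by hand: inflation $\Hom(G/H, \mathbb{C}^*) \to \Hom(G, \mathbb{C}^*)$ by precomposition with the quotient; restriction $\Hom(G, \mathbb{C}^*) \to \Hom(H, \mathbb{C}^*)$; transgression $\Hom(H, \mathbb{C}^*) \to M(G/H)$ sending a character $\chi$ to the class of the central extension $1 \to \mathbb{C}^* \to \chi_* G \to G/H \to 1$ obtained by pushing out $1 \to H \to G \to G/H \to 1$ along $\chi$; and finally inflation $M(G/H) \to M(G)$. Exactness at $\Hom(G, \mathbb{C}^*)$ is the statement that a character is trivial on $H$ iff it factors through $G/H$; exactness at $\Hom(H, \mathbb{C}^*)$ says that $\chi$ extends to $G$ iff the pushed-out extension splits; and exactness at $M(G/H)$ is the inflation--transgression relation in degree two. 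I expect the main bookkeeping obstacle to be verifying exactness at $M(G/H)$, namely that a $2$-cocycle on $G/H$ inflating to a coboundary on $G$ is exactly a transgressed character, which is again where centrality of $H$ enters to make the relevant cocycle well defined.
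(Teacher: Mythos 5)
The paper does not actually prove Theorem \ref{HSES}: it quotes the statement from \cite[Chapter~2,~(7.29)]{Su82}, so there is no internal proof to compare yours against, and your argument supplies what the paper leaves to the reference. Your proof is correct. You derive the sequence from the Lyndon--Hochschild--Serre spectral sequence $E_2^{p,q}=H^p(G/H,H^q(H,\mathbb{C}^*))\Rightarrow H^{p+q}(G,\mathbb{C}^*)$ with trivial coefficients, identify the five low-degree terms, and correctly isolate the only point where centrality (as opposed to mere normality, which is all the spectral sequence needs) is used: the conjugation action of $G/H$ on $\Hom(H,\mathbb{C}^*)$ is trivial, so the invariants $\Hom(H,\mathbb{C}^*)^{G/H}$ of the general five-term sequence become all of $\Hom(H,\mathbb{C}^*)$, which is exactly the form asserted in the theorem. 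By contrast, the cited source (and your sketched alternative) proceeds by elementary cocycle arguments, constructing inflation, restriction, and the transgression $\chi\mapsto[\,1\to\mathbb{C}^*\to\chi_*G\to G/H\to 1\,]$ by hand and checking exactness directly; there centrality is what makes the pushout a well-defined central extension for \emph{every} character $\chi$, not just the $G$-stable ones. The spectral-sequence route buys brevity and conceptual clarity at the price of heavier machinery; the cocycle route is self-contained and is in the spirit of the Schur-multiplier theory the paper is drawing on. Either argument is complete for the purposes of the paper.
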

  
  \begin{theorem}\label{SchurSylow}
 (See, for example, \cite[Chapter~2,~Corollary~3~to~Theorem~7.26]{Su82}) Let $p$ be a prime number. Let $P$ be a Sylow $p$-subgroup of $G$. Then the restriction map $M(G)\longrightarrow M(P)$ induces an injective homomorphism $M(G)_p\longrightarrow M(P)$.
  \end{theorem}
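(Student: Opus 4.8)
The plan is to reduce the statement to the standard transfer (restriction--corestriction) identity in group cohomology, applied to the trivial $G$-module $\mathbb{C}^*$. The first ingredient I would record is that, for a finite group $G$, the Schur multiplier $M(G)=H^2(G,\mathbb{C}^*)$ is a \emph{finite} abelian group. One clean way to see this is via the coefficient sequence $0\to\mathbb{Z}\to\mathbb{C}\to\mathbb{C}^*\to 0$: since $\mathbb{C}$ is a $\mathbb{Q}$-vector space its higher cohomology over a finite group vanishes, giving $M(G)\cong H^3(G,\mathbb{Z})$, which is finite. Alternatively, applying $\mathrm{cor}\circ\mathrm{res}$ with the trivial subgroup shows that $|G|$ annihilates $H^n(G,-)$ for $n\geq1$, so $M(G)$ is torsion of exponent dividing $|G|$, and finiteness follows from finite generation. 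Either way, $M(G)$ decomposes canonically as the direct sum of its $p$-primary components, and $M(G)_p$ is precisely the summand on which every element has $p$-power order.

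Next I would invoke the two natural maps relating the cohomology of $G$ and that of the Sylow subgroup $P$: the restriction $\mathrm{res}\colon M(G)\to M(P)$ and the corestriction (transfer) $\mathrm{cor}\colon M(P)\to M(G)$. The identity at the heart of the argument is
\[
\mathrm{cor}\circ\mathrm{res}=[G:P]\cdot\mathrm{id}_{M(G)},
\]
that is, the composite is multiplication by the index. I expect this to be the step demanding the most care: in the self-contained setting of this paper it is quoted from \cite{Su82}, but it ultimately rests on the double-coset description of the composition $\mathrm{cor}\circ\mathrm{res}$. Granting this formula, the remainder of the proof is purely formal.

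Finally I would exploit that $P$ is a Sylow $p$-subgroup, so that $[G:P]$ is prime to $p$, whence multiplication by $[G:P]$ is an automorphism of the finite $p$-group $M(G)_p$. Thus if $x\in M(G)_p$ satisfies $\mathrm{res}(x)=0$, then
\[
[G:P]\cdot x=\mathrm{cor}\bigl(\mathrm{res}(x)\bigr)=0,
\]
and invertibility of $[G:P]$ on $M(G)_p$ forces $x=0$. Therefore $\mathrm{res}$ restricted to the direct summand $M(G)_p$ is injective, yielding the desired injective homomorphism $M(G)_p\to M(P)$. The genuine obstacle is setting up the corestriction and justifying its defining identity; once finiteness and the $p$-primary decomposition are in hand, the conclusion is immediate.
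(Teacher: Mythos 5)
Your proof is correct and complete: finiteness of $M(G)$ gives the canonical $p$-primary decomposition, the transfer identity $\mathrm{cor}\circ\mathrm{res}=[G:P]\cdot\mathrm{id}$ kills any element of $M(G)_p$ in the kernel of restriction since $[G:P]$ is prime to $p$. Note that the paper itself gives no proof of this statement; it is quoted directly from Suzuki's book, and your restriction--corestriction argument is exactly the standard one underlying that reference, so there is nothing to contrast.
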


   \begin{notation} 
   Let $A=(a_{ij})\in \GL(n,\mathbb{C})$, and let $F\in \C[x_1,...,x_n]$ be a homogeneous polynomial of degree $d$. We denote by $A(F)$ the homogeneous polynomial $$F(\sum_{i=1}^{n}a_{1i}x_i,\cdots,\sum_{i=1}^{n}a_{ni}x_i)\, .$$

   \end{notation}
  

    \begin{definition}\label{def:leaveinvariant}
(1) We say $F$ is $A$-{\it invariant} if $A(F)=F$. In this case, we also say $A$ leaves $F$ invariant, or $F$ is invariant by $A$. We say $F$ is $A$-{\it semi-invariant} if $A(F)=\lambda F,$ for some $\lambda\in \mathbb{C}^*$.

  For example, let $A=\Diag (1,\xi_5,\xi_5^2,\xi_5^3,\xi_5^4)$ and $F=x_1^4x_2+x_2^4x_3+x_3^4x_4+x_4^4x_5+x_5^4x_1$, then $A(F)=\xi_5 F$, so $F$ is $A$-semi-invariant but not $A$-invariant.

  (2) Let $G$ be a finite subgroup of $\PGL(n,\mathbb{C})$. We say $F$ is $G$-{\it invariant} if for all $g\in G$, there exists $ A_g\in \GL(n,\mathbb{C})$ such that $g=[A_g]$ and $A_g(F)=F$.

  For example, let $X$ be a smooth quintic threefold defined by $F$ and $G$ is a finite subgroup of $\PGL(5,\mathbb{C})$. Then $F$ is $G$-invariant if and only if $G$ is a subgroup of $\Aut(X)$.

 (3) Let $G$ be a finite subgroup of $\PGL(n,\mathbb{C})$. We say a subgroup $\widetilde{G}< \GL(n,\mathbb{C})$ is a {\it lifting} of $G$ if $\widetilde{G}$ and $G$ are isomorphic via the natural projection $\pi: \GL(n,\C)\rightarrow \PGL(n,\C).$ We call $G$ {\it liftable} if $G$ admits a lifting.
 \end{definition}
  
  \begin{remark}
  Subgroups of $\PGL(n,\C)$ do not necessarily admit liftings to $\GL(n,\C)$.
  
  For example consider the dihedral group $D_8:=\langle a,b| a^4=b^2=1, b^{-1}ab=a^{-1} \rangle$. Then the map $$\rho(a)= \begin{pmatrix} 
    \xi_{8}&0 \\ 
    0&\xi_8^{-1} \\ 
   \end{pmatrix},\,\, \rho(b)= \begin{pmatrix} 
    0&\xi_4 \\ 
    \xi_4&0 \\ 
 \end{pmatrix}$$
 
 defines a projective representation $\rho: D_8\lra \PGL(2,\C)$. However, this is not induced by any linear representation $D_8\lra \GL(2,\C)$, that is, this $D_8$ does not admit a lifting (see e.g.\cite{Og05}).
  \end{remark}

  \begin{definition}
 (1) Let $G$ be a finite subgroup of $\PGL(n,\mathbb{C})$ and $F\in \C[x_1,...,x_n]$ be a homogeneous polynomial of degree $d$. We say $G$ is $F$-{\it liftable} if the following two conditions are satisfied:
  
  1) $G$ admits a lifting $\widetilde{G}< \GL(n,\mathbb{C})$; and
  
  2)  $A(F)=F$, for all $A$ in $\widetilde{G}$.

  In this case, we say $\widetilde{G}$ is an $F$-{\it lifting} of $G$.
  
  We say $G$ is $F$-{\it semi-liftable} if 2) is replaced by the following:
  
  2$)^\prime$ for all $A$ in $\widetilde{G}$, $A(F)=\lambda_A F$, for some $\lambda_A\in \mathbb{C}^*$ (depending on $A$).

(2)  Let $h$ be an element in $\PGL(n,\mathbb{C})$ of finite order. As a special case, we say $H\in \GL(n,\mathbb{C})$ is an $F$-{\it lifting} of $h$ if $\pi (H)=h$ and the group $\langle H\rangle $ is an $F$-lifting of the group $\langle h\rangle .$ 
  \end{definition}
  
 \begin{example}
   a) Let $F=x_1^5+x_2^5+x_3^5+x_4^5+x_5^5$ and let $G$ be the subgroup of $\PGL(5,\mathbb{C})$ generated by $[A_1]$ and $[A_2]$, where  $A_1=\Diag(\xi_5,1,1,1,1),A_2=\Diag(1,\xi_5,1,1,1)$. Then, clearly, $G\cong C_5^2$ and $G$ is $F$-liftable. 
  
  b) Let $F=x_1^4x_2+x_2^4x_3+x_3^4x_4+x_4^4x_5+x_5^4x_1$  and let $G$ be the subgroup of $\PGL(5,\mathbb{C})$ generated by $[A_1]$ and $[A_2]$, where $$A_1=
    \begin{pmatrix} 
    0&1&0&0&0 \\ 
    0&0&1&0&0 \\ 
    0&0&0&1&0 \\ 
    0&0&0&0&1 \\ 
    1&0&0&0&0 \\ 
    \end{pmatrix},\,\, A_2=
    \begin{pmatrix} 
    1&0&0&0&0 \\ 
    0&\xi_5&0&0&0 \\ 
    0&0&\xi_5^2&0&0 \\ 
    0&0&0&\xi_5^3&0 \\ 
    0&0&0&0&\xi_5^4 \\ 
    \end{pmatrix}.$$ Then $F$ is $G$-invariant, but $G$ is not $F$-semi-liftable (in fact, $G$ is not liftable).

\end{example}

    \begin{theorem}\label{liftable}
  Let $G$ be a finite subgroup of $\PGL(n,\mathbb{C})$. Let $F\in \C[x_1,...,x_n]$ be a nonzero homogeneous polynomial of degree $p$, where $p$ is a prime number. Suppose $F$ is $G$-invariant. Let $G_p$ be a Sylow $p$-subgroup. Then $G$ is $F$-liftable if the following two conditions are satisfied:
  
    (1) $G_p$ is $F$-liftable; and

    (2) either $G_p$ has no element of order $p^2$ or $G$ has no normal subgroup of index $p.$  
  \end{theorem}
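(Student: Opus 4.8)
The plan is to translate $F$-liftability into the splitting of an explicit central extension, and then to kill the obstruction in two stages that match the two alternatives of condition (2). Set
$$\Gamma := \{A \in \GL(n,\C) : [A]\in G,\ A(F)=F\}.$$
Since $F$ is $G$-invariant, $\pi|_\Gamma\colon \Gamma \to G$ is surjective, and its kernel is the group of scalar matrices $\lambda I_n$ with $\lambda^p = 1$ (because $(\lambda I_n)(F)=\lambda^p F$ and $\deg F = p$), which is cyclic of order $p$. Thus $\Gamma$ is a central extension $1 \to C_p \to \Gamma \to G \to 1$, and by Definition of $F$-liftability $G$ is $F$-liftable precisely when this extension splits. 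I would also record the full preimage $\widehat\Gamma := \pi^{-1}(G) = \C^*\cdot\Gamma$, a central extension of $G$ by $\C^*$ whose class $c$ lies in $M(G)=H^2(G,\C^*)$; note that $c$ is $p$-torsion, since $\widehat\Gamma$ is obtained from $\Gamma$ by enlarging the $C_p$-kernel to $\C^*$.

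First I would show $c = 0$, i.e.\ that $G$ is (ordinarily) liftable. Restricting to a Sylow $p$-subgroup, $\pi^{-1}(G_p)\cap\Gamma$ is the corresponding $C_p$-extension of $G_p$, and condition (1) says it splits; hence its $\C^*$-enlargement $\pi^{-1}(G_p)$ splits as well, so $c$ restricts to $0$ in $M(G_p)$. Because $c$ is $p$-torsion it lies in $M(G)_p$, and Theorem \ref{SchurSylow} says the restriction $M(G)_p \to M(G_p)$ is injective; therefore $c = 0$ and $\widehat\Gamma \to G$ admits a homomorphic section $g \mapsto B_g$. Each $B_g$ is only semi-invariant, $B_g(F) = \lambda_g F$, and a direct check shows $\lambda\colon G\to\C^*$ is a character. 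Replacing $B_g$ by $\mu_g B_g$ yields an honest $F$-lifting exactly when there is a character $\mu$ with $\mu^p = \lambda^{-1}$, i.e.\ exactly when $\lambda$ is a $p$-th power in the finite abelian group $\Hom(G,\C^*)$; so the whole problem reduces to showing that the class of $\lambda$ in $\Hom(G,\C^*)/p\,\Hom(G,\C^*)$ vanishes.

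This residual obstruction group is where condition (2) enters. If $G$ has no normal subgroup of index $p$, then $\Hom(G,C_p)=0$, equivalently $\Hom(G,\C^*)/p\,\Hom(G,\C^*)=0$, and $\lambda$ is automatically a $p$-th power. If instead $G_p$ has no element of order $p^2$, then $G_p$ has exponent dividing $p$; restricting the section to $G_p$ and comparing it with the genuine $F$-lifting supplied by condition (1), the two sections differ by a character $\chi$ of $G_p$ with $\lambda|_{G_p}=\chi^p$, and $\chi^p = 1$ because $\Hom(G_p,\C^*)\cong G_p^{\mathrm{ab}}$ has exponent dividing $p$; hence $\lambda|_{G_p}=1$. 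A transfer (focal subgroup) argument then finishes the job: the $p$-primary part of $\lambda$ factors through $(G^{\mathrm{ab}})_p$, which is a quotient of $G_p$, so $\lambda|_{G_p}=1$ forces the $p$-part of $\lambda$ to be trivial; the prime-to-$p$ part of $\lambda$ is always a $p$-th power, so $\lambda$ itself is a $p$-th power and $G$ is $F$-liftable.

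I expect the genuinely delicate points to be the bookkeeping that separates the Schur-multiplier obstruction $c\in M(G)$ from the residual character obstruction in $\Hom(G,\C^*)/p\,\Hom(G,\C^*)$, making sure condition (1) is invoked correctly in both places, and, in the second case of condition (2), the transfer fact that a Sylow $p$-subgroup surjects onto the $p$-part of $G^{\mathrm{ab}}$, which is exactly what converts the local vanishing $\lambda|_{G_p}=1$ into the global statement that $\lambda$ has prime-to-$p$ order. Theorem \ref{HSES} is available to package the interplay between $\Hom(-,\C^*)$ and $M(-)$ should a cleaner cohomological formulation of these two stages be preferred.
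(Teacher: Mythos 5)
Your proposal is correct, and its skeleton matches the paper's: both reduce $F$-liftability to the splitting of a central extension by $\mu_p$, kill the Schur-multiplier obstruction using condition (1) together with the injectivity of $M(G)_p\to M(G_p)$ from Theorem \ref{SchurSylow}, and only then bring in condition (2). The implementations, however, genuinely differ. You work with the canonical extension $\Gamma=\{A\in\GL(n,\C): [A]\in G,\ A(F)=F\}$ and its scalar enlargement $\widehat\Gamma=\pi^{-1}(G)$, arguing directly with extension classes and the pushout along $C_p\hookrightarrow\C^*$; the paper instead builds a specific group $\widetilde{G}$ generated by an $F$-lifting of $G_p$ and the \emph{unique} $F$-liftings of all elements of order prime to $p$ (an existence/uniqueness lemma your route makes unnecessary), and extracts the needed character of $\widetilde{G}$ from the Hochschild--Serre sequence (Theorem \ref{HSES}), which you only mention as optional. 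More substantively, the endgames diverge. The paper finds a character $f$ of $\widetilde{G}$ that is injective on the central $\mu_p$ and then, in the exponent-$p$ case, analyzes the Sylow $p$-part of the cyclic image $f(\widetilde{G})$, while in the other case it notes that the kernel of any surjection onto $C_p$ cannot contain the center, else its image in $G$ would be normal of index $p$. You instead reformulate the residual obstruction as the class of the semi-invariance character $\lambda$ in $\Hom(G,\C^*)/p\Hom(G,\C^*)$: the no-index-$p$-subgroup case is then immediate, and the exponent-$p$ case uses a second invocation of condition (1) (comparing your section with the Sylow lifting to get $\lambda|_{G_p}=1$) plus the transfer/focal-subgroup fact that $G_p$ surjects onto the $p$-part of $G^{\mathrm{ab}}$ --- a standard lemma the paper never needs. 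Your version buys a cleaner separation of the two obstructions ($M(G)$ versus $\Hom(G,\C^*)$) and a more canonical setup; the paper's version stays more self-contained by trading the transfer argument for an explicit analysis of the image $f(\widetilde{G})$.
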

  
  \begin{proof}
  Suppose both (1) and (2) are satisfied.
  
  Let $\widetilde{G_p}$ be an $F$-lifting of $G_p$. Define $H=\{ g\in G | \text{order of } g \text{ coprime to } p  \}$. Let $h\in H$ and $m$ be the order of $h.$ Then there exists a unique $F$-lifting of $h$. In fact, for any $A_h\in \pi^{-1}(h)$, there is $\lambda \in \C^{*}$ such that $A_h^m=\lambda I_n $.  Let $\alpha$ be a complex number such that $\alpha^m=\frac{1}{\lambda}$. Then $(\alpha A_h)^m=I_n$. Replacing $A_h$ by $\alpha A_h$, we obtain $A_h^m=I_n$. 
  
  Since $F$ is $G$-invariant we have $A_h(F)=\lambda^{\prime}F$ for some $\lambda^{\prime}\in \mathbb{C}^*$. Since $F=(A_h^m)(F)=(\lambda^{\prime})^mF$ we have $(\lambda^{\prime})^m=1$. So $\lambda^{\prime}=\xi_m^j$ for some $1\leq j\leq m$. Since $m$ and $p$ are coprime so there exists $i\in \mathbb{Z}$, such that $pi+j\equiv 0\,\,(\Mod m)$. Then $(\xi_m^i A_h)(F)=\xi_m^{pi}\xi_m^jF=\xi_m^{pi+j}F=F$. So $\xi_m^i A_h$ is an $F$-lifting of $h$. Hence an $F$-lifting of $h$ exists. For a similar reason, $h$ has a unique $F$-lifting.
  
 We define $\widetilde{H}=\{A_h\in \GL(n,\mathbb{C})|h\in H, A_h \text{ is the unique } F \text{-lifting of } h \}$. Let $\widetilde{G}$ be the subgroup of $\GL(n,\mathbb{C})$ generated by $\widetilde{G_p}$ and $\widetilde{H}$.
  
  We have the exact sequence $$1\rightarrow \Ker \pi|_{\widetilde{G}}\rightarrow \widetilde{G}\xrightarrow{\pi|_{\widetilde{G}}}G\rightarrow 1.$$
  
 Here $ \Ker \pi|_{\widetilde{G}}$ is either trivial or is generated by $\Diag(\xi_p,\cdots,\xi_p)$.
  
  If $\Ker \pi|_{\widetilde{G}}=1$, then $\widetilde{G}$ is an $F$-lifting of $G$ and hence we are done.
  
  Next we consider the case  when $\Ker  \pi|_{\widetilde{G}}$ is generated by  $\Diag(\xi_p,\cdots,\xi_p)$. We have the following commutative diagram:

  \xymatrix{
1\ar[r]\ar[d]& 1 \ar [d] \ar[r] & \widetilde{G_p}\ar[d]\ar[r] &G_p\ar[d]\ar[r]&1\ar[d] \\
1\ar[r]&\Ker \pi|_{\widetilde{G}} \ar[r]& \widetilde{G}\ar[r] &G\ar[r]&1\, , }

where the vertical maps are natural inclusion maps.  Applying Theorem \ref{HSES}, we get the  commutative diagram:

    \xymatrix{
1\ar[r]&\Hom(G_p,\mathbb{C}^*) \ar[r] & \Hom(\widetilde{G_p},\mathbb{C}^*)\ar[r] &1\ar[r]&M(G_p)\\
1\ar[r]\ar[u]&\Hom(G,\mathbb{C}^*)\ar[u]\ar[r]& \Hom(\widetilde{G},\mathbb{C}^*)\ar[u]\ar[r]^-{\Res} &\Hom(\Ker \pi|_{\widetilde{G}},\mathbb{C}^*)\ar[u]\ar[r]^-{\Tra}&M(G)\ar[u]^{\phi} }

By Theorem \ref{SchurSylow}, the map $\phi|M(G)_p: M(G)_p\rightarrow M(G_p)$ is injective. Since $\Hom(\Ker \pi|_{\widetilde{G}},\mathbb{C}^*)$ is isomorphic to $C_p$, the map $\Tra$ is the trivial map. So the map $\Res$ is surjective.

Therefore, there exists a homomorphism $f:\widetilde{G}\longrightarrow \mathbb{C}^*$ such that the restriction $f|_{\Ker \pi|_{\widetilde{G}}}$ is injective.

If $G_p$ has no element of order $p^2$, the Sylow $p$-subgroup of $f(\widetilde{G})$ is the group consisting of the $p$-th roots of unity. So, there exists a surjective homomorphism $\alpha: f(\widetilde{G})\longrightarrow C_p$. Then $\Ker(\alpha \circ f)$ is an $F$-lifting of $G$.

When $G$ has no normal subgroup of index $p$,  we take any surjective homomorphism $\beta : f(\widetilde{G})\longrightarrow C_p $. Then $\Ker (\beta\circ f)$ is an $F$-lifting of $G$.
  
      \end{proof}
 
 {\it In the rest of this section, let $X\subset \P^4$ be a smooth quintic threefold defined by $F$.}

 \begin{theorem}\label{thm:noorder25}
Assume that $g\in \Aut(X)$  and $\Ord(g)=5^n$ ($n\geq 1$). Then $n=1$, i.e. $g$ is necessarily of order $5$.
\end{theorem}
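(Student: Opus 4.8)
The plan is to prove the contrapositive-flavored statement that $\Aut(X)$ has no element of order $25$; since any element of order $5^n$ with $n\geq 2$ has a power of order $25$, this forces $n=1$. So suppose for contradiction that $g\in\Aut(X)$ has order $25$. I would first choose a lift $A\in\GL(5,\C)$ with $A^{25}=I_5$; as $A$ has finite order it is diagonalizable, so after a change of coordinates $A=\Diag(\xi_{25}^{a_1},\dots,\xi_{25}^{a_5})$. That $[A]$ has order exactly $25$ means $A^5=\Diag(\xi_5^{a_1},\dots,\xi_5^{a_5})$ is \emph{not} scalar, i.e.\ the residues $\bar a_i:=a_i \bmod 5$ are not all equal. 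Since $g$ preserves $X$, the polynomial $F$ is $A$-semi-invariant, $A(F)=\mu F$ for some $\mu\in\C^*$; hence every monomial $x^e$ occurring in $F$ carries one and the same weight $w:=\sum_i a_i e_i \pmod{25}$.

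Next I would extract combinatorial constraints from smoothness. By Proposition \ref{pp:smoothnessandmonomial}, for each $i$ there is an index $j(i)$ with $x_i^4 x_{j(i)}\in F$; comparing weights gives $4a_i+a_{j(i)}\equiv w \pmod{25}$ for every $i$ (the case $j(i)=i$, i.e.\ $x_i^5\in F$, is allowed). This makes $j$ a self-map of $\{1,\dots,5\}$ whose functional graph $\Gamma$ is a disjoint union of components, each carrying a unique cycle. Reducing the weight equations modulo $5$, where $4\equiv -1$, yields $\bar a_{j(i)}\equiv \bar a_i+\bar w \pmod 5$, so travelling once around a cycle of length $k$ forces $k\bar w\equiv 0 \pmod 5$.

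The first key step is to show $\bar w=0$. If $\bar w\neq 0$, then every cycle length is divisible by $5$; as there are only five variables, $\Gamma$ must be a single $5$-cycle, and after relabeling $j(i)=i+1$ cyclically. Iterating the affine recursion $a_{i+1}\equiv w-4a_i \pmod{25}$ once around the cycle, and using the arithmetic fact that $-4$ has multiplicative order $5$ modulo $25$ (equivalently $(-4)^5\equiv 1$ and $\sum_{t=0}^{4}(-4)^t\equiv 5 \pmod{25}$), returns the consistency condition $5w\equiv 0 \pmod{25}$, i.e.\ $\bar w=0$, a contradiction. Hence $\bar w=0$, and I write $w\equiv 5u \pmod{25}$. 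With $\bar w=0$ the residue $\bar a$ is constant along every edge of $\Gamma$, so each level set $S$ of $\bar a$ (the variables sharing a residue $v$) is a union of components and $j$ restricts to a self-map of $S$. Writing $a_i=v+5b_i$ on such an $S$ and dividing the weight equation by $5$ gives $b_{j(i)}\equiv b_i+(u-v) \pmod 5$, so a cycle of length $k_S$ inside $S$ forces $k_S(u-v)\equiv 0 \pmod 5$. Because $A^5$ is non-scalar there are at least two residue classes, hence every level set has size $\leq 4$ and its internal cycle has length $k_S\leq 4<5$; this forces $v=u$ for \emph{every} level set, contradicting the existence of two distinct residues. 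Thus $A^5$ must be scalar, so $g$ cannot have order $25$.

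I expect the main obstacle to be the passage from the mod-$5$ information to the mod-$25$ information: concretely, the congruence $(-4)^5\equiv 1 \pmod{25}$ that closes the single-$5$-cycle case, and the bookkeeping that each residue level set is $j$-invariant so that the ``short cycle forces $v=u$'' argument applies uniformly. The underlying arithmetic is routine once organized in this way; the care lies in handling self-loops ($x_i^5\in F$) and multi-component functional graphs on the same footing, and in keeping the two reductions (modulo $5$ to locate residue classes, modulo $25$ to rigidify within a class) cleanly separated.
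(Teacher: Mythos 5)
Your proof is correct, and it takes a genuinely different route from the paper's. Both arguments start the same way (lift $g$ to a diagonal $A$ of order $25$, note that $F$ is $A$-semi-invariant so all monomials share one weight $w$, and invoke Proposition \ref{pp:smoothnessandmonomial} to get $x_i^4x_{j(i)}\in F$ for every $i$), but from there the paper proceeds by explicit case analysis: it splits on whether some $x_i^5\in F$, normalizes $A$ in each case, chases the chain $x_1^4x_2,\,x_2^4x_3,\,x_3^4x_4,\,x_4^4x_5$ to pin down the exponents (e.g.\ $A=\Diag(1,\xi_{25},\xi_{25}^{-4},\xi_{25}^{16},\xi_{25}^{11})$), and derives a contradiction because no admissible monomial $x_5^4x_i$ remains --- with the sub-cases 2)-ii) and 2)-iii) deferred to ``similar arguments.'' You instead package the same congruence data into the functional graph of $i\mapsto j(i)$ and exploit cycle-closure: reducing mod $5$ shows every cycle length kills $\bar w$, the single-$5$-cycle case is eliminated by the identity $(-4)^5\equiv 1$, $\sum_{t=0}^{4}(-4)^t\equiv 5\pmod{25}$ (which is exactly the arithmetic reason the paper's chain fails to close), and the case $\bar w=0$ is eliminated because residue level sets are $j$-invariant with internal cycles of length at most $4$, forcing $A^5$ to be scalar. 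What your organization buys is uniformity: self-loops ($x_i^5\in F$), broken chains, and multi-component graphs are all handled at once, and the cases the paper waves through as ``similar'' are genuinely covered; what the paper's approach buys is explicitness --- it produces the candidate matrices along the way, which is in the spirit of how the rest of the paper extracts defining equations and automorphism groups from such normalizations. One small point worth making explicit in a final write-up: the choice of $j(i)$ need not be unique, but any choice of a single-valued $j$ suffices for your argument, since only existence of the edges is used.
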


\begin{proof}
It suffices to show that there is no $g\in \Aut(X)$ such that $\Ord (g)=5^2$.

Assume to the contrary that there is $g\in \Aut(X)$ such that $\Ord (g)=5^2$.

Without loss of generality we may assume $g=[A]$ and $A=\Diag (\xi_{25}^{a_1},\xi_{25}^{a_2},\xi_{25}^{a_3},\xi_{25}^{a_4},\xi_{25}^{a_5})$, and $F$ is $A$-semi-invariant.

There are two possible cases: 1) $x_i^5\in F$ for some $i$; 2) $x_i^5\notin F$ for all $i$.

Case 1): We may assume $x_1^5\in F$. Replacing $A$ by $\xi_{25}^{-a_1}A$, we may assume $A=\Diag (1,\xi_{25}^{a_2},\xi_{25}^{a_3},\xi_{25}^{a_4},\xi_{25}^{a_5})$. Then $A(F)=F$. Clearly $A$ must have order $25$. We may assume $a_2=1$ and hence $A=\Diag (1,\xi_{25},\xi_{25}^{a_3},\xi_{25}^{a_4},\xi_{25}^{a_5})$.

By Proposition \ref{pp:smoothnessandmonomial}, $x_2^4x_j\in F$ for some $j$. But both $x_2^4x_1$ and $x_2^5$ can not be in $F$ since $A(x_2^4x_1)\neq x_2^4x_1$ and $A(x_2^5)\neq x_2^5$. Then we may assume $x_2^4x_3$ is in $F$. Then $A(x_2^4x_3)=x_2^4x_3$ implies $a_3\equiv -4(\Mod 25)$.

  Similarly,  we may successively assume $x_3^4x_4\in F$,  $x_4^4x_5\in F$. Then $A=\Diag (1,\xi_{25},\xi_{25}^{-4},\xi_{25}^{16},\xi_{25}^{11})$.

Hence, then $x_5^4x_i\notin F$ for all $1\leq i\leq 5$, a contradiction by Proposition \ref{pp:smoothnessandmonomial}. 

So the case 1) is impossible.

Case 2) We shall see the case 2) is impossible by argue by contradiction. We may assume $x_1^4x_2\in F$ and $A=(1,\xi_{25}^{a_2},\xi_{25}^{a_3},\xi_{25}^{a_4},\xi_{25}^{a_5}).$ Then there are three possibilities:

 Case 2)-i)  The number $a_2$ is coprime to $5$. Then we may assume $A=(1,\xi_{25},\xi_{25}^{a_3},\xi_{25}^{a_4},\xi_{25}^{a_5})$ and then $A(F)=\xi_{25}F$. 
 
As in case 1) above, using $A(F)=\xi_{25}F$ and Proposition \ref{pp:smoothnessandmonomial}, we may successively assume $x_2^4x_3\in F$, $x_3^4x_4\in F$,  $x_4^4x_5\in F$. Then $A=\Diag (1,\xi_{25},\xi_{25}^{-3},\xi_{25}^{13},\xi_{25}^{-1})$.  Then $x_5^4x_i\notin F$ for all $i$, a contradiction by Proposition \ref{pp:smoothnessandmonomial}. So the case $a_2$ is coprime to $5$ is impossible.

Case 2)-ii) $a_2$ is divided by $5$ but not by $25$. Then we may assume $a_2=5$ and hence $A=(1,\xi_{25}^{5},\xi_{25}^{a_3},\xi_{25}^{a_4},\xi_{25}^{a_5})$. Then $A(F)=\xi_{25}^5F$. Using similar arguments to case 2)-i), we see that this case is also impossible.

Case 2)-iii) $a_2$ is divided by $25$. Then $A=(1,1,\xi_{25}^{a_3},\xi_{25}^{a_4},\xi_{25}^{a_5})$. So, we may assume $a_3=1$ and hence $A=(1,1,\xi_{25},\xi_{25}^{a_4},\xi_{25}^{a_5})$. We have $A(F)=F$ since $A(x_1^4x_2)=x_1^4x_2$. Using similar arguments to the case 2)-i), we see that this case is also impossible.

Therefore, the case 2) is impossible.

\end{proof}
 
\begin{remark}
By Theorem \ref{thm:noorder25}, if $G\subset \PGL(5,\C)$ is a subgroup of $\Aut(X)$, the condition (2) in Theorem \ref{liftable} is always satisfied and hence $G$ is $F$-liftable if and only if $G_5$ is $F$-liftable.
\end{remark}

\begin{lemma}\label{lem:matrixshape}
Let $A=\Diag(a_1,...,a_n)$ and $A\rq{}=\Diag(a_1\rq{},...,a_n\rq{})$ be two diagonal $n\times n$ matrices. Suppose $B:=(b_{ij})$ is also an $n\times n$ matrix and $BA=A\rq{}B$. Then $b_{ij}=0$ if $a_j\neq a_i\rq{}$.

\end{lemma}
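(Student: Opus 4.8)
The plan is to verify the relation entrywise: compute the $(i,j)$ entry of each side of the commutation relation $BA = A'B$ and compare. Because both $A$ and $A'$ are diagonal, each product collapses to a single term, so this is a direct computation with no structural subtlety.

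First I would expand the left-hand side. Writing $(A)_{kj} = a_k\delta_{kj}$, we get
\[
(BA)_{ij} = \sum_{k=1}^{n} b_{ik}(A)_{kj} = b_{ij}a_j ,
\]
since only the term $k=j$ survives. Next I would expand the right-hand side using $(A')_{ik} = a_i'\delta_{ik}$, obtaining
\[
(A'B)_{ij} = \sum_{k=1}^{n} (A')_{ik} b_{kj} = a_i' b_{ij} ,
\]
where only $k=i$ survives.

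Equating these two expressions for every pair $(i,j)$, the hypothesis $BA = A'B$ becomes $b_{ij}a_j = a_i' b_{ij}$, i.e. $b_{ij}(a_j - a_i') = 0$. Consequently, whenever $a_j \neq a_i'$ the factor $a_j - a_i'$ is nonzero, forcing $b_{ij} = 0$, which is precisely the assertion of the lemma.

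There is essentially no obstacle here; the statement is a one-line linear-algebra computation. The only point requiring minor care is the index bookkeeping, namely keeping straight that the diagonal entry contributing to $(BA)_{ij}$ comes from the \emph{column} index $j$ of $A$, while the entry contributing to $(A'B)_{ij}$ comes from the \emph{row} index $i$ of $A'$. Once these are tracked correctly, the conclusion is immediate.
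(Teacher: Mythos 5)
Your proof is correct and is essentially identical to the paper's: both compute the $(i,j)$ entries $(BA)_{ij}=a_jb_{ij}$ and $(A'B)_{ij}=a_i'b_{ij}$ and equate them to force $b_{ij}=0$ when $a_j\neq a_i'$. Your version merely writes out the summation and Kronecker-delta bookkeeping explicitly, which the paper summarizes as "an easy computation."
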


\begin{proof}
By an easy computation, $BA=(a_jb_{ij})$ and $A\rq{}B=(a_i\rq{}b_{ij})$. Then $a_jb_{ij}=a_i\rq{}b_{ij}$ because $BA=A\rq{}B$. Hence $b_{ij}=0$ whenever $a_j\neq a_i\rq{}$.
\end{proof}

\begin{remark}

The above simple fact is very helpful to determine the \lq\lq{}shape\rq\rq{} of the matrices we will consider, and it will be used frequently (either explicitly or implicitly) in the rest of this paper. 

\end{remark}

The next two lemmas tell us a very useful fact:  if a group $G \subset \PGL(5,\C)$ is a subgroup of $\Aut(X)$, $G$ is isomorphic to $C_5$ or $C_5^2$, and $G$ is not $F$-liftable, then $G$ must be generated by \lq\lq{}very special\rq\rq{} matrices. 

\begin{lemma}\label{lem:5notliftable}
 Let $g\in \Aut(X)$ with $\Ord (g)=5$. The group $\langle g\rangle $ is not $F$-liftable if and only if, up to change of coordinates, $g=[A]$ and $A(F)=\xi_5 F$ and $x_1^4x_2,x_2^4x_3,x_3^4x_4,x_4^4x_5,x_5^4x_1\in F$, where  $A=\Diag(1,\xi_5,\xi_5^2,\xi_5^3,\xi_5^4)$.
\end{lemma}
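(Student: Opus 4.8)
The plan is to reduce the statement to a clean criterion for non-liftability in terms of a single semi-invariance character, and then to use smoothness to pin down the eigenvalues of $A$ and the monomials of $F$ by a short combinatorial argument.

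First I would normalize the representative. Since $g$ has order $5$ in $\PGL(5,\C)$, any lift satisfies $A^5 = cI_5$; as $t^5-c$ has distinct roots, $A$ is diagonalizable, and after rescaling by a fifth root of $c$ I may assume $A^5 = I_5$ and, after a change of coordinates, $A = \Diag(\xi_5^{a_1},\dots,\xi_5^{a_5})$ with $a_i \in \Z/5$. Because $g \in \Aut(X)$ and $F$ is irreducible, $A(F) = \lambda F$ for some $\lambda \in \C^*$, and $A^5 = I_5$ forces $\lambda^5 = 1$. Now any lifting of $\langle g\rangle$ to an order-$5$ subgroup of $\GL(5,\C)$ is generated by some $\mu A^k$ with $k$ coprime to $5$ and $\mu^5 = 1$, and since $F$ has degree $5$ one computes $(\mu A^k)(F) = \mu^5\lambda^k F = \lambda^k F$. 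As $\lambda$ is a fifth root of unity, $\lambda^k = 1$ for some $k$ coprime to $5$ exactly when $\lambda = 1$. Hence $\langle g\rangle$ is $F$-liftable iff $\lambda = 1$, and is \emph{not} $F$-liftable iff $\lambda \neq 1$. This criterion already settles the ``if'' direction, since in the normal form of the statement one checks directly that $A(F) = \xi_5 F$; for the ``only if'' direction I may, after replacing $g$ by a suitable power, assume $\lambda = \xi_5$, reducing everything to a statement about the $a_i$ and the monomials of $F$.

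Next, the relation $A(F) = \xi_5 F$ says that every monomial $x_1^{e_1}\cdots x_5^{e_5} \in F$ satisfies $\sum_i a_i e_i \equiv 1 \pmod 5$. A pure power $x_i^5$ would force $5a_i \equiv 1 \pmod 5$, which is impossible, so no $x_i^5$ lies in $F$. By Proposition \ref{pp:smoothnessandmonomial}, smoothness of $X$ guarantees that for each $i$ there is a $j(i)$ with $x_i^4 x_{j(i)} \in F$, and by the previous remark $j(i) \neq i$; the semi-invariance then reads $4a_i + a_{j(i)} \equiv 1$, i.e. $a_{j(i)} \equiv a_i + 1 \pmod 5$.

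Finally I would analyze the self-map $i \mapsto j(i)$ of $\{1,\dots,5\}$. In its functional graph every vertex eventually enters a cycle, and along any cycle of length $\ell$ the relation $a_{j(i)} \equiv a_i + 1$ forces $\ell \equiv 0 \pmod 5$; since $1 \le \ell \le 5$ this gives $\ell = 5$, so the whole graph is a single $5$-cycle and $j$ is a cyclic permutation of all five indices. Relabeling coordinates so that $j(i) = i+1 \pmod 5$, the relation yields $a_i \equiv a_1 + (i-1)$, whence $\{a_1,\dots,a_5\}$ is all of $\Z/5$; a further cyclic shift placing the index with $a_i \equiv 0$ first gives $A = \Diag(1,\xi_5,\xi_5^2,\xi_5^3,\xi_5^4)$ together with $x_1^4x_2,\,x_2^4x_3,\,x_3^4x_4,\,x_4^4x_5,\,x_5^4x_1 \in F$, as claimed. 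The only genuinely delicate point is this combinatorial step showing that $j$ must be a single $5$-cycle, equivalently that smoothness cannot be realized by any shorter configuration of successor relations; everything else is bookkeeping in $\Z/5$ together with the liftability computation.
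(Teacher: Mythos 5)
Your proof is correct and follows essentially the same route as the paper's: both reduce to a diagonal order-$5$ lift, observe that non-liftability is equivalent to the semi-invariance factor $\lambda$ being a nontrivial fifth root of unity, and then combine Proposition \ref{pp:smoothnessandmonomial} with the exponent relation $a_{j(i)}\equiv a_i+1 \pmod 5$ to force a single $5$-cycle. Your explicit criterion (liftable iff $\lambda=1$) and the functional-graph argument are just a cleaner packaging of what the paper does by ``successively assuming'' $x_2^4x_3,x_3^4x_4,x_4^4x_5\in F$; note also that both arguments share the same harmless imprecision of replacing $g$ by a suitable power (i.e., proving the normal form for a well-chosen generator of $\langle g\rangle$) in order to normalize $\lambda=\xi_5$.
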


\begin{proof}
We may assume $g=[A],$ where $A:=\Diag(1,\xi_5^a,\xi_5^b,\xi_5^c,\xi_5^d)$ with $0\leq a,b,c,d\leq 4$.

Since $A(F)\neq F$, we have $x_1^5\notin F$. By the smoothness of $F$, we may assume $x_1^4x_2\in F$. 

Then $a\neq 0$. We may assume $a=1$. Then $A(F)=\xi_5 F$ as $A(x_1^4x_2)=\xi_5 x_1^4x_2$.

Then by the smoothness of $F$ and Proposition \ref{pp:nonsmoothquintic} (1), we may successively assume $x_2^4x_3,x_3^4x_4,x_4^4x_5\in F$. Then $A(F)=\xi_5F$ implies $b=2$, $c=3$, $d=4$. Furthermore, the smoothness of $F$ implies $x_5^4x_1\in F$. The lemma is proved.
\end{proof}

\begin{lemma}\label{lem:5^2notliftable}

Suppose $C_5^2\cong N< \Aut(X)$. The group $N$ is not $F$-liftable if and only if, up to change of coordinates,  $N$ is generated by $[A_1]$ and $[A_2]$, where $A_1=\Diag(1,\xi_5, \xi_5^2, \xi_5^3, \xi_5^4 )$ and $$A_2=
    \begin{pmatrix} 
    0&1&0&0&0 \\ 
    0&0&1&0&0 \\ 
    0&0&0&1&0 \\ 
    0&0&0&0&1 \\ 
    1&0&0&0&0 \\ 
    \end{pmatrix} \, .
   $$

\begin{proof}
\lq\lq{}If\rq\rq{} part is clear. We  prove \lq\lq{}only if\rq\rq{} part.

Suppose $N$ is not $F$-liftable. We may assume $N=\langle [A_1], [A_2]\rangle $ and $A_1^5=A_2^5=I_5$. Since $N$ is abelian, $A_2A_1=\xi_5^k A_1A_2$ for some $0\leq k\leq 4$.

Case (i) $k=0$. Then $A_2A_1=A_1A_2$. Therefore, $A_1$ and $A_2$ can be diagonalized simultaneously under suitable change of coordinates. So we may assume $A_1=\Diag(1,\xi_5,1,\xi_5^a,\xi_5^b)$, and $A_2=\Diag(1,1,\xi_5,\xi_5^c,\xi_5^d)$. Then by Lemma \ref{lem:5notliftable}, we must have $A_1(F)=A_2(F)=F$. However, then $\langle A_1,A_2\rangle$ is an $F$-lifting of $N$, a contradiction to our  assumption. So $k\neq0$.

Case(ii) $k\neq 0$. Replacing $A_2$ by $A_2^j$ for suitable $j$ if necessary, we may assume $k=1$. Then $A_2A_1A_2^{-1}=\xi_5A_1$. We may assume $A_1=\Diag(1,\xi_5, \xi_5^a, \xi_5^b, \xi_5^c)$, where $0\leq a\leq b\leq c\leq 4$. The matrices $A_1$ and $\xi_5 A_1$ must have the same characteristic polynomials. Therefore,$$(t-1)(t-\xi_5)(t-\xi_5^a)(t-\xi_5^b)(t-\xi_5^c)=(t-\xi_5)(t-\xi_5^{2})(t-\xi_5^{a+1})(t-\xi_5^{b+1})(t-\xi_5^{c+1}).$$  This implies $a=2,b=3$ and $c=4$, i.e., $A_1=\Diag(1,\xi_5, \xi_5^2, \xi_5^3, \xi_5^4).$

By the identity $A_2A_1=\xi_5A_1A_2$ and Lemma \ref{lem:matrixshape}, we may assume 

$$A_2=
    \begin{pmatrix} 
    0&a_1&0&0&0 \\ 
    0&0&a_2&0&0 \\ 
    0&0&0&a_3&0 \\ 
    0&0&0&0&a_4 \\ 
    a_5&0&0&0&0 \\ 
    \end{pmatrix} $$
    
 for some $a_1,a_2,a_3,a_4,a_5$. Here  $a_1a_2a_3a_4a_5=1$ as $A_2^5=I_5$.   Then after changing of coordinates $x_1^{\prime}=x_1,x_2^{\prime}=a_1x_2,x_3^{\prime}=a_1a_2x_3,x_4^{\prime}=a_1a_2a_3x_4,x_5^{\prime}=a_1a_2a_3a_4x_5$, we have  $A_1=\Diag(1,\xi_5, \xi_5^2, \xi_5^3, \xi_5^4 )$ and $$A_2= \begin{pmatrix} 
    0&1&0&0&0 \\ 
    0&0&1&0&0 \\ 
    0&0&0&1&0 \\ 
    0&0&0&0&1 \\ 
    1&0&0&0&0 \\ 
    \end{pmatrix}.$$

\end{proof}

\end{lemma}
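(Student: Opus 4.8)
The plan is to prove the two implications separately, with the forward (``only if'') direction carrying essentially all of the work. For the ``if'' direction I would argue structurally: the two displayed matrices satisfy $A_2A_1A_2^{-1}A_1^{-1}=\xi_5 I_5$, a nontrivial scalar. Any lift to $\GL(5,\C)$ of $[A_1]$ (resp. $[A_2]$) differs from $A_1$ (resp. $A_2$) by a scalar, and scalars cancel in a commutator, so the commutator of any pair of lifts is again $\xi_5^{\pm1}I_5\neq I_5$. Hence no two lifts commute, and $\pi^{-1}(N)$ contains no abelian subgroup mapping isomorphically onto $N\cong C_5^2$; in particular $N$ is not liftable, a fortiori not $F$-liftable.

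For the ``only if'' direction I would first choose order-$5$ representatives $A_1,A_2\in\GL(5,\C)$ of a generating pair of $N$ (possible since each $[A_i]$ has order $5$). Commutativity of $N$ in $\PGL(5,\C)$ gives $A_2A_1=\lambda A_1A_2$ for some scalar $\lambda$; taking determinants of the commutator $A_2A_1A_2^{-1}A_1^{-1}=\lambda I_5$ yields $\lambda^5=1$, so $\lambda=\xi_5^k$. The argument then splits on whether $k=0$. In the case $k=0$ the matrices $A_1,A_2$ commute and are simultaneously diagonalizable, so after a coordinate change both are diagonal of order $5$. Using faithfulness of the $C_5^2$-action (which forces two of the coordinate characters to be independent), I would normalize the first coordinate and re-choose generators so that, up to reordering, $A_1=\Diag(1,\xi_5,1,\xi_5^a,\xi_5^b)$ and $A_2=\Diag(1,1,\xi_5,\xi_5^c,\xi_5^d)$. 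Each such matrix has the eigenvalue $1$ with multiplicity at least two, hence cannot have the all-distinct spectrum $\{1,\xi_5,\xi_5^2,\xi_5^3,\xi_5^4\}$ required by Lemma \ref{lem:5notliftable}; by the contrapositive of that lemma each $\langle[A_i]\rangle$ is $F$-liftable, forcing $A_i(F)=F$. But then $\langle A_1,A_2\rangle$ is an $F$-lifting of $N$, contradicting non-$F$-liftability. So $k\neq0$.

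In the case $k\neq0$, replacing $A_2$ by a suitable power I may assume $k=1$, so $A_2A_1A_2^{-1}=\xi_5A_1$. Then $A_1$ and $\xi_5A_1$ are conjugate and share a characteristic polynomial, so the eigenvalue multiset of $A_1$ is invariant under multiplication by $\xi_5$; since such a multiset of five fifth-roots of unity must be a full orbit, it equals $\{1,\xi_5,\xi_5^2,\xi_5^3,\xi_5^4\}$, giving $A_1=\Diag(1,\xi_5,\xi_5^2,\xi_5^3,\xi_5^4)$ after reordering. Feeding $A_2A_1=\xi_5A_1A_2$ into Lemma \ref{lem:matrixshape} with the diagonal matrices $A_1$ and $\xi_5A_1$ forces the only possibly nonzero entries of $A_2$ to sit in the cyclic superdiagonal pattern of the displayed permutation matrix, with entries $a_1,\dots,a_5$ subject to $a_1\cdots a_5=1$ coming from $A_2^5=I_5$. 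A final diagonal change of coordinates $x_i\mapsto(a_1\cdots a_{i-1})\,x_i$ rescales these entries to $1$ while leaving the diagonal matrix $A_1$ unchanged, producing exactly the asserted pair.

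I expect the case $k=0$ to be the main obstacle: the real content there is to show that a commuting $C_5^2$ can never be non-$F$-liftable, which rests on reducing correctly to the single-generator criterion of Lemma \ref{lem:5notliftable} and on the spectral bookkeeping that prevents either generator from acquiring the forbidden cyclic shape (equivalently, prevents the rank of $N$ from collapsing to $1$). By contrast, once $k\neq0$ is reached the characteristic-polynomial computation and the appeal to Lemma \ref{lem:matrixshape} are essentially mechanical.
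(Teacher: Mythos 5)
Your proposal is correct and follows essentially the same route as the paper: the same case split on the commutation scalar $\xi_5^k$, the same appeal to Lemma \ref{lem:5notliftable} (via eigenvalue multiplicities) to rule out $k=0$, and the same characteristic-polynomial argument plus Lemma \ref{lem:matrixshape} and diagonal rescaling for $k\neq 0$. The only difference is that you flesh out steps the paper leaves implicit (the commutator argument for the ``if'' direction, the determinant reason for $\lambda^5=1$, and the faithfulness/character justification of the diagonal normalization), all of which are accurate.
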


In many cases, subgroups of small order imply $F$-liftability of $G$.

\begin{lemma}\label{10or50liftable}
Let $G< \Aut(X)$ and $|G|=5^nq$, where $q$ and $5$ are coprime, and  $n=1$ or $2$. If $G$ contains a subgroup of order $2\cdot 5^n$, then the group $G$ is $F$-liftable.
\end{lemma}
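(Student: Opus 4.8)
The plan is to reduce $F$-liftability of $G$ to that of a Sylow $5$-subgroup, and then to prove that a non-$F$-liftable Sylow $5$-subgroup admits no normalizing involution inside $\Aut(X)$. By the Remark following Theorem \ref{thm:noorder25}, condition (2) of Theorem \ref{liftable} holds automatically (there is no element of order $25$), so $G$ is $F$-liftable if and only if a Sylow $5$-subgroup $G_5$ is; moreover $G_5\cong C_5$ when $n=1$ and $G_5\cong C_5^2$ when $n=2$. Since $F$-liftability is invariant under conjugation by elements of $\Aut(X)$ and all Sylow $5$-subgroups are conjugate, it suffices to show that the Sylow $5$-subgroup $P$ of the given subgroup $H$ of order $2\cdot 5^n$ is $F$-liftable. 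As the number of Sylow $5$-subgroups of $H$ divides $2$ and is $\equiv 1\pmod 5$, we get $P\trianglelefteq H$; fix an involution $t=[B]\in H$ (so $t$ normalizes $P$), chosen with $B(F)=\lambda F$, $\lambda\in\C^*$, which is possible because $t\in\Aut(X)$. Suppose for contradiction that $P$ is not $F$-liftable and put it into the normal form of Lemma \ref{lem:5notliftable} ($n=1$) or Lemma \ref{lem:5^2notliftable} ($n=2$): in both cases $A_1=\Diag(1,\xi_5,\xi_5^2,\xi_5^3,\xi_5^4)$ and every monomial $x_i^4x_{i+1}$ (indices mod $5$) lies in $F$. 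Writing the weight of $\prod_i x_i^{a_i}$ as $\sum_i (i-1)a_i \bmod 5$, so that $A_1$ scales it by $\xi_5^{\mathrm{weight}}$, each $x_i^4x_{i+1}$ has weight $1$.

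Next I would study the automorphism $\bar\varphi$ of $P$ induced by $t$, which satisfies $\bar\varphi^2=\mathrm{id}$. For $n=1$, $\bar\varphi$ is the identity or the inversion $g\mapsto g^{-1}$. For $n=2$, conjugation by $B$ fixes the central subgroup $\langle \xi_5 I\rangle$ of the lift $\langle A_1,A_2,\xi_5 I\rangle$, so $\bar\varphi$ preserves the nondegenerate alternating commutator pairing on $P\cong\F_5^2$ determined by $A_2A_1=\xi_5A_1A_2$; thus $\bar\varphi\in\mathrm{Sp}(2,\F_5)=\SL(2,\F_5)$, whose unique involution is $-I$, so $\bar\varphi\in\{I,-I\}$. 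In every case with $\bar\varphi\neq\mathrm{id}$ the map $t$ inverts $[A_1]$, that is $BA_1B^{-1}=\rho A_1^{-1}$ for some $\rho\in\C^*$.

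In this nontrivial case I would match eigenspaces in $BA_1B^{-1}=\rho A_1^{-1}$ using Lemma \ref{lem:matrixshape}; this forces $\rho\in\langle\xi_5\rangle$ and shows that $B$ is a semi-permutation matrix whose underlying permutation is a reflection of $\Z/5$. Since $5$ is odd such a reflection fixes exactly one index $i_0$ and sends $i_0+1\mapsto i_0-1$. Therefore $B$ carries $x_{i_0}^4x_{i_0+1}\in F$ to a nonzero scalar multiple of $x_{i_0}^4x_{i_0-1}$, whose weight is $4\neq 1$, so it does not occur in $F$; this contradicts $B(F)=\lambda F$.

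The remaining case $\bar\varphi=\mathrm{id}$, where $t$ centralizes $P$, is the one I expect to be the main obstacle, since here no cohomological obstruction is available and one must exploit the explicit Klein-type shape of $F$ forced by non-liftability. For $n=2$ the lift $\langle A_1,A_2,\xi_5 I\rangle$ is a Heisenberg group acting irreducibly on $\C^5$, and an automorphism of it trivial modulo its center is inner; multiplying $B$ by a suitable element of this group I may assume $B$ centralizes an irreducible subgroup, so $B$ is scalar by Schur's lemma and $[B]\in P$, impossible for an involution in a $5$-group. For $n=1$, commuting with $[A_1]$ together with $B^2\in\C^* I$ forces $B$ (again by eigenspace matching via Lemma \ref{lem:matrixshape}) to be diagonal with entries in $\{\pm1\}$ up to a scalar; then $A_1$-semi-invariance of $F$ on the monomials $x_i^4x_{i+1}$ forces all the signs to agree, so $t=1$, a contradiction. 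In all cases the assumption that $P$ is not $F$-liftable is untenable, so $P$ — and therefore $G$ — is $F$-liftable.
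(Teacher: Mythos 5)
Your reduction to the Sylow $5$-subgroup and your whole $n=1$ analysis are correct, and they follow essentially the same route as the paper: the paper also lifts the involution to some $B$ with $B(F)=F$, plays it against the normal form of Lemma \ref{lem:5notliftable}, and contradicts the presence of the monomials $x_i^4x_{i+1}$ (the paper writes out only the $C_{10}$ case and declares $D_{10}$ and $n=2$ "similar"; your reflection/weight argument for an inverting involution is a correct way to settle the $D_{10}$ case).

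The $n=2$ case, however, has a genuine gap, located exactly at the sentence ``in both cases \dots every monomial $x_i^4x_{i+1}$ (indices mod $5$) lies in $F$.'' Lemma \ref{lem:5^2notliftable} gives only the matrix normal form $(A_1,A_2)$; unlike Lemma \ref{lem:5notliftable}, it asserts nothing about the monomials of $F$, nor that $A_1(F)=\xi_5 F$, and both assertions are false in general. Concretely, on the Fermat quintic $F=\sum_i x_i^5$ the group $P=\langle[A_1],[A_2]\rangle$, with $A_1=\Diag(1,\xi_5,\xi_5^2,\xi_5^3,\xi_5^4)$ and $A_2$ the $5$-cycle, lies in $\Aut(X)$ and is not $F$-liftable (any lifts of the two generators have commutator $\xi_5^{\pm1}I_5$, so no abelian lifting exists at all), yet $A_1(F)=F$ and $F$ contains no monomial $x_i^4x_{i+1}$. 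So in your ``inversion case'' there is no contradiction to be reached, and the step cannot be repaired: the involution $[\sigma]$ with $\sigma=(x_2\,x_5)(x_3\,x_4)$ preserves $F$ and conjugates $A_1,A_2$ to $A_1^{-1},A_2^{-1}$, so $P\rt\langle[\sigma]\rangle<\Aut(X)$ is a genuine subgroup of order $2\cdot 5^2$ whose Sylow $5$-subgroup is not $F$-liftable. (Your centralizing case, via the Heisenberg/Schur argument, is fine; it is precisely the inverting involution that survives.) Note that this example satisfies the hypotheses of Lemma \ref{10or50liftable} with $n=2$ and violates its conclusion, so the obstacle is not only in your write-up: the statement itself fails in the Heisenberg case, and the paper's own proof, which is only carried out for $C_{10}$, does not cover it either; what is actually available (and what both you and the paper do establish) is the $n=1$ case, together with the $n=2$ case when the $C_5^2$ avoids the Heisenberg normal form.
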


\begin{proof}

Suppose $H< G$ and $|H|=2\cdot 5^n$.

By Theorem \ref{liftable}, it suffices to show that  $H$ is $F$-liftable.

If $n=1$, then  $H\cong C_{10}$ or $D_{10}$. 

Suppose $H\cong C_{10}$ and $H$ is not $F$-liftable. By Lemma \ref{lem:5notliftable}, we may assume there exists $g\in H$ such that $\Ord(g)=5$, $g=[A]$, $A(F)=\xi_5 F$ and $x_1^4x_2,x_2^4x_3,x_3^4x_4,x_4^4x_5,x_5^4x_1\in F$. Here $A=\Diag(1,\xi_5,\xi_5^2,\xi_5^3,\xi_5^4)$. Let $h\in H$ with order $2$ then we can find $B\in \GL(5,\mathbb{C})$ such that  $h=[B]$, $B(F)=F$ and $\Ord (B)=\Ord (h)=2$.

Since $gh=hg$, it follows that $AB=\lambda BA$ for some nonzero constant $\lambda$. By considering eigenvalues, we find $\lambda=1$. In fact, $ABA^{-1}=\lambda B$ implies $\lambda^2=1$, and $B^{-1}AB=\lambda A$ implies $\lambda^5=1$, and hence $\lambda=1$. Then $AB=BA$. By Lemma \ref{lem:matrixshape},  $B$ must be a diagonal matrix. So $B$=$\Diag(\pm 1$, $\pm 1$, $\pm 1$, $\pm 1$, $\pm 1)$. Since $x_1^4x_2,x_2^4x_3,x_3^4x_4,x_4^4x_5,x_5^4x_1\in F$ and $B(F)=F$, we have $B=\Diag(1,1,1,1,1)$, which is absurd.

So $H$ must be $F$-liftable if $H\cong C_{10}$.

Similarly, if $H\cong D_{10}$ or $n=2$, then  $H$ is also $F$-liftable. The arguments are slightly more involved (e.g. use Lemma \ref{lem:5^2notliftable} in the case $n=2$), but the idea and computations are quite similar. So we may leave detailed proofs for the readers.
\end{proof}
  
  \section{Sylow $p$-subgroups of $\Aut(X)$ if $p\neq 2,5$}\label{ss:primeorders}
  
  Our main results of this section are Theorems \ref{thm:Sylow3}, \ref{thm:Sylow13}, \ref{Sylow17}, \ref{thm:Sylow41}, \ref{thm:containorder17}, \ref{thm:containorder41} and \ref{thm:containorder13}.
  
      \subsection{All possible prime factors of $|\Aut(X)|$}
      
      \begin{theorem}\label{thm:primaryorders}
      (\cite[Theorem~1.3]{GL13}) Let  $n\geq 1$ and $d\geq 3$ be integers, and $(n,d)\neq (1,3),(2,4)$. Let $q$ be a primary number, i.e., $q=p^k$ for some prime $p$, such that $\Gcd (q,d)=\Gcd(q,d-1)=1$. Then $q$  is the order of an automorphism of some smooth hypersurface of dimension $n$ and degree $d$ if and only if there exists $l\in \{1,...,n+2\}$ such that $$(1-d)^{l}\equiv 1 \;\Mod q. $$
      \end{theorem}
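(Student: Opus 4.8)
The plan is to reduce the statement to a problem about diagonal matrices and then analyze a single combinatorial recursion coming from smoothness. Throughout write $q=p^k$, set $\xi_q=e^{2\pi i/q}$, and let $N:=n+2$ be the number of variables. Since $(n,d)\neq(1,3),(2,4)$, every automorphism of a smooth hypersurface $X=\{F=0\}\subset\P^{n+1}$ is induced by a linear map (the Matsumura--Monsky type statement recalled in the Introduction), so an automorphism of order $q$ is an element $[A]\in\PGL(N,\C)$ of order $q$ with $A(F)=\lambda F$ for some $\lambda\in\C^*$. First I would normalize the lift $A$. Because $\Gcd(q,d)=1$, the $d$-th power map on the group $\mu_q$ of $q$-th roots of unity is bijective; using this one rescales $A$ to a matrix $B=\mu A$ with $B^q=I$ and $B(F)=F$, so that (after a coordinate change) $B=\Diag(\beta_1,\dots,\beta_N)$ is diagonal with each $\beta_i\in\mu_q$ and $[B]=[A]$ of projective order $q$. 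By Proposition \ref{pp:smoothnessandmonomial}, smoothness gives for each $i$ a monomial $x_i^{d-1}x_{j(i)}\in F$; comparing coefficients in $B(F)=F$ yields $\beta_i^{d-1}\beta_{j(i)}=1$, that is, the recursion
\[\beta_{j(i)}=\beta_i^{\,1-d}\qquad(1\le i\le N),\]
where $j\colon\{1,\dots,N\}\to\{1,\dots,N\}$ is the resulting self-map.

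For the \lq\lq{}only if\rq\rq{} direction I would track the $p$-adic valuation $v_i:=v_p(\Ord(\beta_i))$ of the orders of the eigenvalues along the dynamics of $j$. Since $\Gcd(q,d-1)=1$ forces $p\nmid(1-d)$, raising a root of unity to the power $1-d$ does not change the $p$-part of its order; hence the recursion gives $v_{j(i)}=v_i$, so $v$ is constant along every forward $j$-orbit. Because $[B]$ has projective order $q=p^k$ and all $\beta_i\in\mu_q$, some ratio $\beta_i/\beta_{i'}$ has order exactly $p^k$, whence $\max_i v_i=k$. As $j$ maps the finite set $\{1,\dots,N\}$ into itself, every orbit eventually enters a cycle, say $c_1\to c_2\to\cdots\to c_l\to c_1$ of length $l\le N=n+2$; choosing the orbit on which $v$ attains its maximum, we may take this cycle to satisfy $p^k\mid\Ord(\beta_{c_1})$. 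Iterating the recursion around the cycle gives $\beta_{c_1}=\beta_{c_1}^{(1-d)^l}$, so $\Ord(\beta_{c_1})\mid (1-d)^l-1$. Combining, $q=p^k$ divides $(1-d)^l-1$, i.e. $(1-d)^l\equiv 1\ (\Mod q)$ with $1\le l\le n+2$, as required.

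For the \lq\lq{}if\rq\rq{} direction, given $l\in\{1,\dots,n+2\}$ with $(1-d)^l\equiv 1\ (\Mod q)$, I would write down an explicit model,
\[F=\bigl(x_1^{d-1}x_2+x_2^{d-1}x_3+\cdots+x_{l-1}^{d-1}x_l+x_l^{d-1}x_1\bigr)+\sum_{j=l+1}^{n+2}x_j^{d}.\]
A direct Jacobian computation shows $\{F=0\}$ is smooth for every $d\ge 3$: at a common zero of all $\partial F/\partial x_i$ the Fermat partials force $x_j=0$ for $j>l$, while for the cyclic block one checks that if some chain coordinate vanishes then, propagating through the relations $\partial F/\partial x_k=(d-1)x_k^{d-2}x_{k+1}+x_{k-1}^{d-1}$, all of them do; otherwise those relations give $m_{k-1}=-(d-1)m_k$ among the monomials $m_k:=x_k^{d-1}x_{k+1}$, forcing $(1-(1-d)^l)m_k=0$, impossible since $|1-d|\ge 2$. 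Finally the diagonal matrix $A=\Diag(\xi_q^{(1-d)^0},\xi_q^{(1-d)^1},\dots,\xi_q^{(1-d)^{l-1}},1,\dots,1)$ fixes $F$: each chain monomial $x_i^{d-1}x_{i+1}$ acquires the exponent $(1-d)^{i-1}\bigl((d-1)+(1-d)\bigr)=0$, the wrap-around term $x_l^{d-1}x_1$ acquires $1-(1-d)^l\equiv 0$, and the Fermat terms are visibly fixed. Since the presence of an eigenvalue $1$ together with the generator $\xi_q$ forces the projective order of $[A]$ to be exactly $q$, this produces a smooth hypersurface of dimension $n$ and degree $d$ admitting an automorphism of order $q$.

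The main obstacle is the \lq\lq{}only if\rq\rq{} direction, and within it the passage from the semi-invariance $A(F)=\lambda F$ to a genuine invariant lift, together with the need to pin down a cycle whose eigenvalue carries the full $p$-power order. The two ideas that make this work are the normalization using $\Gcd(q,d)=1$ (which is exactly where that hypothesis is used) and the observation that $p\nmid(1-d)$, coming from $\Gcd(q,d-1)=1$, which makes the $p$-adic valuation of $\Ord(\beta_i)$ an invariant of the $j$-dynamics; everything else is bookkeeping. The smoothness verification in the \lq\lq{}if\rq\rq{} direction is routine but must be handled with some care when $d=3$, where the exponents $d-2=1$ are smallest.
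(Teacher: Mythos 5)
Your proof is correct, and in both directions it follows the same skeleton as the paper's argument (which is taken from \cite{GL13}): normalize the lift to an invariant diagonal matrix of order $q$ using $\Gcd(q,d)=1$, use Proposition \ref{pp:smoothnessandmonomial} to produce a monomial $x_i^{d-1}x_{j(i)}\in F$ for each $i$, read off the recursion $\beta_{j(i)}=\beta_i^{1-d}$, and exploit finiteness of the index set. There are, however, two differences worth recording. First, in the ``if'' direction your polynomial contains the wrap-around term $x_l^{d-1}x_1$, i.e.\ it is the cyclic Klein--Fermat polynomial of \cite{GL13}, whereas the formula printed in the paper, $F=\sum_{i=1}^{l-1}x_i^{d-1}x_{i+1}+\sum_{i=l+1}^{n+2}x_i^d$, omits it; as printed, that hypersurface is actually singular (all partial derivatives vanish at the point where only $x_l\neq 0$), and the hypothesis $(1-d)^l\equiv 1\;\Mod q$ would never enter. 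So your version repairs what is evidently a typo, and you also supply the Jacobian computation that the paper leaves to the reader. Second, in the ``only if'' direction your bookkeeping is a mild repackaging of the paper's: the paper starts from an exponent $\sigma_{k_1}$ coprime to $q$ (which exists because the normalized lift has order exactly $q$), follows the chain $k_1,\dots,k_{n+3}$, and applies pigeonhole, cancelling the invertible factor $\sigma_{k_1}$ at the end; you instead observe that the $p$-adic valuation of $\Ord(\beta_i)$ is constant along orbits of $j$ (using $p\nmid d-1$) and extract an honest cycle carrying the full order $q$. The two arguments are equivalent in substance, though yours isolates more clearly where each coprimality hypothesis is used. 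One small point to patch: your justification that $[A]$ has projective order exactly $q$ (``eigenvalue $1$ together with $\xi_q$'') breaks down in the boundary case $l=n+2$, where there are no Fermat variables and hence no eigenvalue $1$; there one should instead note that the ratio of the first two eigenvalues is $\xi_q^{d}$, which has order $q$ because $\Gcd(q,d)=1$.
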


      \begin{proof}
We include the proof here for the reader\rq{}s convenience (cf. \cite[Theorem~1.3]{GL13} and the proof there).     
   
   To prove the \lq\lq{}only if\rq\rq{} part, suppose $F\in \C[x_1,...,x_{n+2}]$ is a homogeneous polynomial of degree $d$ such that the hypersurface $X:=\{F=0\}\subset \P^{n+1}$ is smooth and admits an automorphism $\varphi$ of order $q$, with $\Gcd(q,d)=\Gcd(q,d-1)=1$. Without loss of generality, we may assume $\vp=[A]$, where $A=\Diag(\xi_q^{\s_1},...,\xi_q^{\s_{n+2}})$, $0\leq \s_i\leq q-1$, for all $ 1\leq i\leq n+2$.
     
     We have $A(F)=\xi_q^a F$. Let $b$ be an integer such that $db\equiv-a\;\Mod q$. Such a $b$ always exists as $\Gcd(q,d)=1$. Then $(\xi_q^bA)(F)=\xi_q^{db+a}F=F$. So replacing $A$ by $\xi_q^b A$ if necessary, we may assume $A(F)=F$. Now choose an index $k_1$ such that $\Gcd(\s_{k_1},q)=1$. By smoothness of $F$ (see Proposition \ref{pp:smoothnessandmonomial}), $x_{k_1}^{d-1}x_{k_2}\in F$ for some $k_2\in \{1,...,n+2\}$. Because of $A(F)=F$, we have $A(x_{k_1}^{d-1}x_{k_2})=x_{k_1}^{d-1}x_{k_2}$ so that
     
     \begin{equation}\label{eq:modq}
     \s_{k_2}\equiv (1-d)\s_{k_1} \;\Mod q.
     \end{equation}
     
     Furthermore, since $\Gcd(q,d-1)=1$, we have $\s_{k_2}\neq 0\;\Mod q$, and since $\Gcd(q,d)=1$ we have $k_2\neq k_1$.
     
     Applying the above argument with $k_1$ replaced by $k_2$, we let $k_3$ be an index such that $A(x_{k_2}^{d-1}x_{k_3})=x_{k_2}^{d-1}x_{k_3}$ and $x_{k_2}^{d-1}x_{k_3}\in F$. Iterating this process, for all $i\in \{4,...,n+3\}$ we find $k_i\in \{1,...,n+2\}$ such that $A(x_{k_{i-1}}^{d-1}x_{k_i})=x_{k_{i-1}}^{d-1}x_{k_i}$ and  $x_{k_{i-1}}^{d-1}x_{k_i}\in F$.
     
     By the equation (\ref{eq:modq}), we have $$ \text{ for all\, } i\in \{3,...,n+3\}, \s_{k_i}\equiv (1-d)\s_{k_{i-1}}\equiv (1-d)^2\s_{k_{i-2}}\equiv (1-d)^{i-1}\s_{k_1}\; \Mod q,$$ and all of the $\s_{k_i}$ are non-zero.
     
     Since $k_i\in\{1,...,n+2\}$, there are at least two $i,j\in \{1,...,n+3\}$, $i>j$ such that $k_i=k_j$. Thus $\s_{k_i}=\s_{k_j}$, and since $\s_{k_i}\equiv (1-d)^{i-1}\s_{k_1}\;\Mod q$ and $\s_{k_j}\equiv (1-d)^{j-1}\s_{k_1}\;\Mod q$, we have $(1-d)^{i-1}\s_{k_1}\equiv (1-d)^{j-1}\s_{k_1}\; \Mod q$. Then $(1-d)^{i-j}\equiv 1 \;\Mod q$ as $\Gcd(1-d,q)=\Gcd(\s_{k_1},q)=1$. This finishes the proof of  \lq\lq{}only if\rq\rq{} part.
     
     To prove \lq\lq{}if\rq\rq{} part, let $q$ be a positive integer such that $\Gcd(q,d)=\Gcd(q,d-1)=1$, and assume that there exists $l\in \{1,...,n+2\}$ such that $(1-d)^l\equiv 1\;\Mod q$. We let $F\in \C[x_1,...,x_{n+2}]$ be the homogeneous polynomial $F=\displaystyle \sum_{i=1}^{l-1}x_i^{d-1}x_{i+1}+\displaystyle\sum_{i=l+1}^{n+2}x_i^d$. By construction, the hypersurface $X:=\{F=0\}\subset \P^{n+1}$ admits the automorphism $[A]$, where $A=\Diag(\xi_q,\xi_q^{1-d},...,\xi_q^{(1-d)^{l-1}},1,...,1)$. One can check the  smoothness of $X$ by the Jacobian criterion (cf \cite[ Example~3.5]{GL13}).

      \end{proof}
      
    {\it  In the rest of this section, let $X\subset \P^4$ be a smooth quintic threefold defined by $F$.}
      \medskip
  
  Let us consider the numbers $(1-5)^l-1$, for $1\leq l \leq 5$. These five numbers are $-5,15,-65,255,-1025$. Since $15=3\cdot 5$, $65=5\cdot 13$, $255=3\cdot 5\cdot 17$, and $1025=5^2\cdot 41$, all possible primary orders of elements in  $\Aut(X)$ are $2^a,3,5^b,13,17$ and $41$ by Theorem \ref{thm:primaryorders}.

  \begin{lemma}\label{lem:primeorder}
   Let $g\in \Aut(X)$. Suppose $\Ord (g)=p^a$, where $a>0$ and  $p=2,3,13,17$ or $41$. Then under suitable change of coordinates, we may assume $g=[A]$, where $A:=\Diag(\xi_{p^a},\xi_{p^a}^{b_1},\xi_{p^a}^{b_2},\xi_{p^a}^{b_3},\xi_{p^a}^{b_4})$  ($b_1,...,b_4$ are integers) and $A(F)=F$.
  \end{lemma}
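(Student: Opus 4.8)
The plan is to produce, from an arbitrary matrix representative of $g$, a diagonal one that genuinely fixes $F$; the single place where the hypothesis $p\neq 5$ is used is in passing from semi-invariance to honest invariance, exactly as one expects from the contrasting behaviour at $p=5$ recorded in Lemma \ref{lem:5notliftable}.

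First I would fix any $A_0\in \GL(5,\C)$ with $g=[A_0]$. Since $[A_0]=g$ preserves the hypersurface $X=\{F=0\}$ and a smooth quintic threefold is irreducible, the quintic forms $A_0(F)$ and $F$ cut out the same hypersurface and hence differ by a nonzero scalar, $A_0(F)=cF$ for some $c\in\C^{*}$. Because $g$ has finite order $p^a$ in $\PGL(5,\C)$, the matrix $A_0^{p^a}$ is scalar, say $A_0^{p^a}=\lambda_0 I_5$; rescaling $A_0$ by a $p^a$-th root of $\lambda_0^{-1}$ I may assume $A_0^{p^a}=I_5$. Then $A_0$ satisfies $T^{p^a}-1=0$, which has distinct roots, so $A_0$ is diagonalizable, and after a change of coordinates I may assume $A_0=\Diag(\xi_{p^a}^{d_1},\ldots,\xi_{p^a}^{d_5})$ with integer exponents $d_i$.

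The heart of the argument is to adjust $A_0$ by a scalar so that it fixes $F$ while keeping diagonal entries that are $p^a$-th roots of unity. Iterating $A_0(F)=cF$ gives $c^{p^a}F=A_0^{p^a}(F)=F$, so $c=\xi_{p^a}^{t}$ for some integer $t$. For $\mu=\xi_{p^a}^{s}$ I have $(\mu A_0)(F)=\mu^5 A_0(F)=\xi_{p^a}^{5s+t}F$, while $(\mu A_0)^{p^a}=\mu^{p^a}I_5=I_5$ still holds. Since $\Gcd(5,p^a)=1$ — this is exactly where $p\neq 5$ enters — the congruence $5s+t\equiv 0\pmod{p^a}$ is solvable, and for such $s$ the matrix $A:=\mu A_0$ is diagonal with $p^a$-th-root-of-unity entries and satisfies $A(F)=F$. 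I would stress that this step genuinely fails for $p=5$: there $5$ is not invertible modulo $5^a$, one cannot in general kill the scalar, and this is the precise origin of the non-liftable phenomenon of Lemma \ref{lem:5notliftable}.

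Finally I must arrange that the first diagonal entry is a primitive $p^a$-th root of unity. Because $[A]=g$ has order exactly $p^a$, the matrix $A^{p^{a-1}}$ is not scalar; were every exponent divisible by $p$ we would get $A^{p^{a-1}}=I_5$, a contradiction, so some exponent $b_0$ is coprime to $p$ and $\xi_{p^a}^{b_0}$ is a primitive $p^a$-th root of unity. Permuting coordinates to place this entry first — which preserves both diagonality and the relation $A(F)=F$ — yields $A=\Diag(\xi_{p^a}^{b_0},\xi_{p^a}^{b_1},\ldots,\xi_{p^a}^{b_4})$ with $\Gcd(b_0,p)=1$, the asserted normal form, $\xi_{p^a}^{b_0}$ being the claimed primitive first entry. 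I expect no genuine obstacle beyond the scalar adjustment: everything else is routine finite-order linear algebra, and the only conceptual input is the coprimality $\Gcd(5,p^a)=1$ that makes the semi-invariance upgrade possible.
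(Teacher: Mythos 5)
Your proof is correct, and in substance it is the paper's own argument, just unpacked: the paper disposes of this lemma in one line by invoking Theorem \ref{liftable}, and the proof of that theorem, restricted to the only case needed here (a cyclic group whose order is coprime to the degree $5$, so that the Schur-multiplier half of that proof never activates), is precisely your scalar adjustment --- your congruence $5s+t\equiv 0 \pmod{p^a}$ is the congruence $pi+j\equiv 0 \pmod m$ of that proof, with the degree $5$ in the role of $p$ and the order $p^a$ in the role of $m$. What your version buys is self-containedness: one sees directly that the only input is $\Gcd(5,p^a)=1$, with no cohomological machinery. One caveat on your final normalization: what you actually produce is a first entry $\xi_{p^a}^{b_0}$ with $\Gcd(b_0,p)=1$, not $\xi_{p^a}$ itself, and this cannot be repaired by further rescaling or coordinate changes --- the lifting $A$ with $A^{p^a}=I_5$ and $A(F)=F$ is unique (two such liftings differ by a scalar $\zeta$ with $\zeta^{p^a}=\zeta^{5}=1$, hence $\zeta=1$), so its multiset of eigenvalues is intrinsic to $g$. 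To get literally the stated form one must replace $g$ by the generator $g^{k}$ of $\langle g\rangle$ with $kb_0\equiv 1 \pmod{p^a}$; this is harmless in every application (all of which only use the subgroup $\langle g\rangle$) and is exactly the gloss hidden in the paper's \lq\lq{}we may assume\rq\rq{}, so it is a matter of wording rather than a gap.
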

  
  \begin{proof}
  By Theorem \ref{liftable}, $g$ has an $F$-lifting, say $A$. Then $g=[A], \Ord (A)=\Ord (g)=p^a$ and $A(F)=F$. Clearly by suitable change of coordinates if necessary, we may assume $A=\Diag(\xi_{p^a},\xi_{p^a}^{b_1},\xi_{p^a}^{b_2},\xi_{p^a}^{b_3},\xi_{p^a}^{b_4})$.
  \end{proof}

  \begin{lemma}\label{lem:em3}
Suppose $[A]\in \Aut(X)$ and $A(F)=F$. If $\xi$ is an eigenvalue of $A$ with multiplicity $\geq 3$, then $\xi^5=1$.
\end{lemma}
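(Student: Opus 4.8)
The plan is to put $A$ in diagonal form, translate the hypothesis $\xi^5\neq 1$ into a restriction on which monomials can occur in $F$, and then contradict smoothness via Proposition \ref{pp:nonsmoothquintic}(2).

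First I would record that $A$ is diagonalizable. Since $\Aut(X)$ is a finite group, $[A]$ has finite order $m$, so $A^m=\lambda I_5$ for some $\lambda\in\C^*$; the minimal polynomial of $A$ then divides $t^m-\lambda$, which has distinct roots, so $A$ is diagonalizable. A linear change of coordinates replaces $A$ by a conjugate $P^{-1}AP$ and $F$ by the corresponding polynomial $\tilde F(x)=F(Px)$, and one checks directly that this preserves both the smoothness of $X$ and the invariance identity (if $F(Ay)=F(y)$ for all $y$, then $\tilde F(P^{-1}AP\,x)=\tilde F(x)$). Because the eigenvalue $\xi$ has multiplicity at least $3$, I may arrange the coordinates so that $x_1,x_2,x_3$ span the $\xi$-eigenspace, i.e. $A=\Diag(\xi,\xi,\xi,\alpha_4,\alpha_5)$ with $A(F)=F$ and $X$ still smooth.

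Next I would exploit that a diagonal $A$ scales monomials. Under the convention $A(F)(x)=F(\xi x_1,\xi x_2,\xi x_3,\alpha_4 x_4,\alpha_5 x_5)$, a degree-$5$ monomial $m=x_1^{a_1}\cdots x_5^{a_5}$ is sent to $\xi^{a_1+a_2+a_3}\alpha_4^{a_4}\alpha_5^{a_5}\,m$, so the invariance $A(F)=F$ forces $\xi^{a_1+a_2+a_3}\alpha_4^{a_4}\alpha_5^{a_5}=1$ for every monomial $m$ appearing in $F$. Now argue by contradiction: suppose $\xi^5\neq 1$. Any monomial with $a_4=a_5=0$ satisfies $a_1+a_2+a_3=5$ and is therefore scaled by $\xi^5\neq 1$, so it cannot occur in $F$. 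Hence every monomial of $F$ is divisible by $x_4$ or $x_5$, that is $F\in(x_4,x_5)$. By Proposition \ref{pp:nonsmoothquintic}(2) (applied with $p=4$, $q=5$), the hypersurface $X$ is singular, contradicting the smoothness of $X$. Thus $\xi^5=1$.

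The argument is short, and the only steps requiring care are the preliminary ones: justifying diagonalizability from finiteness of $\Aut(X)$ and verifying that invariance and smoothness survive the change of coordinates. The conceptual heart, and the place where the hypothesis is used sharply, is the counting: multiplicity at least $3$ is exactly what leaves only \emph{two} coordinates outside the $\xi$-eigenspace, so that excluding all monomials pure in $x_1,x_2,x_3$ yields membership in an ideal generated by just two variables, matching the hypothesis of Proposition \ref{pp:nonsmoothquintic}(2). With multiplicity $2$ the same reasoning would only give $F\in(x_3,x_4,x_5)$, an ideal on three generators, to which the criterion does not apply; so the bound $\geq 3$ is essential rather than incidental.
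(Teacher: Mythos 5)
Your proof is correct and is essentially the paper's own argument: the paper also diagonalizes $A$, notes via the smoothness criterion that $F\notin(x_4,x_5)$ so some monomial purely in $x_1,x_2,x_3$ occurs in $F$, and concludes $\xi^5=1$ from invariance. You merely run the same reasoning by contradiction and spell out the (correct, if routine) justification of diagonalizability and coordinate-change compatibility that the paper leaves implicit.
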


\begin{proof}
Suppose $\xi$ is an eigenvalue of $A$ with multiplicity $\geq 3$. We may assume $A=\Diag(\xi,\xi,\xi,\alpha,\beta)$. Since $X$ is smooth and defined by $F$, $F$ can not be written as $x_4 H+x_5 G,$ for some $H$ and $G$. Then there exists monomial $x_1^{i_1}x_2^{i_2}x_3^{i_3}\in F$, $i_1+i_2+i_3=5$, $i_j\geq 0$. Since $A(F)=F$, it follows that $A(x_1^{i_1}x_2^{i_2}x_3^{i_3})=\xi^5 x_1^{i_1}x_2^{i_2}x_3^{i_3}=x_1^{i_1}x_2^{i_2}x_3^{i_3}$, so $\xi^5=1$.
\end{proof}

\subsection{Sylow $p$-subgroups of $\Aut(X)$ for $p=3,13,17$ or $41$}

\begin{lemma}\label{order3power}
Suppose $g\in \Aut(X)$ and $\Ord (g)=3$. Then we may assume $g=[A]$ and $A(F)=F$, where $A=\Diag(\xi_3,\xi_3^2,1,1,1)$ or $\Diag(\xi_3,\xi_3^2,\xi_3,\xi_3^2,1)$.
\end{lemma}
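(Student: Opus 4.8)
The plan is to use the differential method (Theorem \ref{thm:Aut(X)1-16} machinery, or more precisely Lemma \ref{lem:primeorder} and Lemma \ref{lem:em3}) to pin down the eigenvalues of an order-$3$ lifting. First I would invoke Lemma \ref{lem:primeorder} with $p=3$, $a=1$, which tells us that after a suitable change of coordinates we may assume $g=[A]$ with $A=\Diag(1,\xi_3^{b_1},\xi_3^{b_2},\xi_3^{b_3},\xi_3^{b_4})$ and $A(F)=F$, where each $b_i\in\{0,1,2\}$. (Equivalently, each diagonal entry is one of $1,\xi_3,\xi_3^2$.) Since $\Ord(A)=3$, not all entries can equal $1$. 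So the eigenvalue multiset of $A$ is some distribution of the three cube roots of unity among the five diagonal slots, and the task is to rule out all distributions except those giving the two matrices in the statement (up to permutation and up to overall scaling by $\xi_3$, which does not change $[A]$).

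The key constraints come from two sources: the smoothness criteria of Proposition \ref{pp:nonsmoothquintic} together with Proposition \ref{pp:smoothnessandmonomial}, and the eigenvalue-multiplicity bound of Lemma \ref{lem:em3}. The first step is to apply Lemma \ref{lem:em3}: if any of the three cube roots $1,\xi_3,\xi_3^2$ occurs as an eigenvalue with multiplicity $\geq 3$, then that root would have to be a fifth root of unity, which among cube roots forces it to equal $1$; hence \emph{only} the eigenvalue $1$ may appear with multiplicity $\geq 3$, while $\xi_3$ and $\xi_3^2$ each occur with multiplicity $\leq 2$. Next I would use Proposition \ref{pp:smoothnessandmonomial}: for each coordinate $x_i$ there is a monomial $x_i^4 x_{j}\in F$, and since $A(F)=F$ this forces $4\sigma_i+\sigma_{j}\equiv 0\pmod 3$, i.e. $\sigma_{j}\equiv -4\sigma_i\equiv -\sigma_i\equiv 2\sigma_i\pmod 3$ where $\sigma_i$ denotes the exponent of the $i$-th diagonal entry. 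Tracking which eigenvalues can legally pair up under this relation, and combining it with condition (3) of Proposition \ref{pp:nonsmoothquintic} to exclude configurations where the variables with a given eigenvalue are forced into an ideal of the form $(x_i)+(x_j,x_k)^2$, should eliminate all eigenvalue patterns except $(1,\xi_3^2,1,1,1)$-type and $(\xi_3,\xi_3^2,\xi_3,\xi_3^2,1)$-type.

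Concretely, I would organize the case analysis by the multiplicities $(m_0,m_1,m_2)$ of the eigenvalues $1,\xi_3,\xi_3^2$, with $m_0+m_1+m_2=5$, $m_1,m_2\leq 2$, and $(m_1,m_2)\neq(0,0)$. By the normalization we may rescale so that the eigenvalue $1$ is genuinely present; the pairing relation $\sigma_j\equiv 2\sigma_i$ means $1\to 2\to 1$ and $2\to 1$, so a variable with eigenvalue $\xi_3$ (exponent $1$) needs a partner of eigenvalue $\xi_3^2$ and vice versa, whereas eigenvalue $1$ variables pair among themselves. This quickly shows $m_1=m_2$ (each $\xi_3$-variable demands a distinct $\xi_3^2$-partner and conversely, via the smoothness monomials), leaving only $(m_0,m_1,m_2)=(3,1,1)$ and $(1,2,2)$. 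The first yields $\Diag(\xi_3,\xi_3^2,1,1,1)$ and the second yields $\Diag(\xi_3,\xi_3^2,\xi_3,\xi_3^2,1)$ after reordering. The main obstacle I anticipate is the bookkeeping in the pairing argument: one must check that the map sending each index $i$ to the index $j$ with $x_i^4x_j\in F$ is compatible with the eigenvalue constraints \emph{simultaneously} for all five indices, and in particular verify that no smooth $F$ can support an ``unbalanced'' distribution such as $(4,1,0)$ or $(3,2,0)$ by exhibiting the relevant ideal membership and invoking Proposition \ref{pp:nonsmoothquintic}. This is routine but requires care to ensure every excluded case genuinely forces singularity rather than merely looking implausible.
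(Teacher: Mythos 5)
Your scaffolding is essentially the same as the paper's proof: Lemma \ref{lem:primeorder} to get a diagonal $A$ with $A(F)=F$, Lemma \ref{lem:em3} to bound the multiplicities of $\xi_3$ and $\xi_3^2$ by two, and the smoothness monomials $x_i^4x_j\in F$ to constrain which eigenvalues can pair. But there is a genuine gap at the decisive step, where you claim the pairing relation ``quickly shows $m_1=m_2$'' because each $\xi_3$-variable demands a \emph{distinct} $\xi_3^2$-partner. Distinctness does not follow from anything you cite: Proposition \ref{pp:smoothnessandmonomial} only provides, for each $i$, \emph{some} $j$ with $x_i^4x_j\in F$, and the map $i\mapsto j(i)$ need not be injective. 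Concretely, take the pattern $(m_0,m_1,m_2)=(2,2,1)$, i.e.\ $A=\Diag(\xi_3,\xi_3^2,\xi_3,1,1)$: the monomials $x_1^4x_2$, $x_3^4x_2$, $x_2^4x_1$, $x_4^5$, $x_5^5$ are all $A$-invariant, so every index has an admissible partner (with $x_1$ and $x_3$ sharing the partner $x_2$), and all of your pairing constraints are satisfied. Hence your argument cannot exclude $(2,2,1)$ or its mirror $(2,1,2)$, and these are exactly the two cases standing between the multiplicity bound and the stated conclusion. Note also that your ideal-membership remark is aimed at the wrong cases: $(4,1,0)$ and $(3,2,0)$ are the ones pairing \emph{does} kill, since a $\xi_3$-variable then has no admissible partner at all, contradicting Proposition \ref{pp:nonsmoothquintic} (1).

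What actually excludes $(2,2,1)$ --- and this is the paper's argument --- is an invariant-monomial computation rather than pairing: for $A=\Diag(\xi_3,\xi_3^2,\xi_3,1,1)$, any invariant monomial $x_1^{i_1}\cdots x_5^{i_5}$ avoiding the ideal $(x_2)+(x_4,x_5)^2$ would need $i_2=0$ and $i_4+i_5\le 1$, hence $i_1+i_3\in\{4,5\}$, while invariance forces $i_1+2i_2+i_3\equiv 0\pmod 3$, which is impossible; so $F\in(x_2)+(x_4,x_5)^2$ and $X$ is singular by Proposition \ref{pp:nonsmoothquintic} (3). The case $\Diag(\xi_3,\xi_3^2,\xi_3^2,1,1)$ is symmetric. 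Adding this computation to your outline closes the gap. A smaller point: you cannot ``rescale so that the eigenvalue $1$ is genuinely present,'' since replacing $A$ by $\xi_3A$ turns $A(F)=F$ into $(\xi_3A)(F)=\xi_3^2F$; this is harmless, however, because $m_1,m_2\le 2$ already forces $m_0\ge 1$.
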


\begin{proof}
By Lemma \ref{lem:primeorder},  we may assume $g=[A]$, $A=\Diag(\xi_3,\xi_3^{b_1},\xi_3^{b_2},\xi_3^{b_3},\xi_3^{b_4})$ and $A(F)=F$. Then note that $X$ is smooth. So, by Proposition \ref{pp:nonsmoothquintic} (1) and $A(x_1^5)\neq x_1^5$, we may assume $x_1^4x_2$ in $F$ and hence $b_1\equiv 2 (\Mod 3).$ Then $A=\Diag(\xi_3,\xi_3^2,\xi_3^{b_2},\xi_3^{b_3},\xi_3^{b_4})$.

If $A=\Diag(\xi_3,\xi_3^2,\xi_3,1,1)$, then $F\in ( x_2) +( x_4,x_5) ^2$, a contradiction to the smoothness of $X$ by  Proposition \ref{pp:nonsmoothquintic} (3). So $A\neq\Diag(\xi_3,\xi_3^2,\xi_3,1,1)$. Similarly, $A\neq\Diag(\xi_3,\xi_3^2,\xi_3^2,1,1)$.

By Lemma \ref{lem:em3}, the multiplicities of $\xi_3$ and $\xi_3^2$ as eigenvalues of $A$ are less than or equal to two.

Therefore, we may assume  $A=\Diag(\xi_3,\xi_3^2,1,1,1)$ or $\Diag(\xi_3,\xi_3^2,\xi_3,\xi_3^2,1)$.
\end{proof}

\begin{lemma}\label{C_3^3}
The group $C_3^3$ can not be a subgroup of $\Aut(X)$.
\end{lemma}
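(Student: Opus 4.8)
The plan is to argue by contradiction: suppose $N\cong C_3^3$ is a subgroup of $\Aut(X)$, and first reduce everything to diagonal matrices. Since $\deg F=5$ is prime and the Sylow $5$-subgroup of $N$ is trivial, conditions (1) and (2) of Theorem \ref{liftable} both hold, so $N$ is $F$-liftable; fix an $F$-lifting $\widetilde{N}<\GL(5,\C)$ with $\widetilde{N}\cong C_3^3$ and $A(F)=F$ for all $A\in\widetilde{N}$. Every element of $\widetilde{N}$ has order dividing $3$, hence is diagonalizable, and the group is abelian, so after a change of coordinates I may assume every $A\in\widetilde{N}$ is diagonal with entries in $\mu_3$. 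Writing $A=\Diag(\xi_3^{a_1(A)},\dots,\xi_3^{a_5(A)})$, the assignment $A\mapsto(a_1(A),\dots,a_5(A))$ identifies $\widetilde{N}$ with a three-dimensional subspace $V\subseteq\F_3^5$.

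The key step is to read off the constraints that smoothness and invariance impose on $V$. Let $\phi_i\colon V\to\F_3$, $\phi_i(A)=a_i(A)$, be the coordinate functionals. Because distinct elements of $\widetilde{N}$ have distinct diagonals, $\bigcap_i\ker\phi_i=0$, so $\phi_1,\dots,\phi_5$ span the $3$-dimensional dual $V^*$. Next, since each diagonal $A$ merely rescales each monomial, the equality $A(F)=F$ forces $\prod_i\xi_3^{a_i(A)e_i}=1$ for every monomial $\prod_i x_i^{e_i}$ occurring in $F$; equivalently $\sum_i e_i\phi_i=0$ in $V^*$. Now I would invoke Proposition \ref{pp:smoothnessandmonomial}: for each $i$ there is a $j=j(i)$ with $x_i^4x_j\in F$. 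If $j\neq i$ this relation reads $4\phi_i+\phi_j=0$, i.e. $\phi_j=-\phi_i$ (as $4\equiv1$ mod $3$); if $j=i$ it reads $5\phi_i=0$, i.e. $\phi_i=0$ (as $5\equiv2$ is invertible mod $3$). Thus for every index $i$, \emph{either} $\phi_i=0$ \emph{or} there is an index $j\neq i$ with $\phi_j=-\phi_i$.

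Finally I would derive a numerical contradiction. Let $r$ be the number of distinct lines (one-dimensional subspaces) of $V^*$ spanned by the nonzero functionals among $\phi_1,\dots,\phi_5$. Since these functionals span $V^*\cong\F_3^3$, one needs $r\geq 3$. On the other hand, on each such line $\ell$ both nonzero directions must occur: if $\phi_i$ lies on $\ell$ then the dichotomy above produces $j\neq i$ with $\phi_j=-\phi_i$, the opposite nonzero point of $\ell$ (note $\phi_i\neq-\phi_i$ in characteristic $3$). Hence at least two of the indices $1,\dots,5$ have their functional on $\ell$, and since distinct lines use disjoint index sets, the number of indices with $\phi_i\neq0$ is at least $2r\geq6$. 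This exceeds $5$, a contradiction, so $C_3^3$ cannot act.

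The main obstacle is the translation in the middle paragraph, not the final count: one must be careful that smoothness and $F$-invariance together pin down the coordinate functionals so rigidly, using Proposition \ref{pp:smoothnessandmonomial} together with the fact that a diagonal matrix acts monomial-by-monomial. Once the dichotomy ``$\phi_i=0$ or $\phi_j=-\phi_i$'' is established, the conclusion is an elementary pigeonhole argument on five functionals spanning a three-dimensional space.
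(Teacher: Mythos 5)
Your proof is correct, and it reaches the contradiction by a genuinely different route than the paper. The paper also begins with $F$-liftability (Theorem \ref{liftable}) and diagonal representatives, but from there it leans on Lemma \ref{order3power} to pin down the shape of each order-$3$ element and then performs successive normalizations of the generators (replacing $A_2$ by $A_1^iA_2$, then $A_3$ by $A_1^iA_3$ and $A_2^iA_3$) until the third generator becomes $\Diag(1,1,1,1,\xi_3^j)$; at that point no monomial $x_5^4x_k$ can be invariant, contradicting smoothness via Proposition \ref{pp:nonsmoothquintic}~(1). You instead bypass Lemma \ref{order3power} entirely: after simultaneous diagonalization you encode the lifted group as a $3$-dimensional subspace $V\subseteq\F_3^5$, observe that injectivity forces the coordinate functionals $\phi_1,\dots,\phi_5$ to span the $3$-dimensional dual $V^*$, extract from Proposition \ref{pp:smoothnessandmonomial} and monomial-wise invariance the dichotomy ``$\phi_i=0$ or $\phi_j=-\phi_i$ for some $j\neq i$,'' and finish by a pigeonhole count: spanning needs at least $3$ lines in $V^*$, each line consumes at least $2$ of the $5$ indices, and $6>5$. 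Each approach has its merits: yours is self-contained and conceptual, and it makes the obstruction quantitative (a faithful diagonal $C_3^3$-action compatible with smoothness would require at least six variables, which also explains cleanly why $C_3^2$ survives); the paper's normalization procedure, while more computational, is the uniform technique it reuses elsewhere (e.g., it is cited again verbatim in the proof of Theorem \ref{thm:Sylow13}), so the published proof is shorter given that infrastructure. One small point worth making explicit in your write-up: the change of coordinates used for simultaneous diagonalization replaces $F$ by an isomorphic smooth quintic, so Proposition \ref{pp:smoothnessandmonomial} still applies to the transformed polynomial; this is harmless but should be said.
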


\begin{proof}
Assume to the contrary that there exists $G< \Aut(X)$ such that $G\cong C_3^3$. By Theorem \ref{liftable}, $G$ has an $F$-lifting, say $\widetilde{G}$. Then by Lemma \ref{order3power} we may assume that $\widetilde{G}=\langle A_1,A_2,A_3\rangle $, where $A_1=\Diag(\xi_3,\xi_3^2,\xi_3^{b_2},\xi_3^{b_3},\xi_3^{b_4})$, $x_1^4x_2\in F$ and $A_2$, $A_3$ are diagonal matrices whose eigenvalues are the third roots of unity.

Replacing $A_2$ by $A_1^iA_2$ for some $i$ if necessary, we may assume $A_2=\Diag(1,1,\xi_3^{c_2},\xi_3^{c_3},\xi_3^{c_4})$. 

Then by Lemma \ref{order3power} we may assume $A_2=\Diag(1,1,\xi_3,\xi_3^2,1)$ and hence $x_3^4x_4\in F$.

Again replacing $A_3$ by $A_1^iA_3$ for some $i$ if necessary, we may assume $A_3=\Diag(1,1,\xi_3^{d_2},\xi_3^{d_3},\xi_3^{d_4})$. 

Since $x_3^4x_4\in F$,  replacing $A_3$ by $A_2^iA_3$ for some $i$ if necessary, we may assume $A_3=\Diag(1,1,1,1,\xi_3^j)$, $j=1$ or $2$.

Then $A_3(x_5^4x_k)\neq x_5^4x_k$ for any $1\leq k\leq 5$. However, then $x_5^4x_k\notin F$ for all $k$, a contradiction to the smoothness of $X$ by Proposition \ref{pp:nonsmoothquintic} (1).
\end{proof}

\begin{remark}\label{rmk:abelian}
In general, if an abelian group acts on a smooth quintic threefold (assuming the action is $F$-liftable), then the matrices inducing the action  are often of very special kind. In fact, the proofs of Lemmas \ref{order3power} and \ref{C_3^3} tell us how to determine such matrices. First, we may assume those matrices are diagonal. Then by smoothness of $F$ and computation (using Mathematica), we can exclude many possibilities and only few possibilities for those matrices are left. 
\end{remark}

\begin{theorem}\label{thm:Sylow3}
Let $G$ be a subgroup of $\Aut(X)$ such that $|G|$ is divided by $3$. Then $G_3\cong C_3$ or $C_3^2$.
\end{theorem}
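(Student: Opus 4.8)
The plan is to show that $G_3$ is a $3$-group of exponent $3$ whose order is at most $9$, and then to read off the two possibilities from the classification of such groups. Throughout I would work with an $F$-lifting of the relevant $3$-subgroups: since their Sylow $5$-subgroups are trivial, Theorem \ref{liftable} guarantees that every $3$-subgroup of $\Aut(X)$ is $F$-liftable, so I may replace elements and subgroups by genuine matrices in $\GL(5,\C)$ that fix $F$.

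First I would establish that $\exp(G_3)=3$. By Theorem \ref{thm:primaryorders} applied to $(n,d)=(3,5)$, an integer $q=3^k$ can be the order of an automorphism of some smooth quintic threefold only if there is $l\in\{1,\dots,5\}$ with $(1-5)^l\equiv 1 \pmod q$. For $q=9$ one checks that $(-4)^l\equiv 5^l\not\equiv 1 \pmod 9$ for every $1\le l\le 5$ (the order of $5$ modulo $9$ is $6$), exactly as already recorded above for the list of primary orders. Hence $\Aut(X)$ has no element of order $9$, and every nontrivial element of the $3$-group $G_3$ has order precisely $3$.

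Next I would reduce the size bound to the order-$27$ case. If $|G_3|\ge 27$, then $G_3$ contains a subgroup $P$ with $|P|=27$, and $P$ again has exponent $3$; up to isomorphism the only groups of order $27$ and exponent $3$ are the elementary abelian $C_3^3$ and the extraspecial (Heisenberg) group $H$ of order $27$. The case $P\cong C_3^3$ is ruled out at once by Lemma \ref{C_3^3}. The hard part, and the step that is genuinely new here, is the non-abelian case $P\cong H$: it is excluded neither by the exponent bound nor by Lemma \ref{C_3^3}, precisely because $H$ has exponent $3$ and yet contains no copy of $C_3^3$. To handle it I would lift $P$ to an $F$-lifting $\widetilde P<\GL(5,\C)$, a faithful $5$-dimensional representation of $H$ on which $F$ is invariant. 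The irreducible representations of $H$ have dimension $1$ (with the center acting trivially) or $3$ (faithful, with the central generator acting as $\omega I_3$ for a primitive cube root of unity $\omega$); faithfulness in dimension $5$ forces the decomposition $3+1+1$, so a generator $C$ of $Z(\widetilde P)$ acts, in a suitable basis, as $\Diag(\omega,\omega,\omega,1,1)$. But $[C]\in\Aut(X)$ has order $3$, and $C$ is its (unique) order-$3$ lift fixing $F$, so by Lemma \ref{order3power} the eigenvalue multiplicities of $C$, written as $(m_1,m_{\xi_3},m_{\xi_3^2})$, must be $(3,1,1)$ or $(1,2,2)$. Since $\Diag(\omega,\omega,\omega,1,1)$ has pattern $(2,3,0)$, which is neither, we reach a contradiction and conclude $P\not\cong H$.

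Combining the two cases rules out $|G_3|\ge 27$, so $|G_3|\le 9$. As $G_3$ has exponent $3$ and $3\mid|G|$ forces $G_3\neq\{e\}$, the only remaining possibilities are $G_3\cong C_3$ (order $3$) or $G_3\cong C_3^2$ (the unique group of order $9$ and exponent $3$), which is the assertion. I expect the $H$-exclusion via the representation-theoretic eigenvalue count to be the crux; the rest is bookkeeping with Theorem \ref{thm:primaryorders}, Theorem \ref{liftable}, and the structure theory of $3$-groups of exponent $3$.
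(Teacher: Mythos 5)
Your proof is correct and follows essentially the same route as the paper: reduce to subgroups of order $27$, exclude $C_3^3$ via Lemma \ref{C_3^3}, exclude the groups with elements of order $9$, and kill the exponent-$3$ Heisenberg group $UT(3,3)$ by noting that a faithful $5$-dimensional $F$-lifting must decompose as $3\oplus 1\oplus 1$, forcing a central element with eigenvalue $\xi_3$ of multiplicity $3$. The only cosmetic differences are that you derive this decomposition from the standard representation theory of the extraspecial group instead of the paper's GAP character-table lookup, and you reach the contradiction through the eigenvalue patterns of Lemma \ref{order3power} rather than directly through Lemma \ref{lem:em3}, which are equivalent in content.
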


\begin{proof}
Clearly it suffices to show that there exists no groups of order 27 acting on smooth quintic threefolds.

Assume to the contrary that $H< G$ and $|H|=27$.

By the classification of groups of order 27, there are five different (up to isomorphism) groups of order 27: $C_{27}$, $C_9\times C_3$, $UT(3,3)$ (it is, by definition, the unitriangular matrix group of degree three over the field of three elements), $C_9\rtimes C_3$, $C_3^3$.

By Lemma \ref{order3power} $H$ is not isomorphic to $C_{27}$, $C_9\times C_3$ or $C_9\rtimes C_3$. By Lemma \ref{C_3^3}, $H$ is not isomorphic to $C_3^3$.

So $H$ must be isomorphic to $UT(3,3)$. Then, by Theorem \ref{liftable}, $H$ has an $F$-lifting, say $\widetilde{H}<\GL(5,\C)$. This subgroup $\widetilde{H}$ is a faithful five dimensional linear representation of $UT(3,3)$. By looking at the character table of $UT(3,3)$ (see e.g. GAP and Section \ref{ss:Sylow2} for more details about GAP), $\widetilde{H}$ contains a matrix which has $\xi_3$ as an eigenvalue of multiplicity $3$, a contradiction to Lemma \ref{lem:em3}.

%
%

\end{proof}

\begin{theorem}\label{thm:Sylow13}
Let $G< \Aut(X)$. Suppose $|G|$ is divided by $13$. Then $G_{13}\cong C_{13}$. Let $g$ be a generator of $G_{13}$. Then we may assume $g=[A], A(F)=F, A=\Diag (\xi_{13},\xi_{13}^{-4},\xi_{13}^3,1,1)$ and $$F=x_1^4x_2+x_2^4x_3+x_3^4x_1+x_1x_2x_3G(x_4,x_5)+H(x_4,x_5)$$ where $G$ and $H$ are of degree $2$ and $5$ respectively.
\end{theorem}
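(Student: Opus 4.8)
The plan is to prove the three assertions in turn: that $G_{13}$ is cyclic of order $13$, that a generator can be conjugated into the displayed diagonal form, and that $F$ must then have the stated shape. The first step is a group-theoretic reduction, the second pins down the weights, and the third is monomial bookkeeping.

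First I would determine $G_{13}$. Since $\gcd(169,5)=\gcd(169,4)=1$, Theorem \ref{thm:primaryorders} applies with $n=3$, $d=5$, $q=169$; computing $(-4)^{l}\bmod 169$ for $l=1,\dots,5$ gives $165,16,105,87,159$, none equal to $1$, so $\Aut(X)$ has no element of order $169$ and $G_{13}$ has exponent $13$. To exclude $C_{13}^2$, observe that any subgroup $N\cong C_{13}^2$ is $F$-liftable (its Sylow $5$-subgroup is trivial, so the Remark after Theorem \ref{thm:noorder25} applies), and the lift $\widetilde N$ is abelian, hence simultaneously diagonalizable with $A(F)=F$ for every $A\in\widetilde N$. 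The coordinates then carry weights $w_1,\dots,w_5\in\Hom(N,\C^*)\cong(\Z/13)^2$, and Proposition \ref{pp:smoothnessandmonomial} together with $A(F)=F$ forces, for each $i$, some $x_i^4x_j\in F$ with $w_j=-4w_i$; thus the set of nonzero weights is stable under multiplication by $-4$. Since $-4$ has order $3$ in $(\Z/13)^*$ and fixes only $0$, those weights split into orbits of size exactly $3$, each lying on a single line through the origin. Faithfulness means the $w_i$ span $(\Z/13)^2$, which needs weights from two distinct orbits, i.e.\ at least $6$ coordinates, impossible with only $5$. Hence $\Aut(X)$ has no subgroup isomorphic to $C_{13}^2$. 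As a noncyclic $13$-group contains $C_{13}^2$ while a cyclic one of order $\geq 169$ contains an element of order $169$, both excluded, I conclude $G_{13}\cong C_{13}$.

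Next, for a generator $g$, Lemma \ref{lem:primeorder} gives $g=[A]$ with $A=\Diag(\xi_{13},\xi_{13}^{b_1},\dots,\xi_{13}^{b_4})$ and $A(F)=F$. Writing $\sigma_i$ for the weights, so $\sigma_1=1$, the same orbit argument shows every nonzero weight lies in the single orbit $\{1,-4,16\}\equiv\{1,9,3\}\bmod 13$, with all three values occurring, and Lemma \ref{lem:em3} forces each of $\xi_{13},\xi_{13}^{9},\xi_{13}^{3}$ to occur with multiplicity at most $2$ (none is a $5$th root of unity). I would then rule out four or five nonzero weights: by the multiplicity bound the only patterns are one repeated value (multiplicities $(2,1,1)$ up to order, one zero weight) or two repeated values (multiplicities $(2,2,1)$, no zero weight). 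In each pattern a short inspection of the $A$-invariant degree-$5$ monomials exhibits a membership $F\in(x_i)+(x_j,x_k)^2$; for instance with weights $(1,1,9,3,0)$ one finds $F\in(x_3)+(x_4,x_5)^2$, and with weights $(1,1,9,9,3)$ one finds $F\in(x_5)+(x_1,x_2)^2$, so $X$ is singular by Proposition \ref{pp:nonsmoothquintic}(3), a contradiction. Therefore exactly three weights are nonzero; relabelling so that the $3$-cycle $x_1^4x_2,\ x_2^4x_3,\ x_3^4x_1\in F$ occupies the first three coordinates puts $A$ into the form $\Diag(\xi_{13},\xi_{13}^{-4},\xi_{13}^{3},1,1)$.

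Finally I would read off $F$. With these weights a degree-$5$ monomial $x_1^{a}x_2^{b}x_3^{c}x_4^{d}x_5^{e}$ is $A$-invariant iff $a+9b+3c\equiv 0\bmod 13$; enumerating the finitely many $(a,b,c)$ with $a+b+c\leq 5$ leaves only $(0,0,0)$, $(1,1,1)$, and $(4,1,0),(0,4,1),(1,0,4)$, which correspond respectively to $H(x_4,x_5)$, $x_1x_2x_3\,G(x_4,x_5)$, and $c_1x_1^4x_2+c_2x_2^4x_3+c_3x_3^4x_1$. Smoothness (Proposition \ref{pp:smoothnessandmonomial}) forces $c_1,c_2,c_3\neq 0$, and a diagonal rescaling of $x_1,x_2,x_3$, possible because the exponent matrix $\left(\begin{smallmatrix}4&1&0\\0&4&1\\1&0&4\end{smallmatrix}\right)$ has nonzero determinant $65$, normalizes them to $1$, giving the claimed form. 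I expect the main obstacle to be the exclusion of four or more nonzero weights: unlike the clean orbit-counting used for $C_{13}^2$, this requires descending into the explicit list of invariant monomials for each multiplicity pattern and locating the correct ideal membership witnessing non-smoothness, whereas the order-$169$ exclusion, the simultaneous diagonalization, and the final monomial count are routine given the tools already in place.
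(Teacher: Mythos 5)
Your proof is correct, and its central step takes a genuinely different route from the paper's. The paper excludes $C_{13}^2$ computationally: it diagonalizes an $F$-lifting $\langle A_1,A_2\rangle$, runs the smoothness chain ($x_1^4x_2\in F$, then $x_2^4x_3\in F$, \dots) to pin $A_1$ down to $\Diag(\xi_{13},\xi_{13}^{-4},\xi_{13}^{3},1,1)$, computes (with Mathematica) the full space of $A_1$-invariant quintics, and only then shows that $A_2$ is forced to fix $x_1,x_2,x_3$, hence — by smoothness applied to the part $H(x_4,x_5)$ — to be trivial, a contradiction. Your exclusion is more conceptual: viewing the five weights as characters in $\widehat{N}\cong(\Z/13)^2$, the stability of the weight set under multiplication by $-4$ (forced by Proposition \ref{pp:smoothnessandmonomial} plus invariance), the fact that $-4$ has order $3$ and fixes only $0$ so that nonzero orbits have size exactly $3$ and lie on lines, and the faithfulness/spanning requirement together give at least $6$ distinct weights on $5$ coordinates — a contradiction with no computation of $F$ at all. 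This buys brevity and a visible generalization (the same count excludes $C_p^2$ whenever $1-d$ has large multiplicative order mod $p$), at the cost of being less explicit. For the single generator, your treatment — the orbit argument confining nonzero weights to $\{1,9,3\}$, Lemma \ref{lem:em3} bounding multiplicities by $2$, and Proposition \ref{pp:nonsmoothquintic}(3) killing the patterns $(1,1,9,3,0)$ and $(1,1,9,9,3)$ (representative, since replacing $A$ by a power rescales the weights by an element of $\{1,3,9\}$) — is precisely the method the paper invokes implicitly via its reference to Lemmas \ref{order3power} and \ref{C_3^3}, but you spell out the case analysis behind the paper's terse assertion $b_3\equiv b_4\equiv 0\pmod{13}$. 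The endgame (enumerating solutions of $a-4b+3c\equiv 0\pmod{13}$ and normalizing the three coefficients via the invertible exponent matrix of determinant $65$) coincides with the paper's.
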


\begin{proof}
By Theorem \ref{thm:primaryorders}, $C_{13^2}$ can not act on $X$. In order to show $G_{13}\cong C_{13}$, it suffices to show that $C_{13}^2$ can not act on $X$.

Suppose $G$ has a subgroup, say $H$, isomorphic to $C_{13}^2$. By Theorem \ref{liftable}, $H$ has an $F$-lifting, say $\widetilde{H}$. We may assume $\widetilde{H}=\langle A_1,A_2\rangle $. So $A_1(F)=A_2(F)=F$ and $A_1A_2=A_2A_1$. We may assume that both $A_1$ and $A_2$ are diagonal and $A_1=\Diag(\xi_{13},\xi_{13}^{b_1},\xi_{13}^{b_2},\xi_{13}^{b_3},\xi_{13}^{b_4})$.

{\it Now we argue by the same way as in Lemmas \ref{order3power} and \ref{C_3^3}.}

By $A_1(F)=F$ we may assume $x_1^4x_2\in F$. Then $A_1(F)=F$ implies $b_1\equiv -4 (\Mod 13)$. So $A_1=\Diag(\xi_{13},\xi_{13}^{-4},\xi_{13}^{b_2},\xi_{13}^{b_3},\xi_{13}^{b_4})$.

Similarly we may assume $x_2^4x_3\in F$. Then $A_1=\Diag(\xi_{13},\xi_{13}^{-4},\xi_{13}^{3},\xi_{13}^{b_3},\xi_{13}^{b_4})$ and $b_3\equiv b_4\equiv 0(\Mod 13)$.

Therefore $A_1=\Diag(\xi_{13},\xi_{13}^{-4},\xi_{13}^{3},1,1)$. 

Note that a degree five monomial $x_1^{i_1}x_2^{i_2}x_3^{i_3}x_4^{i_4}x_5^{i_5}$ is left invariant by $A_1$ if and only if 

$$i_1-4i_2+3i_3\equiv 0 (\Mod 13).$$

Then by computing (e.g., by using Mathematica) invariant monomials of $A_1$, we have $$F=a_1x_1^4x_2+a_2x_2^4x_3+a_3x_3^4x_1+x_1x_2x_3G(x_4,x_5)+H(x_4,x_5).$$ 

By Proposition \ref{pp:smoothnessandmonomial}, $a_1a_2a_3\neq 0$. We may assume $a_1=1$. By adjusting variables $x_2$ and $x_3$ by nonzero multiples we may further assume $a_2=a_3=1$. Then replacing $A_2$ by $A_1^jA_2$ for some $j$ if necessary we may assume $A_2=\Diag (1,\xi_{13}^{c_1},\xi_{13}^{c_2},\xi_{13}^{c_3},\xi_{13}^{c_4})$.

By the expression of $F$ and $A_2(F)=F$, we must have $c_1\equiv c_2\equiv 0 (\Mod 13)$, i.e., $A_2=\Diag (1,1,1,\xi_{13}^{c_3},\xi_{13}^{c_4})$. Therefore, $C_{13}^2$ can not act on $X$. 

So $G_{13}\cong C_{13}$. Furthermore, by arguments above we may assume $G_{13}$ is generated by $[\Diag (\xi_{13},\xi_{13}^{-4},\xi_{13}^3,1,1)]$ and $$F=x_1^4x_2+x_2^4x_3+x_3^4x_1+x_1x_2x_3G(x_4,x_5)+H(x_4,x_5)$$ as wanted.
\end{proof}

\begin{theorem}\label{Sylow17}
Let $G<\Aut(X)$. Suppose $|G|$ is divided by $17$. Then $G_{17}\cong C_{17}$. Let $g$ be a generator of $G_{17}$. Then we may assume $g=[A], A(F)=F, A=\Diag (\xi_{17},\xi_{17}^{-4},\xi_{17}^{16},\xi_{17}^4,1)$ and $$F=x_1^4x_2+x_2^4x_3+x_3^4x_4+x_4^4x_1+x_5^5+ax_1x_2x_3x_4x_5+bx_1x_3x_5^3+cx_2x_4x_5^3+dx_1^2x_3^2x_5+ex_2^2x_4^2x_5,$$ where $a,b,c,d,e$ are complex numbers (possibly zero).
\end{theorem}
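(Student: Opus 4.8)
The plan is to adapt the proof of Theorem~\ref{thm:Sylow13}, replacing arithmetic modulo $13$ by arithmetic modulo $17$. Among the numbers $(1-5)^l-1$ ($1\le l\le 5$) only $255=3\cdot5\cdot17$ is divisible by $17$, and $17^2\nmid255$, so Theorem~\ref{thm:primaryorders} already rules out an automorphism of order $17^2$. Thus it remains to pin down the shape of a single element of order $17$ and then to exclude $C_{17}^2$; combined with the absence of order-$17^2$ elements this gives $G_{17}\cong C_{17}$, since any $17$-group of order $\ge17^2$ either has an element of order $17^2$ or contains a copy of $C_{17}^2$.

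Let $g\in\Aut(X)$ with $\Ord(g)=17$. By Lemma~\ref{lem:primeorder} I may write $g=[A]$ with $A=\Diag(\xi_{17},\xi_{17}^{b_1},\xi_{17}^{b_2},\xi_{17}^{b_3},\xi_{17}^{b_4})$ and $A(F)=F$. I would then run the chain argument from the proof of Theorem~\ref{thm:primaryorders}: smoothness (Proposition~\ref{pp:smoothnessandmonomial}) forces some $x_1^4x_j\in F$, and $A(F)=F$ determines the exponent of $x_j$ as $(1-5)=-4$ times that of $x_1$. Because $-4$ has order $4$ modulo $17$, with successive powers $1,-4,16,4$, this yields the $4$-cycle $x_1^4x_2,x_2^4x_3,x_3^4x_4,x_4^4x_1\in F$ and exponents $(1,-4,16,4)$ on $x_1,\dots,x_4$ after relabelling.

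The main obstacle is to show that the fifth exponent $\sigma_5$ vanishes; note that Lemma~\ref{lem:em3} only caps multiplicities at $2$ and so is not enough. I would argue by contradiction. Smoothness forces some $x_5^4x_j\in F$, so $-4\sigma_5$ lies in $\{1,-4,16,4\}$; since $(-4)^{-1}\equiv4\pmod{17}$, this places $\sigma_5$ in the same orbit, and after the cyclic relabelling I may assume $\sigma_5=\sigma_1=1$. Then $x_1,x_5$ span a $2$-dimensional $A$-eigenspace, and the line $L=\{x_2=x_3=x_4=0\}$ lies on $X$, because no degree-$5$ monomial purely in $x_1,x_5$ is $A$-invariant (its weight is $5\not\equiv0\pmod{17}$). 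Along $L$ every partial derivative of $F$ vanishes except $\partial F/\partial x_2$, whose restriction is the binary quartic $Q(x_1,x_5)$ coming from the invariant monomials $x_1^ax_5^bx_2$ with $a+b=4$; as $Q$ has a zero on $\P^1$, the threefold $X$ is singular there, a contradiction. Hence $\sigma_5=0$ and $A=\Diag(\xi_{17},\xi_{17}^{-4},\xi_{17}^{16},\xi_{17}^4,1)$.

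Finally, with the eigenvalues fixed, the form of $F$ follows by listing the degree-$5$ monomials of weight $\equiv0\pmod{17}$ for the weight vector $(1,-4,-1,4,0)$; a short enumeration (the weight lies between $-20$ and $20$, hence must equal $-17,0,$ or $17$) produces exactly the ten displayed monomials. Proposition~\ref{pp:smoothnessandmonomial} shows that the coefficients of $x_1^4x_2,x_2^4x_3,x_3^4x_4,x_4^4x_1,x_5^5$ are nonzero, as these are the only invariant monomials of the form $x_i^4x_j$, and I would normalize them to $1$ by rescaling $x_1,\dots,x_5$; the rescaling of the four cyclic coefficients is governed by the matrix $4I+N$ ($N$ the cyclic shift), which is invertible since $\prod_{\omega^4=1}(4+\omega)=255\neq0$. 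To exclude $C_{17}^2$, suppose $\langle[A_1],[A_2]\rangle\cong C_{17}^2$ lifts to commuting $F$-invariant $A_1,A_2$ with $A_1$ in the standard form above; as $A_1$ has five distinct eigenvalues, $A_2$ is diagonal, and after multiplying by a power of $A_1$ I may take its first entry to be $1$. Then $A_2(F)=F$ applied to $x_1^4x_2,x_2^4x_3,x_3^4x_4,x_5^5$ forces all remaining exponents of $A_2$ to vanish, so $A_2$ is scalar, contradicting $C_{17}^2$. This completes the proof that $G_{17}\cong C_{17}$ with the asserted generator and defining equation.
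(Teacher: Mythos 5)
Your proposal is correct and follows essentially the same route as the paper's proof, which simply adapts the proof of Theorem \ref{thm:Sylow13}: rule out order $17^2$ via Theorem \ref{thm:primaryorders}, run the chain argument on the weights to reach $(1,-4,16,4,0)$, enumerate the invariant monomials, and exclude $C_{17}^2$ by lifting to commuting diagonal matrices and testing invariance of the cyclic monomials and $x_5^5$. Your line-singularity argument for $\sigma_5=0$ (equivalently, $F\in(x_2)+(x_3,x_4)^2$, so Proposition \ref{pp:nonsmoothquintic}(3) applies) is a welcome piece of added rigor, since the paper's template proof only asserts the corresponding vanishing of the residual weights without detail.
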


\begin{proof}
Similar to proof of Theorem \ref{thm:Sylow13}.
\end{proof}

\begin{theorem}\label{thm:Sylow41}
Let $G< \Aut(X)$. Suppose $|G|$ is divided by $41$. Then $G_{41}\cong C_{41}$. Let $g$ be a generator of $G_{41}$. Then we may assume $g=[A], A(F)=F, A=\Diag (\xi_{41},\xi_{41}^{-4},\xi_{41}^{16},\xi_{41}^{18},\xi_{41}^{10})$ and $$F=x_1^4x_2+x_2^4x_3+x_3^4x_4+x_4^4x_5+x_5^4x_1+ax_1x_2x_3x_4x_5,$$ where $a$ is a complex numbers (possibly zero).
\end{theorem}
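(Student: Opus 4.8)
The plan is to follow the template of the proof of Theorem \ref{thm:Sylow13} almost verbatim, adjusting only the arithmetic to the prime $41$. First I would rule out higher $41$-power orders. The relevant quantity from Theorem \ref{thm:primaryorders} is $(1-5)^5-1=-1025=-5^2\cdot 41$, which is divisible by $41$ but not by $41^2$; hence no automorphism of $X$ has order $41^2$, so $G_{41}$ is elementary abelian and it remains to exclude $C_{41}^2$.

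Next I would pin down the shape of a generator and of $F$. By Lemma \ref{lem:primeorder} a generator may be written $g=[A]$ with $A=\Diag(\xi_{41},\xi_{41}^{b_1},\xi_{41}^{b_2},\xi_{41}^{b_3},\xi_{41}^{b_4})$ and $A(F)=F$. Since $A(x_1^5)\neq x_1^5$, smoothness (Proposition \ref{pp:smoothnessandmonomial}) lets me assume $x_1^4x_2\in F$, and $A$-invariance gives $b_1\equiv -4\ (\Mod 41)$. Iterating through the cyclic chain $x_2^4x_3,x_3^4x_4,x_4^4x_5\in F$ exactly as in Theorem \ref{thm:Sylow13}, and using $-4\cdot 16\equiv 18$ and $-4\cdot 18\equiv 10\ (\Mod 41)$, forces
\[
b_1\equiv -4,\qquad b_2\equiv 16,\qquad b_3\equiv 18,\qquad b_4\equiv 10\ (\Mod 41),
\]
so that $A=\Diag(\xi_{41},\xi_{41}^{-4},\xi_{41}^{16},\xi_{41}^{18},\xi_{41}^{10})$; one checks directly that $x_5^4x_1$ is then automatically $A$-invariant, since $4\cdot 10+1=41\equiv 0$, consistent with smoothness. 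A degree-$5$ monomial $x_1^{i_1}\cdots x_5^{i_5}$ is $A$-invariant precisely when
\[
i_1-4i_2+16i_3+18i_4+10i_5\equiv 0\ (\Mod 41),
\]
and enumerating the solutions with $\sum i_j=5$ (conveniently by Mathematica) I expect to find exactly the five chain monomials together with $x_1x_2x_3x_4x_5$, the latter because $1-4+16+18+10=41$. This yields $F=a_1x_1^4x_2+a_2x_2^4x_3+a_3x_3^4x_4+a_4x_4^4x_5+a_5x_5^4x_1+ax_1x_2x_3x_4x_5$, where Proposition \ref{pp:smoothnessandmonomial} forces $a_1,\dots,a_5\neq 0$; rescaling $x_2,\dots,x_5$ normalizes them to $1$, giving the stated form of $F$.

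Finally I would exclude $C_{41}^2$. The five exponents $1,-4,16,18,10$ are pairwise distinct modulo $41$, so $A$ has distinct eigenvalues, and by Lemma \ref{lem:matrixshape} any lift $A_2$ of a second generator, commuting with $A$, is diagonal, say $A_2=\Diag(\xi_{41}^{e_1},\dots,\xi_{41}^{e_5})$. As the five chain monomials lie in $F$ and $A_2(F)=F$ with $A_2$ diagonal, each of them is fixed, giving the cyclic linear system $4e_1+e_2\equiv 0,\ 4e_2+e_3\equiv 0,\ \ldots,\ 4e_5+e_1\equiv 0\ (\Mod 41)$. Solving it yields $A_2=A_1^{e_1}$, hence $[A_2]\in\langle[A_1]\rangle$, contradicting $C_{41}^2$; therefore $G_{41}\cong C_{41}$. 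The only genuinely new work compared with Theorem \ref{thm:Sylow13} is the monomial enumeration modulo $41$, and I expect this to be the main---though purely computational---obstacle: one must verify that no \emph{unexpected} $A$-invariant monomial of degree $5$ appears beyond the six listed.
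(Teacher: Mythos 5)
Your proposal is correct and is exactly what the paper intends: its entire proof of this theorem is the remark ``Similar to proof of Theorem \ref{thm:Sylow13},'' and your argument is precisely that adaptation to $p=41$, with the right exponent arithmetic ($1,-4,16,18,10$ closing the length-five cycle since $(-4)^5\equiv 1 \pmod{41}$), the same Mathematica-style enumeration of $A$-invariant quintic monomials, and the same liftability-plus-diagonalization exclusion of $C_{41}^2$ (which for $41$ is even cleaner than for $13$, since the cycle exhausts all five variables and forces $A_2\in\langle A_1\rangle$).
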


\begin{proof}
Similar to proof of Theorem \ref{thm:Sylow13}.
\end{proof}

\subsection{Brauer\rq{}s Theorem and  $\Aut(X)$}

\begin{theorem}\label{thm:Brauer}
(\cite[Theorem~3]{Br42}) Let $p$ be a prime number and $G$ be a finite group such that $G_p\cong C_p$ and $G_p$ is not a normal subgroup. Then the degree of any faithful representation of $G$ is not smaller than $\frac{p-1}{2}$.
\end{theorem}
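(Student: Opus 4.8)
Since this is Brauer's theorem (\cite[Theorem~3]{Br42}), the plan is ultimately to invoke it; but let me indicate how I would organize a self-contained proof. Write $P=G_p=\langle g\rangle\cong C_p$ and let $\rho$ be a faithful complex representation, with character $\chi$ and degree $d=\chi(1)$; the goal is $d\geq (p-1)/2$.

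First I would make two harmless reductions. A faithful representation stays faithful after deleting its trivial constituents, so I may assume $\rho$ contains no trivial summand. Next, let $L=\langle\,p\text{-elements of }G\,\rangle$; this is a normal subgroup with $G/L$ a $p'$-group, $\rho|_L$ is still faithful of degree $d$, and $P$ is a Sylow $p$-subgroup of $L$ that is still non-normal (if $P\triangleleft L$ then $P$, being the unique Sylow $p$-subgroup of $L$, is characteristic in $L\triangleleft G$, hence normal in $G$, a contradiction). Replacing $G$ by $L$ I may thus assume $G$ is generated by its elements of order $p$ and that $\rho$ has no trivial constituent, so that $V^{G}=0$.

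The core is the analysis of the eigenvalues of $\rho(g)$. Decompose the representation space $V=\bigoplus_{i=0}^{p-1}V_i$ into $\xi_p^{\,i}$-eigenspaces of $\rho(g)$, with $a_i=\dim V_i$ and $\sum_i a_i=d$; faithfulness forces $a_i>0$ for some $i\neq 0$. Let $E=N_G(P)/C_G(P)\hookrightarrow(\Z/p)^{\times}$, of order $e\mid(p-1)$. An element of $N_G(P)$ inducing a generator $x\mapsto x^{r}$ of $E$ carries $V_i$ to $V_{ri}$, so the multiplicities $a_i$ are constant on the orbits of $H=\langle r\rangle\leq(\Z/p)^{\times}$; equivalently, under the Galois action $\sigma_k(\chi(g))=\chi(g^k)$ of $\mathrm{Gal}(\Q(\xi_p)/\Q)\cong(\Z/p)^{\times}$, the value $\chi(g)$ lies in the fixed field of $H$, and its distinct algebraic conjugates are exactly the values of $\chi$ on the $(p-1)/e$ distinct conjugacy classes of elements of order $p$ (non-normality of $P$ guarantees every such element is conjugate to a power of $g$, so these classes are exhausted this way).

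The decisive and hardest step is then a number-theoretic estimate: for a faithful $\rho$ the nontrivial eigenvalues of $\rho(g)$ cannot be supported on fewer than $(p-1)/2$ of the primitive $p$-th roots of unity, which yields $d\geq\sum_{i\neq 0}a_i\geq(p-1)/2$. Here the hypothesis that $P$ is non-normal is essential and the constant $1/2$ emerges: one studies the sums of roots of unity $\chi(g^k)=a_0+\sum_{i\neq 0}a_i\xi_p^{\,ik}$ across the algebraically conjugate classes and shows that a smaller support would force either $\rho(g)$ to act as a scalar on a $G$-invariant subspace (contradicting faithfulness together with $V^G=0$) or $P$ to be normal. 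I expect this estimate---controlling the eigenvalue support of $\rho(g)$ globally across all constituents, since a faithful representation need not possess any faithful irreducible constituent---to be the genuine obstacle; it is precisely the content of Brauer's original argument in \cite{Br42}. The bound is sharp, as $\PSL(2,p)$ has smallest faithful degree exactly $(p-1)/2$.
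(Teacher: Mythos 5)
Since the paper offers no proof of this statement---it is quoted verbatim as Theorem~3 of Brauer's paper \cite{Br42}---and your proposal likewise defers the decisive eigenvalue-support estimate to Brauer's original argument, the two approaches coincide, and your surrounding reductions (deleting trivial constituents, passing to the normal subgroup generated by $p$-elements, the eigenspace/fusion analysis under $N_G(P)$) are sound. Two minor corrections to side remarks only: the fact that every element of order $p$ is conjugate to a power of $g$ is plain Sylow conjugacy and has nothing to do with non-normality of $P$, and the sharpness example $\PSL(2,p)$ has minimal faithful degree $\frac{p-1}{2}$ only when $p\equiv 3 \pmod 4$ (for $p\equiv 1 \pmod 4$ its minimal faithful degree is $\frac{p+1}{2}$).
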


\begin{theorem}\label{thm:normalsylow}
Let $X$ be a smooth quintic threefold. Let $G$ be a subgroup of $\Aut(X)$. Suppose $|G|$ is divided by $p$, where $p$ is one of $41$, $17$, $13$. Then $G_p$ is a normal subgroup of $G$.
\end{theorem}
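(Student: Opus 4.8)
The plan is to argue by contradiction using Brauer's Theorem \ref{thm:Brauer}. By Theorems \ref{thm:Sylow13}, \ref{Sylow17} and \ref{thm:Sylow41} we already know $G_p\cong C_p$ for each of the three primes, so suppose $G_p$ is \emph{not} normal in $G$. Since $\tfrac{p-1}{2}$ equals $6,8,20$ for $p=13,17,41$ respectively, in every case $\tfrac{p-1}{2}>5$. Hence it suffices to exhibit a faithful $5$-dimensional \emph{linear} representation of the abstract group $G$: Brauer's Theorem \ref{thm:Brauer}, applied to the non-normal subgroup $C_p=G_p$, would then force $5\ge \tfrac{p-1}{2}>5$, a contradiction, so $G_p$ must be normal.

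Such a representation is produced by lifting $G\subset \PGL(5,\C)$ to $\GL(5,\C)$. First I would observe that an $F$-lifting $\widetilde{G}\subset \GL(5,\C)$ of $G$ is, via $\pi$, precisely a faithful $5$-dimensional linear representation of $G$. By Theorem \ref{liftable} together with the remark following Theorem \ref{thm:noorder25} (condition $(2)$ there holds automatically, since $\Aut(X)$ has no element of order $25$), the group $G$ is $F$-liftable if and only if its Sylow $5$-subgroup $G_5$ is $F$-liftable. Thus, whenever $G_5$ is $F$-liftable, the Brauer argument above completes the proof.

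The main obstacle is therefore the case in which $G_5$ is \emph{not} $F$-liftable. By Lemmas \ref{lem:5notliftable} and \ref{lem:5^2notliftable} this forces $G_5\cong C_5$ or $C_5^2$ and, up to a change of coordinates, puts $F$ into the ``Klein'' $5$-cycle shape $x_1^4x_2+x_2^4x_3+x_3^4x_4+x_4^4x_5+x_5^4x_1+\cdots$, on which a generator acts by $A(F)=\xi_5 F$. For $p=13$ and $p=17$ I would rule this out directly: the normal forms of $F$ supplied by Theorems \ref{thm:Sylow13} and \ref{Sylow17} are incompatible with this $5$-cycle shape (for instance $x_5^5\in F$ in the $p=17$ normal form, which cannot occur in a $\xi_5$-semi-invariant of $5$-cycle type, and similarly the monomial support in the $p=13$ case), as one verifies via the differential method; hence $G_5$ is in fact $F$-liftable and Brauer applies. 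For $p=41$ the $5$-cycle shape is exactly the normal form of Theorem \ref{thm:Sylow41}, so this case must be handled by hand: with $A=\Diag(\xi_{41},\xi_{41}^{-4},\xi_{41}^{16},\xi_{41}^{18},\xi_{41}^{10})$ generating $G_{41}$, the diagonal generators of $G_5$ commute with $A$, while the $5$-cycle permutation generator conjugates $A$ to $A^{10}$ (because the exponent vector $(1,-4,16,18,10)$ is, modulo $41$, a scalar multiple of its cyclic shift). Combined with the fact that every automorphism of $X$ is induced by a semi-permutation matrix, as in Theorem \ref{thm:Aut(X)1-16}, whose permutation part must be a power of the $5$-cycle preserving the monomials $x_i^4x_{i+1}$, this shows that $\langle A\rangle=G_{41}$ is normalized by all of $\Aut(X)$, hence normal in $G$. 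This last case, and in particular checking that no further automorphism escapes $N_G(G_{41})$, is where I expect the real work to lie.
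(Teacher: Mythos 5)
Your overall Brauer strategy is sound and the numerical comparison $5<\tfrac{p-1}{2}$ is exactly the right engine, but the way you manufacture the faithful $5$-dimensional representation is where your proposal diverges from the paper and where it has genuine gaps. The paper sidesteps liftability entirely: it takes the full preimage $\mathcal{G}=\theta^{-1}(G)$ under $\theta\colon \SL(5,\C)\to\PSL(5,\C)$. This $\mathcal{G}$ has order $5|G|$, is tautologically a subgroup of $\GL(5,\C)$ (hence faithfully represented in dimension $5$), and its Sylow $p$-subgroup is still $C_p$ since $p\neq 5$; Brauer forces $\mathcal{G}_p\lhd\mathcal{G}$, and applying $\theta$ gives $G_p\lhd G$. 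No discussion of $F$-liftability of $G_5$ is needed. Your route, lifting $G$ itself, must confront the non-liftable cases head on, and that is precisely where the argument breaks down.

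Concretely: (1) For $p=13,17$ you dismiss the non-liftable case by comparing the monomial support of the normal forms in Theorems \ref{thm:Sylow13} and \ref{Sylow17} with the $5$-cycle shape of Lemma \ref{lem:5notliftable}; but these descriptions live in two \emph{different} coordinate systems (one adapted to the order-$p$ element, one adapted to the order-$5$ element), so observing that $x_5^5$ occurs in the $17$-normal form while no $x_i^5$ can occur in a $\xi_5$-semi-invariant of $5$-cycle type is not a contradiction. You would need a coordinate-invariant obstruction, and the differential-method computation you appeal to is never actually carried out. (2) For $p=41$ you invoke Theorem \ref{thm:Aut(X)1-16} to conclude that every automorphism is semi-permutation, but that theorem covers only Example (15), i.e. the case $a=0$ in the normal form of Theorem \ref{thm:Sylow41}; for $a\neq 0$ the semi-permutation statement requires a separate differential-method argument, which you acknowledge but do not supply. (3) The assertion that non-liftability of $G_5$ ``forces $G_5\cong C_5$ or $C_5^2$'' does not follow from Lemmas \ref{lem:5notliftable} and \ref{lem:5^2notliftable} alone, since those lemmas only characterize non-liftable subgroups of those two isomorphism types; excluding $|G_5|\geq 125$ requires the Sylow-$5$ analysis of Section \ref{ss:Sylow5}. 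Each of these points could likely be repaired with substantial extra work, but as written the proof is incomplete — and the paper's preimage trick makes all of that work unnecessary.
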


\begin{proof}
Our proof is inspired by \cite{Ad78}.

Since $p=13,17$, or $41$, we have $G_p\cong C_p$ by Theorems \ref{thm:Sylow13}, \ref{Sylow17} and \ref{thm:Sylow41}. 

The group $G$ is a finite subgroup of $\PGL(5,\mathbb{C})=\PSL(5,\mathbb{C})$. We have an exact sequence $$1\rightarrow Z\xrightarrow{i} \SL(5,\mathbb{C})\xrightarrow{\theta} \PSL(5,\mathbb{C})\rightarrow 1\, ,$$

where $Z$ denotes the center of $\SL(5,\mathbb{C})$, a group of order $5$. Denote by $\mathcal{G}$ the preimage $\theta^{-1}(G)$ of $G$ in $\SL(5,\mathbb{C})$ under $\theta.$ The order of $\mathcal{G}$ is $5\cdot |G|$. Let $\mathcal{G}_p$ be a Sylow $p$-subgroup of $\mathcal{G}$. Then $\mathcal{G}_p\cong C_p$ by our assumption of $p$.

Applying Theorem \ref{thm:Brauer}, $\mathcal{G}_p$ is normal in $\mathcal{G}$. In fact, $5<\frac{p-1}{2}$. So $\pi (\mathcal{G}_p)$ is normal in $G$, which means $G_p$ is normal in $G$.
\end{proof}

Using Theorem \ref{thm:normalsylow}, we shall study $\Aut(X)$ when $|\Aut(X)|$ is divided by $41,17$ or $13$ (Theorems \ref{thm:containorder17}, \ref{thm:containorder41} and \ref{thm:containorder13}).

\begin{theorem}\label{thm:containorder17}
 Suppose  $\Aut(X)$ contains an element of order $17$. Then $\Aut(X)$ is isomorphic to a subgroup of  the group appears in Example (13) in Example \ref{mainex}, and $|\Aut(X)|$ is divided by $2$.
\end{theorem}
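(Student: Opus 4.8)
The plan is to fix a convenient normal form via the results already proved, show that every automorphism is forced to be a monomial matrix over a rigid skeleton, and then read off both assertions. By Theorem \ref{Sylow17}, after a change of coordinates I may assume the given order-$17$ element is $g=[A]$ with $A=\Diag(\xi_{17},\xi_{17}^{-4},\xi_{17}^{16},\xi_{17}^{4},1)$, $A(F)=F$, and
\[
F=x_1^4x_2+x_2^4x_3+x_3^4x_4+x_4^4x_1+x_5^5+ax_1x_2x_3x_4x_5+bx_1x_3x_5^3+cx_2x_4x_5^3+dx_1^2x_3^2x_5+ex_2^2x_4^2x_5.
\]
Write $F_0=x_1^4x_2+x_2^4x_3+x_3^4x_4+x_4^4x_1+x_5^5$ for the skeleton and put $X_0=\{F_0=0\}$; by Theorem \ref{thm:Aut(X)1-16} the group $\Aut(X_0)$ is precisely the group $G$ of Example (13). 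By Theorem \ref{thm:normalsylow} the subgroup $\langle g\rangle\cong C_{17}$ is normal in $\Aut(X)$.

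Next I would analyze how an arbitrary $[B]\in\Aut(X)$ interacts with $A$. Normality gives $BAB^{-1}=\lambda A^{k}$ for some $\lambda\in\C^{*}$ and some $k$ coprime to $17$. Since $BAB^{-1}$ and $A$ have the same multiset of eigenvalues $\{\xi_{17}^{e}:e\in E\}$ with $E=\{0,1,4,13,16\}=\{0,\pm1,\pm4\}$, and since $0\in E$ forces $\lambda=\xi_{17}^{m}$ for some $m$, the exponents must satisfy $\{m+ke:e\in E\}=E$ modulo $17$. Comparing the ``centre of mass'' $\tfrac15\sum_{e\in E}e=0$ of both sides forces $m=0$, hence $\lambda=1$; then $k$ must stabilize the multiplicative subgroup $\{1,4,16,13\}=\langle4\rangle\cong C_4$, so $k\in\langle4\rangle$ and $BAB^{-1}=A^{k}$. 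Because $A$ has distinct eigenvalues, Lemma \ref{lem:matrixshape} shows that $B$ is a monomial (generalized permutation) matrix whose underlying permutation is the corresponding power of the $4$-cycle $(1\,4\,3\,2)$ fixing the index $5$. In particular the permutation part of every element of $\Aut(X)$ lies in $\langle(1\,2\,3\,4)\rangle$, and $[B]\mapsto k$ is a homomorphism $\Aut(X)\to\langle4\rangle$ whose kernel is the diagonal centralizer of $A$.

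To conclude the first assertion I would separate the skeleton monomials from the rest. A monomial matrix sends each monomial to a scalar times a monomial with the same multiset of exponents; the skeleton monomials have exponent patterns $\{4,1,0,0,0\}$ or $\{5,0,0,0,0\}$, whereas every extra monomial has pattern $\{1,1,1,1,1\}$, $\{1,1,3,0,0\}$ or $\{2,2,1,0,0\}$. These two families are disjoint, so in $B(F)=\mu F$ the skeleton part of the left-hand side is exactly $B(F_0)$ and of the right-hand side is exactly $\mu F_0$; hence $B(F_0)=\mu F_0$ and $[B]\in\Aut(X_0)=G$. Therefore $\Aut(X)\le G$, as claimed. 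For divisibility by $2$ I would exhibit one explicit involution valid for all coefficients: the double transposition $\iota=(1\,3)(2\,4)$ fixing $x_5$ preserves the skeleton ($x_1^4x_2\leftrightarrow x_3^4x_4$, $x_2^4x_3\leftrightarrow x_4^4x_1$, $x_5^5$ fixed) and fixes each of the five extra monomials, so $\iota(F)=F$ independently of $a,b,c,d,e$; thus $[\iota]\in\Aut(X)$ has order $2$ (it corresponds to $k=-1\in\langle4\rangle$), giving $2\mid|\Aut(X)|$.

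The step I expect to require the most care is the eigenvalue bookkeeping of the second paragraph, which simultaneously pins down $k\in\langle4\rangle$, forces $\lambda=1$, and dictates the monomial shape of $B$: this is exactly what rigidifies $\Aut(X)$ into monomial matrices over the skeleton and makes the clean containment $\Aut(X)\le\Aut(X_0)$ possible. By contrast, the separation-of-monomials argument and the verification that $[\iota]$ preserves $F$ are routine once the monomial structure is in hand.
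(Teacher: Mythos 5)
Your proposal is correct and follows essentially the same route as the paper: normal form from Theorem \ref{Sylow17}, normality of $C_{17}$ from Theorem \ref{thm:normalsylow}, the conjugation relation $BAB^{-1}=\lambda A^k$ forcing $\lambda=1$ and $B$ semi-permutation via Lemma \ref{lem:matrixshape}, invariance of the skeleton $F_0$ giving $\Aut(X)\le\Aut(X_0)=G$ by Theorem \ref{thm:Aut(X)1-16}, and the same involution $(1\,3)(2\,4)$ for divisibility by $2$. The only differences are cosmetic refinements: you derive $\lambda=1$ purely from the eigenvalue-exponent sum, where the paper combines $B(F)=F$ (giving $\lambda^5=1$) with eigenvalues (giving $\lambda^{17}=1$), and your exponent-pattern separation makes explicit the step the paper states as ``by the shape of $F$ above, $B$ must leave $F'$ invariant.''
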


\begin{proof}
By Theorem \ref{Sylow17}, $\Aut(X)_{17}\cong C_{17}$. Let $g$ be a generator of $\Aut(X)_{17}$. Then we may assume $g=[A], A(F)=F, A=\Diag (\xi_{17},\xi_{17}^{-4},\xi_{17}^{16},\xi_{17}^4,1)$ and $F$ is of the form (just compute invariant monomials of $A$ using Mathematica) $$F=x_1^4x_2+x_2^4x_3+x_3^4x_4+x_4^4x_1+x_5^5+ax_1x_2x_3x_4x_5+bx_1x_3x_5^3+cx_2x_4x_5^3+dx_1^2x_3^2x_5+ex_2^2x_4^2x_5,$$ where $a,b,c,d,e$ are complex numbers (possibly zero).

By Theorem \ref{thm:normalsylow}, $\Aut(X)_{17}$ is normal in $\Aut(X)$.

Let $h$ be an element of $\Aut(X)$. Suppose $h=[B]$ for some $B\in \GL(5,\mathbb{C})$. We can choose $B$ such that $B(F)=F$. By normality of $\Aut(X)_{17}$ we have $hgh^{-1}=g^k$ for some $k$. Then $BAB^{-1}=\lambda A^k$ for some nonzero constant $\lambda$. Clearly $F=(BAB^{-1})(F)=(\lambda A^k)(F)=\lambda^5 F$. Then $\lambda^5=1$. Since $A$ and $\lambda A^k$ must have the same set of eigenvalues we have $\lambda^{17}=1$. Then $\lambda=1$, i.e., $BAB^{-1}=A^k$. By the shape of $A$ (eigenvalues), $k\equiv 1,-4,16$ or $4 (\Mod 17)$. By $BA=A^kB$ and Lemma \ref{lem:matrixshape}, we can  show that $B$ must be a semi-permutation matrix.

Then by the shape of $F$ above,  $B$ must leave $F\rq{}:=x_1^4x_2+x_2^4x_3+x_3^4x_4+x_4^4x_1+x_5^5$  invariant. Let $G\subset \PGL(5,\C)$ be the group as in Example (13) in Example \ref{mainex}. Therefore, $\Aut(X)$ is a subgroup of $G$ by Theorem \ref{thm:Aut(X)1-16}. 

Notice that no matter what $a,b,c,d,e$ are, the following matrix can always act on $X$:

 $$ \begin{pmatrix} 
    0&0&1&0&0 \\ 
    0&0&0&1&0 \\ 
    1&0&0&0&0 \\ 
    0&1&0&0&0 \\ 
    0&0&0&0&1 \\ 
    \end{pmatrix}.$$

Therefore, $\Aut(X)$ contains an element of order two.
\end{proof}

\begin{theorem}\label{thm:containorder41}
  Suppose  $\Aut(X)$ contains an element of order $41$. Then $\Aut(X)$ is isomorphic to a subgroup of  the group appears in Example (15) in Example \ref{mainex}, and $|\Aut(X)|$ is divided by $5$.
\end{theorem}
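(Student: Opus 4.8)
The plan is to mirror the proof of Theorem~\ref{thm:containorder17} almost verbatim, with the prime $17$ replaced by $41$ and the normal form supplied by Theorem~\ref{thm:Sylow41} in place of Theorem~\ref{Sylow17}. First I would apply Theorem~\ref{thm:Sylow41} to get $\Aut(X)_{41}\cong C_{41}$ with a generator $g=[A]$ such that, after a change of coordinates, $A=\Diag(\xi_{41},\xi_{41}^{-4},\xi_{41}^{16},\xi_{41}^{18},\xi_{41}^{10})$, $A(F)=F$, and
$$F=x_1^4x_2+x_2^4x_3+x_3^4x_4+x_4^4x_5+x_5^4x_1+a\,x_1x_2x_3x_4x_5$$
for some $a\in\C$. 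The structural input I would then invoke is Theorem~\ref{thm:normalsylow}: since $5<\frac{41-1}{2}$, Brauer's theorem yields that $\Aut(X)_{41}$ is normal in $\Aut(X)$.

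Next I would take an arbitrary $h\in\Aut(X)$, choose a lift $B\in\GL(5,\C)$ with $B(F)=F$, and use normality to write $hgh^{-1}=g^k$, i.e.\ $BAB^{-1}=\lambda A^k$ for some $\lambda\in\C^*$. Applying both sides to $F$ gives $\lambda^5=1$, while the equality of the eigenvalue sets of $A$ and $\lambda A^k$ (which are conjugate) gives $\lambda^{41}=1$; as $\Gcd(5,41)=1$ this forces $\lambda=1$, so $BAB^{-1}=A^k$. Because the exponents $1,-4,16,18,10$ are pairwise distinct modulo $41$, the matrix $A$ has five distinct eigenvalues, and comparing eigenvalue sets shows that $k$ lies in the subgroup $\langle -4\rangle$ of order $5$ in $(\Z/41)^*$. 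The relation $BA=A^kB$ together with Lemma~\ref{lem:matrixshape} (each one-dimensional eigenspace of $A$ maps to an eigenspace of $A$) then forces $B$ to be a semi-permutation matrix.

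Then I would decompose $F$ by exponent type: the five monomials of $F'':=x_1^4x_2+x_2^4x_3+x_3^4x_4+x_4^4x_5+x_5^4x_1$ are all of type $(4,1,0,0,0)$, while $x_1x_2x_3x_4x_5$ is of type $(1,1,1,1,1)$. A semi-permutation matrix carries each monomial to a scalar multiple of a monomial of the same exponent type, so $B(F)=F$ forces $B(F'')=F''$ on its own. Hence $[B]$ preserves the Klein quintic $X_{15}:=\{F''=0\}$ of Example (15), and by Theorem~\ref{thm:Aut(X)1-16} we have $\Aut(X_{15})=G$, the group of Example (15); letting $h$ range over $\Aut(X)$ gives $\Aut(X)\subseteq G$. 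For the divisibility claim, I would observe that the cyclic coordinate permutation (the $5$-cycle appearing in Example (15)) fixes both $F''$ and the symmetric monomial $x_1x_2x_3x_4x_5$, hence fixes $F$ for every value of $a$; it is an automorphism of $X$ of order $5$, so $5\mid|\Aut(X)|$.

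The step I expect to be the main obstacle is the exponent-type separation, where one must ensure that the conclusion $\Aut(X)\subseteq G$ is unaffected by the extra term $a\,x_1x_2x_3x_4x_5$ when $a\neq 0$: the key point is that this monomial is itself semi-permutation-invariant and sits in a distinct exponent class, so it neither blocks the identity $B(F'')=F''$ nor produces automorphisms outside $G$. The remaining ingredients — the scalar computation forcing $\lambda=1$ and the distinctness of the exponents modulo $41$ that guarantees the semi-permutation shape — are routine and parallel the order-$17$ case.
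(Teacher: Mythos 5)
Your proposal is correct and is essentially the paper's own argument: the paper proves this theorem by stating that the proof is identical to that of Theorem \ref{thm:containorder17}, which is exactly the adaptation you carry out (normal form from Theorem \ref{thm:Sylow41}, normality via Brauer's theorem, the scalar computation $\lambda^5=\lambda^{41}=1$ forcing $\lambda=1$, Lemma \ref{lem:matrixshape} forcing $B$ to be semi-permutation, and reduction to the Klein quintic via Theorem \ref{thm:Aut(X)1-16}). Your explicit treatment of the exponent-type separation and the observation that the $5$-cycle fixes $F$ for every $a$ (giving $5\mid|\Aut(X)|$) are precisely the details the paper leaves implicit, and they check out.
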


\begin{proof}
The proof is the same as that of Theorem \ref{thm:containorder17}.
\end{proof}

\begin{theorem}\label{thm:containorder13}
  Suppose  $\Aut(X)$ contains an element of order $13$. Then $\Aut(X)$ is isomorphic to a subgroup of  one of the three groups $G$ which appear in Example (9), Example (12) and Example (16) in Example \ref{mainex} and $|\Aut(X)|$ is divided by $3$.
\end{theorem}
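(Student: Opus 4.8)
The plan is to run the argument of Theorem~\ref{thm:containorder17} up to the point where the normalizer analysis begins, the essential new feature being that the order-$13$ element now fixes a $2$-dimensional subspace rather than a line. This produces a free $2\times 2$ block in the normalizing matrices whose possible shapes, controlled by a pair of binary forms in $x_4,x_5$, are exactly what splits the conclusion into three target groups instead of one. First I would normalize using Theorem~\ref{thm:Sylow13}: after a change of coordinates, $\Aut(X)_{13}=\langle g\rangle\cong C_{13}$ with $g=[A]$, $A=\Diag(\xi_{13},\xi_{13}^{-4},\xi_{13}^{3},1,1)$, $A(F)=F$, and
\[
F=x_1^4x_2+x_2^4x_3+x_3^4x_1+x_1x_2x_3\,G(x_4,x_5)+H(x_4,x_5),
\]
with $\Deg G=2$ and $\Deg H=5$. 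By Theorem~\ref{thm:normalsylow}, $\Aut(X)_{13}$ is normal in $\Aut(X)$.

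Second, for an arbitrary $h=[B]\in\Aut(X)$ chosen with $B(F)=F$, normality gives $hgh^{-1}=g^{k}$, hence $BAB^{-1}=\lambda A^{k}$ for some $\lambda\in\C^{*}$. Applying both sides to $F$ yields $\lambda^{5}=1$, while equality of the eigenvalue sets of $A$ and $\lambda A^{k}$ (both consisting of $13$th roots of unity) forces $\lambda^{13}=1$; thus $\lambda=1$ and $BAB^{-1}=A^{k}$. Matching the exponent multiset $\{1,-4,3\}$ with $\{k,-4k,3k\}$ modulo $13$ shows $k$ lies in the order-$3$ subgroup $\langle 3\rangle=\{1,3,9\}\leq(\Z/13\Z)^{*}$. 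Now Lemma~\ref{lem:matrixshape} applied to $BA=A^{k}B$, using that $A$ has the three distinct nontrivial eigenvalues $\xi_{13},\xi_{13}^{-4},\xi_{13}^{3}$ on $\langle x_1,x_2,x_3\rangle$ and the eigenvalue $1$ of multiplicity two on $\langle x_4,x_5\rangle$, forces $B$ to be block diagonal with a $3\times 3$ block on $x_1,x_2,x_3$ and a $2\times 2$ block on $x_4,x_5$; the first block is a semi-permutation matrix effecting the cyclic permutation prescribed by $k$ and preserving the core $x_1^4x_2+x_2^4x_3+x_3^4x_1$ up to a scalar, and the second is an arbitrary element of $\GL(\langle x_4,x_5\rangle)$. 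Consequently $\Aut(X)$ preserves the partition $\{x_1,x_2,x_3\}\sqcup\{x_4,x_5\}$, and each of its elements is determined by a symmetry of the core together with an element of $\GL(\langle x_4,x_5\rangle)$ compatible with the forms $G$ and $H$.

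Third, the factor $3$ is immediate: the cyclic permutation matrix $C$ sending $x_1\mapsto x_2\mapsto x_3\mapsto x_1$ and fixing $x_4,x_5$ preserves the core, preserves $x_1x_2x_3$, and acts trivially on $x_4,x_5$, hence satisfies $C(F)=F$ for every choice of $G$ and $H$; since $C$ induces a nontrivial element of $\{1,3,9\}$ under the homomorphism $h\mapsto k$, it has order $3$, and so $3\mid|\Aut(X)|$.

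Finally, to obtain exactly the three listed groups I would classify the binary data. Smoothness of $X$ forces $H$ to have $5$ distinct roots on the line $\{x_1=x_2=x_3=0\}$ (otherwise a singular point appears there, by the Jacobian criterion together with Proposition~\ref{pp:nonsmoothquintic}) and constrains $G$ correspondingly. The image of $\Aut(X)$ in $\PGL(2,\C)$ is then a finite group permuting these $5$ roots and fixing $G$; running through the finite subgroups of $\PGL(2,\C)$ compatible with such a configuration shows that, up to a change of coordinates in $x_4,x_5$, the only cases admitting symmetry beyond the generic $C_{13}\rtimes C_3$ are the three of Examples~(9), (12) and (16) in Example~\ref{mainex}, with binary symmetry groups $C_5^2\rtimes C_2$, $C_4\times C_5$ and $C_5\times S_3$. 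Since these three quintics occur among Examples (1)--(16), Theorem~\ref{thm:Aut(X)1-16} identifies their full automorphism groups, and we conclude that $\Aut(X)$ is isomorphic to a subgroup of one of them. I expect this last step --- the exhaustive reduction of the pair $(G,H)$ up to $\GL(2,\C)$-equivalence under the smoothness constraint, and the verification that each surviving case lands inside one of the three listed groups --- to be the main obstacle, and the place where explicit computation is needed.
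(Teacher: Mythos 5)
Your proposal is correct and takes essentially the same approach as the paper: the paper's own proof is a one-line deferral to Theorem \ref{thm:containorder17} (normalization via Theorem \ref{thm:Sylow13}, normality of the Sylow $13$-subgroup via Theorem \ref{thm:normalsylow}, the $\lambda=1$ and $k\in\{1,3,9\}$ computation, the shape analysis via Lemma \ref{lem:matrixshape}, and the conclusion via Theorem \ref{thm:Aut(X)1-16}), ``except that there are 3 possible maximal $G$.'' The points you add --- the free $2\times2$ block arising from the two-dimensional fixed eigenspace of $A$, the order-$3$ cyclic permutation of $x_1,x_2,x_3$ giving $3\mid|\Aut(X)|$, and the classification of the binary data $(G,H)$ under the smoothness constraint (paralleling Lemma \ref{lem:Autfivepoints}) --- are precisely the details the paper leaves implicit in that one-sentence proof.
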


\begin{proof}
Proof is the same as that of Theorem \ref{thm:containorder17} except that there are $3$ possible maximal $G$.
\end{proof}

\section{Sylow 2-subgroups}\label{ss:Sylow2}

{\it In this section, $X$ is always a smooth quintic threefold defined by $F$.}

In this section, we study Sylow $2$-subgroups of $\Aut(X)$.

In the rest of the paper,  we extensively use the mathematical software GAP. In GAP library, groups of order $\leq 2000$ are stored (except groups of order 1024). In GAP, all the information (structure descriptions, subgroups, character tables, automorphism groups, etc.) of these groups we need are included. 

A terminology used in GAP: $\SmallGroup(a,b)$:= the $b$-th group of order $a$. 

For example, by classification, up to isomorphism, there are exactly five different groups of order $8$: $C_8,C_4\times C_2, D_8,Q_8,C_2^3$.  In GAP, these five groups are stored in a specific order. In fact,  $\SmallGroup(8,1)\cong C_8$,  $\SmallGroup(8,2)\cong C_4\times C_2$,  $\SmallGroup(8,3)\cong D_8$,  $\SmallGroup(8,4)\cong Q_8$,  $\SmallGroup(8,5)\cong C_2^3$.

In the rest of this paper, if no confuse causes, we use the following:

{\bf Convention.} Let $a$ be a positive integer less than or equal to $2000$, and $a\neq 1024$. Suppose, up to isomorphism, there are $k_a$ many different groups of order $a$. Let $b$ be a positive integer less than or equal to $k_a$. Then we denote by $[a,b]$ a group isomorphic to $\SmallGroup(a,b)$. In fact, in GAP, $[a,b]$ is regarded as the \lq\lq{}ID\rq\rq{} of $\SmallGroup(a,b)$. We also call $[a,b]$ the \lq\lq{}GAP ID\rq\rq{} of groups isomorphic to $\SmallGroup(a,b)$. For example, $[8,3]$ is a group isomorphic to the dihedral group $D_8$.

Using GAP, one can quickly find all possible $2$-groups which are isomorphic to a subgroup of the 22 groups in Section \ref{ss:22examples}. There are  25 such $2$-groups. For reader\rq{}s convenience, we list all of them below (including their GAP IDs):

$[2,1]\cong C_2$, $[4,1]\cong C_4$, $[4,2]\cong C_2^2$, $[8,1]\cong C_8$,$[8,2]\cong C_4\times C_2$, $[8,3]\cong D_8$, $[8,4]\cong Q_8$, $[16,1]\cong C_{16}$, $[16,2]\cong C_4^2$, $[16,5]\cong C_8\times C_2$, $[16,6]$, $[16,7]\cong D_{16}$, $[16,8]$, $[16,9]$, $[16,13]$, $[32,1]\cong C_{32}$, $[32,3]\cong C_8\times C_4$,  $[32,11]$, $[32,16]\cong C_{16}\times C_2$, $[32,42]$, $[64,1]\cong C_{64}$, $[64,26]\cong C_{16}\times C_4$, $[64,50]\cong C_{32}\times C_2$, $[128,1]\cong C_{128}$, $[256,1]\cong C_{256}$.

Our goal is to show that the 25 groups above are all possible $2$-groups which acts on a smooth quintic threefold. In other words, we exclude all other $2$-groups. 

\begin{theorem}\label{thm:sylow2}
Let $G<\Aut(X)$. Suppose $G$ is a $2$-group. Then $G$ is isomorphic to a subgroup of one of the $22$ groups in the  Examples (1)-(22) in Example \ref{mainex}.
\end{theorem}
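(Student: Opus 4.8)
The plan is to reduce the statement to a finite, GAP-assisted verification, resting on three ingredients: $F$-liftability (Theorem~\ref{liftable}), the smoothness criteria of Section~\ref{ss:differentialmethod}, and the eigenvalue restriction of Lemma~\ref{lem:em3}.

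First I would produce a faithful five-dimensional linear model of $G$ fixing $F$. Since $\deg F=5$ is prime and $G$ is a $2$-group, its Sylow $5$-subgroup is trivial, hence (trivially) $F$-liftable and without elements of order $25$; thus both hypotheses of Theorem~\ref{liftable} hold (equivalently, the remark after Theorem~\ref{thm:noorder25} applies), and $G$ is $F$-liftable. Fixing an $F$-lifting $\widetilde{G}<\GL(5,\C)$, we realize $G$ as a $2$-group carrying a faithful $5$-dimensional representation with $A(F)=F$ for every $A\in\widetilde{G}$.

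Next I would extract the numerical constraints forced by smoothness. For $A\in\widetilde{G}$, diagonalize $A=\Diag(\alpha_1,\dots,\alpha_5)$ with each $\alpha_i$ a $2$-power root of unity; by Lemma~\ref{lem:em3} any eigenvalue of multiplicity $\ge 3$ is a fifth root of unity, hence equals $1$. By Proposition~\ref{pp:smoothnessandmonomial}, for each $i$ there is $j(i)$ with $x_i^4x_{j(i)}\in F$, and $A(F)=F$ forces $\alpha_i^4\alpha_{j(i)}=1$. Reading $i\mapsto j(i)$ as a functional graph on $\{1,\dots,5\}$ and chasing its chains into cycles, the elementary fact that $(-4)^l-1$ is odd for every $l$ yields two bounds: the order of any single $A$ divides $2^8$ (with equality only for the chain configuration of Example~(11) in Example~\ref{mainex}); and, applied simultaneously to an abelian subgroup $M\le\widetilde{G}$, it places $M$ inside the finite kernel of $(\alpha_i)\mapsto(\alpha_i^4\alpha_{j(i)})_i$, whose $2$-part has order $2^{2(5-s)}\le 2^8$, where $s\ge 1$ is the number of vertices lying on cycles. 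Hence every abelian $2$-subgroup has order at most $256$, and its isomorphism type is read off from the Smith normal form of $4I+Q$, with $Q$ the adjacency matrix of the functional graph.

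Then I would globalize and finish. A $2$-group is nilpotent, so a maximal normal abelian subgroup $M\trianglelefteq G$ is self-centralizing and $G/M\hookrightarrow\Aut(M)$ with $|M|\le 256$; combining this with the fact that the faithful $5$-dimensional representation decomposes into irreducibles of dimension in $\{1,2,4\}$ constrains the non-abelian part and reduces the problem to finitely many groups, in fact to $|G|\le 256$. Since $256<1024$, all such $G$ lie in the GAP library, and one completes the proof by the enumeration indicated in Section~\ref{ss:Sylow2}: list the $2$-groups of order $\le 256$ admitting a faithful $5$-dimensional representation, and for each one not among the $25$ listed, show that every invariant quintic is singular, i.e. that $F$ is forced into one of the forbidden ideals of Proposition~\ref{pp:nonsmoothquintic} or Lemma~\ref{lem:nonsmooth}. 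Conversely the $25$ groups all occur, being subgroups of the $22$ groups of Example~\ref{mainex} acting on the exhibited smooth quintics. I expect the exclusion step to be the main obstacle: the per-element and per-abelian bounds are clean, but each non-abelian candidate must be handled representation by representation, computing the space of $\widetilde{G}$-invariant quintics and checking, via the monomial criteria and Lemma~\ref{lem:matrixshape}, that no member is smooth; this case-by-case interplay of GAP bookkeeping with the differential/smoothness lemmas is where the real work lies.
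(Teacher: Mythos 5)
Your reduction to an $F$-lifting is correct (a $2$-group has trivial Sylow $5$-subgroup, so Theorem \ref{liftable} applies directly), and your functional-graph bounds are genuinely nice: since $(-4)^l-1$ is odd, eigenvalues sitting on cycles of $i\mapsto j(i)$ must equal $1$, and chasing chains of length at most $4$ gives $\alpha^{4^4}=1$; this cleanly bounds the order of any element, and of any simultaneously diagonalized abelian $2$-subgroup, by $2^8$. That is a more conceptual packaging of what the paper does by hand in Lemmas \ref{lem:C16}, \ref{lem:C32}, \ref{lem:C64}, \ref{lem:8times8}, \ref{lem:32times4} and \ref{lem:64times2}.

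The gap is your globalization step. From $|M|\le 2^8$ for a maximal normal abelian $M\trianglelefteq G$ and $G/M\hookrightarrow\Aut(M)$ one cannot conclude $|G|\le 256$: your own Smith-form analysis allows $M$ of rank up to $4$ (e.g.\ type $C_4^4$ or $C_{16}\times C_{16}$), and for such $M$ the $2$-part of $\Aut(M)$ is huge (for $C_4^4$ it is $2^{22}$), so the phrase ``in fact to $|G|\le 256$'' is an assertion, not a deduction; the appeal to irreducible dimensions $\{1,2,4\}$ does not close it. Even the standard patch that $2$-groups are monomial, so $\widetilde{G}$ sits inside diagonal matrices extended by a $2$-subgroup of $S_5$, only yields $|G|\le 2^8\cdot 2^3=2^{11}$, which leaves orders $512$, $1024$, $2048$ in play --- and $1024$ lies outside the GAP library you invoke. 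This is exactly where the paper's real mechanism sits: it classifies $2$-subgroups order by order (sub-test, then case-by-case exclusion via the trace and eigenvalue constraints of Lemmas \ref{lem:to2}, \ref{lem:to4}, \ref{lem:eoo8} and character tables), and then Theorem \ref{thm:C128} kills all orders $\ge 2^7$: such a group contains a $C_{128}$, which forces $F=x_1^4x_2+x_2^4x_3+x_3^4x_4+x_4^4x_5+x_5^5+\lambda x_4^2x_5^3$ up to coordinates, whence $\Aut(X)\cong C_{128}$ or $C_{256}$ by the differential method. Without an argument of this kind your enumeration never becomes finite in the range you claim; moreover, the exclusion of the surviving non-abelian candidates --- the bulk of the paper's Section \ref{ss:Sylow2}, carried out largely by representation-theoretic arguments rather than by computing invariant quintics --- is deferred in your proposal rather than supplied.
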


\begin{remark}\label{rmk:stepstoexcludegroups}

Here we explain the main ideas of the proof.

  We will exclude groups inductively (from smaller orders to larger orders). Our strategies to exclude groups consist of two steps:

{\bf Step one:} Let $G$ be a $2$-groups of order $2^n$. If  Theorem \ref{thm:sylow2} has been proved for orders strictly less than $2^n$ and $G$ contains a proper subgroup which is not isomorphic to one of the above 25 groups then the group $G$ is excluded. 

In the sequel, we call this method of excluding groups as {\it sub-test}. We always use GAP to do sub-test. The detailed GAP codes can be found on the second author\rq{}s personal website \cite{Yu}. (Sub-test will also be used in Sections \ref{ss:2^63^25^2} and \ref{ss:goren}.)  {\it Surprisingly, it turns out that sub-test is quite effective in our study.}

{\bf Step two:} If $G$ survives after sub-test and $G$ is not one of the above 25 groups, we just do case by case consideration.

\end{remark}

We now start to prove Theorem \ref{thm:sylow2}.

\begin{proof}
1) $|G|=2$: Trivial.

2) $|G|=4$: Trivial.

3) $|G|=8$: It suffices to exclude $[8,5]\cong C_2^3$.

\begin{lemma}\label{lem:[8,5]}
$C_2^3$ can not be a subgroup of $Aut(X).$
\end{lemma}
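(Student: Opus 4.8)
The plan is to prove $C_2^3$ cannot embed in $\Aut(X)$ by the same diagonalization-and-smoothness strategy used for $C_3^3$ in Lemma \ref{C_3^3}. First I would argue that such a subgroup is $F$-liftable: since $C_2^3$ is a $2$-group and $2\nmid 5$, the Sylow $5$-subgroup of $C_2^3$ is trivial, so condition (1) of Theorem \ref{liftable} is vacuously satisfied and $G=C_2^3$ is $F$-liftable. Hence I may assume $G=\langle A_1,A_2,A_3\rangle$ with each $A_i\in\GL(5,\C)$ satisfying $A_i(F)=F$, the $A_i$ commuting, and each of order dividing $2$. Because the $A_i$ pairwise commute and each squares to the identity, they are simultaneously diagonalizable, so after a change of coordinates every element of $\widetilde{G}$ is diagonal with entries in $\{\pm 1\}$.

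The core of the argument is then a counting obstruction on the five coordinates. Each diagonal sign matrix $A=\Diag(\e_1,\dots,\e_5)$ with $\e_i\in\{\pm1\}$ partitions the coordinates into a $+1$-eigenspace and a $-1$-eigenspace, i.e.\ it is determined by a vector in $\F_2^5$; the three generators give three such vectors, hence a linear map $\F_2^5\to\F_2^3$ sending each coordinate index $i$ to the triple of signs $(\text{sign of }\e_i\text{ in }A_1,A_2,A_3)$. By Proposition \ref{pp:smoothnessandmonomial}, smoothness forces that for each $i$ there is a monomial $x_i^4x_j\in F$. Invariance $A_k(x_i^4x_j)=x_i^4x_j$ for all $k$ means the sign-vector of index $i$ equals the sign-vector of index $j$ under each generator, i.e.\ indices $i$ and $j$ receive the \emph{same} image in $\F_2^3$. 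I would encode this as a directed graph on the five coordinates with an edge $i\to j$ whenever $x_i^4x_j\in F$, noting that every vertex has out-degree at least one and that endpoints of each edge share a common character value.

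The obstruction comes from comparing $5$ coordinates against the $8$ possible sign-patterns in $\F_2^3$: the key point is not a simple pigeonhole (since $5<8$) but the constraint that the eight characters occur in a structured way. I would instead argue directly: if $\widetilde{G}\cong C_2^3$ acts faithfully on $\C^5$ by diagonal sign matrices, the five one-dimensional characters $\chi_1,\dots,\chi_5$ of $C_2^3$ appearing must span the full character group $\Hom(C_2^3,\C^*)\cong C_2^3$ (otherwise the representation is not faithful). Faithfulness forces at least three independent characters among the $\chi_i$, so some character, say the one attached to $x_5$, is \emph{not} equal to any other $\chi_j$ and also differs from the trivial character; then the invariance condition $\chi_5 = \chi_j$ required by the edge $x_5^4 x_j\in F$ cannot hold for any $j$ (including $j=5$, since $\chi_5\neq\mathbf{1}$ rules out $x_5^5\in F$). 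This contradicts Proposition \ref{pp:smoothnessandmonomial}.

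The step I expect to be the main obstacle is making the faithfulness-versus-smoothness count airtight: I must verify that one genuinely cannot arrange five sign-characters that (a) generate $C_2^3$ and (b) admit, for every coordinate $i$, another coordinate $j$ (possibly with $x_i^5\in F$, i.e.\ $j=i$ and $\chi_i=\mathbf 1$) carrying an equal character. The clean way is to observe that any coordinate assigned the trivial character $\mathbf 1$ could contribute $x_i^5$, but if $\widetilde G$ acts faithfully at most \emph{two} coordinates can carry the trivial character (three trivial characters would leave only two coordinates to generate $C_2^3$, impossible). Partitioning the five characters into classes of equal value and checking that the non-trivial classes each need an outgoing edge staying inside their class, a short finite case-check (which I would relegate to a sub-test via GAP as in Remark \ref{rmk:stepstoexcludegroups}, or do by hand) shows the constraints are incompatible, forcing the desired contradiction and completing the proof of Lemma \ref{lem:[8,5]}.
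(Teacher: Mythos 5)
Your setup (liftability via Theorem \ref{liftable}, simultaneous diagonalization into sign matrices, encoding coordinates by characters $\chi_1,\dots,\chi_5$ of $C_2^3$) matches the paper's, but there is a fatal computational error at the heart of your argument: the invariance condition for the monomial $x_i^4x_j$ is wrong. For $A=\Diag(\epsilon_1,\dots,\epsilon_5)$ with $\epsilon_i=\pm1$ one has $A(x_i^4x_j)=\epsilon_i^4\epsilon_j\, x_i^4x_j=\epsilon_j\, x_i^4x_j$, since $\epsilon_i^4=1$. So $x_i^4x_j\in F$ forces $\chi_j=\mathbf{1}$, \emph{not} $\chi_i=\chi_j$. (The relation $\chi_j=\chi_i^{-4}=\chi_i$ that you are implicitly using is correct for characters of order $5$ --- this is exactly the computation in the Sylow-$5$ sections --- but here the characters have order dividing $2$, so $\chi_i^{-4}=\mathbf{1}$.) With the corrected condition your final contradiction evaporates: the coordinate $x_5$ carrying a nontrivial character distinct from all others is perfectly allowed to have an edge $x_5^4x_j\in F$, as long as $\chi_j=\mathbf{1}$; and faithfulness permits up to two coordinates with trivial character. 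Indeed, in the paper's own normalized configuration ($\chi_2,\chi_3,\chi_4$ a basis, $\chi_1,\chi_5$ trivial) every monomial $x_i^4x_1$ and $x_i^4x_5$ is invariant, so no contradiction can be extracted from the existence of monomials $x_i^4x_j$ alone, i.e.\ from Proposition \ref{pp:smoothnessandmonomial} alone.

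The repair requires looking at \emph{all} monomials of $F$, not just the leading ones, and invoking the stronger singularity criteria. This is what the paper does: after choosing three independent characters among the five (possible by faithfulness) and placing them at positions $2,3,4$ with dual generators $A_1,A_2,A_3$, any invariant monomial $x_1^{i_1}\cdots x_5^{i_5}$ with $i_1=i_5=0$ would need $i_2,i_3,i_4$ all even, which is impossible in odd degree $5$; hence $F\in(x_1,x_5)$, and Proposition \ref{pp:nonsmoothquintic} (2) (equivalently Lemma \ref{lem:nonsmooth} with $a=2$, $b=0$) gives the contradiction with smoothness. Alternatively one can argue via Lemma \ref{lem:em3}: each nonidentity element has $-1$-eigenvalue multiplicity at most $2$, and a count over the seven nonidentity elements ($4k\le 14$, where $k$ is the number of nontrivial $\chi_i$) forces $k=3$ with the three nontrivial characters independent, whereupon the element on which all three evaluate to $-1$ has multiplicity $3$, a contradiction. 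Either route closes the gap; your graph-theoretic case-check, as set up on the incorrect condition $\chi_i=\chi_j$, does not.
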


\begin{proof}
Suppose $N< \Aut(X)$ and $N\cong C_2^3$. Then we may assume $N=\langle [A_1],[A_2],[A_3]\rangle $, where $A_1=\Diag(1,-1,1,1,(-1)^a)$, $A_2=\Diag(1,1,-1,1,(-1)^b)$, $A_3=\Diag(1,1,1,-1,(-1)^c) $, and $A_1(F)=A_2(F)=A_3(F)=F.$  Then $x_1^{i_1}x_2^{i_2}x_3^{i_3}x_4^{i_4}x_5^{i_5}\in F$ only if $i_1\neq 0$ or $i_5\neq 0$. Therefore, $F=x_1 H+x_5G$, for some degree 4 polynomials $H$ and $G$, a contradiction to the smoothness of $X$.
\end{proof}

\begin{lemma}\label{lem:to2}
Let $g\in \Aut(X)$. Suppose $\Ord(g)=2$ and $A$ is an $F$-lifting of $g$. Then the trace of $A$ must be  positive (more precisely, $1,$ or $3$).
\end{lemma}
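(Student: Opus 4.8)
The plan is to exploit that an $F$-lifting of an involution must be diagonalizable with eigenvalues $\pm 1$, and then to read off the constraint that $A(F)=F$ imposes on the monomials of $F$. First I would observe that, since $A$ is an $F$-lifting of $g$ and $\Ord(g)=2$, the group $\langle A\rangle$ has order $2$, so $A^2=I_5$. Hence the minimal polynomial of $A$ divides $(t-1)(t+1)$ and $A$ is diagonalizable with every eigenvalue equal to $\pm 1$. Conjugating by the diagonalizing matrix (which carries $F$ to another defining polynomial of a smooth quintic, leaves the trace unchanged since trace is a conjugation invariant, and preserves the relation $A(F)=F$), I may assume $A=\Diag(\epsilon_1,\dots,\epsilon_5)$ with each $\epsilon_i\in\{1,-1\}$. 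Writing $p$ for the number of indices with $\epsilon_i=1$, the trace is $\Tr(A)=2p-5$; and since $g=[A]\neq e$ the $\epsilon_i$ are not all equal, so $1\le p\le 4$ and a priori $\Tr(A)\in\{-3,-1,1,3\}$.

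The key step is a parity count. A monomial $m=x_1^{a_1}\cdots x_5^{a_5}$ of degree $5$ satisfies $A(m)=(-1)^{s}m$, where $s=\sum_{\epsilon_i=-1}a_i$ is the total exponent carried by the $(-1)$-eigenvectors. Thus $A(F)=F$ forces $s$ to be even for every monomial occurring in $F$. Because $\deg F=5$ is odd, the complementary sum $\sum_{\epsilon_i=1}a_i=5-s$ is then odd, hence at least $1$; so every monomial of $F$ is divisible by some variable $x_i$ with $\epsilon_i=1$. Equivalently, $F$ lies in the ideal generated by the $(+1)$-eigenvalue variables. This is also the observation that pins down the \emph{sign} of the trace: the only other lifting of $g$ of order $2$ is $-A$, and it satisfies $(-A)(F)=(-1)^{5}F=-F\neq F$, so it is not an $F$-lifting; the genuine $F$-lifting is the one whose $(+1)$-eigenspace predominates.

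Finally I would invoke smoothness. If $p\le 2$ the ideal $(x_i:\epsilon_i=1)$ is generated by at most two variables, so $F$ lies in some $(x_p,x_q)$; then by Proposition \ref{pp:nonsmoothquintic}(2) (equivalently Lemma \ref{lem:nonsmooth} with $a=2$, $b=0$, which applies since $2a+b=4=n$) the quintic $X$ would be singular, contradicting the standing hypothesis. Therefore $p\ge 3$, and together with $p\le 4$ this yields $\Tr(A)=2p-5\in\{1,3\}$, which is positive, as claimed. I do not expect a serious obstacle: the argument is a short diagonalization followed by a parity count, and the only point needing care is to track the sign $(-1)^{\deg F}$, so as to ensure that it is the $(+1)$-eigenspace, rather than the $(-1)$-eigenspace, that must meet every monomial of $F$ — this is precisely where the oddness of the degree $5$ enters.
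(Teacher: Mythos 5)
Your proof is correct and is essentially the argument the paper uses: the paper's proof simply invokes Lemma \ref{lem:em3} (an eigenvalue of multiplicity $\geq 3$ must be a fifth root of unity, so $-1$ has multiplicity at most $2$), and that lemma is itself proved by exactly your combination of ingredients, namely $A$-invariance of the monomials of $F$ together with the smoothness criterion $F\notin(x_p,x_q)$ of Proposition \ref{pp:nonsmoothquintic}(2). Your parity count is just a self-contained, inlined version of that specialization to $\xi=-1$, so there is no substantive difference in route.
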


\begin{proof}
Just apply Lemma \ref{lem:em3}.
\end{proof}

\begin{remark}
It turns out this simple lemma is extremely useful to exclude groups in the rest of this paper. It is a little mysterious why it is so useful in our study. 
\end{remark}

4) $|G|=16$: after sub-test, one sees that it suffices to exclude: $[16,4]$, $[16,12]$.

\begin{lemma}\label{lem:[16,4]}
$[16,4]\cong C_4\rtimes C_4$ is not a subgroup of $\Aut(X)$.
\end{lemma}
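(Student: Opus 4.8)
The plan is to show that if $[16,4] \cong C_4 \rtimes C_4$ embeds in $\Aut(X)$, then one of the generators must have an eigenvalue of multiplicity $\geq 3$ that is not a fifth root of unity, or must force $F$ into a shape that violates the smoothness criterion of Proposition \ref{pp:nonsmoothquintic}. First I would invoke Theorem \ref{liftable} (using the remark after Theorem \ref{thm:noorder25}, since $G_5$ is trivial here so condition (2) is vacuous) to obtain an $F$-lifting $\widetilde{G} < \GL(5,\C)$ of $G \cong C_4 \rtimes C_4$. Write $G = \langle a, b \rangle$ with $a^4 = b^4 = 1$ and $b^{-1}ab = a^{-1}$ (the standard presentation of $[16,4]$), and let $A, B$ be the corresponding lifts with $A(F) = B(F) = F$, each of order $4$.

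Next I would diagonalize the normal generator. Since $\langle a \rangle \cong C_4$ is normal, I may assume after a change of coordinates that $A = \Diag(\xi_4^{a_1}, \dots, \xi_4^{a_5})$ is diagonal, with eigenvalues among $\{1, \xi_4, \xi_4^2, \xi_4^3\}$. By Lemma \ref{lem:em3}, no eigenvalue of $A$ (which satisfies $A(F)=F$) can appear with multiplicity $\geq 3$ unless it is a fifth root of unity; since the only fifth root of unity among fourth roots of unity is $1$, the nontrivial eigenvalues $\xi_4, \xi_4^2, \xi_4^3$ each occur at most twice, and $\Tr(A)$ must be positive by Lemma \ref{lem:to2} applied to $A^2$ (an involution, or the identity). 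The relation $BAB^{-1} = A^{-1}$ forces, via Lemma \ref{lem:matrixshape}, that $B$ permutes the eigenspaces of $A$ according to the pairing $\xi_4^j \leftrightarrow \xi_4^{-j}$, so $B$ has a block structure: it fixes the $\pm 1$-eigenspaces setwise and swaps the $\xi_4$- and $\xi_4^3$-eigenspaces. The key step is then to enumerate the few admissible eigenvalue multiplicity patterns for $A$ (constrained by $\sum a_i$ giving a genuine order-$4$ element, by the at-most-two bound, and by the existence of the conjugating $B$), and for each pattern compute the $A$-invariant monomials of degree $5$ to pin down the possible shapes of $F$.

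The main obstacle I expect is the case analysis in this last step: for each surviving eigenvalue pattern of $A$, I must check whether the resulting $F$ can be smooth while still admitting the order-$4$ automorphism $B$ satisfying $B(F) = F$. The strategy, exactly as in Lemmas \ref{order3power}, \ref{C_3^3} and Theorem \ref{thm:Sylow13}, is to use smoothness (Proposition \ref{pp:smoothnessandmonomial}, requiring some $x_i^4 x_j \in F$ for each $i$) together with the $A$- and $B$-invariance to derive a contradiction --- typically that some variable $x_i$ admits no monomial $x_i^4 x_j \in F$, or that $F \in (x_p, x_q)$ or $F \in (x_i) + (x_j,x_k)^2$, contradicting smoothness by Proposition \ref{pp:nonsmoothquintic}. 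I anticipate that the constraint coming from the noncommutativity $BAB^{-1} = A^{-1}$ (as opposed to the abelian case) is what ultimately rules out every pattern: the swap of the $\xi_4$- and $\xi_4^3$-eigenspaces, combined with $B$ having order $4$ rather than $2$ on the block it acts on, is too rigid to coexist with enough invariant monomials of the form $x_i^4 x_j$ to guarantee smoothness.

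Finally, I would remark that in practice one confirms the monomial computations with Mathematica (as the authors do elsewhere), and that the same template --- lift, diagonalize the normal cyclic factor, read off the block shape of the second generator from Lemma \ref{lem:matrixshape}, then test smoothness against Proposition \ref{pp:nonsmoothquintic} --- will recur for the companion exclusion of $[16,12]$, so the argument here sets up the pattern for the remaining order-$16$ cases.
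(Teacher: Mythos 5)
Your setup is sound and matches the paper's starting point (an $F$-lifting via Theorem \ref{liftable}, diagonalization of $A$, the block shape of $B$ from Lemma \ref{lem:matrixshape}), but the proof has a genuine gap: the decisive step --- ruling out every admissible eigenvalue pattern of $A$ --- is never carried out, only announced (``I expect'', ``I anticipate''). Moreover, the mechanism you predict will finish the job (computing $A$-invariant monomials pattern-by-pattern and contradicting Proposition \ref{pp:nonsmoothquintic}) is not the one that actually closes the argument, and it is harder than you suggest: for each of the three admissible patterns of $A$ there is a continuum of admissible matrices $B$ (choices of the entries swapping the $\xi_4$- and $\xi_4^3$-eigenspaces, of the block on the $1$-eigenspace, etc.), and for many of these choices there are plenty of invariant monomials of the form $x_i^4x_j$; one would have to show that every member of the resulting family of invariant quintics is singular, a nontrivial computation you have not done.

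What actually closes each case is a trace argument whose ingredients you cite but never assemble. From $\mathrm{tr}(A^2)>0$ (Lemma \ref{lem:to2}) together with $\mathrm{mult}(\xi_4)=\mathrm{mult}(\xi_4^3)$ (since $A$ is conjugate to $A^{-1}$), the eigenvalues $\xi_4$ and $\xi_4^3$ each occur exactly once, so $A^2=\Diag(-1,-1,1,1,1)$ up to coordinates. The center of $C_4\rtimes C_4$ contains the three involutions $a^2$, $b^2$, $a^2b^2$; since the lifting is faithful, their lifts $A^2$, $B^2$, $A^2B^2$ are three distinct order-two matrices fixing $F$, and Lemma \ref{lem:to2} applies to each. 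Writing $B^2=\beta I_2\oplus C$ (with $\beta=b_{12}b_{21}$ and $C$ of order at most $2$ on the last three coordinates), the inequalities $\mathrm{tr}(B^2)>0$ and $\mathrm{tr}(A^2B^2)>0$ read $2\beta+\mathrm{tr}(C)>0$ and $-2\beta+\mathrm{tr}(C)>0$, forcing $\mathrm{tr}(C)=3$, i.e.\ $C=I_3$; then $\beta=1$ gives $B^2=I_5$ (contradicting $\mathrm{ord}(B)=4$), while $\beta=-1$ gives $B^2=A^2$ (contradicting injectivity of $\pi$ on the lifting, since $a^2b^2\neq 1$ in $G$). This is, in matrix language, exactly the paper's proof: the paper organizes it through the character table of $[16,4]$, showing $\rho$ must be of type $2\oplus1\oplus1\oplus1$ with a faithful $2$-dimensional constituent ($X.9$ or $X.10$), whereupon the needed $1$-dimensional constituents force some order-two conjugacy class to have negative trace, violating Lemma \ref{lem:to2}. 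So your route can be completed, but the missing punchline is to apply Lemma \ref{lem:to2} to $B^2$ and $A^2B^2$, not to hunt for missing monomials. (Also a small slip: Lemma \ref{lem:to2} applied to $A^2$ constrains $\mathrm{tr}(A^2)$, not $\mathrm{tr}(A)$.)
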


\begin{proof}
Assume to the contrary that $[16,4]\cong G<\Aut (X)$. Then by Theorem \ref{liftable}, $G$ is $F$-liftable, i.e., there exists a faithful representation of degree $5$, say $\rho$,  of $[16,4]$ such that  $A(F)=F$ for all matrices $A$ belonging to $\rho$.

Next we look at the character table of $[16,4]$, see Figure \ref{table:[16,4]}.

\begin{figure}[htbp]
\begin{center}

\includegraphics[width=15cm, height=18cm]{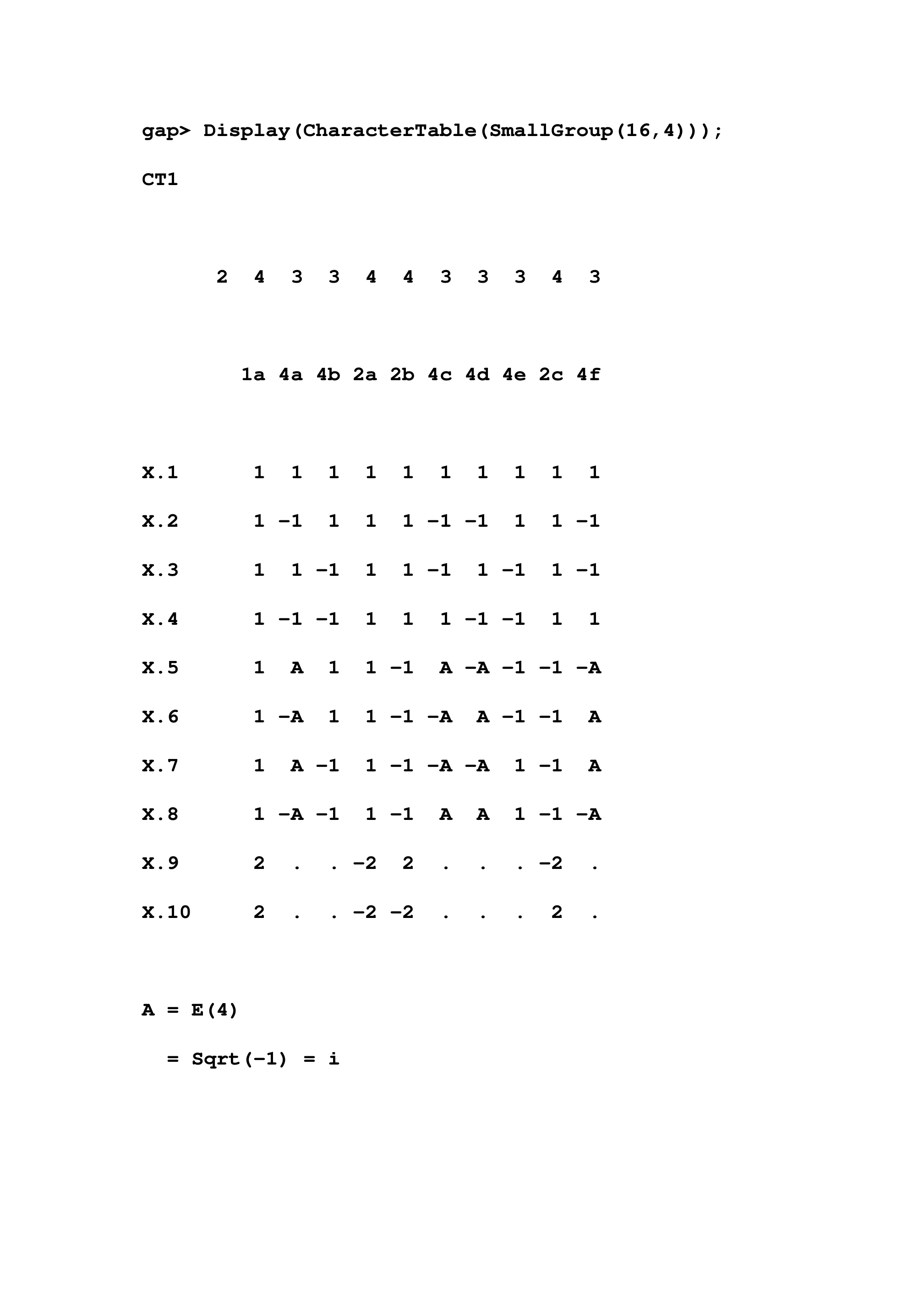}

\caption{Character table of $[16,4]$}
\label{table:[16,4]}
\end{center}
\end{figure}

As the group $[16,4]$ is not abelian, the faithful representation $\rho$ must contain a $2$-dimensional irreducible representation. By character values of the conjugacy class  $2a$ and Lemma \ref{lem:to2}, $\rho$ must be of type $2\oplus 1\oplus 1\oplus 1$ (i.e., $\rho$ decomposes into  one $2$-dimension irreducible representation, and three $1$-dimensional irreducible representations). 
  
If the 2-dimensional irreducible representation contained in $\rho$ is $X.9$, then by character values of conjugacy class $2b$ and faithfulness of $\rho$, one of $X.5,X.6,X.7,X.8$ must be contained in $\rho$. Then the trace of the conjugacy class $2c$ must be negative, a contradiction to Lemma \ref{lem:to2}. Therefore, $X.9$ is not contained in $\rho$.

Similarly, $X.10$ is also not contained in $\rho$. 

But one of $X.9$ or $X.10$ must be in $\rho$, a contradiction.
\end{proof}

\begin{lemma}\label{lem:[16,12]}
$[16,12]\cong C_2\times Q_8$ is not a subgroup of $\Aut(X)$.
\end{lemma}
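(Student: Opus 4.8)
The plan is to reduce to genuine matrices via Theorem~\ref{liftable}: since $G\cong C_2\times Q_8$ has order a power of the prime $2\neq 5$ and $F$ is $G$-invariant, $G$ is $F$-liftable, so I may replace the projective action by a faithful $5$-dimensional representation $\rho$ of $G$ all of whose image matrices leave $F$ invariant. The whole contradiction will then be extracted from the trace restriction on involutions recorded in Lemma~\ref{lem:to2}: any $F$-lifting of an order-$2$ element of $\Aut(X)$ has trace $1$ or $3$, in particular positive.

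Next I would record the representation theory of $G$. Write $Q_8=\langle i,j\rangle$ with central involution $-1=i^2=j^2$, and $C_2=\langle z\rangle$; then $G$ has exactly three involutions, namely $z$, $-1$, and $z(-1)$, while every other nontrivial element has order $4$. The irreducible representations of $G$ are the eight one-dimensional characters, all of which kill $-1$ (because $[Q_8,Q_8]=\{\pm1\}$), together with the two two-dimensional irreducibles $\mathbf 1\otimes V$ and $\mathrm{sgn}\otimes V$, where $V$ is the standard $2$-dimensional irreducible of $Q_8$. The crucial numerics are that in every two-dimensional summand $-1$ acts as $-I_2$ (trace $-2$), whereas in every one-dimensional summand $-1$ acts trivially (trace $+1$). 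As $G$ is non-abelian, $\rho$ must contain a two-dimensional irreducible, so the only options are $\rho=2\oplus2\oplus1$ or $\rho=2\oplus1\oplus1\oplus1$. I dispose of the first at once: applying Lemma~\ref{lem:to2} to the involution $-1$ gives $\Tr\rho(-1)=-2-2+1=-3<0$, a contradiction.

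The main step is then to rule out $\rho=2\oplus1\oplus1\oplus1$, say $\rho=(\chi_0\otimes V)\oplus\psi_1\oplus\psi_2\oplus\psi_3$ with $\chi_0\in\{\mathbf1,\mathrm{sgn}\}$ and one-dimensional $\psi_k$, by playing the two remaining involutions $z$ and $z(-1)$ against each other. Set $t:=\chi_0(z)\in\{\pm1\}$ and $s:=\sum_k\psi_k(z)$, a sum of three terms each $\pm1$. Since $V(1)=I_2$ one gets $\Tr\rho(z)=2t+s$, and since $V(-1)=-I_2$ while $\psi_k(-1)=1$ for all $k$ one gets $\Tr\rho(z(-1))=-2t+s$. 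Lemma~\ref{lem:to2} forces both traces into $\{1,3\}$, so their difference is even with absolute value at most $2$; but that difference equals $4t$, of absolute value $4$, a contradiction. This eliminates the surviving case and finishes the proof.

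I expect the only delicate point to be the bookkeeping of the second step, specifically confirming that $-1$ lies in the kernel of every one-dimensional character (so $\psi_k(-1)=1$) and that the $2$-dimensional summand reads off $V$ correctly on $z$ and $z(-1)$; once those trace values are pinned down, the numerical contradiction $|4t|=4>2$ is immediate. As with Lemma~\ref{lem:[16,4]}, this argument is simply the GAP character-table analysis made explicit, with Lemma~\ref{lem:to2} (itself a consequence of Lemma~\ref{lem:em3}) serving as the engine behind every exclusion.
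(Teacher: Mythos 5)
Your proof is correct and follows essentially the same route as the paper: the paper disposes of $[16,12]$ exactly as it did $[16,4]$, namely by taking an $F$-lifting (Theorem \ref{liftable}), decomposing it into irreducibles, and using the trace constraint on involutions from Lemma \ref{lem:to2} to rule out every possible decomposition type. The only difference is cosmetic: you work out the character theory of $C_2\times Q_8$ by hand and package the final contradiction as the difference $4t$ of two traces each lying in $\{1,3\}$, where the paper reads the same data off a GAP character table.
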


\begin{proof}
Similar to $[16,4]$. For the character table of $[16,12]$, see the website \cite{Yu}.

%
%
%

\end{proof}

5) $|G|=32$: Again, after sub-test, it suffices to exclude: $[32,4]$, $[32,12]$, $[32,15]$, $[32,17]$, $[32,18]$, $[32,19]$, $[32,20]$, $[32,38]$.

\begin{lemma}\label{lem:[32,4]}
Neither $[32,4]\cong C_8\rt C_4$ nor $[32,12]\cong C_4\rt C_8$ is a subgroup of $\Aut(X)$.
\end{lemma}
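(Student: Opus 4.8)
The plan is to mimic the character-theoretic argument already used for $[16,4]$ and $[16,12]$ (Lemmas \ref{lem:[16,4]} and \ref{lem:[16,12]}). Assume to the contrary that $G\cong[32,4]$ is a subgroup of $\Aut(X)$; the case $G\cong[32,12]$ is entirely parallel, using its own character table, which is why the two groups are excluded together. Since $G$ is a $2$-group its Sylow $5$-subgroup is trivial and hence trivially $F$-liftable, so by Theorem \ref{liftable} together with the remark following Theorem \ref{thm:noorder25}, $G$ is $F$-liftable. Thus there is a faithful five-dimensional representation $\rho$ of $G$ realized by matrices $A$ with $A(F)=F$, and by Lemma \ref{lem:to2} every involution of $G$ must have strictly positive trace (namely $1$ or $3$) under $\rho$. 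This last constraint is the whole engine of the proof.

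First I would pin down the possible shapes of $\rho$. Both $C_8\rtimes C_4$ and $C_4\rtimes C_8$ contain an abelian subgroup of index $2$ (adjoin the square of the generator of the acting cyclic factor to the normal cyclic subgroup; its square acts trivially since every unit mod $8$ squares to $1$), so all irreducible characters of $G$ have degree $1$ or $2$. A faithful representation cannot be a sum of linear characters, since those factor through the abelian quotient $G/[G,G]$ while $G$ is non-abelian; hence $\rho$ must contain at least one $2$-dimensional constituent. Writing $5$ as a sum of $1$'s and $2$'s with at least one $2$ leaves only the types $2\oplus 1\oplus 1\oplus 1$ and $2\oplus 2\oplus 1$. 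I would then read off the character table of $G$ from GAP exactly as in the order-$16$ cases, and in particular locate the involution conjugacy classes together with the values of the $2$-dimensional irreducibles on them.

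The core is then a finite sign analysis. For each admissible type I would check that \emph{faithfulness} forces $\rho$ to contain a collection of constituents whose total character is negative on at least one involution class, contradicting Lemma \ref{lem:to2}. Concretely, to make $\rho$ faithful on the center and on the cyclic subgroup of order $8$ one is forced to include a $2$-dimensional irreducible that takes the value $-2$ on a suitable involution; the linear (or second $2$-dimensional) constituents available to offset this are constrained, because their combination with the chosen faithful constituent must keep the total degree equal to $5$ and the whole representation faithful. As in the proof of Lemma \ref{lem:[16,4]}, one shows that whichever faithful $2$-dimensional constituent is selected, some \emph{other} involution class is then forced to carry nonpositive trace.

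The main obstacle I anticipate is purely the bookkeeping: the group has order $32$, so there are more involution classes and more $2$-dimensional irreducibles than in the order-$16$ situation, and correspondingly more candidate decompositions to eliminate. I expect each individual elimination to be mechanical once the table is in hand — the decisive structural fact, just as before, is that ``positive trace on every involution'' is an extremely rigid requirement, and GAP is used only to supply the character table and to confirm the finitely many sign computations.
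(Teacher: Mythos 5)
Your proposal is correct and follows essentially the same route as the paper: the paper's proof of this lemma is literally "similar to the $[16,4]$ case," i.e.\ use $F$-liftability (Theorem \ref{liftable}), decompose the resulting faithful five-dimensional representation into irreducibles of degree $1$ and $2$ read off from the GAP character table, and contradict Lemma \ref{lem:to2} (involutions must have trace $1$ or $3$) via faithfulness. Your added observations — the index-$2$ abelian subgroup bounding the irreducible degrees, and the reduction to types $2\oplus 1\oplus 1\oplus 1$ and $2\oplus 2\oplus 1$ — are correct and merely make explicit what the paper leaves implicit.
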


\begin{proof}
Similar to [16,4] case. 

%
%
%
%
\end{proof}

For the remaining $6$ cases we need some new tools to exclude.

\begin{lemma}\label{lem:eoo8}
Let $g\in \Aut(X)$ be of order $8$. Suppose $A\in \GL(5,\C)$ is an $F$-lifting of $g$. Then:

(i) $-1$ is one of the eigenvalues of $A$;

(ii) If $\xi_8$ is an eigenvalue of $A$, then the multiplicity of $\xi_8$ as an eigenvalue of $A$ is exactly one; and

(iii) If $-1$ is an eigenvalue of $A$ of multiplicity two, then, up to replacing $A$ by its odd power, we may assume $A=\Diag(\xi_8,-1,1,-1,1)$.

\end{lemma}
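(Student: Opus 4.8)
The plan is to put $A$ in diagonal form and read off the eigenvalues forced by smoothness. Since $A$ is an $F$-lifting of $g$ we have $A(F)=F$ and $\Ord(A)=8$, and as $A$ has finite order it is diagonalizable; after a change of coordinates I may write $A=\Diag(\xi_8^{a_1},\dots,\xi_8^{a_5})$ with at least one $a_i$ odd (otherwise every eigenvalue is a $4$-th root of unity and $\Ord(A)\mid 4$). The single computation driving everything is this: by Proposition \ref{pp:smoothnessandmonomial} each index $i$ admits a $j$ with $x_i^4x_j\in F$, and $A(F)=F$ forces $4a_i+a_j\equiv 0\pmod 8$, i.e. $a_j\equiv-4a_i$, which equals $4$ when $a_i$ is odd and $0$ when $a_i$ is even. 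Thus every odd exponent produces a partner of exponent $4$, i.e. the eigenvalue $\xi_8^4=-1$; as some $a_i$ is odd, this gives (i) immediately. The same bookkeeping shows an exponent $4$ forces an exponent $0$, so $1$ is always an eigenvalue, a fact I reuse below.

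For (ii) I would assume $\xi_8$ occurs with multiplicity $\ge 2$. Lemma \ref{lem:em3}, applied to $\xi_8$ (whose fifth power is not $1$), bounds the multiplicity by $2$, so it is exactly $2$, with eigenplane $\langle x_1,x_2\rangle$. I then split on the multiplicity of $-1$, which by (i) and Lemma \ref{lem:em3} is $1$ or $2$. If a single index $x_{j_0}$ has exponent $4$, I examine the line $\ell=\{x_3=x_4=x_5=0\}$: no $A$-invariant monomial is supported on $x_1,x_2$ alone (its weight would be $5\not\equiv0$), so $\ell\subset X$; moreover $\partial_{x_1}F$ and $\partial_{x_2}F$ vanish identically on $\ell$, while for each other variable $\partial_{x_j}F|_\ell$ equals the binary quartic $C_j(x_1,x_2)$ collecting the coefficients of the monomials $x_j(x_1,x_2)^4$. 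Only $C_{j_0}$ can be nonzero (the other two variables do not have exponent $4$), and $C_{j_0}\ne0$ because $x_1^4x_{j_0},x_2^4x_{j_0}\in F$ by smoothness; any root of $C_{j_0}$ then yields a singular point on $\ell$, a contradiction. If instead $-1$ has multiplicity $2$, counting exponents ($2$ of weight $1$, $2$ of weight $4$, one forced weight $0$) pins the pattern down to $\Diag(\xi_8,\xi_8,-1,-1,1)$, and one checks $F\in(x_5)+(x_1,x_2)^2$, so Proposition \ref{pp:nonsmoothquintic}(3) makes $X$ singular. Either way multiplicity $2$ is impossible, proving (ii).

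For (iii) I would normalize first: an odd exponent exists, and replacing $A$ by a suitable odd power $A^k$ (which fixes the exponents $0$ and $4$, hence preserves both the multiplicity of $-1$ and the set of exponent-$0$ indices) I may assume this odd exponent is $1$. With the two exponent-$4$ indices and at least one exponent-$0$ index already present, the five exponents form $\{1,4,4,0,e\}$ with $e\ne4$, and the goal is $e\equiv0$. Writing $x_1$ for the exponent-$1$ index, $x_3$ for an exponent-$0$ index, $x_5$ for the remaining one and $x_2,x_4$ for the two exponent-$4$ indices, I claim $F\in(x_3)+(x_1,x_5)^2$ unless $e\equiv0$: an $A$-invariant monomial avoiding $x_3$ with $x_1,x_5$-degree $\le1$ must satisfy $i_1+4(i_2+i_4)+ei_5\equiv0\pmod 8$ with $i_1+i_5\le1$, and the three cases $(i_1,i_5)\in\{(0,0),(1,0),(0,1)\}$ force $i_2+i_4\in\{5,4,4\}$ and weights $\equiv 4,\,1,\,e\pmod 8$ respectively, none of which vanishes when $e\not\equiv0$. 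Hence if $e\not\equiv0$ Proposition \ref{pp:nonsmoothquintic}(3) gives singularity; thus $e\equiv0$ and, up to reordering, $A=\Diag(\xi_8,-1,1,-1,1)$.

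The main obstacle is not a single estimate but the combinatorial bookkeeping: converting ``smooth and $A$-invariant'' into the short list of admissible exponent patterns and, for every pattern other than the asserted normal form, locating the coordinate flat or monomial ideal that triggers Lemma \ref{lem:nonsmooth}/Proposition \ref{pp:nonsmoothquintic}. The delicate point is that the clean ideal criterion $(x_i)+(x_j,x_k)^2$ only applies once $-1$ has been isolated by multiplicity; in the borderline configurations where two variables share exponent $4$ one must instead exhibit the singular point by hand on the relevant eigen-line, where the identically vanishing partials $\partial_{x_1}F,\partial_{x_2}F$ do the real work.
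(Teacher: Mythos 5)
Your proof is correct, and it rests on the same toolkit as the paper's: diagonalize $A$ with eighth-root-of-unity exponents, turn $A$-invariance of each monomial into a weight congruence mod $8$, bound multiplicities via Lemma \ref{lem:em3}, and derive contradictions from the smoothness criteria of Section \ref{ss:differentialmethod}. Part (i) is identical. Where you genuinely diverge is in the organization and execution of (ii) and (iii). For (ii) the paper fixes the pattern $A=\Diag(\xi_8,-1,1,\xi_8,\xi_8^d)$ and runs over $d$: odd $d$ is killed by applying Lemma \ref{lem:em3} to $A^4$, and the even cases are killed by Mathematica-computed invariant monomials yielding $F\in(x_2)+(x_3,x_5)^2$ or $F\in(x_3)+(x_1,x_4)^2$. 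You instead split by the multiplicity of $-1$, and in the multiplicity-one case you give a uniform, computer-free geometric argument: the $\xi_8$-eigenline $\{x_3=x_4=x_5=0\}$ lies in $X$, two partials vanish on it identically, and the only possibly nonzero restricted partial is a binary quartic, whose zero is a singular point. (This is secretly the same obstruction -- your computation shows $F\in(x_{j_0})+(x_{j_1},x_{j_2})^2$, so Lemma \ref{lem:nonsmooth} with $a=1$, $b=2$ would apply -- but you produce it by hand and without subdividing by the fifth exponent.) For (iii) the paper excludes $d=1$ by quoting (ii) and then enumerates $d=2,3,5,6,7$ with Mathematica, whereas you treat the fifth exponent $e$ as a parameter and do a single three-line weight computation that rules out every $e\not\equiv 0$, including $e=1$, so your (iii) does not even depend on (ii). The trade-off: the paper's enumeration is shorter to write down given software, while your version is self-contained, uniform in the unknown exponent, and makes visible why the exceptional pattern $\Diag(\xi_8,-1,1,-1,1)$ is the only survivor.
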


\begin{proof}
We may assume $A=\Diag(\xi_8, \xi_8^a,\xi_8^b,\xi_8^c,\xi_8^d)$, where $0\leq a,b,c,d\leq 7$. Since $A(F)=F$, then $x_1^5\notin F$. We may assume $x_1^4x_2\in F$. Then $a=4$. So, $-1$ is an eigenvalue of $A$, and i) is proved.

For ii), assume to the contrary that $\xi_8$, as eigenvalue of $A$, is of multiplicity greater than one. 

We may assume $A=\Diag(\xi_8,-1,1,\xi_8,\xi_8^d)$, $0\leq d \leq 7.$ (Since $1$ must be an eigenvalue of $A$.) By Lemma \ref{lem:em3}, $d$ can not be odd; otherwise, $-1$ is an eigenvalue of $A^4$ of multiplicity three.

If $d=0,2,6$, computing invariant monomials of $A$ (e.g., using Mathematica), we have $F\in ( x_2) +( x_3,x_5) ^2$, a contradiction to Proposition \ref{pp:nonsmoothquintic} (3). If $d=4$, computing invariant monomials of $A$ (e.g., using Mathematica), we have $F\in ( x_3) +( x_1,x_4) ^2$, a contradiction. So ii) is proved.

For iii), suppose, $-1$ is an eigenvalue of $A$ of multiplicity two. Then we may assume $A=\Diag(\xi_8,-1,1,-1,\xi_8^d)$, where $0\leq d \leq 7$. Clearly $d\neq 1$ by ii) above. Also $d\neq 4$.

If $d=2,3,5,6,7$, then as before, we have $F\in ( x_3) +( x_1,x_5) ^2$, a contradiction. Therefore, iii) is proved.

\end{proof}

\begin{lemma}\label{lem:to4}
Let $g\in \Aut(X)$ of order $4$. Suppose $A\in \GL(5,\C)$ is an $F$-lifting of $g$. Then the trace of $A$ is not equal to $-1$.
\end{lemma}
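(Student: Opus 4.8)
The plan is to reduce the statement to a short piece of eigenvalue bookkeeping, exactly in the spirit of Lemma~\ref{lem:to2}. Since $A$ is an $F$-lifting of an element of order $4$, the matrix $A$ has order $4$ and satisfies $A(F)=F$, so after a change of coordinates I may take $A=\Diag(\xi_4^{a_1},\dots,\xi_4^{a_5})$ with $a_j\in\{0,1,2,3\}$. Writing $m_1,m_{-1},m_i,m_{-i}$ for the multiplicities of the eigenvalues $1,-1,\xi_4,\xi_4^3$, the trace is $(m_1-m_{-1})+(m_i-m_{-i})\xi_4$. Hence if $\Tr A=-1$ the imaginary part must vanish, forcing $m_i=m_{-i}=:k$, while the real part gives $m_{-1}=m_1+1$. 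Because $A$ has order $4$ at least one eigenvalue is primitive, so $k\geq 1$; combined with $m_1+m_{-1}+2k=5$ this leaves exactly two eigenvalue multisets, namely $\{-1,\xi_4,\xi_4,\xi_4^3,\xi_4^3\}$ (with $m_1=0,\ k=2$) and $\{1,-1,-1,\xi_4,\xi_4^3\}$ (with $m_1=1,\ k=1$). I will rule out each.

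For the first multiset I observe that $m_1=0$ is already impossible. Indeed, for any index $i$ Proposition~\ref{pp:smoothnessandmonomial} gives $x_i^4x_j\in F$ for some $j$, and $A(F)=F$ forces $A(x_i^4x_j)=\xi_4^{4a_i+a_j}x_i^4x_j=\xi_4^{a_j}x_i^4x_j=x_i^4x_j$, so $a_j\equiv 0\pmod 4$; that is, the eigenvalue $1$ must occur. Thus $m_1\geq 1$, contradicting $m_1=0$.

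For the second multiset I may arrange $A=\Diag(1,-1,-1,\xi_4,\xi_4^3)$, i.e.\ exponents $(0,2,2,1,3)$. The idea is to produce a smoothness obstruction by showing $F\in(x_1)+(x_4,x_5)^2$. Every monomial of $F$ is $A$-invariant; those divisible by $x_1$ lie in $(x_1)$, so it suffices to verify that every invariant monomial $x_2^{i_2}x_3^{i_3}x_4^{i_4}x_5^{i_5}$ of degree $5$ has $i_4+i_5\geq 2$. Here the invariance condition reads $2(i_2+i_3)+i_4+3i_5\equiv 0\pmod 4$, and checking the low cases $(i_4,i_5)\in\{(0,0),(1,0),(0,1)\}$ gives residues $2,1,3$ respectively, none $\equiv 0$; hence no invariant monomial has $i_4+i_5\leq 1$. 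Therefore $F\in(x_1)+(x_4,x_5)^2$, and Proposition~\ref{pp:nonsmoothquintic}(3) shows $X$ is singular, a contradiction.

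Combining the two cases, $\Tr A=-1$ is impossible. The only mildly delicate point is the short residue computation in the second case; everything else is the same eigenvalue counting that underlies Lemma~\ref{lem:to2}, so I expect the main (and very minor) obstacle to be organizing the case split cleanly rather than any genuine difficulty.
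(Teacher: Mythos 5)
Your proof is correct and takes essentially the same route as the paper: both reduce trace $-1$ to the two eigenvalue multisets $\{1,-1,-1,\xi_4,\xi_4^3\}$ and $\{-1,\xi_4,\xi_4,\xi_4^3,\xi_4^3\}$, and both kill the first via the ideal-membership claim $F\in(x_1)+(x_4,x_5)^2$ (identical to the paper's $F\in(x_5)+(x_1,x_2)^2$ up to relabeling coordinates) together with Proposition \ref{pp:nonsmoothquintic}(3). The only cosmetic difference is in the multiset with no eigenvalue $1$, where the paper computes that all invariant monomials lie in $(x_5)$, while you observe more cleanly that smoothness (Proposition \ref{pp:smoothnessandmonomial}) forces the eigenvalue $1$ to occur at all; both arguments are valid instances of the same invariant-monomial technique.
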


\begin{proof}
Assume to the contrary that ${\rm tr}A=-1$.

Then, up to change of coordinates, $A=\Diag(\xi_4,-\xi_4,-1,-1,1)$ or $\Diag(\xi_4,-\xi_4,\xi_4,-\xi_4,-1)$.

If $A=\Diag(\xi_4,-\xi_4,\xi_4,-\xi_4,-1)$, computing invariant monomials of $A$ as in the proof of Lemma \ref{lem:eoo8}, we have $F\in ( x_5) $, a contradiction.

If $A=\Diag(\xi_4,-\xi_4,-1,-1,1)$, then we have $F\in ( x_5) +( x_1,x_2)^2 $, a contradiction, as before.

\end{proof}

\begin{lemma}\label{lem:[32,15]}
Neither $[32,15]\cong C_4.(C_4\times C_2)$ nor $[32,28]\cong (C_8\times C_2)\rt C_2$is not a subgroup of $\Aut(X)$.

\end{lemma}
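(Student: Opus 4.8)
The plan is to argue exactly as in the proofs of Lemmas \ref{lem:[16,4]} and \ref{lem:[32,4]}, now feeding in the finer eigenvalue information supplied by Lemmas \ref{lem:eoo8} and \ref{lem:to4}. Suppose for contradiction that $G < \Aut(X)$ with $G \cong [32,15]$; the case $G \cong [32,28]$ is entirely parallel. Since $G$ is a $2$-group, its Sylow $5$-subgroup is trivial, hence $F$-liftable, so by the remark following Theorem \ref{thm:noorder25} and Theorem \ref{liftable} the group $G$ is itself $F$-liftable. Thus there is a faithful $5$-dimensional representation $\rho$ of $G$ with $A(F)=F$ for every $A$ in the image of $\rho$. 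Because $G$ is non-abelian, $\rho$ must contain an irreducible constituent of dimension $\geq 2$; reading the irreducible degrees off the character table of $[32,15]$ (available in GAP, see \cite{Yu}) pins down the finitely many possible isomorphism types of $\rho$ as ordered sums of irreducibles of total dimension $5$.

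First I would impose Lemma \ref{lem:to2} on every conjugacy class of involutions, forcing each such character value (trace) to equal $1$ or $3$. This already discards many candidate decompositions and, crucially, forces certain $1$-dimensional constituents to accompany a given higher-dimensional irreducible in order to correct an otherwise negative involution-trace, exactly as the constraint on the classes $2a,2b,2c$ operated in Lemma \ref{lem:[16,4]}. Next I would apply Lemma \ref{lem:to4} to each class of order-$4$ elements, discarding any surviving $\rho$ whose order-$4$ elements have trace $-1$. Finally, on the classes of order $8$ I would invoke Lemma \ref{lem:eoo8}: every order-$8$ lift must have $-1$ among its eigenvalues, $\xi_8$ can occur with multiplicity at most one, and the multiplicity-two case for $-1$ is rigid. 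Translated into character values on the order-$8$ classes (and on the order-$4$ classes obtained by squaring), these conditions contradict the constraints extracted in the earlier steps.

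The organising principle is that faithfulness forces enough irreducible constituents into $\rho$ to separate all elements, while the trace and eigenvalue lemmas cap how negative the character can be on the $2$-, $4$- and $8$-power classes; these two pressures are incompatible for both groups. The main obstacle is the bookkeeping across conjugacy classes: as in Lemma \ref{lem:[16,4]}, fixing one involution-trace positive typically forces a specific family of $1$-dimensional characters into $\rho$, and one must then verify that this inevitably drives the trace on some other involution or order-$4$ class into the forbidden range. Since both groups carry elements of order $8$, the real work lies in matching the rigid eigenvalue patterns of Lemma \ref{lem:eoo8} against the admissible character sums, so I would carry out the case split indexed by which higher-dimensional irreducible lands in $\rho$, ruling out each in turn; the underlying computations are the routine GAP verifications already used above, not anything conceptually new.
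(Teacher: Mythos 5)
Your proposal follows essentially the same route as the paper's proof: lift $G$ to a faithful $5$-dimensional representation $\rho$ preserving $F$, use Lemmas \ref{lem:to2} and \ref{lem:to4} on the involution and order-$4$ classes of the character table of $[32,15]$ to force $\rho$ to be of type $2\oplus 1\oplus 1\oplus 1$, and then play the order-$8$ eigenvalue rigidity of Lemma \ref{lem:eoo8} against the admissible $1$-dimensional constituents until some involution class is driven to negative trace. This is exactly the paper's argument, so the plan is correct; what remains is only the explicit character-table bookkeeping that the paper carries out.
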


\begin{proof}

We only give a proof for $[32,15]$. Proof of $[32,28]$ is similar. Assume to the contrary. Then we may assume $\rho$ is a 5 dimensional faithful representation of $[32,15]$ which leaves $F$ invariant.

By character table of $[32,15]$ (see Figure \ref{table:[32,15]}), since $\rho$ is faithful, the trace of the conjugacy class $2a$ must be positive and the trace of the conjugacy class $4a$ is not equal to $-1$, we have that $\rho$ is of type $2\oplus 1\oplus 1\oplus 1$, and the 2 dimensional irreducible representation is one of  $X.11\sim X.14$.

\begin{figure}[htbp]
\begin{center}

\includegraphics[width=20cm, height=22cm]{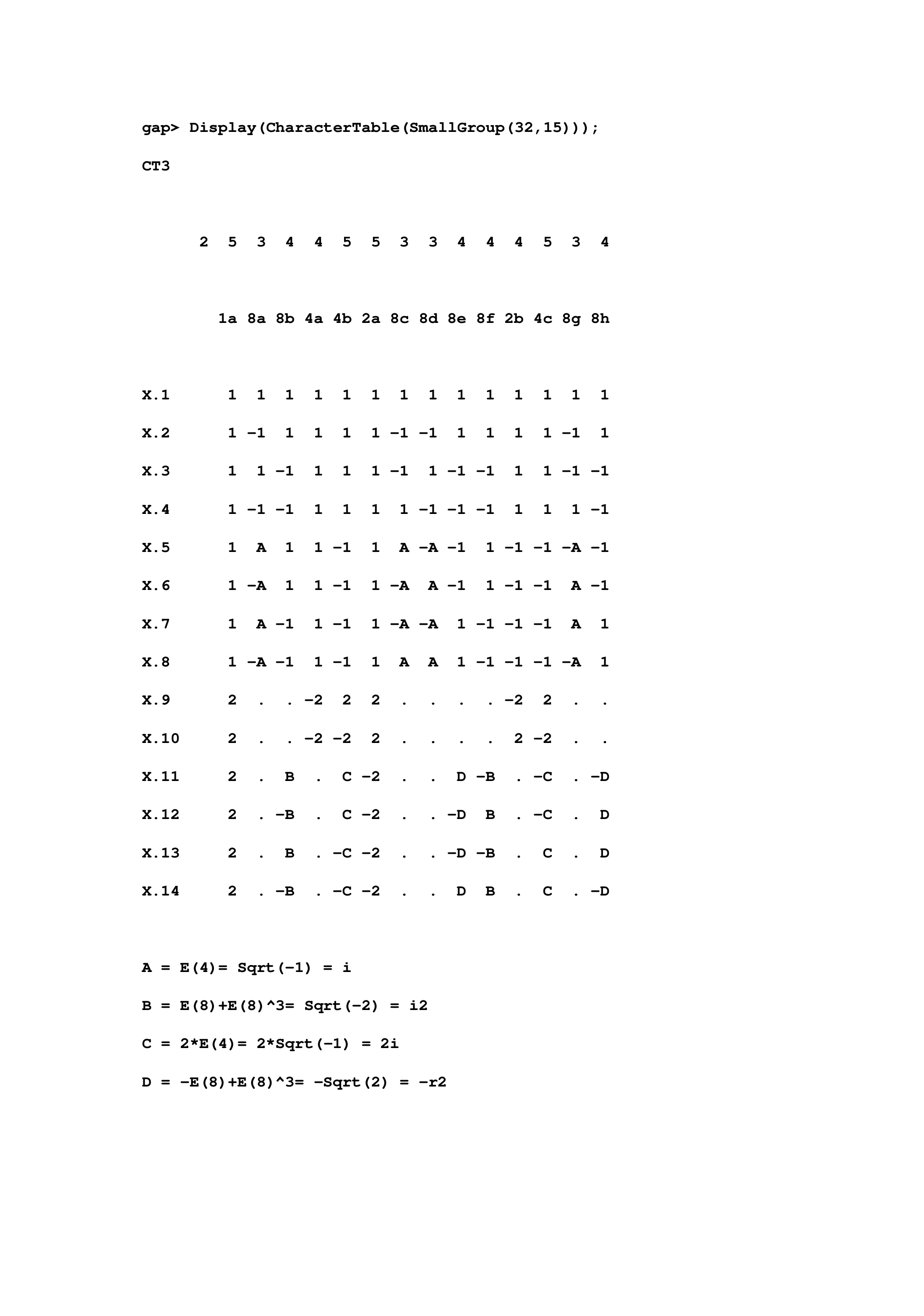}

\caption{Character table of $[32,15]$}
\label{table:[32,15]}
\end{center}
\end{figure}

Considering values of conjugacy classes $8a,8b,8c$, by Lemma \ref{lem:eoo8}, we see that all $8a,8c$ must have $-1$ as one of their eigenvalues.

If $X.2$ is not in $\rho$, then by eigenvalue consideration of $8a$, $X.4$ must be in $\rho$. Then by eigenvalue consideration of $8c$, $X.3$ must be in $\rho$. Then $8b$ has $-1$ as an eigenvalue of multiplicity greater than one, a contradiction to Lemma \ref{lem:eoo8}.

So $X.2$ must be in $\rho$.

Then $X.3$ is not in $\rho$ (otherwise, $8c$ has $-1$ as eigenvalue of multiplicity greater than one), and $X.4$ is also not in $\rho$ (otherwise, $8a$ has $-1$ as eigenvalue of multiplicity greater than one). Then one of $X.7$ and $X.8$ must be in $\rho$ (by eigenvalue consideration of $8b$), and one of $X.5$ and $X.6$ must be in in $\rho$ (by eigenvalue consideration of $8h$). However, then trace of $2b$ must be negative, a contradiction.
 
So such $\rho$ does not exist and the lemma is proved.

\end{proof}

\begin{lemma}\label{lem:C16}
If $C_{16}\cong N\lhd G< \Aut(X)$. Then $G=C_G(N)$.
\end{lemma}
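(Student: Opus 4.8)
The plan is to prove the equivalent statement that every $h\in G$ centralizes a fixed generator $g$ of $N\cong C_{16}$; since $N=\langle g\rangle$, this gives $h\in C_G(N)$ for all $h$, hence $G=C_G(N)$.

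First I would fix a generator $g$ of $N$. As $\Ord(g)=16=2^4$ and $\langle g\rangle<\Aut(X)$, Lemma \ref{lem:primeorder} supplies an $F$-lifting $A=\Diag(\xi_{16},\xi_{16}^{b_1},\xi_{16}^{b_2},\xi_{16}^{b_3},\xi_{16}^{b_4})$ of $g$ with $A(F)=F$ and $\Ord(A)=16$. For an arbitrary $h\in G$ write $h=[B]$ and rescale $B$ so that $B(F)=F$ (possible since $F$ is homogeneous of degree $5$, so a suitable fifth root of the semi-invariance scalar absorbs it). Normality of $N$ in $G$ gives $hgh^{-1}=g^k$ with $\gcd(k,16)=1$, hence $BAB^{-1}=\lambda A^k$ for some $\lambda\in\C^*$. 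Applying both sides to $F$ and using $A(F)=B(F)=F$ yields $F=\lambda^5F$, so $\lambda^5=1$; comparing the eigenvalues of the conjugate matrices $A$ and $\lambda A^k$, the order-$16$ eigenvalue $\xi_{16}$ of $A$ must again occur in $\lambda A^k$, which forces $\lambda\in\mu_{16}$, i.e. $\lambda^{16}=1$. Since $\gcd(5,16)=1$ this gives $\lambda=1$, and therefore $BAB^{-1}=A^k$.

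It remains to prove $k\equiv 1\pmod{16}$. Now $A$ and $A^k$ share the same multiset $S$ of eigenvalue exponents, and multiplication by the odd number $k$ permutes $S$ while preserving the multiplicative order of each eigenvalue. I would first record the constraints forced by smoothness. Because $\Ord(A)=16$ there is an exponent of order $16$; Proposition \ref{pp:smoothnessandmonomial} attaches to each variable a monomial $x_i^4x_{j(i)}\in F$, which in exponents reads $b_{j(i)}\equiv -4b_i\pmod{16}$, so every order-$16$ exponent has a successor of order $4$ and every order-$4$ exponent a successor $0$; in particular $0\in S$. A short count of the five available slots then shows that the \emph{only} configuration for which some $k\not\equiv 1\pmod{16}$ can fix $S$ is the symmetric one $S=\{a,-a,4,12,0\}$, with two distinct order-$16$ exponents $a,-a$ and $k\equiv -1\pmod{16}$: in every other case — a single order-$16$ value, three or more distinct order-$16$ values, two distinct ones that are not negatives of each other, or the presence of an order-$8$ eigenvalue (which by the same counting fills all five slots with a single order-$16$ value) — the matching forces $k\equiv 1$.

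The step I expect to carry the real content is the elimination of this last symmetric configuration by smoothness. There the unique exponent-$0$ variable, say $x_3$, divides every invariant monomial except the two ``diagonal'' ones $x_1^4x_2$ and $x_4^4x_5$ produced by the two chains: indeed, writing the invariance congruence for an $x_3$-free degree-$5$ monomial $x_1^ux_2^vx_4^wx_5^z$ as $u-4v-w+4z\equiv 0\pmod{16}$ with $u+v+w+z=5$, a direct check leaves only $(u,v,w,z)=(4,1,0,0)$ and $(0,0,4,1)$. Both of these lie in $(x_1,x_4)^2$, so $F\in(x_3)+(x_1,x_4)^2$, and Proposition \ref{pp:nonsmoothquintic}(3) makes $X$ singular, a contradiction. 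Hence the symmetric case never occurs for a smooth $X$, so in every admissible configuration $k\equiv 1\pmod{16}$, i.e. $hgh^{-1}=g$. As $h\in G$ was arbitrary, $G=C_G(N)$.
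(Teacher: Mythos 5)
Your reduction to $BAB^{-1}=A^k$ is sound and is exactly how the paper begins (the scalar is killed by $\lambda^5=1$ together with the fact that all eigenvalues are $16$-th roots of unity), and your elimination of the symmetric configuration $\{1,-1,4,12,0\}$ by showing that the only $x_3$-free invariant monomials are $x_1^4x_2$ and $x_4^4x_5$, whence $F\in(x_3)+(x_1,x_4)^2$, is correct and agrees with the paper's Case $\alpha=15$. The gap is in the ``short count'': the claim that any configuration other than the symmetric one forces $k\equiv 1\pmod{16}$ is false. Take $S=\{1,7,4,12,0\}$: it contains $0$, it is closed under $v\mapsto -4v\pmod{16}$ (the chains are $1\mapsto 12\mapsto 0$ and $7\mapsto 4\mapsto 0$), its two order-$16$ exponents $1$ and $7$ are \emph{not} negatives of each other, and yet multiplication by $k=7$ preserves the multiset, since $7\cdot 7\equiv 1$, $7\cdot 4\equiv 12$, $7\cdot 12\equiv 4\pmod{16}$. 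Likewise $S=\{1,9,4,12,0\}$ and $S=\{1,9,12,12,0\}$ are preserved by $k=9$ (note $9\cdot 4\equiv 4$ and $9\cdot 12\equiv 12$, so $9$ fixes every even exponent). These configurations satisfy every structural constraint you derive, so your case ``two distinct order-$16$ values that are not negatives of each other'' cannot be settled by multiset bookkeeping at all.

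These surviving configurations are precisely the paper's Cases $\alpha=7$ and $\alpha=9$, which form the bulk of its proof, and they are eliminated there by the same kind of argument you reserved for $k=-1$: compute the $A$-invariant quintic monomials and apply a non-smoothness criterion. For $S=\{1,12,0,4,7\}$ one checks that every invariant monomial prime to $x_3$ lies in $(x_1,x_5)^2$, so $F\in(x_3)+(x_1,x_5)^2$ and Proposition \ref{pp:nonsmoothquintic} (3) gives a singular point; for the $\alpha=9$ patterns one finds, case by case in $(a,b)$, either that $x_i^4x_j\notin F$ for some $i$ and all $j$ (violating Proposition \ref{pp:smoothnessandmonomial}), or that $F$ lies in an ideal of the forbidden shape, or (e.g.\ for $(a,b)=(4,9)$, where the monomial $x_2x_5^4$ is invariant and escapes the obvious ideals) that all five partials of any invariant $F$ vanish simultaneously at some point of the line $x_1=x_3=x_5=0$. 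So your proof needs an invariant-monomial smoothness analysis for each of $k=7$ and $k=9$, exponent pattern by exponent pattern, in addition to the one you did for $k=-1$; without it the lemma is not proved.
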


\begin{proof}
We may assume $N=\langle [A]\rangle $, $A=\Diag(\xi_{16},\xi_{16}^{-4},1,\xi_{16}^a,\xi_{16}^b), A(F)=F$, and $0\leq a\leq b\leq 15$. Suppose $G\neq C_G(N)$, then there exists $[B]\in G$, such that $BAB^{-1}=A^{\alpha}$, $1< \alpha\leq15$. Since $A$ and $A^{\alpha}$ have the same characteristic polynomial, so $\alpha$ is odd. 

Case (1): $\alpha =3$. Then $$\chi_{A}(t)=(t-\xi_{16})(t-\xi_{16}^{-4})(t-1)(t-\xi_{16}^a)(t-\xi_{16}^b)=(t-\xi_{16}^3)(t-\xi_{16}^{4})(t-1)(t-\xi_{16}^{3a})(t-\xi_{16}^{3b})=\chi_{A^3}(t).$$ As $(t-\xi_{16}^3)(t-\xi_{16}^4)$ divides $\chi_{A^3}(t)$, so $(t-\xi_{16}^3)(t-\xi_{16}^4)$ divides $\chi_{A}(t)$, too. Therefore, $a=3, b=4$. Then $(t-\xi_{16}^9)$ divides $\chi_{A^3}(t)$ but $(t-\xi_{16}^9)$ does not divide $\chi_A(t)$, a contradiction. Therefore, $\alpha =3$ is impossible.

Case (2): $\alpha=5$. By the same argument as in Case (1), $\alpha=5$ is also impossible.

Case (3): $\alpha=7$. Then $\chi_A(t)=\chi_{A^7}(t)$ implies $a=4$ and $b=7$. Then $F\in ( x_3) +(x_1,x_5) ^2$, contradiction. So this case is also impossible.

Case (4): $\alpha=9$. $\chi_A(t)=\chi_{A^9}(t)$ implies $(a,b)$ = $(2,9),(4,9),(6,9)$, $(8,9),(9,10)$, $(9,12),(9,14)$. If $(a,b)=(2,9)$ or $(9,10)$, then $x_5^4x_i\notin F$, for all $1\leq i\leq 5$, so $X$ is singular, a contradiction. If $(a,b)=(4,9),$ or $(9,12)$, computing invariant monomials of $A$ (e.g., using Mathematica), we have $F\in ( x_3) +( x_1,x_4) ^2$, a contradiction. If $(a,b)=(6,9)$, $(8,9),$ or $(9,14)$, then $F\in ( x_2,x_3) $, a contradiction. Therefore, $\alpha=9$ is impossible.

Case (5): $\alpha=11$. By the same argument as in Case (1), $\alpha=11$ is also impossible.

Case (6): $\alpha=13$. By the same argument as in Case (1), $\alpha=13$ is also impossible.

Case (7): $\alpha=15$. $\chi_A(t)=\chi_{A^{15}}(t)$ implies $(a,b)=(4,15)$, but then $F\in ( x_3) +( x_1,x_4) ^2$, a contradiction.

Therefore, we must have $G=C_G(N)$.
\end{proof}

\begin{lemma}\label{lem:[32,17]}
None of $[32,17]$, $[32,18]$, $[32,19]$, $[32,20]$ is a subgroup of $\Aut(X)$.
\end{lemma}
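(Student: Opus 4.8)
The plan is to recognize that all four groups $[32,17]$, $[32,18]$, $[32,19]$, and $[32,20]$ share one decisive structural feature: they are precisely the four non-abelian groups of order $32$ possessing a cyclic maximal subgroup of order $16$. Concretely, $[32,17]$ is the modular group $M_{32}$, $[32,18]$ the dihedral group $D_{32}$, $[32,19]$ the semidihedral group $SD_{32}$, and $[32,20]$ the generalized quaternion group $Q_{32}$; in each case there is a normal subgroup $N\cong C_{16}$ of index two. First I would record this common feature, either by writing down the standard presentations $\langle a,b\mid a^{16}=b^2=1,\ bab^{-1}=a^{r}\rangle$ with $r=9,-1,7$ for the first three together with the quaternion presentation $\langle a,b\mid a^{16}=1,\ b^2=a^8,\ bab^{-1}=a^{-1}\rangle$ for the last, or simply by reading it off in GAP. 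This is the only group-theoretic input required.

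Granting this, the argument is short and uniform. Suppose for contradiction that one of these groups, call it $G$, embeds as a subgroup of $\Aut(X)$. Its distinguished subgroup $N\cong C_{16}$ is normal in $G$, so Lemma \ref{lem:C16} applies verbatim and gives $G=C_G(N)$, equivalently $N\le Z(G)$. But $G/N\cong C_2$ is cyclic, and a group with a central subgroup of cyclic quotient is necessarily abelian: choosing any $g$ mapping to a generator of $G/N$, the whole group is generated by the central $N$ together with $g$, and all such generators commute. This contradicts the non-abelianness of each of $M_{32}$, $D_{32}$, $SD_{32}$, $Q_{32}$, which completes the exclusion.

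In this proof there is essentially no obstacle left to overcome: the entire weight has been shifted onto Lemma \ref{lem:C16}, whose detailed eigenvalue and characteristic-polynomial case analysis was carried out just above. The present statement is therefore a clean corollary, and the only points demanding (routine) attention are the structural identification of the four GAP IDs as groups carrying a normal $C_{16}$ and the elementary implication ``central subgroup with cyclic quotient $\Rightarrow$ abelian.'' I expect no further computation; in particular, the character-table arguments used for the earlier order-$32$ cases (Lemmas \ref{lem:[32,4]} and \ref{lem:[32,15]}) are not needed here, since normality of $N$ makes Lemma \ref{lem:C16} directly available.
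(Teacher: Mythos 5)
Your proposal is correct and is essentially the paper's own proof: the paper likewise observes (via GAP) that all four groups are non-abelian and contain a copy of $C_{16}$, which is automatically normal since it has index two, and then applies Lemma \ref{lem:C16} to reach a contradiction. You have merely made explicit the routine steps the paper leaves implicit, namely the identification of the four GAP IDs as the modular, dihedral, semidihedral and generalized quaternion groups of order $32$, and the observation that $G=C_G(N)$ together with the cyclic quotient $G/N\cong C_2$ would force $G$ to be abelian.
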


\begin{proof}
By GAP, these four groups are non-abelian and contain a subgroup isomorphic to $C_{16}$. So,  just apply Lemma \ref{lem:C16}.
\end{proof}

This completes the proof for $|G|=32$.

6) $|G|=64$, after sub-test, it suffices to exclude five groups: $[64,2]$, $[64,3]$, $[64,27]$, $[64,44]$, $[64,51]$.

\begin{lemma}\label{lem:8times8}

$C_8\times C_8\cong [64,2]$ can not be a subgroup of $\Aut(X).$
\end{lemma}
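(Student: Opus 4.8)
The plan is to show that no $C_8\times C_8$-invariant homogeneous quintic in five variables can be smooth, by converting the group action into weight data in $(\Z/8)^2$ and then invoking the singularity criterion of Proposition \ref{pp:nonsmoothquintic}(3). First I would lift the action. Since $N\cong C_8\times C_8$ is abelian of order $64$, coprime to $5$, Theorem \ref{liftable} together with the remark following Theorem \ref{thm:noorder25} (which guarantees condition (2) there for any subgroup of $\Aut(X)$) shows that $N$ is $F$-liftable. Let $\widetilde N<\GL(5,\C)$ be an $F$-lifting, so $\widetilde N\cong C_8\times C_8$ and $A(F)=F$ for all $A\in\widetilde N$. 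As $\widetilde N$ is finite abelian it is simultaneously diagonalizable, so after a coordinate change every $A\in\widetilde N$ is diagonal. Each coordinate $x_i$ then carries a character $v_i\in\widehat{\widetilde N}\cong(\Z/8)^2$, and the invariance $A(F)=F$ means that every monomial occurring in $F$ has total weight $0$ in $(\Z/8)^2$.

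The two structural inputs are faithfulness and smoothness. Faithfulness of $\widetilde N$ is equivalent to $v_1,\dots,v_5$ generating $(\Z/8)^2$, which in turn is equivalent to the mod-$2$ reductions $\bar v_i$ spanning $(\Z/2)^2$ (a subgroup of $(\Z/8)^2$ is the whole group exactly when it surjects onto the quotient $(\Z/2)^2$). Smoothness, via Proposition \ref{pp:smoothnessandmonomial}, gives for each $i$ a monomial $x_i^4x_{j(i)}\in F$, hence a relation $v_{j(i)}=-4v_i=4v_i$ in $(\Z/8)^2$, and reducing mod $2$ yields $\bar v_{j(i)}=0$. I would then partition the indices by the order of $v_i$ into $T$ (order $8$, equivalently $\bar v_i\neq 0$), $W$ (order $2$) and $Z$ (order $1$), so that $j(T)\subseteq W$, $j(W)\subseteq Z$ and $j(Z)\subseteq Z$. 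A short counting argument finishes this stage: faithfulness forces two indices of $T$ with independent reductions, whose values $4v_i$ are then distinct nonzero elements of order $2$, so $|T|\ge 2$ and $|W|\ge 2$; and $j(W)\subseteq Z$ with $W\neq\emptyset$ forces $|Z|\ge 1$. Since $|T|+|W|+|Z|=5$, all three inequalities are equalities, and up to relabeling and a choice of basis $a,b$ of $(\Z/8)^2$ the weights are forced to be $v_1=a,\ v_2=b,\ v_3=4a,\ v_4=4b,\ v_5=0$.

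To conclude, I would show that with these weights every degree-$5$ monomial of weight $0$ lies in $(x_5)+(x_1,x_2)^2$. A monomial not divisible by $x_5$ has $i_5=0$, and the weight conditions $i_1+4i_3\equiv 0$ and $i_2+4i_4\equiv 0\pmod 8$ together with $i_1+i_2+i_3+i_4=5$ force $i_1+i_2\ge 2$: each case $i_1+i_2\le 1$ is immediately impossible modulo $8$. Hence such a monomial lies in $(x_1,x_2)^2$, while every monomial divisible by $x_5$ lies in $(x_5)$. Therefore $F\in(x_5)+(x_1,x_2)^2$, so Proposition \ref{pp:nonsmoothquintic}(3) applied to the three distinct variables $x_5,x_1,x_2$ shows that $X$ is singular, contradicting smoothness. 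Thus $C_8\times C_8$ is not a subgroup of $\Aut(X)$.

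The main obstacle is the middle stage: organizing the weight bookkeeping so that faithfulness and the degree-drop relations $v_{j(i)}=4v_i$ pin down the multiset of weights essentially uniquely. The lifting and diagonalization are routine, and the final smoothness contradiction is a direct application of the stated criteria; the real content is proving that the arrow structure $T\to W\to Z$ leaves no configuration other than $\{a,b,4a,4b,0\}$.
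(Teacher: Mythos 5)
Your proof is correct, and it ends exactly where the paper's does --- with every invariant quintic monomial lying in an ideal of the shape $(x_k)+(x_l,x_m)^2$, hence a contradiction via Proposition \ref{pp:nonsmoothquintic}(3) --- but your route to the normal form of the weights is genuinely different. The paper argues by explicit matrix normalization: it takes generators $A_1=\Diag(\xi_8,\xi_8^{-4},1,\xi_8^a,\xi_8^b)$ and $A_2=\Diag(1,1,1,\xi_8,\xi_8^{-4})$ (this shape coming from the order-$8$ analysis behind Lemma \ref{lem:eoo8} and smoothness), reduces to $a=b=0$, and then computes the invariant monomials to conclude $F\in(x_3)+(x_1,x_4)^2$. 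You instead work intrinsically in the character group $(\Z/8)^2$: faithfulness of the lifted action becomes the statement that the five weights generate the dual group, which you verify mod $2$ by a Nakayama-type reduction, and the smoothness relations $v_{j(i)}=4v_i$ from Proposition \ref{pp:smoothnessandmonomial} feed a counting argument that forces the weight multiset $\{a,b,4a,4b,0\}$. What your version buys is rigor at precisely the point the paper's ``we may assume'' glosses over (why the two generators can be arranged in that block form), and it generalizes mechanically to other abelian $2$-groups; what the paper's version buys is brevity, since it piggybacks on normalizations already established for single order-$8$ elements. One small imprecision to fix: your trichotomy $T,W,Z$ by orders $8,2,1$ is not a priori a partition, since a weight could have order $4$; but this is self-correcting, because $T,W,Z$ are disjoint subsets of $\{1,\dots,5\}$, so your inequalities $|T|\ge 2$, $|W|\ge 2$, $|Z|\ge 1$ already force $|T|+|W|+|Z|=5$ and thereby exclude order-$4$ weights --- one sentence making this explicit completes the argument.
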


\begin{proof}
 Suppose $C_8\times C_8\cong N< \Aut(X)$. Then we may assume $N=\langle [A_1],[A_2]\rangle ,A_1=\Diag(\xi_8,\xi_8^{-4},1,\xi_8^a,\xi_8^b),A_2=\Diag(1,1,1,\xi_8,\xi_8^{-4}),A_1(F)=A_2(F)=F$. Then we may assume $a=0$. Then $b=0$ since $x_4^4x_5\in F$. $A_1(F)=A_2(F)=F$ implies $F\in ( x_3) +( x_1,x_4) ^2$, a contradiction.

\end{proof}

\begin{lemma}\label{lem:[64,27]}
None of $[64,3]\cong C_8\rtimes C_8$, $[64,27]\cong C_{16}\rtimes C_{4}$, $[64,44]\cong C_4\rtimes C_{16}$ is isomorphic to a subgroup of $\Aut(X)$.
\end{lemma}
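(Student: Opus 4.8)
The plan is to assume, for contradiction, that one of the three groups $G$ embeds into $\Aut(X)$. By Theorem \ref{liftable} (together with Theorem \ref{thm:noorder25}, which guarantees that condition (2) holds for any subgroup of $\Aut(X)$), the group $G$ is $F$-liftable, so it admits a faithful five-dimensional representation $\widetilde G<\GL(5,\C)$ with $A(F)=F$ for every $A\in\widetilde G$. Since the three groups have genuinely different structures — $[64,27]$ has a normal $C_{16}$, $[64,3]$ has exponent $8$, and $[64,44]$ contains an element of order $16$ but no normal $C_{16}$ — I would treat them by three separate arguments rather than a single uniform one.

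For $[64,27]\cong C_{16}\rtimes C_4$ the argument is immediate. The base factor $N\cong C_{16}$ is normal in $G$, and since $G$ is non-abelian the complementary $C_4$ acts non-trivially on $N$; hence $N$ is not central and $G\neq C_G(N)$. This contradicts Lemma \ref{lem:C16}, which rules out $[64,27]$.

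For $[64,3]\cong C_8\rtimes C_8$ note first that this group has exponent $8$: any non-trivial action of $C_8$ on $C_8$ factors through $\Aut(C_8)\cong C_2\times C_2$, so $b^2$ is central, and a direct check on the square of a general element shows no element has order $16$. Thus Lemma \ref{lem:C16} is unavailable, and I would instead reuse the character-theoretic method of Lemmas \ref{lem:[16,4]} and \ref{lem:[32,15]}. One decomposes $\widetilde G$ into irreducibles (of dimensions $1,2,4$, as for any $2$-group). Lemma \ref{lem:to2} forces every involution to have positive trace, Lemma \ref{lem:to4} forbids trace $-1$ on the order-$4$ classes, and Lemma \ref{lem:eoo8} pins down the eigenvalue pattern on the order-$8$ classes (presence of $-1$, multiplicity one for $\xi_8$). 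Reading the character table of $[64,3]$ from GAP, these constraints should be incompatible with the existence of a faithful $5$-dimensional summand, exactly as in the $[16,4]$ and $[32,15]$ cases.

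The main obstacle is $[64,44]\cong C_4\rtimes C_{16}$: it does contain an element of order $16$ but, as one checks, none of its cyclic subgroups of order $16$ is normal, so Lemma \ref{lem:C16} does not apply directly. Here I would argue by diagonalization, in the style of Theorem \ref{thm:containorder17}. Writing $G=\langle a,b\rangle$ with $a^4=b^{16}=1$ and $bab^{-1}=a^{-1}$, one lifts $b$ to a matrix $B$; by the smoothness analysis carried out in the proof of Lemma \ref{lem:C16} one may assume $B=\Diag(\xi_{16},\xi_{16}^{-4},1,\xi_{16}^{s},\xi_{16}^{t})$. The group relation rewrites as $aba^{-1}=ba^{2}$, which translates on lifts to $\widehat a\,B\,\widehat a^{-1}=\nu\,B\,\widehat{a^{2}}$ for some scalar $\nu$, where $\widehat a$ and $\widehat{a^{2}}$ lift $a$ and $a^{2}$, and $\widehat{a^{2}}$ (being central of order $2$, hence simultaneously diagonalizable with $B$) is a diagonal matrix with entries $\pm1$. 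Comparing the eigenvalue multisets of $B$ and $\nu B\widehat{a^{2}}$ restricts the admissible $(s,t)$ and the sign pattern of $\widehat{a^{2}}$, just as the eigenvalue comparison in Theorem \ref{thm:containorder17} forced $\lambda=1$ and a short list of exponents; then Lemma \ref{lem:matrixshape}, applied to the intertwiner $\widehat a$ of the two diagonal matrices $B$ and $\nu B\widehat{a^{2}}$, forces $\widehat a$ to be a semi-permutation matrix supported on the coincident eigenspaces. For each surviving configuration the invariant monomials of $B$ turn out to be too few, and Proposition \ref{pp:nonsmoothquintic} then exhibits a singular point of $X$, the desired contradiction. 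The delicate point — and the one I expect to require the most bookkeeping — is the finite but non-trivial case analysis over $(s,t)$ and the sign pattern of $\widehat{a^{2}}$, making sure that every configuration is eliminated by smoothness.
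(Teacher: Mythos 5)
Your proposal is correct in substance but follows a genuinely different route from the paper for two of the three groups: the paper disposes of all of $[64,3]$, $[64,27]$, $[64,44]$ uniformly by the character-table method of Lemma \ref{lem:[16,4]} (deferring the tables to the website), whereas you split the work. For $[64,27]\cong C_{16}\rtimes C_4$ your argument is complete and cleaner than the paper's: the base $C_{16}$ is normal and non-central (the action is non-trivial, else the group would be $C_{16}\times C_4$), so $G\neq C_G(N)$, contradicting Lemma \ref{lem:C16}; this is exactly the mechanism the paper itself uses for $[32,17]$--$[32,20]$, $[64,51]$ and $[128,160]$, and it is a genuine simplification to notice it applies here. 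For $[64,3]$ you correctly verify that the group has exponent $8$ (so Lemma \ref{lem:C16} is unavailable) and fall back on the paper's own character-table approach, at the same level of detail as the paper. For $[64,44]$ your diagonalization program (normal form of $B$ from Lemma \ref{lem:C16}, the relation $aba^{-1}=ba^2$, eigenvalue-multiset comparison against the sign matrix $\widehat{a^2}$, Lemma \ref{lem:matrixshape}, then smoothness via Proposition \ref{pp:nonsmoothquintic}) is sound and does terminate: I checked, for instance, the family $B=\Diag(\xi_{16},\xi_{16}^{12},1,\xi_{16}^{9},\xi_{16}^{t})$ with $\widehat{a^2}=\Diag(-1,1,1,-1,1)$, where every $t$ is killed either by the absence of an invariant $x_5^4x_j$, or by $F\in(x_2,x_3)$, or by $F\in(x_3)+(x_1,x_4)^2$. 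What the paper's method buys is uniformity and minimal hand computation; what yours buys is a self-contained proof for $[64,27]$ and an argument for $[64,44]$ that does not require trusting an external table.

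Two caveats on the $[64,44]$ part. First, your claim that \emph{in every} surviving configuration the $B$-invariant monomials alone force singularity is asserted, not verified; the case list (several sign patterns, each with a free exponent) is large, and you should be prepared for configurations where the kill comes from the joint $\langle A,B\rangle$-invariance rather than from $B$ alone. Second, your multiset analysis does not by itself enforce faithfulness of the lifting: for even $t$ in the configuration above one gets $\widehat{a}^2=B^8$, which is impossible in a genuine lifting of $C_4\rtimes C_{16}$ (where $a^2\neq b^8$); keep this constraint in your toolkit, since it can eliminate configurations independently of smoothness. Neither caveat breaks the approach, but both belong in a complete write-up.
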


\begin{proof}
Similar to [16,4] case. (Character tables we need are on the website \cite{Yu}.)
\end{proof}

\begin{lemma}\label{lem:C32}

If $C_{32}\cong N\lhd G < \Aut(X)$, then $G=C_G(N).$
\end{lemma}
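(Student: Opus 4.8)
The plan is to run the proof of Lemma \ref{lem:C16} with $16$ replaced by $32$ throughout, so the first task is to pin down the shape of a generator. Since $N\cong C_{32}$ has order $2^5$, Lemma \ref{lem:primeorder} lets me assume $N=\langle [A]\rangle$ with $A(F)=F$ and $A=\Diag(\xi_{32},\xi_{32}^{c_1},\xi_{32}^{c_2},\xi_{32}^{c_3},\xi_{32}^{c_4})$. I would then run the smoothness chain of Proposition \ref{pp:smoothnessandmonomial} exactly as in Theorem \ref{thm:Sylow13}: since $x_1^5\notin F$ I may assume $x_1^4x_2\in F$, which forces $c_1\equiv -4\pmod{32}$; then $x_2^4x_3\in F$ forces $c_2\equiv 16\pmod{32}$ (so the third eigenvalue is $-1$); then $x_3^4x_4\in F$ forces $c_3\equiv 0\pmod{32}$. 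At each step the new index cannot coincide with a previous one, so after relabelling I reduce to $A=\Diag(\xi_{32},\xi_{32}^{-4},\xi_{32}^{16},1,\xi_{32}^{b})$ with $0\le b\le 31$, the last exponent being unconstrained by the chain.

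Next, assume for contradiction that $G\neq C_G(N)$. Then there is $[B]\in G$ with $[B][A][B]^{-1}=[A]^{\alpha}$ for some $\alpha$ coprime to $32$ (i.e.\ odd) and $\alpha\not\equiv 1\pmod{32}$. Picking a representative $B$ with $B(F)=F$ (possible since $F$ is $\Aut(X)$-invariant) and arguing on eigenvalues and degrees exactly as in the proof of Theorem \ref{thm:containorder17}, the ambiguous scalar is trivial, so $BAB^{-1}=A^{\alpha}$ and hence $A$, $A^{\alpha}$ share the same characteristic polynomial; equivalently the exponent multiset $\{1,-4,16,0,b\}$ is fixed by multiplication by $\alpha$ modulo $32$. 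Since $\alpha$ is odd one has $16\alpha\equiv 16$ and $0\mapsto 0$ automatically, so the condition collapses to $\{\alpha,-4\alpha,b\alpha\}\equiv\{1,-4,b\}\pmod{32}$. A short inspection forces $b=\alpha$ (being odd, $\alpha$ cannot equal the even residue $-4\equiv 28$), then $-4\alpha\equiv -4$ gives $\alpha\equiv 1\pmod 8$, and finally $\alpha^2\equiv 1\pmod{32}$; among the candidates $\alpha\in\{9,17,25\}$ only $\alpha=17$ survives. Thus the sole surviving possibility is $\alpha=b=17$, that is $A=\Diag(\xi_{32},\xi_{32}^{-4},\xi_{32}^{16},1,\xi_{32}^{17})$.

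It remains to eliminate this last matrix by smoothness, the only genuinely computational point. Computing the degree-$5$ monomials left invariant by $A$ (those with exponent vector $(n_1,\dots,n_5)$ satisfying $n_1-4n_2+16n_3+17n_5\equiv 0\pmod{32}$), one finds exactly $x_1^4x_2,\ x_2^4x_3,\ x_1^2x_2x_5^2,\ x_2x_5^4,\ x_3^4x_4,\ x_3^2x_4^3,\ x_4^5$. Every one of these lies in the ideal $(x_2,x_4)$, so $F\in (x_2,x_4)$ and $X$ is singular by Proposition \ref{pp:nonsmoothquintic}(2), a contradiction. Hence no such $[B]$ exists and $G=C_G(N)$. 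The main obstacle is the bookkeeping: carrying the free exponent $b$ through the characteristic-polynomial comparison in order to isolate the single case $\alpha=17$, and then verifying that this one surviving diagonal type is forced into $(x_2,x_4)$; both are finite checks (the monomial enumeration is easily done by hand or with Mathematica, as elsewhere in the paper).
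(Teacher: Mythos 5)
Your proof is correct and follows essentially the same route as the paper's: reduce to $A=\Diag(\xi_{32},\xi_{32}^{-4},\xi_{32}^{16},1,\xi_{32}^{b})$, compare characteristic polynomials of $A$ and $A^{\alpha}$ to force $\alpha=b=17$, and then derive the contradiction $F\in(x_2,x_4)$ via Proposition \ref{pp:nonsmoothquintic}(2). The only difference is that you spell out steps the paper leaves implicit (the smoothness chain fixing the shape of $A$, the triviality of the conjugation scalar, and the explicit list of invariant monomials), all of which check out.
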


\begin{proof}
We may assume $N=\langle [A]\rangle $, $A=\Diag(\xi_{32},\xi_{32}^{-4},\xi_{32}^{16},1,\xi_{32}^a)$, $A(F)=F$, and $0\leq a\leq 31$. Let $[B]\in G$. Then we may assume $BAB^{-1}=A^{\alpha}$. If $\alpha=1$, then $[B]\in C_G(N)$. If $\alpha\neq 1$, then $32> \alpha> 1$ and $\alpha$ is odd. As $\chi_{A}(t)=\chi_{A^{\alpha}}(t)$, we have $\alpha=a=17$. But then $A(F)=F$ implies $F\in ( x_2,x_4) $, a contradiction. Therefore, $\alpha$ must be 1, and hence $G=C_G(N)$.
\end{proof}

\begin{lemma}\label{lem:[64,51]}
$[64,51]\cong C_{32}\rt C_2$ is not a subgroup of $\Aut(X)$.
\end{lemma}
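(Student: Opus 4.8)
The plan is to invoke Lemma \ref{lem:C32} essentially as a black box and reduce the statement to a purely group-theoretic observation. Suppose for contradiction that there is a subgroup $G<\Aut(X)$ with $G\cong[64,51]\cong C_{32}\rt C_2$. First I would locate inside $G$ the cyclic subgroup $N\cong C_{32}$. Since $N$ has index $2$ in $G$, it is automatically normal, so that $C_{32}\cong N\lhd G<\Aut(X)$ and the hypotheses of Lemma \ref{lem:C32} are met.

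Applying Lemma \ref{lem:C32}, I conclude $G=C_G(N)$; that is, every element of $G$ commutes with $N$, so $N\subseteq Z(G)$. The key structural observation is then that a group possessing a central subgroup of index $2$ must be abelian. Indeed, $G/Z(G)$ is a quotient of the cyclic group $G/N\cong C_2$ and hence itself cyclic, which forces $G$ to be abelian. (Concretely, writing $G=N\cup gN$ for any $g\notin N$, the elements of the central subgroup $N$ commute with everything, and $g$ commutes with all of $N$, so any two elements of $G$ commute.)

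This yields the desired contradiction, because $[64,51]$ is non-abelian: it is precisely the nontrivial semidirect product $C_{32}\rt C_2$, distinguished in the GAP library from the abelian group $[64,50]\cong C_{32}\times C_2$ exactly by its non-commutativity. I expect there to be no genuine obstacle here, since the substantive work --- the rigidity of the action of any normalizing element on a normal $C_{32}$, proved by the eigenvalue and smoothness analysis --- is already packaged inside Lemma \ref{lem:C32}. The only points needing verification are that the named copy of $C_{32}$ is a subgroup of $[64,51]$ and that $[64,51]$ is non-abelian, both of which are immediate and routinely checkable in GAP.
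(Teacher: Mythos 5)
Your proof is correct and is essentially the paper's own argument: the paper's proof is literally ``Just apply Lemma \ref{lem:C32}.'' You have simply made explicit the details the paper leaves implicit --- that the $C_{32}$ is normal because it has index $2$, and that $G=C_G(N)$ forces $G$ abelian (central subgroup of index $2$), contradicting the non-commutativity of $[64,51]$.
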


\begin{proof}
 Just apply Lemma \ref{lem:C32}.
\end{proof}

7) $|G|=128$, by sub-test we need to exclude five groups: $[128,128]$, $[128,129]$, $[128,153]$, $[128,159]$, $[128,160]$.

\begin{lemma}\label{lem:32times4}
$C_{32}\times C_4\cong [128,128]$ is not isomorphic to a subgroup of $\Aut(X)$.
\end{lemma}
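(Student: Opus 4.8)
The plan is to argue by contradiction and, after a short liftability reduction, to produce an explicit singular point of $X$ from the Jacobian criterion. Suppose $C_{32}\times C_4\cong G<\Aut(X)$. Since $G$ is abelian of order $2^7$ and $5\nmid |G|$, its Sylow $5$-subgroup is trivial, so condition (2) of Theorem \ref{liftable} holds automatically and $G$ is $F$-liftable. Fixing a lifting $\widetilde G<\GL(5,\C)$, all of its matrices commute and leave $F$ invariant, hence can be simultaneously diagonalized. Write $G=\langle[A_1],[A_2]\rangle$ with $A_1$ of order $32$, $A_2$ of order $4$, both diagonal and with $A_i(F)=F$. As $\langle[A_1]\rangle\cong C_{32}$ is normal, the normal form established in the proof of Lemma \ref{lem:C32} applies, so after a coordinate change I may assume
\[
A_1=\Diag(\xi_{32},\xi_{32}^{-4},\xi_{32}^{16},1,\xi_{32}^{a}),\qquad x_1^4x_2,\ x_2^4x_3,\ x_3^4x_4\in F,\qquad 0\le a\le 31 .
\]

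First I would pin down $A_2$. Writing $A_2=\Diag(\xi_4^{f_1},\dots,\xi_4^{f_5})$, invariance of the three displayed monomials forces $f_2\equiv f_3\equiv f_4\equiv 0 \pmod 4$, so $A_2=\Diag(\xi_4^{f_1},1,1,1,\xi_4^{f_5})$. The crucial group-theoretic input is that $G$ is the \emph{direct} product $C_{32}\times C_4$, i.e.\ $|G|=128$, which means $[A_2]^2\notin\langle[A_1]\rangle$. The unique involution of $\langle[A_1]\rangle$ is $[A_1^{16}]=[\Diag(-1,1,1,1,1)]$, and $[A_2]^2=[\Diag((-1)^{f_1},1,1,1,(-1)^{f_5})]$; comparing the middle entries shows $[A_2]^2\in\langle[A_1]\rangle$ exactly when $f_5$ is even. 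Hence the direct-product hypothesis forces $f_5$ to be \emph{odd}. Consequently every monomial $x_1^{i_1}\cdots x_5^{i_5}$ of $F$ satisfies both $i_1-4i_2+16i_3+a\,i_5\equiv 0\pmod{32}$ and $f_1i_1+f_5i_5\equiv 0\pmod 4$ with $f_5$ odd.

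Next I would use smoothness. By Proposition \ref{pp:smoothnessandmonomial} some $x_5^4x_{j_0}\in F$; invariance under $A_1$ and $A_2$ (the latter, with $f_5$ odd, killing $x_5^5$ and $x_1x_5^4$) shows $j_0\in\{2,3,4\}$, the three possibilities corresponding to $a\equiv 1,4,0\pmod 8$ respectively (any other residue of $a$ already makes $X$ singular, even without $A_2$). In each case the chain monomial $x_{k_0}^4x_{j_0}$ with $k_0:=j_0-1$ lies in $F$, so $\partial F/\partial x_{j_0}$ contains $x_{k_0}^4$ and $x_5^4$ with nonzero coefficients. The heart of the argument is the line $\ell=\{x_i=0:\ i\neq k_0,5\}\cong\P^1$. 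I claim $\ell\subset X$ and that $X$ is singular along the zero locus of $\partial F/\partial x_{j_0}|_\ell$. Indeed I would verify: (a) no monomial of $F$ is a pure product $x_{k_0}^px_5^q$, whence $F|_\ell=0$ and $\partial F/\partial x_{k_0}|_\ell=\partial F/\partial x_5|_\ell=0$; (b) for $i\notin\{k_0,j_0,5\}$ no monomial $x_ix_{k_0}^px_5^q$ occurs in $F$, whence $\partial F/\partial x_i|_\ell=0$; and (c) $\partial F/\partial x_{j_0}|_\ell$ is a nonzero binary quartic in $x_{k_0},x_5$, which over $\C$ necessarily vanishes at some point of $\ell$. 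At such a point all five partials and $F$ vanish, so $X$ is singular there, contradicting smoothness.

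The main obstacle is the uniform verification of (a) and (b). On any monomial supported in $\{x_{k_0},x_5\}$ together with at most one further variable $x_i$, $i\in\{2,3,4\}$ (so $f_i=0$), the $A_2$-congruence with $f_5$ odd forces the $x_5$-exponent into $\{0,4\}$, leaving only finitely many candidates, which the $A_1$-congruence then excludes by direct evaluation. The only monomials escaping this dichotomy involve $x_1$, where $f_1$ may be odd; these I would treat by a residue count on the $A_1$-weights $(1,28,16,0,a)$. When $a$ is even (the cases $a\equiv 0,4\pmod 8$) the variable $x_1$ carries odd $A_1$-weight while $x_{k_0},x_5,x_{j_0}$ all carry even weight, so any monomial divisible by $x_1$ to an odd power has odd weight and cannot be invariant; when $a$ is odd ($a\equiv 1\pmod 8$) I would write the weight of $x_{k_0}^px_5^q$, and of its products with one extra variable, in the form (constant)$+(a-1)q$ and use $a-1\equiv 0\pmod 8$ to see it is never $\equiv 0\pmod{32}$. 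Carrying out these three residue computations is the only genuine work; everything else is bookkeeping, and together they complete the exclusion of $[128,128]\cong C_{32}\times C_4$.
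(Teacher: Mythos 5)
Your overall route is essentially the paper's: liftability from Theorem \ref{liftable}, simultaneous diagonalization, the normal form $A_1=\Diag(\xi_{32},\xi_{32}^{-4},\xi_{32}^{16},1,\xi_{32}^{a})$ with the chain $x_1^4x_2,x_2^4x_3,x_3^4x_4\in F$, and a smoothness contradiction; your conditions (a) and (b) say precisely that $F\in (x_{j_0})+(x_{m_1},x_{m_2})^2$ for the two remaining variables $x_{m_1},x_{m_2}$, so your explicit singular point on $\ell$ is a re-proof of Lemma \ref{lem:nonsmooth}, where the paper simply checks the invariant monomials for each residue of $a$ and cites Proposition \ref{pp:nonsmoothquintic}. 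However, your ``crucial group-theoretic input'' contains a genuine error. You assert that the unique involution of $\langle[A_1]\rangle$ is $[\Diag(-1,1,1,1,1)]$, but in fact
\begin{equation*}
A_1^{16}=\Diag\bigl(\xi_{32}^{16},\xi_{32}^{-64},\xi_{32}^{256},1,\xi_{32}^{16a}\bigr)=\Diag\bigl(-1,1,1,1,(-1)^{a}\bigr),
\end{equation*}
whose last entry is $-1$ whenever $a$ is odd. Redoing your comparison with the corrected involution, $[A_2]^2\in\langle[A_1]\rangle$ holds if and only if ($f_1$ even and $f_5$ even) or ($f_1$ odd and $f_5\equiv a \pmod 2$). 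So for odd $a$ the direct-product hypothesis forces only $f_1+f_5$ to be odd, \emph{not} $f_5$ odd: for instance with $a=1$ and $A_2=\Diag(\xi_4,1,1,1,1)$ the group $\langle[A_1],[A_2]\rangle$ is a genuine $C_{32}\times C_4$ with $f_5=0$, a configuration your claimed constraint wrongly excludes. (A smaller slip of the same kind: $x_1x_5^4$ is excluded by its odd $A_1$-weight $1+4a$, not by $A_2$.)

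Fortunately the error is localized, because every place you actually \emph{use} ``$f_5$ odd'' occurs when $a$ is even, where your parity claim is correct: the exclusion of $x_5^5$ only needs $A_2$ when $a=0$ (otherwise $5a\not\equiv 0\pmod{32}$ does it), and the ``$x_5$-exponent in $\{0,4\}$'' dichotomy is invoked only in the cases $a\equiv 0,4\pmod 8$; the sole odd residue surviving smoothness, $a\equiv 1\pmod 8$, is handled by your mod-$8$ computation using $A_1$ alone. So the proof is repairable either by splitting the parity statement according to the parity of $a$, or, more cleanly, by the normalization the paper uses: since $A_1^{8}=\Diag(\xi_4,1,1,1,\xi_4^{a})$ lies in the lifted group, replace $A_2$ by $A_2A_1^{-8f_1}$ to force $f_1=0$; then the order-$128$ hypothesis legitimately forces the last entry of $A_2$ to be a primitive fourth root of unity, i.e.\ (after inverting if necessary) $A_2=\Diag(1,1,1,1,\xi_4)$, which is exactly the paper's starting point, and with it your three residue computations go through verbatim.
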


\begin{proof}
Suppose $C_{32}\times C_4\cong N< \Aut(X)$. We may assume  $N=\langle [A_1],[A_2]\rangle $, $A_1=\Diag(\xi_{32},\xi_{32}^{-4},\xi_{32}^{16},1,\xi_{32}^a)$,  $A_2=\Diag(1,1,1,1,\xi_4)$ and $A_1(F)=A_2(F)=F$. We may assume $0\leq a\leq 7$. So $a=0,1,2,3,4,5$ or $6$.

If $a=0,1,2,3,5$, or $6$, computing invariant monomials of $A_1$ and $A_2$ as before, we have $F\in ( x_2,x_4)$, a contradiction. If $a=4$, then $F\in ( x_3) +( x_1,x_4) ^2$, a contradiction. 

Therefore, $C_{32}\times C_4$ can not be a subgroup of $\Aut(X)$.
 
\end{proof}

\begin{lemma}\label{lem:[128,129]}
$[128,129]\cong C_{32}\rt C_4$ is not isomorphic to a subgroup of $\Aut(X)$.
\end{lemma}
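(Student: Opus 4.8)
The plan is to imitate verbatim the proof of Lemma~\ref{lem:[64,51]}, reducing the statement to the centralizer result Lemma~\ref{lem:C32}. The key point is that $G:=[128,129]\cong C_{32}\rtimes C_4$ contains a \emph{normal, non-central} copy of $C_{32}$, and Lemma~\ref{lem:C32} forbids a normal $C_{32}$ from failing to be central inside any subgroup of $\Aut(X)$. All the hypersurface-theoretic work (smoothness, eigenvalue constraints via Proposition~\ref{pp:nonsmoothquintic}) has already been carried out inside Lemma~\ref{lem:C32}, so the present lemma is purely a matter of extracting the right subgroup.

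First I would fix, by GAP, the subgroup $N\cong C_{32}$ of $G$ that is the normal factor of the semidirect product $C_{32}\rtimes C_4$; this $N$ is normal in $G$ by construction. Next I would check that $N$ is not central. Since $G$ is a \emph{genuine} (nontrivial) semidirect product — as opposed to the direct product $[128,128]\cong C_{32}\times C_4$, already excluded in Lemma~\ref{lem:32times4} — the cyclic group $C_4$ acts on $N$ by a nontrivial automorphism, so some element of $G$ does not commute with a generator of $N$. Concretely, the kernel of the action map $C_4\to \Aut(C_{32})$ is at most $C_2$, so $C_G(N)$ has order $32$ or $64$; in either case $C_G(N)\neq G$, i.e.\ $N\not\subseteq Z(G)$. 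Both facts (non-abelian, normal non-central $C_{32}$) are immediate GAP verifications of the same type already used for $[32,17]$--$[32,20]$ and for $[64,51]$.

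With these in hand the contradiction is formal. Suppose $G\cong[128,129]$ were isomorphic to a subgroup of $\Aut(X)$. Then $N\cong C_{32}$ is normal in $G<\Aut(X)$, so Lemma~\ref{lem:C32} yields $G=C_G(N)$, i.e.\ $N\subseteq Z(G)$. This contradicts the non-centrality of $N$ established above, and therefore $[128,129]$ cannot be isomorphic to a subgroup of $\Aut(X)$.

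The only step requiring any care is ensuring that the copy of $C_{32}$ fed into Lemma~\ref{lem:C32} is simultaneously normal in $G$ and non-central; this is exactly the feature distinguishing $[128,129]$ from the abelian $[128,128]$, and it is settled by a short inspection of the group structure in GAP rather than by any new geometric input. I expect no genuine obstacle here: the whole difficulty is already absorbed into Lemma~\ref{lem:C32}, and the remaining argument is the same ``normal cyclic subgroup forces its own centrality, contradiction'' template used throughout this section.
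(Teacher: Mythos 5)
Your proposal is correct and is exactly the paper's argument: the paper's proof of this lemma is literally ``Just apply Lemma~\ref{lem:C32},'' with the implicit details (the semidirect factor $N\cong C_{32}$ is normal but non-central in $[128,129]$, while Lemma~\ref{lem:C32} forces $G=C_G(N)$, a contradiction) being precisely what you spell out. No gaps; the extra care you take in verifying non-centrality via the kernel of the action map $C_4\to\Aut(C_{32})$ is a sound and welcome elaboration of the paper's terse proof.
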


\begin{proof}
Just apply Lemma \ref{lem:C32}.
\end{proof}

\begin{lemma}\label{lem:[128,153]}
$[128,153]\cong C_4\rt C_{32}$ is not isomorphic to a subgroup of $\Aut(X)$.
\end{lemma}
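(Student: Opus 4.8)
The plan is to argue directly with eigenvalues, in the spirit of Lemmas~\ref{lem:32times4} and \ref{lem:C32}, rather than through the (large) character table. Write $[128,153]=C_4\rtimes C_{32}=\langle a,b\mid a^4=b^{32}=1,\ bab^{-1}=a^{-1}\rangle$ and suppose for contradiction that $G\cong[128,153]$ is a subgroup of $\Aut(X)$. By Theorem~\ref{liftable}, $G$ is $F$-liftable, so I would choose lifts $A,B\in\GL(5,\C)$ of $b,a$ with $A(F)=B(F)=F$, $\Ord(A)=32$, $\Ord(B)=4$, and, from the defining relation, $ABA^{-1}=B^{-1}$. Since $A$ has order $32$ and fixes the smooth quintic $F$, the usual normalization (exactly as in the proof of Lemma~\ref{lem:C32}, via Propositions~\ref{pp:smoothnessandmonomial} and \ref{pp:nonsmoothquintic}) lets me assume, after a change of coordinates, that $A=\Diag(\xi_{32},\xi_{32}^{-4},\xi_{32}^{16},1,\xi_{32}^{a})$ for some $a$ and that the chain $x_1^4x_2,\,x_2^4x_3,\,x_3^4x_4\in F$ holds.

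The heart of the argument is that $ABA^{-1}=B^{-1}$ forces $B$ to interchange the two coordinates on which $A$ acts by $-1$ and $1$. Indeed $b^2$ is central in $G$, so $A^2=\Diag(\xi_{16},\xi_{16}^{-4},1,1,\xi_{16}^{a})$ commutes with $B$; by Lemma~\ref{lem:matrixshape}, $B$ is block diagonal along the eigenspaces of $A^2$, and (provided $\xi_{16}^{a}\ne1$) the eigenvalue $1$ of $A^2$ occurs only in coordinates $3,4$. On coordinates $1,2,5$ the matrix $A$ is scalar, so $ABA^{-1}=B^{-1}$ forces the diagonal entries there to be $\pm1$; on the block $W=\langle x_3,x_4\rangle$, where $A$ acts as $s:=\Diag(\xi_{32}^{16},1)=\Diag(-1,1)$, one gets $s\,(B|_W)\,s=(B|_W)^{-1}$ together with $(B|_W)^4=I_2$. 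A two-line computation then shows $B|_W$ is either $\pm$-diagonal, which forces $B^2=I_5$ and contradicts $\Ord(B)=4$, or anti-diagonal with $(B|_W)^2=-I_2$; hence $B$ sends $x_3\mapsto(\text{scalar})\,x_4$ and $x_4\mapsto(\text{scalar})\,x_3$.

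Now the contradiction is immediate: since $x_3^4x_4\in F$, the identity $B(F)=F$ forces the nonzero multiple $B(x_3^4x_4)$, which is a scalar times $x_4^4x_3$, to lie in $F$ as well; but the $A$-character of $x_4^4x_3$ is $\xi_{32}^{16}\ne1$, so $x_4^4x_3$ is not $A$-invariant and cannot occur in the $A$-invariant polynomial $F$, a contradiction. The main remaining obstacle is the degenerate values $\xi_{16}^{a}=1$ (i.e. $a\in\{0,16\}$), where coordinate $5$ joins the $A^2=1$ eigenspace and $B$ may mix $x_3,x_4,x_5$; I expect to dispose of these by the same monomial bookkeeping together with Proposition~\ref{pp:nonsmoothquintic} and Lemma~\ref{lem:em3}, just as the degenerate exponents are handled in Lemma~\ref{lem:C32}. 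As a fallback one can instead run the character-table argument used for $[16,4]$: a faithful $5$-dimensional $F$-fixing representation must be of type $2\oplus1\oplus1\oplus1$, and the trace constraints of Lemmas~\ref{lem:to2}, \ref{lem:to4} and \ref{lem:eoo8} on the classes of order $2,4,8$ exclude every admissible choice of constituents; the eigenvalue argument above is, however, shorter.
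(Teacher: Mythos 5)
Your generic case is correct, and it is a genuinely different route from the paper's: the paper disposes of $[128,153]$ in one line by the character-table/trace argument of Lemma~\ref{lem:[16,4]} (your fallback), whereas you work directly with a lift and the relation $ABA^{-1}=B^{-1}$. However, your case division over $a$ has a real hole. Commutation with $A^2=\Diag(\xi_{16},\xi_{16}^{-4},1,1,\xi_{16}^{a})$ only makes $B$ block diagonal along the eigenspaces of $A^2$, and coordinate $5$ can merge not only with $\{3,4\}$ (your excluded case $\xi_{16}^{a}=1$) but also with coordinate $1$ (when $\xi_{16}^{a}=\xi_{16}$, i.e.\ $a\in\{1,17\}$) or with coordinate $2$ (when $\xi_{16}^{a}=\xi_{16}^{-4}$, i.e.\ $a\in\{12,28\}$); your statement that on coordinates $1,2,5$ the entries of $B$ are diagonal and equal to $\pm1$ silently assumes these mergers do not occur. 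This matters: take $a=17$, so $A=\Diag(\xi_{32},\xi_{32}^{-4},-1,1,-\xi_{32})$ and $A$ restricted to the block $\{1,5\}$ is $\xi_{32}\Diag(1,-1)$. The very dichotomy you prove on $W$ then applies on $\{1,5\}$ as well: $B$ may be anti-diagonal there (the relation forces $b_{15}b_{51}=-1$, so this block squares to $-I_2$) while being $\pm$-diagonal on $W$, giving $B^2=\Diag(-1,1,1,1,-1)\neq I_5$, so $\Ord(B)=4$ and \emph{neither} of your contradictions fires: $B|_W$ is diagonal, and the monomial test is empty because $B(x_1^4x_2)$ is a scalar times $x_5^4x_2$, which \emph{is} $A$-invariant (smoothness even forces $x_5^4x_2\in F$ in this case). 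Closing $a=17$ requires a new ingredient: for instance, enumerate the $A$-invariant quintic monomials and check they all lie in the ideal $(x_2,x_4)$, so $F\in(x_2,x_4)$ and $X$ is singular by Proposition~\ref{pp:nonsmoothquintic}\,(2) (this is exactly the $\alpha=17$ computation inside Lemma~\ref{lem:C32}); alternatively, observe that $B^2=\Diag(-1,1,1,1,-1)=A^{16}$, contradicting $\la A\ra\cap\la B\ra=1$ in any faithful lift of $C_4\rt C_{32}$. The case $a=12$ escapes your argument in the same way and needs yet another observation: there $B(x_1^4x_2)$ is a scalar times $x_1^4x_5$, which is \emph{not} $A$-invariant, contradicting $B(F)=F$.

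Second, the case $\xi_{16}^{a}=1$ is only a promissory note in your write-up, and it is not routine. There $B$ lives in a $3\times3$ block, and the solutions of $sMs=M^{-1}$, $M^4=I$ in $\GL(3,\C)$ are no longer diagonal-or-anti-diagonal: for $a=0$ one has $s=\Diag(-1,1,1)$ and $M=\Diag(1,R)$ is a solution for \emph{every} $2\times2$ involution $R$ mixing $x_4,x_5$, so $B$ need not be a monomial matrix and your monomial bookkeeping does not directly apply. Nor is the case vacuous: for $a=0$ the $A$-invariant quintics are exactly $x_1^4x_2+x_2^4x_3+c_1x_3^4x_4+c_2x_3^4x_5+x_3^2G_3(x_4,x_5)+G_5(x_4,x_5)$, a family containing smooth members (Example (7) of Example~\ref{mainex} is one, and its order-$32$ automorphism is precisely this $A$), so one must genuinely prove that no admissible $B$ of order $4$ coexists with such an $A$. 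In short, the idea is sound and completable, but as written the proof both omits cases in which its stated argument is false ($a\in\{12,17\}$) and leaves the hardest degenerate cases ($a\in\{0,16\}$) undone; carrying out the character-table argument, as the paper does, avoids this case analysis over $a$ altogether.
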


\begin{proof}
Similar to [16,4] case. (The character table of $[128,153]$ is on the website \cite{Yu})

\end{proof}

\begin{lemma}\label{lem:64times2}
$C_{64}\times C_2\cong [128,159]$ is not isomorphic to a subgroup of $\Aut(X)$.
\end{lemma}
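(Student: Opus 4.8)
The plan is to treat this exactly as the other abelian $2$-groups were handled in Lemmas \ref{lem:8times8} and \ref{lem:32times4}: diagonalize a lifting and then reduce everything to a finite smoothness check via Propositions \ref{pp:smoothnessandmonomial} and \ref{pp:nonsmoothquintic}. Suppose $C_{64}\times C_2\cong N<\Aut(X)$. Since $N$ is a $2$-group, its Sylow $5$-subgroup is trivial, so by the remark following Theorem \ref{thm:noorder25} (applying Theorem \ref{liftable}) $N$ is $F$-liftable; because an $F$-lifting is isomorphic to $N$ via $\pi$, the lifting $\widetilde N$ is abelian. Thus $\widetilde N=\langle A_1,A_2\rangle$ with $A_1,A_2$ commuting and of finite order, hence simultaneously diagonalizable, with $A_1(F)=A_2(F)=F$, where $\Ord(A_1)=64$ generates the $C_{64}$-factor and $\Ord(A_2)=2$ generates the $C_2$-factor.

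Next I would put $A_1$ in standard form. Some eigenvalue of $A_1$ is a primitive $64$-th root of unity; normalizing it to the first coordinate and running the monomial-chain argument from the proof of Theorem \ref{thm:primaryorders} (as in Lemma \ref{lem:C32}), smoothness forces $x_1^4x_2,x_2^4x_3,x_3^4x_4\in F$, and $A_1(F)=F$ then fixes the exponents, so after a coordinate change
\[ A_1=\Diag(\xi_{64},\xi_{64}^{-4},\xi_{64}^{16},1,\xi_{64}^{a}),\qquad 0\le a\le 63. \]
Write $A_2=\Diag(\epsilon_1,\dots,\epsilon_5)$ with $\epsilon_i\in\{\pm1\}$; since $[A_2]\notin\langle[A_1]\rangle$, $A_2$ is neither $\pm I$ nor $\pm A_1^{32}$, where $A_1^{32}=\Diag(-1,1,1,1,(-1)^a)$. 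Because $A_1^{32}\in\widetilde N$ also fixes $F$, I may replace $A_2$ by $A_2A_1^{32}$ to arrange $\epsilon_1=1$; note that one cannot renormalize by the scalar $-I$, since $F$ has odd degree $5$ and hence $(-A_2)(F)=-F\neq F$.

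Now I would use smoothness to cut $a$ down to finitely many values. By Proposition \ref{pp:smoothnessandmonomial} there is $j$ with $x_5^4x_j\in F$, and $A_1$-invariance forces $4a+e_j\equiv 0\pmod{64}$, where $(e_1,\dots,e_5)=(1,-4,16,0,a)$ are the exponents in $A_1$. Running over $j$ this confines $a$ to $\{0,16,32,48\}\cup\{1,17,33,49\}\cup\{12,28,44,60\}$ (the case $j=1$ being excluded by parity). For each such $a$, and each sign pattern of $A_2$ compatible with $A_2(F)=F$ and with the further monomials forced by smoothness (for instance $x_4^5\in F$ unless $a=0$, which pins $\epsilon_4$), I would list the monomials left invariant by both $A_1$ and $A_2$ — by hand or with Mathematica, as in Lemmas \ref{lem:eoo8}, \ref{lem:8times8}, \ref{lem:32times4} — and check that the invariant space is always small enough that $F$ lands in a forbidden ideal $(x_p,x_q)$ or $(x_i)+(x_j,x_k)^2$, contradicting Proposition \ref{pp:nonsmoothquintic}.

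The hard part will be this last case analysis. Unlike the $C_{16}$ and $C_{32}$ situations (Lemmas \ref{lem:C16}, \ref{lem:C32}), where a single cyclic generator already over-determines $F$, here $A_1$ alone is compatible with a smooth quintic — e.g. for $a=0$ it is exactly the generator of Example (7), whose quintic is smooth. Hence the contradiction must genuinely come from the extra involution $A_2$, and the bookkeeping of its signs against each residue of $a$ (together with the sign constraints imposed by which $x_i^4x_j$ and $x_i^5$ survive $A_1$) is the real content. I expect that in every branch the combined $A_1,A_2$-invariance annihilates either all monomials supported away from two fixed variables or all the pure powers $x_i^5$, which is precisely what Proposition \ref{pp:nonsmoothquintic} requires.
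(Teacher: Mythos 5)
Your proposal is correct and follows essentially the same route as the paper's proof: normalize to $A_1=\Diag(\xi_{64},\xi_{64}^{-4},\xi_{64}^{16},1,\xi_{64}^{a})$ and $A_2=\Diag(1,1,1,1,-1)$ (your sign constraints do force exactly this, since $x_1^4x_2,x_2^4x_3,x_3^4x_4\in F$ pin $\epsilon_2=\epsilon_3=\epsilon_4=1$), then run a finite invariant-monomial check against Proposition \ref{pp:nonsmoothquintic}; the paper checks all $0\le a\le 31$ directly and finds $F\in(x_2,x_4)$ unless $a=8,12,23,28$, in which case $F\in(x_3)+(x_1,x_4)^2$. Your preliminary cut on $a$ via the monomials $x_5^4x_j$ is a harmless refinement (it disposes of cases like $a=8,23$ immediately by Proposition \ref{pp:nonsmoothquintic} (1)), and the remaining cases do come out exactly as you expect.
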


\begin{proof}
Suppose $C_{64}\times C_2\cong N<  \Aut(X)$. We may assume $N=\langle [A_1],[A_2]\rangle $, $A_1=\Diag(\xi_{64},\xi_{64}^{-4},\xi_{64}^{16},1,\xi_{64}^a)$,  $A_2=\Diag(1,1,1,1,-1)$ and $A_1(F)=A_2(F)=F$. We may assume $0\leq a\leq 31$. If $a\neq 8,12,23$ or $28$, then $F\in ( x_2,x_4) $, a contradiction. If $a=8,12,23$ or $28$, then $F\in ( x_3) +( x_1,x_4) ^2$, a contradiction.

Therefore, $C_{64}\times C_2$ can not be a subgroup of $\Aut(X)$.
 
\end{proof}

\begin{lemma}\label{lem:C64}
If $C_{64}\cong N\lhd G< \Aut(X)$, then $G=C_G(N).$
\end{lemma}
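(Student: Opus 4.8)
The plan is to follow verbatim the strategy of Lemmas \ref{lem:C16} and \ref{lem:C32}. First I would put the normal subgroup $N$ in standard form. Since $N\cong C_{64}$, pick a generator $[A]$ with $A$ of order $64$, and by the smoothness argument used repeatedly for prime-power orders (as in Theorem \ref{thm:primaryorders}) arrange $A(F)=F$. Using Proposition \ref{pp:smoothnessandmonomial} I would pin down the eigenvalues successively: $x_1^4x_2\in F$ forces the second exponent to be $-4$, then $x_2^4x_3\in F$ forces the third to be $16$, then $x_3^4x_4\in F$ forces the fourth to be $-64\equiv 0$, while the last exponent is left free. Thus we may assume $A=\Diag(\xi_{64},\xi_{64}^{-4},\xi_{64}^{16},1,\xi_{64}^{a})$ with $0\le a\le 63$ and $A(F)=F$.

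Next, let $[B]\in G$ be arbitrary. Since $N\lhd G$, conjugation gives $[B][A][B]^{-1}=[A]^{\alpha}$ for some $\alpha$ coprime to $64$, so $BAB^{-1}=\lambda A^{\alpha}$ for a scalar $\lambda$. I would first show $\lambda=1$: a direct substitution (using $A(F)=F$ together with the semi-invariance $B(F)=\mu F$) gives $(BAB^{-1})(F)=F$, hence $\lambda^{5}=1$; on the other hand every eigenvalue of $A$, and therefore of the conjugate matrix $\lambda A^{\alpha}$, is a $64$-th root of unity, which forces $\lambda$ itself to be a $64$-th root of unity. Since $\gcd(5,64)=1$ we get $\lambda=1$, so $BAB^{-1}=A^{\alpha}$ and in particular $\chi_{A}(t)=\chi_{A^{\alpha}}(t)$. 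It then suffices to prove $\alpha=1$: for then $B$ commutes with $A$, giving $[B]\in C_G(N)$, and as $[B]$ was arbitrary this yields $G=C_G(N)$.

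The heart of the argument is to rule out $\alpha\neq 1$. Assuming $\alpha$ odd with $1<\alpha\le 63$, the equality $\chi_A(t)=\chi_{A^\alpha}(t)$ says the exponent multisets $\{1,-4,16,0,a\}$ and $\{\alpha,-4\alpha,16\alpha,0,a\alpha\}$ coincide modulo $64$. Tracking eigenvalues by their multiplicative order (the entry $\xi_{64}$ is the only primitive $64$-th root unless $a$ is odd, $\xi_{64}^{16}$ is the unique order-$4$ entry unless $a\equiv\pm 16$, and so on) cuts the possibilities down to a short explicit list of pairs $(a,\alpha)$, exactly as in the seven cases of Lemma \ref{lem:C16}. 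For each surviving pair I would compute the $A$-invariant degree-$5$ monomials (by hand or with Mathematica) and exhibit a forbidden ideal membership, namely $F\in(x_i,x_j)$ or $F\in(x_i)+(x_j,x_k)^2$ for suitable variables, contradicting smoothness by Proposition \ref{pp:nonsmoothquintic}. The main obstacle is precisely this finite case analysis: organizing the comparison of characteristic polynomials so that no pair $(a,\alpha)$ is overlooked, and then verifying that each genuinely forces $X$ to be singular. Note that Lemma \ref{lem:matrixshape} is not needed here, since once $\alpha=1$ is established $B$ automatically commutes with $A$.
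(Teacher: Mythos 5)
Your proposal is correct and takes essentially the same route as the paper: the paper's own proof is literally ``Similar to Lemma \ref{lem:C32}'', whose argument is exactly your plan — normalize $A=\Diag(\xi_{64},\xi_{64}^{-4},\xi_{64}^{16},1,\xi_{64}^{a})$ with $A(F)=F$, eliminate the scalar, compare $\chi_A(t)$ with $\chi_{A^{\alpha}}(t)$, and kill the surviving pairs $(a,\alpha)$ by a smoothness contradiction. For the record, your case analysis terminates immediately: matching eigenvalue orders forces $\alpha\equiv 1 \pmod{16}$, the only nontrivial surviving pair is $(a,\alpha)=(33,33)$, and there every $A$-invariant quintic monomial contains $x_2$ or $x_4$, so $F\in (x_2,x_4)$, contradicting Proposition \ref{pp:nonsmoothquintic} (2).
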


\begin{proof}
Similar to Lemma \ref{lem:C32}.
\end{proof}

\begin{lemma}\label{lem:[128,160]}
$[128,160]\cong C_{64}\rt C_2$ is not a subgroup of $\Aut(X)$.
\end{lemma}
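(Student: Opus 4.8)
The plan is to dispatch this in exactly the same way that Lemma \ref{lem:[64,51]} and Lemma \ref{lem:[128,129]} are dispatched by their cyclic-normalizer lemmas, namely by a direct appeal to Lemma \ref{lem:C64}. I would argue by contradiction, assuming $G \cong [128,160] < \Aut(X)$. The first step is to read off the relevant structural feature of $[128,160]$: since $[128,160] \cong C_{64} \rt C_2$ is a (nontrivial) semidirect product with normal factor $C_{64}$, the group $G$ contains a normal subgroup $N \cong C_{64}$. I would confirm via GAP that $[128,160]$ is non-abelian and that its cyclic subgroup of order $64$ is indeed normal; this is the only input that needs checking, and it is immediate from the semidirect-product presentation.

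Having identified $C_{64} \cong N \lhd G < \Aut(X)$, the second step is to apply Lemma \ref{lem:C64}, which forces $G = C_G(N)$. This means every element of $G$ commutes with every element of $N$, i.e. $N$ is contained in the center $Z(G)$. But what distinguishes $[128,160] \cong C_{64} \rt C_2$ from the direct product $[128,159] \cong C_{64} \times C_2$ (already excluded in Lemma \ref{lem:64times2}) is precisely that the $C_2$-factor acts nontrivially on $C_{64}$; hence $N = C_{64}$ is \emph{not} central in $G$. This contradicts $G = C_G(N)$, and the contradiction proves that $[128,160]$ cannot embed in $\Aut(X)$.

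I do not expect any genuine obstacle here: all the substantive work has already been carried out in Lemma \ref{lem:C64}, whose proof rules out every nontrivial conjugation action $BAB^{-1} = A^{\alpha}$ on a normal $C_{64}$ by the characteristic-polynomial and invariant-monomial analysis. The present lemma is a one-line corollary, and the only point to verify (non-abelianness together with normality of the order-$64$ cyclic subgroup, so that Lemma \ref{lem:C64} genuinely applies) is routine and GAP-checkable. Accordingly the proof reduces to: \emph{just apply Lemma \ref{lem:C64}}.
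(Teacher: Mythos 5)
Your proposal is correct and matches the paper exactly: the paper's entire proof is ``Just apply Lemma \ref{lem:C64},'' with the implicit argument being precisely what you spell out — normality of the $C_{64}$ factor plus Lemma \ref{lem:C64} forces $C_{64}$ to be central, contradicting the non-abelianness of the nontrivial semidirect product $[128,160]$.
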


\begin{proof}
Just apply Lemma \ref{lem:C64}.
\end{proof}

Now, the following Theorem completes the proof of Theorem \ref{thm:sylow2}:

\begin{theorem}\label{thm:C128}
If $\Aut(X)$ contains a subgroup of order $2^n$ with $n\geq 7$, then $\Aut(X)\cong C_{128}$ or $C_{256}$.
\end{theorem}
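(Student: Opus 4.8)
The plan is to reduce the hypothesis to the existence of a single automorphism of order $128$, to use smoothness to show that such an automorphism forces $X$ to be, up to projective equivalence, the quintic of Example~(11) in Example~\ref{mainex}, and then to read off $\Aut(X)$ from Theorem~\ref{thm:Aut(X)1-16}.

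First I would produce an element of order $128$. Any $2$-group of order $2^n$ with $n\ge 7$ contains a subgroup of order exactly $2^7=128$, and this is a $2$-subgroup of $\Aut(X)$; so by the classification of $2$-subgroups of order $128$ obtained above (sub-test together with Lemmas~\ref{lem:32times4}--\ref{lem:[128,160]}), it is isomorphic to $C_{128}$. Hence $\Aut(X)$ contains an element $g$ of order $128$.

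Next I would rigidify $F$. Since $\langle g\rangle$ is a $2$-group, condition (2) of Theorem~\ref{liftable} is vacuous and $g$ is $F$-liftable, so I may write $g=[A]$ with $A(F)=F$ and $A=\Diag(\xi_{128}^{c_1},\dots,\xi_{128}^{c_5})$, with some $c_i$ a unit modulo $128$. As in the proofs of Theorem~\ref{thm:primaryorders} and Lemma~\ref{lem:C64}, smoothness (Proposition~\ref{pp:smoothnessandmonomial}) gives, for each $i$, either $x_i^5\in F$ (forcing $c_i\equiv 0$) or $x_i^4x_{j}\in F$ (forcing $c_{j}\equiv -4c_i$). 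Because $(-4)^\ell-1$ is odd for $1\le \ell\le 5$, no chain of such relations can close into a cycle modulo $128$, so starting from a unit exponent the relation $c\mapsto -4c$ must run through the five values $1,-4,16,-64,256\equiv 0$ and terminate at a fixed coordinate; with exactly five variables this pins down, up to reordering and a unit rescaling of the exponents, $A=\Diag(\xi_{128},\xi_{128}^{-4},\xi_{128}^{16},\xi_{128}^{-64},1)$. A direct count of the degree-$5$ monomials satisfying $i_1-4i_2+16i_3-64i_4\equiv 0\pmod{128}$ with $\sum i_k=5$ leaves only $x_1^4x_2,x_2^4x_3,x_3^4x_4,x_4^4x_5,x_5^5$; smoothness forces all five coefficients to be nonzero, and a diagonal change of coordinates (whose exponent matrix has determinant $4^4\cdot 5\neq 0$) normalizes them to $1$.

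Thus $X$ is projectively equivalent to the quintic of Example~(11), and Theorem~\ref{thm:Aut(X)1-16} gives $\Aut(X)\cong C_{256}$; in particular $\Aut(X)\cong C_{128}$ or $C_{256}$, and as a byproduct there is no element of order $512$ and the Sylow $2$-subgroup is cyclic. The one genuinely delicate point is the rigidity step: one must verify both that the chain of smoothness relations forces precisely the above diagonal shape (equivalently, that $2^8$ is the largest attainable $2$-power, realized by Example~(11)) and that no $A$-invariant degree-$5$ monomial other than the five listed survives, since it is exactly this sparseness of $F$ that collapses $\Aut(X)$ onto a cyclic group.
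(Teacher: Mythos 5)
Your first two steps agree with the paper's proof: extract a subgroup of order $128$, identify it as $C_{128}$ by the order-$128$ classification, lift it, and pin down the diagonal form $A=\Diag(\xi_{128},\xi_{128}^{-4},\xi_{128}^{16},\xi_{128}^{-64},1)$ with $A(F)=F$. But your ``direct count'' of invariant monomials — the step you yourself flag as the delicate one — is wrong, and the error is fatal to your argument as written. The monomial $x_4^2x_5^3$ also satisfies the congruence, since its weight is $-64\cdot 2=-128\equiv 0\pmod{128}$. The full list of $A$-invariant quintic monomials is therefore $x_1^4x_2,\,x_2^4x_3,\,x_3^4x_4,\,x_4^4x_5,\,x_5^5,\,x_4^2x_5^3$, so all one can conclude is that, up to coordinates,
$$F=x_1^4x_2+x_2^4x_3+x_3^4x_4+x_4^4x_5+x_5^5+\lambda x_4^2x_5^3,\qquad \lambda\in\C,$$
and $\lambda$ cannot be normalized away: a diagonal substitution fixing the first five coefficients satisfies $t_4^4t_5=t_5^5=1$, hence changes $\lambda$ by at most a sign. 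For $\lambda\neq 0$ these quintics are smooth but are \emph{not} projectively equivalent to Example (11) — indeed the paper shows their automorphism group is $C_{128}$, while Example (11) has $\Aut\cong C_{256}$, and automorphism groups are projective invariants. So your conclusions that $X$ is always Example (11) and that $\Aut(X)\cong C_{256}$ always are both false; the disjunction in the statement of Theorem \ref{thm:C128} is there precisely because both cases occur.

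What is missing, then, is the whole case $\lambda\neq 0$: after the (corrected) monomial count one must still compute $\Aut(X)$ for this one-parameter family and show it is $C_{128}$. This is not automatic — the paper does it by the differential method of Section \ref{ss:differentialmethod} (as in Theorem \ref{thm:Aut(X)1-16}), showing that any automorphism is induced by a semi-permutation matrix preserving the sparse shape of $F$, and that the extra term $\lambda x_4^2x_5^3$ cuts the group down from $C_{256}$ to $C_{128}$. Without this step, your argument establishes only that $\Aut(X)$ \emph{contains} $C_{128}$, not the stated dichotomy.
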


\begin{proof}
Let $G<\Aut(X)$ and $|G|=2^n$, $n\geq 7$. 

Since $G$ is a $2$-group, $G$ contains a subgroup, say $N$, of order $128$. Then, $N\cong C_{128}$, by the results for $|G|=128$ explained above. Then we may assume $N=\langle [A]\rangle $, where $A=\Diag(\xi_{128},\xi_{128}^{-4},\xi_{128}^{16},\xi_{128}^{-64},1)$ and $A(F)=F$. Then by computing (e.g. Mathematica) the invariant monomials of $A$, we find that, $F=x_1^4x_2+x_2^4x_3+x_3^4x_4+x_4^4x_5+x_5^5+\lambda x_4^2x_5^3,$ for some $\lambda\in \mathbb{C}$, up to change of coordinates. Then, applying the differential method in Section \ref{ss:differentialmethod}, we can compute $\Aut(X)\cong C_{256}$ if $\lambda =0$ (cf. Theorem \ref{thm:Aut(X)1-16}) or $\Aut(X)\cong C_{128}$ if $\lambda\neq 0$.
\end{proof}

%
%

\end{proof}

\section{Sylow 5-subgroups}\label{ss:Sylow5}

{\it In this section, $X$ is a smooth quintic threefold defined by $F$.} 

In this section, we study Sylow $5$-subgroups of $\Aut(X)$.

\begin{lemma}\label{lemma1}
Suppose $C_5^2\cong N< \Aut(X)$. Then:

(i) $N$ is generated by $[A_1],$ and $[A_2]$,  $A_1A_2=A_2A_1$, and both $A_1$ and $A_2$ have order 5 as elements in $\GL(5, \mathbb{C})$; or

(ii) up to change of coordinates, $N$ is generated by $[A_1]$ and $[A_2]$, where $A_1=\Diag(1,\xi_5, \xi_5^2, \xi_5^3, \xi_5^4 )$ and $$A_2=
    \begin{pmatrix} 
    0&1&0&0&0 \\ 
    0&0&1&0&0 \\ 
    0&0&0&1&0 \\ 
    0&0&0&0&1 \\ 
    1&0&0&0&0 \\ 
    \end{pmatrix} 
   $$
\end{lemma}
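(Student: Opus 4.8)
The plan is to treat this as a pure statement about the abelian subgroup $N \cong C_5^2$ of $\PGL(5,\C)$; the smoothness of $X$ will not actually be needed, so I would not invoke Proposition \ref{pp:nonsmoothquintic} at all. First I would choose good lifts of a pair of generators. Write $N = \langle [A_1], [A_2]\rangle$. Since each $[A_i]$ has order $5$, any lift satisfies $A_i^5 = \lambda_i I_5$ with $\lambda_i \in \C^*$, and after rescaling $A_i$ by a fifth root of $\lambda_i^{-1}$ I may assume $A_1^5 = A_2^5 = I_5$; thus both lifts have order exactly $5$ in $\GL(5,\C)$. Because $[A_1]$ and $[A_2]$ commute in $\PGL(5,\C)$, conjugation gives $A_2 A_1 A_2^{-1} = \mu A_1$ for some scalar $\mu$, and conjugating five times (using $A_2^5 = I_5$) yields $A_1 = \mu^5 A_1$, so $\mu = \xi_5^k$ for some $k \in \{0,1,2,3,4\}$. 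This single integer $k$ governs the whole dichotomy.

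The proof then splits on $k$. If $k = 0$ the lifts $A_1, A_2$ commute and already have order $5$, which is precisely conclusion (i); no further work is required. If $k \neq 0$, replacing $A_2$ by an appropriate power lets me assume $k = 1$, so $A_2 A_1 A_2^{-1} = \xi_5 A_1$. Then $A_1$ and $\xi_5 A_1$ are conjugate and share a characteristic polynomial; since $A_1^5 = I_5$ its eigenvalues are fifth roots of unity, and invariance of this multiset under multiplication by $\xi_5$ (a free $5$-cycle on $\mu_5$) forces the spectrum to be exactly $\{1, \xi_5, \xi_5^2, \xi_5^3, \xi_5^4\}$, each with multiplicity one. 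Diagonalizing, I may take $A_1 = \Diag(1, \xi_5, \xi_5^2, \xi_5^3, \xi_5^4)$, and then feeding the relation $A_2 A_1 = \xi_5 A_1 A_2$ into Lemma \ref{lem:matrixshape} shows that $A_2$ has exactly one nonzero entry in each row and column arranged as a cyclic shift; a diagonal change of coordinates $x_i \mapsto (a_1\cdots a_{i-1})x_i$ (exactly as in the proof of Lemma \ref{lem:5^2notliftable}) absorbs the remaining scalars and turns $A_2$ into the displayed permutation matrix, giving conclusion (ii).

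I expect no serious obstacle, as this argument is a streamlined version of the proof of Lemma \ref{lem:5^2notliftable} with the $F$-theoretic content removed; alternatively, one can bypass the $k \neq 0$ bookkeeping entirely by arguing that if $N$ is not of the form (ii) then by Lemma \ref{lem:5^2notliftable} it is $F$-liftable, whence an $F$-lifting $\widetilde{N} \cong C_5^2 \subset \GL(5,\C)$ furnishes two commuting order-$5$ generators, i.e.\ (i). The only points demanding a little care are the two elementary normalizations at the start: that order-$5$ lifts exist, and that the commutator scalar $\mu$ is genuinely a fifth root of unity. Both are settled by the order computation above, after which the case analysis is completely mechanical.
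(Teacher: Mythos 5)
Your proposal is correct and follows essentially the same route as the paper: the paper's proof of this lemma is literally a pointer to Lemma \ref{lem:5^2notliftable} and its proof, which performs exactly your case split on the commutator scalar $A_2A_1A_2^{-1}=\xi_5^k A_1$, with $k=0$ yielding commuting order-$5$ lifts (case (i)) and $k\neq 0$ forcing $A_1=\Diag(1,\xi_5,\xi_5^2,\xi_5^3,\xi_5^4)$ by comparison of characteristic polynomials and then the cyclic-shift form of $A_2$ via Lemma \ref{lem:matrixshape} (case (ii)). Your refinements --- phrasing the spectrum computation as invariance of the eigenvalue multiset under multiplication by $\xi_5$, and observing that smoothness of $X$ plays no role in this purely $\PGL(5,\C)$ statement --- are accurate but do not change the substance of the argument.
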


\begin{proof}
See Lemma \ref{lem:5^2notliftable} and its proof.
\end{proof}


\begin{lemma}\label{lemma2}
Let $G=\Aut(X)$. If $|G_5|=125$ and $G_5$ is not abelian, then there exist $ A_1,A_2, \;\text{and}\; A_3$ in $\GL(5,\mathbb{C})$, such that $G=\langle [A_1],[A_2],[A_3]\rangle , A_1A_2=A_2A_1, A_1A_3=A_3A_1$, and, for all $i=1,2,3$, both $A_i$ and $[A_i]$ are of order 5 as elements in $\GL(5,\C)$ and $\PGL(5,\C)$ respectively.
\end{lemma}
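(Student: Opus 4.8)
The plan is to identify $G_5$ up to isomorphism, reduce the whole statement to a single eigenvalue assertion about one matrix, and then eliminate the one dangerous configuration by a weight computation.

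\medskip\noindent\emph{Step 1 (type of $G_5$).} By Theorem \ref{thm:noorder25} the group $\Aut(X)$ has no element of order $25$, so every non-trivial element of $G_5$ has order $5$; that is, $G_5$ has exponent $5$. Of the two non-abelian groups of order $125$, only the Heisenberg group $UT(3,5)$ has exponent $5$ (the other, $C_{25}\rtimes C_5$, has exponent $25$), so $G_5\cong UT(3,5)$. I fix $Z(G_5)=\langle c\rangle\cong C_5$ and generators $a,b$ with $\langle a,b\rangle=G_5$ and $c=[a,b]$. The intended generators are $[A_1]=c$, $[A_2]=a$, $[A_3]=b$, and the real content is that the \emph{central} generator admits a lift commuting (as a matrix) with the lifts of $a$ and $b$.

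\medskip\noindent\emph{Step 2 (reduction to one eigenvalue fact).} First I would choose order-$5$ representatives $A_2,A_3\in\GL(5,\C)$ of $a,b$ (any order-$5$ element of $\PGL(5,\C)$ has an order-$5$ lift after rescaling) and set $A_1:=A_2A_3A_2^{-1}A_3^{-1}$, a representative of $c$. Two structural facts are immediate: the character $A\mapsto\lambda_A$ given by $A(F)=\lambda_AF$ is multiplicative, so $A_1(F)=F$; and $\det A_1=1$. Since $c$ is central, $A_2A_1A_2^{-1}=\lambda_2A_1$ and $A_3A_1A_3^{-1}=\lambda_3A_1$ with $\lambda_2,\lambda_3\in\mu_5$ (take determinants). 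The key observation is that \emph{if $A_1$ has a repeated eigenvalue} then $\lambda_2=\lambda_3=1$: a scalar $\lambda_i$ of order $5$ would permute the at most four distinct eigenvalues of $A_1$ in orbits of size $5$, which is impossible, so each $\lambda_i=1$ and $A_1$ commutes with $A_2,A_3$. Rescaling $A_1$ by a suitable fifth root of unity then makes it of order $5$ without affecting commutativity, and $\langle[A_1],[A_2],[A_3]\rangle=\langle c,a,b\rangle=G_5$ with all three generators of order $5$ in $\GL$ and in $\PGL$. Thus the entire statement reduces to proving that $A_1$ is \emph{not} regular, i.e. has a repeated eigenvalue.

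\medskip\noindent\emph{Step 3 (ruling out the regular case — the main obstacle).} Suppose $A_1$ is regular. If $\lambda_2=\lambda_3=1$, then $A_2,A_3$ would be diagonal in the eigenbasis of $A_1$, hence commute, forcing $A_1=I$, absurd; so some $\lambda_i\neq1$. The eigenvalue-orbit argument together with $\det A_1=1$ then forces the eigenvalues of $A_1$ to be exactly $\{1,\xi_5,\xi_5^2,\xi_5^3,\xi_5^4\}$, so after a change of coordinates $A_1=\Diag(1,\xi_5,\xi_5^2,\xi_5^3,\xi_5^4)$, the corresponding $A_i$ is a monomial matrix whose permutation part is a non-trivial cyclic shift, and $A_1(F)=F$ forces, via Proposition \ref{pp:smoothnessandmonomial}, that $x_j^5\in F$ for all $j$. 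Now every monomial of $F$ has $A_1$-weight $0$, and the set of these monomials is invariant under the cyclic shift coming from $A_i$; reducing exponent vectors modulo $5$ it is stable under the regular unipotent shift $\rho$ on $\F_5^5$, so its linear span is one of the nested $\rho$-invariant subspaces $W_0\subset W_1\subset\cdots$. I would then check directly that the only degree-$5$ monomials whose exponent vector reduces into $W_2=\langle(1,1,1,1,1),(0,1,2,3,4)\rangle$ are $x_1^5,\dots,x_5^5$ and $x_1x_2x_3x_4x_5$ — all already in $W_1$ (for $b\neq0$ the five residues are distinct, forcing total degree $\geq 0+1+2+3+4=10>5$). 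Hence the monomial exponents of $F$ span at most a line modulo $5$, so the diagonal stabilizer $T_F=\{[\Diag(\zeta_1,\dots,\zeta_5)]:\zeta_j^5=1,\ \Diag(\zeta)(F)=F\}$ has $5$-rank at least $3$; thus $\Aut(X)$ contains a copy of $C_5^3$, contradicting that its Sylow $5$-subgroup $G_5\cong UT(3,5)$ has $5$-rank $2$ (cf. also Lemma \ref{lem:5^2notliftable} and Lemma \ref{lemma1}, which describe exactly the non-liftable $C_5^2$ that is being forced here). This contradiction shows $A_1$ is not regular, and Step 2 then finishes the proof. The hard part is precisely this last step: the passage from ``$A_1$ regular'' to a forbidden abundance of diagonal symmetry, which rests on the mod-$5$ weight computation pinning the invariant monomials down to the Dwork shape.
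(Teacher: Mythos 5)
Your Steps 1 and 2 are sound and essentially reproduce the paper's dichotomy in a cleaner language: identifying $G_5$ with the exponent-$5$ Heisenberg group, lifting the two non-central generators to order-$5$ matrices $A_2,A_3$, forming the commutator $A_1=A_2A_3A_2^{-1}A_3^{-1}$ (which automatically satisfies $A_1(F)=F$ and $\Det(A_1)=1$), and observing that a repeated eigenvalue of $A_1$ forces $\lambda_2=\lambda_3=1$ is exactly the split between the liftable case and the non-liftable configuration of Lemma \ref{lem:5^2notliftable} / Lemma \ref{lemma1}. (Minor slip: to normalize $A_1$ to order $5$ you need a $25$th root of unity, not a fifth root, since $(\zeta A_1)^5=\zeta^5A_1^5$.) The genuine gap is in Step 3, at the inference ``hence the monomial exponents of $F$ span at most a line modulo $5$.'' The span of the exponent vectors is indeed one of the chain of $\rho$-invariant subspaces, but the $A_1$-weight-zero condition only confines it to the \emph{three}-dimensional subspace $W_3=\{v:\ (1,1,1,1,1)\cdot v\equiv (0,1,2,3,4)\cdot v\equiv 0\}$ of $\F_5^5$; your computation rules out span $=W_2$ and nothing more, so span $=W_3$ remains open. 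This possibility is real: the quintics $F_c=\sum_{j}x_j^5+c\sum_{j}x_j^3x_{j+1}x_{j-1}$ (indices mod $5$), smooth for generic $c$, satisfy every fact you have assembled at that point --- every monomial has $A_1$-weight $0$, the support is stable under the cyclic shift, and all $x_j^5$ occur --- yet the exponent vector $(3,1,0,0,1)\notin W_2$ of $x_1^3x_2x_5$ shows the span is all of $W_3$, and a direct check shows the diagonal $5$-torsion stabilizer of $F_c$ in $\PGL(5,\C)$ is just $\langle[A_1]\rangle\cong C_5$. So from one diagonal weight condition plus shift-stability the Dwork shape, and hence your $C_5^3$ of diagonal symmetries, simply does not follow; the contradiction cannot be reached.

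What is missing is precisely the step the paper's proof is built around: the third generator must enter the \emph{monomial} analysis, not merely the final rank count. In the paper, normality of $N=\langle[A_1],[A_2]\rangle$ in $G_5$ produces a representative of an element outside $N$ which is \emph{diagonal} --- in your notation, if $\lambda_3\neq 1$ then $A_3$ is itself a shift-monomial matrix and may be replaced by the diagonal matrix $A_3A_2^{-k}$ for suitable $k$ (still a generating pair with $[A_2]$), while if $\lambda_3=1$ it is already diagonal because $A_1$ is regular. This diagonal third generator imposes a \emph{second, independent} weight condition on the monomials of $F$ (the paper's congruences $i_3+3i_4+i_5\equiv 0$ and $i_2+2i_3+3i_4+4i_5\equiv 0 \pmod 5$), and only the two conditions together cut the support down to $x_1^5,\dots,x_5^5,x_1x_2x_3x_4x_5$. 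At that point the contradiction is an order count ($G_5$ would contain the Sylow $5$-subgroup of the Gorenstein automorphism group of a Fermat/Dwork quintic, of order at least $5^4>125$), which is the same contradiction your rank argument aims at. Without bringing $[A_3]$ and the hypothesis $|G_5|=125$ into the weight computation, the regular case cannot be excluded, so as written the proof does not go through.
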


\begin{proof}
By Theorem \ref{thm:noorder25}, each nontrivial element in $G_5$ has order 5. Then $G_5$ contains a normal abelian subgroup $N\cong C_5^2$. Since $N\cap Z(G_5)\neq \emptyset$, where $ Z(G_5)$ is the center of the $5$-group $G_5$, we may assume $N=\langle [A_1],[A_2]\rangle $, and $[A_1]\in Z(G_5)$, and $A_1=\Diag(1,\xi_5,\xi_5^a,\xi_5^b,\xi_5^c)$, $0\leq a\leq b\leq c\leq 4$, and $A_2^5=I_5$ and $F$ is $A_i$-semi-invariant, $i=1,2$. 

Then by Lemma \ref{lemma1}, there are two possibilities:

  (i) $A_2A_1=A_1A_2$; or
  
  (ii) $A_2A_1\neq A_1A_2$, and we can assume $A_1=\Diag(1,,\xi_5, \xi_5^2, \xi_5^3, \xi_5^4)$, and $$A_2= \begin{pmatrix} 
    0&1&0&0&0 \\ 
    0&0&1&0&0 \\ 
    0&0&0&1&0 \\ 
    0&0&0&0&1 \\ 
    1&0&0&0&0 \\ 
    \end{pmatrix}.$$
    
   We show that the case (ii) does not happen. Suppose that the case (ii) happens. Then we can choose $A_3\in \GL(5,\mathbb{C})$, such that, $[A_3]\in G_5\setminus N$, and $A_3^5=I_5$ and $F$ is $A_3$-semi-invariant. We first show that $A_1A_3\neq A_3A_1$ by argue by contradiction. If $A_1A_3=A_3A_1$, then $A_3$ must be diagonal, and $A_3=\Diag(1,1,\xi_5^{a^\prime},\xi_5^{b^\prime},\xi_5^{c^\prime})$. Since $N\lhd G_5,$ it follows that $[A_3][A_2][A_3]^{-1}\in N$, and  $$A_3A_2A_3^{-1}= \begin{pmatrix} 
    0&1&0&0&0 \\ 
    0&0&\xi_5^{-a^{\prime}}&0&0 \\ 
    0&0&0&\xi_5^{a^{\prime}-b^{\prime}}&0 \\ 
    0&0&0&0&\xi_5^{b^{\prime}-c^{\prime}} \\ 
    \xi_5^{c^{\prime}}&0&0&0&0 \\ 
    \end{pmatrix}=\xi_5^{\alpha}A_1^iA_2^j\, ,$$ where $0\leq \alpha, i, j\leq 4$. Then $j=1$ and $\alpha =0$. Therefore, $$ \begin{pmatrix} 
    0&1&0&0&0 \\ 
    0&0&\xi_5^{-a^{\prime}}&0&0 \\ 
    0&0&0&\xi_5^{a^{\prime}-b^{\prime}}&0 \\ 
    0&0&0&0&\xi_5^{b^{\prime}-c^{\prime}} \\ 
    \xi_5^{c^{\prime}}&0&0&0&0 \\ 
    \end{pmatrix}= \begin{pmatrix} 
    0&1&0&0&0 \\ 
    0&0&\xi_5^{i}&0&0 \\ 
    0&0&0&\xi_5^{2i}&0 \\ 
    0&0&0&0&\xi_5^{3i} \\ 
    \xi_5^{4i}&0&0&0&0 \\ 
    \end{pmatrix}\, .$$ 
    
    So, $A_3=\Diag(1,1,\xi_5^{-i},\xi_5^{-3i},\xi_5^{-6i})$. Then without loss of generality, we can assume $A_3=\Diag(1,1,\xi_5,\xi_5^3,\xi_5)$. Recall that  $A_2(F)=\lambda_2 F,A_3(F)=\lambda_3 F$, where $\lambda_2,\lambda_3$ are nonzero complex numbers. Then $(A_2A_3A_2^{-1})(F)=A_3(F)$. So $\Diag(1,\xi_5,\xi_5^3,\xi_5,1)(F)=\Diag(1,1,\xi_5,\xi_5^3,\xi_5)(F)$. Then by smoothness of $X$, $x_1^5\in F$. So $A_1(F)=A_2(F)=F$. Then $x_1^{i_1}x_2^{i_2}x_3^{i_3}x_4^{i_4}x_5^{i_5}\in F$ only if $$i_1+i_2+i_3+i_4+i_5=5,i_3+3i_4+i_5\equiv 0(\Mod 5),i_2+2i_3+3i_4+4i_5\equiv 0(\Mod 5)\, ,$$ and $i_1,...,i_5\geq 0$. It follows that only $x_1^5,x_2^5,x_3^5,x_4^5,x_5^5,x_1x_2x_3x_4x_5$ can appear in $F$. However, then $|G_5|\geq 625$ as $G_5$ then contains the Sylow $5$-subgroup of {\it Gorenstein} automorphism groups of the Fermat quintic threefold (cf. Theorem \ref{thm:goren}), a contradiction. Hence $A_3A_1\neq A_1A_3$ in the case (ii) and we may assume $A_3A_1=\xi_5 A_1A_3$. Then by  Lemma \ref{lem:matrixshape} $$A_3= 
    \begin{pmatrix} 
    0&a_1&0&0&0 \\ 
    0&0&a_2&0&0 \\ 
    0&0&0&a_3&0 \\ 
    0&0&0&0&a_4 \\ 
    a_5&0&0&0&0 \\ 
    \end{pmatrix}$$ for some $a_1,a_2,a_3,a_4,a_5$ with $a_1a_2a_3a_4a_5=1$. Then $\Diag(a_1,a_2,a_3,a_4,a_5)=A_3A_2^{-1}$. By replacing $A_3$ by $A_3A_2^{-1}$, we are reduced to the previous situation (i.e. $A_3A_1=A_1A_3$. Therefore, the case (ii) is impossible.
    
    Hence $A_2A_1=A_1A_2$. Since $[A_1]\in Z(G_5)$, if we choose  $A_3\in \GL(5,\mathbb{C})$ such that  $[A_3]\in G_5\setminus N$, and $A_3^5=I_5$, then, by symmetry, we must also have  $A_1A_3=A_3A_1$. So the lemma is proved.

\end{proof}

\begin{lemma}\label{le3}
Let $G=\Aut(X)$. If $|G_5|=125$ and $G_5$ is not abelian, then there exist $ A_1,A_2,A_3\in \GL(5,\mathbb{C})$ such that  $G=\langle [A_1],[A_2],[A_3]\rangle , A_1A_2=A_2A_1, A_1A_3=A_3A_1$,  $\Det(A_1)=1$, and for all $i=1,2,3$, both $A_i$ and $[A_i]$ are of order $5$ as elements of $\GL(5,\C)$ and $\PGL(5,\C)$ respectively.
\end{lemma}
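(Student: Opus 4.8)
The plan is to take the matrices $A_1, A_2, A_3$ produced by Lemma \ref{lemma2} and to modify only $A_1$ so that, in addition to the properties already guaranteed there, it acquires determinant $1$. Lemma \ref{lemma2} already supplies $A_1A_2 = A_2A_1$, $A_1A_3 = A_3A_1$, $\langle [A_1],[A_2],[A_3]\rangle = G_5$, and order $5$ for each of $A_i$ and $[A_i]$, so the only thing left is the determinant normalization. The first thing to note is that rescaling $A_1$ by a fifth root of unity is useless: if $A_1^5 = I_5$, then for $\zeta^5 = 1$ we have $(\zeta A_1)^5 = I_5$ but $\Det(\zeta A_1) = \zeta^5\Det(A_1) = \Det(A_1)$. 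Thus the determinant of an order-$5$ lift of the central class $[A_1]$ is a well-defined fifth root of unity that scaling cannot change, and I must instead replace $A_1$ by a genuinely different representative of $[A_1]$.

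The mechanism for this is the commutator. Since $G_5$ is non-abelian of order $5^3$, its center coincides with its commutator subgroup, so $Z(G_5) = [G_5,G_5] = \langle [A_1]\rangle$; moreover $[A_2]$ and $[A_3]$ do not commute (otherwise $G_5$ would be abelian), so $[[A_2],[A_3]]$ generates $Z(G_5)$ and $[A_1] = [[A_2],[A_3]]^{k}$ for some $k$ coprime to $5$. I would then set $A_1' := (A_2A_3A_2^{-1}A_3^{-1})^{k}$. By construction $[A_1'] = [A_1]$, hence $A_1' = \mu A_1$ for some scalar $\mu \in \C^*$; being a scalar multiple of $A_1$, the matrix $A_1'$ inherits the commutation relations $A_1'A_2 = A_2A_1'$ and $A_1'A_3 = A_3A_1'$ and still generates $G_5$ together with $[A_2],[A_3]$. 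Crucially, $\Det(A_1') = 1$ automatically, because the determinant of any matrix commutator is $1$.

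The main obstacle is to verify that this new representative has order exactly $5$ rather than $25$. A priori $[A_1']$ has order $5$ in $\PGL(5,\C)$, so $(A_1')^5 = \lambda I_5$ for some scalar $\lambda$, and $\Det(A_1') = 1$ only forces $\lambda^5 = 1$; I must rule out $\lambda \neq 1$. I would do this by exploiting that $A_1'$ commutes with $A_2$ and $A_3$: its eigenspaces are invariant under $K := \langle A_2, A_3\rangle$, and since $A_1'$ is not a scalar matrix (its projective class has order $5$) this forces $K$ to act reducibly on $\C^5$. Combining this reducibility with the non-abelianity of $[K] = G_5$, together with Theorem \ref{thm:noorder25} and the constraints on the eigenvalues of an $F$-semi-invariant diagonal matrix coming from Lemma \ref{lem:em3} and smoothness, I expect to pin down $\lambda = 1$; then $A_1'$ has order $5$ and determinant $1$ and commutes with $A_2, A_3$, which completes the proof. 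This eigenvalue and representation-theoretic bookkeeping is the technical heart of the argument, while everything preceding it is the formal commutator normalization.
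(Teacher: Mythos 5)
Your commutator mechanism is the same key idea as the paper's (namely $Z(G_5)=[G_5,G_5]=\langle[A_1]\rangle$ together with the fact that matrix commutators have determinant $1$), but your write-up has a genuine gap, and it sits exactly where the lemma's new content is. You set $A_1':=(A_2A_3A_2^{-1}A_3^{-1})^k=\mu A_1$ and still must show $(A_1')^5=I_5$ rather than $\lambda I_5$. Observe that $\Det(A_1')=1$ forces $\mu^5=\Det(A_1)^{-1}$, hence $(A_1')^5=\mu^5 I_5=\Det(A_1)^{-1}I_5$; so your unresolved scalar is $\lambda=\Det(A_1)^{-1}$, and proving $\lambda=1$ is \emph{literally equivalent} to proving $\Det(A_1)=1$ for the original matrix, i.e.\ to the lemma itself. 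Since you only say you ``expect'' to pin down $\lambda=1$ via an unexecuted eigenvalue/reducibility argument (which, moreover, leans on Lemma \ref{lem:em3}, a statement requiring $A(F)=F$, whereas Lemma \ref{lemma2} by itself gives you no invariance statement for $A_2,A_3$), the proposal as written defers the entire content of the lemma to a plan that is not carried out and whose feasibility is unclear.

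The gap closes in one line, and the fix shows your route is essentially the paper's in disguise. Write $C:=A_2A_3A_2^{-1}A_3^{-1}=\mu_0A_1^{\alpha}$ with $\alpha\not\equiv 0 \pmod 5$. Since $A_1$ commutes with $A_2$ and $A_3$, the element $C$ is central in $\langle A_1,A_2,A_3\rangle$; in a group whose commutators are central one has $[x^n,y]=[x,y]^n$, so $C^5=[A_2^5,A_3]=[I_5,A_3]=I_5$. Hence $(A_1')^5=C^{5k}=I_5$, and $A_1'$ is not a scalar matrix because $[A_1']=[A_1]$ has order $5$, so $\Ord(A_1')=5$ and your construction works. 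The paper uses this same identity implicitly, but never replaces $A_1$: from $C=\xi_5^iA_1^{\alpha}$ and $\Det(C)=1$ it gets $\Det(A_1)^{\alpha}=1$, and since $\Det(A_1)$ is a fifth root of unity (as $A_1^5=I_5$) and $\gcd(\alpha,5)=1$, it concludes $\Det(A_1)=1$ directly for the matrices Lemma \ref{lemma2} already provides, with no need to re-verify orders or commutation for a new representative. You should either adopt that shortcut or insert the one-line centrality argument above in place of your projected ``technical heart.''
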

\begin{proof}
By Lemma \ref{lemma2}, there exist $ A_1,A_2,A_3\in \GL(5,\mathbb{C})$ such that $G=\langle [A_1],[A_2],[A_3]\rangle $ and

\begin{equation}\label{eq0}
  A_1A_2=A_2A_1, A_1A_3=A_3A_1, A_i^5=I_5, i=1,2,3.
\end{equation}  
   
     We show $\Det(A_1)=1$. Since $G_5$ is not abelian, the commutator subgroup $[G_5,G_5]$ is not the trivial subgroup. Then $[G_5,G_5]=Z(G_5)=\langle [A_1]\rangle $. So $A_2A_3A_2^{-1}A_3^{-1}=\xi_5^iA_1^{\alpha}$. Since $G_5$ is not abelian, so $\alpha\neq 0(\Mod 5)$. Then $$1=\Det(A_2A_3A_2^{-1}A_3^{-1})=\Det(\xi_5^iA_1^{\alpha})=(\Det(A_1))^{\alpha}.$$ On the other hand, $A_1^5=I_5$. Hence $\Det(A_1)=1$.
\end{proof}

\begin{proposition}\label{leAb}
Let $G=\Aut(X)$. If $|G_5|=125$, then $G_5$ is abelian.

\end{proposition}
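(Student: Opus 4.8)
The plan is to argue by contradiction: assuming $G_5$ is nonabelian, I will produce a forbidden eigenvalue configuration for the central generator. First I would invoke Lemma \ref{le3}, together with the relation extracted in its proof, to fix generators $[A_1],[A_2],[A_3]$ of $G_5$ with $A_i^5=I_5$, with $A_1$ commuting (as a matrix) with both $A_2$ and $A_3$, with $[A_1]$ of order $5$ in $\PGL(5,\C)$ (so that $A_1$ is \emph{not} scalar), and with the commutator relation
\begin{equation}\label{eq:commrel}
A_2A_3A_2^{-1}A_3^{-1}=\xi_5^{i}A_1^{\alpha},\qquad \alpha\not\equiv 0\ (\Mod 5).
\end{equation}
Since $A_1^5=I_5$, the matrix $A_1$ is diagonalizable with fifth-root-of-unity eigenvalues, so I decompose $\C^5=\bigoplus_k V_k$ into its eigenspaces, where $A_1$ acts on $V_k$ as $\xi_5^k\cdot\id$. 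Because $A_2$ and $A_3$ commute with $A_1$, they preserve each $V_k$.

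The key step is to restrict \eqref{eq:commrel} to a fixed eigenspace $V_k$. Writing $P=A_2|_{V_k}$ and $Q=A_3|_{V_k}$, both of order dividing $5$, the relation becomes the Weyl commutation $PQ=\xi_5^{\,i+\alpha k}QP$ on $V_k$. I would then split according to whether $\omega_k:=\xi_5^{\,i+\alpha k}$ is trivial. If $\omega_k\neq 1$, it is a primitive fifth root of unity, and the relation gives $QPQ^{-1}=\omega_k^{-1}P$; hence for a $\lambda$-eigenvector $v$ of $P$ the vector $Qv$ is an $\omega_k\lambda$-eigenvector, so $Q$ cyclically permutes the five eigenspaces of $P$. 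As these five eigenspaces are genuinely permuted (because $\omega_k$ has order $5$) and $Q$ is invertible, they all have equal dimension, forcing $5\mid\dim V_k$. Since $0<\dim V_k\le 5$, this yields $\dim V_k=5$, i.e. $A_1$ is scalar, contradicting that $[A_1]$ has order $5$.

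Consequently no eigenspace can fall in the case $\omega_k\neq 1$, so $i+\alpha k\equiv 0\ (\Mod 5)$ must hold for every $k$ with $V_k\neq 0$. As $\alpha$ is invertible modulo $5$, this congruence has the unique solution $k\equiv -i\alpha^{-1}\ (\Mod 5)$, so at most one eigenspace is nonzero; again $A_1$ is scalar, the same contradiction. Either way $G_5$ cannot be nonabelian, which proves the proposition. The main obstacle I anticipate is the dimension count in the case $\omega_k\neq 1$: one must argue cleanly that a Weyl pair satisfying $PQ=\omega_k QP$ with $\omega_k$ of order $5$ forces $5\mid\dim V_k$, which I would handle by the eigenspace-permutation argument above (equivalently, by observing that $V_k$ is a module over a Heisenberg group on which the center acts through a nontrivial character, hence a direct sum of $5$-dimensional irreducibles). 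Everything else is routine bookkeeping with the eigenvalues and the relations furnished by Lemmas \ref{lemma2} and \ref{le3}.
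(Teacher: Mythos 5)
Your proof is correct, and it takes a genuinely different route from the paper's. Both arguments start from Lemma \ref{le3} and the commutator relation $A_2A_3A_2^{-1}A_3^{-1}=\xi_5^iA_1^{\alpha}$, $\alpha\not\equiv 0\ (\Mod 5)$, extracted from its proof; but the paper then runs a case analysis on the number of distinct eigenvalues of the central matrix $A_1$ (two, three, four, or five): the two- and four-eigenvalue patterns are killed by the normalization $\Det(A_1)=1$, the three-eigenvalue patterns by explicit block-matrix computations with the commutator relation (e.g.\ comparing determinants in $B_2B_3=\xi_5^iB_3B_2$ for $2\times 2$ blocks), and the five-eigenvalue case by simultaneous diagonalization. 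You replace all of this with one uniform argument: decompose $\C^5$ into $A_1$-eigenspaces $V_k$ (preserved by $A_2,A_3$ because they commute with $A_1$ as matrices), restrict the commutator relation to get the scalar Weyl relation $PQ=\omega_kQP$ on each $V_k$, and note that $\omega_k\neq 1$ forces $5\mid\dim V_k$ — your eigenspace-permutation argument is sound, and even quicker one can take determinants of $PQ=\omega_kQP$ on $V_k$ to get $\omega_k^{\dim V_k}=1$ — hence $\dim V_k=5$ and $A_1$ is scalar; while $\omega_k=1$ can hold for at most one residue $k$ since $\alpha$ is invertible mod $5$, so otherwise $A_1$ has a single nonzero eigenspace and is again scalar. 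Either way this contradicts $[A_1]$ having order $5$ in $\PGL(5,\C)$. In fact the paper's Case (2)-(ii), where determinants of the $2\times 2$ blocks force $\xi_5^{2i}=1$, is precisely your Weyl-pair divisibility argument on a $2$-dimensional eigenspace, so your proof can be read as the unification of the paper's cases. What each approach buys: the paper's is elementary matrix bookkeeping that exploits the $\Det(A_1)=1$ normalization already established in Lemma \ref{le3}; yours is shorter, never needs that normalization, and exposes the conceptual reason behind the statement — the Stone--von Neumann/extraspecial-group phenomenon that a nontrivial central commutator scalar forces dimensions divisible by $5$, so on $\P^4$ a nonabelian group of order $125$ and exponent $5$ would be forced to have its center act by scalars, i.e.\ trivially in $\PGL(5,\C)$.
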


\begin{proof}
Assuming that $G_5$ is not abelian, we shall get a contradiction. By Lemma \ref{le3}, there exist $ A_1,A_2, \;\text{and}\; A_3\in \GL(5,\mathbb{C})$ such that $G=\langle [A_1],[A_2],[A_3]\rangle , A_1A_2=A_2A_1, A_1A_3=A_3A_1$ and $A_i^5=I_5$,  and $\Det(A_1)=1$. Notice that $A_1$ has two or more distinct eigenvalues.

Case (1) $A_1$ has exactly two distinct eigenvalues. Then we may assume $A_1=\Diag(1,1,1,\xi_5,\xi_5)$ or $\Diag(1,1,1,1,\xi_5)$. But $\Det(\Diag(1,1,1,\xi_5,\xi_5))\neq 1$, and  $\Det(\Diag(1,1,1,1,\xi_5))\neq 1$. So this case is impossible.

Case (2) $A_1$ has exactly three distinct eigenvalues. Then we may assume (i): $A_1=\Diag(1,1,1,\xi_5,\xi_5^a)$, for some $1< a< 5$ or (ii):  $A_1=\Diag(1,1,\xi_5,\xi_5,\xi_5^b)$, for some $1< b< 5$.

For Case (2)-(i), $\Det(A_1)=1$ implies $a=4$. Then $A_1A_2=A_2A_1$ and $A_1A_3=A_3A_1$ imply $A_2=\Diag(B_2, \xi_5^p,\xi_5^q)$ and $A_3=\Diag(B_3, \xi_5^m,\xi_5^n)$, where $0\leq p,q,m,n\leq 4$ and $B_2,B_3\in \GL(3,\mathbb{C})$. As in the proof of Lemma \ref{le3}, we have
\begin{equation}\label{eq1}
  A_2A_3=\xi_5^iA_1^\alpha A_3A_2,\text{ for some}\; i\; \text{and}\; \alpha\, .
\end{equation}

By the equation (\ref{eq1}) we have

  \begin{equation}\label{eq2}
  \Diag(B_2B_3,\xi_5^{p+m},\xi_5^{q+n})=\Diag(B_3B_2,\xi_5^{p+m+i+\alpha},\xi_5^{q+n+i+4\alpha}).
  \end{equation}  
  
  By the equation (\ref{eq2}), $i=\alpha=0$. However, then, by the equation (\ref{eq1}), $A_2A_3=A_3A_2$, a contradiction. So the case (2)-(i) is impossible.
  
 Case (2)-(ii)  By $\Det(A_1)=1$, $b=3$. Then by the equalities (\ref{eq0}), $A_2=\Diag(B_2,C_2,\xi_5^{p}),A_3=\Diag(B_3,C_3,\xi_5^{m})$, where $B_2,B_3,C_2,C_3\in \GL(2,\mathbb{C})$, and $B_2^5=B_3^5=C_2^5=C_2^5=I_2$. Since $G_5$ is not abelian, $A_2A_3\neq A_3A_2$. Therefore, either $B_2B_3\neq B_3B_2$ or $C_2C_3\neq C_3C_2$. Without loss of generality, we may assume $B_2B_3\neq B_3B_2$ and $B_2=\Diag(1,\xi_5)$. Then by the equation (\ref{eq1}), we have 
 \begin{equation}\label{eq3}
 \Diag(B_2B_3,C_2C_3,\xi_5^{p+m})=\Diag(\xi_5^i B_3B_2,\xi_5^{i+\alpha}C_3C_2,\xi_5^{p+m+i+3\alpha}).
 \end{equation} 
 By the equality (\ref{eq3}), $B_2B_3=\xi_5^iB_3B_2$, in particular, $\Det(B_2B_3)=\Det(\xi_5^iB_3B_2)$. However, then $\xi_5^{2i}=1$ and $i=0$, a contradiction to $B_2B_3\neq B_3B_2$. Therefore, Case (2)-(ii) is impossible.

 Case (3) $A_1$ has exactly four distinct eigenvalues. Then we may assume $A_1=\Diag(1,1,\xi_5,\xi_5^a,\xi_5^b)$, $2\leq a< b\leq 4$. But then $\Det(A_1)$ can not be 1. Hence Case (3) is impossible. 
 
Case (4) $A_1$ has exactly five distinct eigenvalues. Then we may assume $A_1=\Diag(1,\xi_5, \xi_5^2, \xi_5^3, \xi_5^4)$. Then by the equalities (\ref{eq0}), $A_2$ and $A_3$ are both diagonal matrices, and hence $G_5$ is abelian, a contradiction. Therefore, Case (4) is impossible.

So, $G_5$ has to be abelian if $|G_5|=125$.
  
\end{proof}

\begin{lemma}\label{le4}
Suppose $C_5^3\cong N< \Aut(X)$. Then there exist $ A_1,A_2,A_3\in \GL(5,\mathbb{C})$ such that $N=\langle [A_1],[A_2],[A_3]\rangle , A_iA_j=A_jA_i$, and $A_i^5=I_5,$ where $i,j=1,2,3$.
\end{lemma}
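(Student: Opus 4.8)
The plan is to reduce the whole statement to one claim: \emph{the commutator pairing on $N$ induced by linear lifts is trivial.} First I would lift each class $[B]\in N$ to a matrix $B\in\GL(5,\mathbb{C})$; since every nontrivial element of $N$ has order $5$ and $X$ carries no automorphism of order $25$ (Theorem \ref{thm:noorder25}), I may rescale $B$ by a scalar so that $B^5=I_5$, exactly by the device used in the proof of Theorem \ref{liftable}. For two such order-$5$ lifts $B,C$ of commuting classes one has $BCB^{-1}C^{-1}=\lambda I_5$ with $\lambda\in\mu_5$, and $\lambda$ is independent of the chosen lifts because scalars are central. Writing $N\cong\mathbb{F}_5^3$, the rule $e([B],[C])=\lambda$ is then a well-defined alternating bilinear pairing $e\colon N\times N\to\mu_5$. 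If $e\equiv 1$, the order-$5$ lifts $A_1,A_2,A_3$ of any generating set commute pairwise, which is precisely the conclusion of the lemma; so the entire task is to exclude $e\not\equiv 1$.

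Suppose therefore that $e([x],[y])\neq 1$ for some $x,y\in N$. Then $[x],[y]$ are linearly independent in $\mathbb{F}_5^3$ and generate a subgroup $M\cong C_5^2$ no two of whose lifts commute; hence $M$ has no abelian, and so no isomorphic, lift, and in particular $M$ is not $F$-liftable. Since $M<N<\Aut(X)$, I may invoke Lemma \ref{lem:5^2notliftable}: after a change of coordinates the order-$5$ lifts of $[x],[y]$ become $A_1=\Diag(1,\xi_5,\xi_5^2,\xi_5^3,\xi_5^4)$ and $A_2=P$, the cyclic permutation matrix. I would then extend $\{[x],[y]\}$ to a generating set of $N$ by a third class $[z]$ and let $A_3$ be its order-$5$ lift.

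The decisive point is that $A_1$ has five distinct eigenvalues. Because $[A_3]$ commutes projectively with $[A_1]$, we have $A_3A_1A_3^{-1}=\lambda A_1$ for some $\lambda\in\mu_5$, and matching eigenvalues shows $A_3$ merely permutes the eigenlines of $A_1$ by a cyclic shift; thus $A_3$ is a monomial matrix with the same support pattern as a power $P^{-m}$ of $A_2$. Replacing $A_3$ by $A_3A_2^{m}$ — which changes neither $N$, nor the fact that $[A_3]\notin M$ (a power of $[A_2]$ lies in $M$), nor, after a scalar adjustment, the property of having order $5$ — I may assume $A_3$ is \emph{diagonal}, so that $A_3$ commutes with $A_1$ by Lemma \ref{lem:matrixshape}. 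Now the remaining projective relation $A_3A_2=\nu A_2A_3$ with $A_2=P$ and $A_3$ diagonal forces, by a direct computation, the diagonal entries of $A_3$ to form a geometric progression with ratio a power of $\nu\in\mu_5$; that is, $A_3$ is a scalar multiple of a power of $A_1$. Hence $[A_3]\in\langle[A_1]\rangle\subset M$, contradicting $[A_3]\notin M$. This contradiction yields $e\equiv 1$ and finishes the argument.

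I expect the only real (though elementary) obstacle to be the last paragraph: carefully tracking the monomial-then-diagonal reduction of $A_3$ against the fixed shapes $A_1=\Diag(1,\xi_5,\dots,\xi_5^4)$ and $A_2=P$, and confirming that the substitution $A_3\mapsto A_3A_2^{m}$ really keeps $[A_3]$ outside $M$. Everything else is packaging: the pairing $e$ is automatically well defined, and the single geometric input — smoothness of $X$ forcing the Klein-type normal form of a non-liftable $C_5^2$ — is supplied wholesale by Lemma \ref{lem:5^2notliftable}. Once that normal form is in place the contradiction is purely group-theoretic and needs no further inspection of $F$.
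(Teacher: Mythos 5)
Your proposal is correct and is essentially the paper's own argument: the paper proves this lemma by referring to the proof of Lemma \ref{lemma2} (``similar and actually easier''), whose skeleton is exactly yours --- the dichotomy of Lemma \ref{lem:5^2notliftable} putting a non-liftable $C_5^2$ into the Klein normal form $A_1=\Diag(1,\xi_5,\xi_5^2,\xi_5^3,\xi_5^4)$, $A_2=P$, then the monomial-to-diagonal reduction of the third lift (multiplying by a power of $A_2$) and the computation forcing that diagonal matrix to be a scalar times a power of $A_1$, hence with class inside $M$, a contradiction. Your commutator pairing $e$ is only a clean repackaging of what the paper leaves implicit, and your final direct contradiction is precisely the ``easier'' step available in the abelian case (in Lemma \ref{lemma2} itself, where $[A_3]$ only normalizes rather than centralizes $\langle[A_1]\rangle,\langle[A_2]\rangle$, the paper must instead fall back on the Fermat-type invariant-monomial counting).
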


\begin{proof}
Similar to the proof of Lemma \ref{lemma2} (and actually easier).
\end{proof}

\begin{theorem}\label{thms5}
If $C_5^3$ is isomorphic to a subgroup of $\Aut(X)$, then, up to linear change of coordinates,  $X$ is defined by one of the following equations:\\
(i) $x_1^5+x_2^5+x_3^5+x_4^5+x_5^5=0$;\\
(ii) $x_1^5+x_2^5+x_3^5+x_4^5+x_5^5+ax_1x_2x_3x_4x_5=0$, $a\neq 0$;\\
(iii) $x_1^5+x_2^5+x_3^5+x_4^5+x_5^5+ax_1x_2x_3x_5^2=0$,  $a\neq 0$;\\
(iv) $x_1^5+x_2^5+x_3^5+x_4^5+x_5^5+ax_1x_2^3x_5+bx_1^2x_2x_5^2=0$,  either $a$ or $b$ is not equal to zero;\\
(v) $x_1^5+x_2^5+x_3^5+G(x_4,x_5)=0$, where $G$ can not be written as $x_4^5+x_5^5$ under any change of coordinates.
\end{theorem}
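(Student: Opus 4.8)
The plan is to use Lemma \ref{le4} as the starting point: since $C_5^3 \cong N < \Aut(X)$, there exist $A_1, A_2, A_3 \in \GL(5,\mathbb{C})$ with $N = \langle [A_1],[A_2],[A_3]\rangle$, all $A_i$ commuting and satisfying $A_i^5 = I_5$. Because the $A_i$ pairwise commute and each has order dividing $5$, they can be simultaneously diagonalized after a linear change of coordinates. Thus I may assume all three $A_i$ are diagonal with entries that are fifth roots of unity, and $F$ is semi-invariant under each. The core of the argument is then a combinatorial analysis of which degree-$5$ monomials can survive in $F$ subject to the three weight conditions imposed by $A_1, A_2, A_3$, together with the smoothness constraints from Proposition \ref{pp:nonsmoothquintic} and Proposition \ref{pp:smoothnessandmonomial}.

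First I would set up the weight formalism. Writing $A_i = \Diag(\xi_5^{w_{i1}}, \ldots, \xi_5^{w_{i5}})$ for weight vectors $w_i \in (\mathbb{Z}/5)^5$, a monomial $x_1^{e_1}\cdots x_5^{e_5}$ of degree $5$ appears in $F$ only if for each $i$ the quantity $\sum_j w_{ij} e_j$ is congruent mod $5$ to a fixed constant $c_i$ (the semi-invariance scalar of $A_i$). The three matrices $[A_i]$ being independent in $\PGL(5,\mathbb{C}) \cong (\mathbb{Z}/5)^4$ (since $N \cong C_5^3$) means the weight vectors, modulo the all-ones vector, span a $3$-dimensional subspace. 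I would next invoke Proposition \ref{pp:smoothnessandmonomial}: for each index $i$ there is a monomial $x_i^4 x_j \in F$. Analyzing the weight constraints on the monomials $x_i^4 x_j$ severely restricts the possible weight vectors, and in favorable cases forces $x_i^5 \in F$ for several $i$, i.e.\ forces Fermat-type terms.

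The key step is an exhaustive but structured case division on how the five pure-power monomials $x_i^5$ and the various $x_i^4 x_j$ terms are distributed, leading to the five listed normal forms. Roughly: if all five $x_i^5 \in F$ one lands in families (i)–(iv), where the remaining freedom is precisely which additional $N$-invariant monomials (such as $x_1x_2x_3x_4x_5$, $x_1x_2x_3x_5^2$, or the pair $x_1x_2^3x_5, x_1^2x_2x_5^2$) are permitted by the weights while keeping $X$ smooth; if instead some variable never appears as a pure fifth power, the smoothness forces a chain of $x_i^4x_j$ relations, but the large abelian $5$-group symmetry collapses these to case (v), where $x_1^5 + x_2^5 + x_3^5$ splits off and $G(x_4,x_5)$ absorbs the genuinely non-Fermat part in the remaining two variables. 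At each branch I would eliminate singular configurations using Proposition \ref{pp:nonsmoothquintic}, exactly as in the proofs of Lemma \ref{order3power} and Lemma \ref{C_3^3}, and use Lemma \ref{lem:matrixshape} to pin down the shape of any permutation-type generator.

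The main obstacle I anticipate is the sheer size of the casework: there are many a priori choices of the three weight vectors spanning a $3$-dimensional subspace of $(\mathbb{Z}/5)^4$, and for each one must check which surviving monomial sets yield a smooth quintic and then recognize the resulting equation as one of the five normal forms up to coordinate change. The genuinely delicate point is distinguishing case (v) from the Fermat-type cases (i)–(iv): one must argue that whenever not all coordinates carry a pure fifth power, the invariance forces $F$ to decompose as $x_1^5 + x_2^5 + x_3^5 + G(x_4,x_5)$ with $G$ not equivalent to $x_4^5 + x_5^5$, rather than producing a spurious sixth family. I expect this reduction, and the verification that the normal forms are exhaustive and non-redundant, to be where essentially all the work concentrates; the diagonalization and weight bookkeeping at the start are routine given the earlier lemmas, and smoothness exclusions are mechanical applications of Proposition \ref{pp:nonsmoothquintic}.
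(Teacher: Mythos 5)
Your overall strategy coincides with the paper's: start from Lemma \ref{le4}, simultaneously diagonalize the commuting order-$5$ liftings $A_1,A_2,A_3$, and classify $F$ through the monomials allowed by the resulting weight conditions, discarding singular configurations via Propositions \ref{pp:smoothnessandmonomial} and \ref{pp:nonsmoothquintic}. But two points in your plan are genuine gaps rather than deferred bookkeeping. The first is that you never reduce the weight data to a normal form, and this is precisely the idea that makes the casework finite and short. The paper's (understated) key step is that, after twisting by scalars, relabelling coordinates and changing generators of $N$, one may assume $A_1=\Diag(1,\xi_5,1,1,\xi_5^a)$, $A_2=\Diag(1,1,\xi_5,1,\xi_5^b)$, $A_3=\Diag(1,1,1,\xi_5,\xi_5^c)$: the five coordinate characters of $N\cong C_5^3$, normalized so that the first is trivial, must span the dual group by faithfulness, so three of them can be taken to be a dual basis. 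Everything then reduces to the triple $(a,b,c)\in(\Z/5)^3$, split by how many entries vanish and cut down further by symmetry, giving roughly a dozen cases, each settled by listing invariant monomials. Your plan instead runs over all $3$-dimensional weight subspaces, organized by which monomials $x_i^5$, $x_i^4x_j$ occur; you yourself flag this as enormous, and you supply no mechanism making it finite, exhaustive and non-redundant. As written, this is a search strategy, not a proof.

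The second gap is that the one organizing principle you do state is false. In the paper's analysis, family (v) arises from the case $a=b=c=0$, where the invariant quintics are exactly $F=a_1x_2^5+a_2x_3^5+a_3x_4^5+G(x_1,x_5)$ with $G$ an arbitrary binary quintic; nothing prevents $G$ from containing both pure fifth powers. Concretely, take $F=x_1^5+x_2^5+x_3^5+x_4^5+x_5^5+x_1^4x_5$. This $X$ is smooth, all five monomials $x_i^5$ occur in $F$, and $X$ admits the faithful $C_5^3$-action generated by $[\Diag(1,\xi_5,1,1,1)]$, $[\Diag(1,1,\xi_5,1,1)]$, $[\Diag(1,1,1,\xi_5,1)]$; yet $X$ belongs to family (v) and to no other family: by Lemma \ref{lem:Autfivepoints}(5) the binary form $G=x_1^5+x_5^5+x_1^4x_5$ has trivial projective symmetry group, hence is not equivalent to $x_1^5+x_5^5$, and $X$ is not projectively equivalent to any member of (i)--(iv) (its automorphism group is $C_5^3\rtimes S_3$ of order $750$, far too small for (i) or (ii), and its normal $C_5^3$ acts on $\C^5$ with $x_1,x_5$ affording the same character, i.e.\ with a $2$-dimensional isotypic piece, whereas for members of (iii) and (iv) the corresponding $C_5^3$ acts with five distinct characters; both counts are invariants of conjugation in $\PGL(5,\C)$). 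So your branch ``all five $x_i^5\in F$ implies families (i)--(iv)'' would misclassify such $X$, and the converse branch is what actually needs proof. Repairing the plan essentially means replacing the pure-power dichotomy by the paper's case split on $(a,b,c)$, or an equivalent invariant of the weight subspace.
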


\begin{proof}
Suppose $C_5^3\cong N< \Aut(X)$. By Lemma \ref{le4}, there exist $ A_1,A_2,A_3\in \GL(5,\mathbb{C})$ such that $N=\langle [A_1],[A_2],[A_3]\rangle , A_iA_j=A_jA_i$, and $A_i^5=I_5,$ where $i,j=1,2,3$. Clearly, we may assume $A_1=\Diag(1,\xi_5,1,1,\xi_5^a)$, $A_2=\Diag(1,1,\xi_5,1,\xi_5^b)$, $A_3=\Diag(1,1,1,\xi_5,\xi_5^c)$, where $0\leq a,b,c\leq 4$. 

Case (1) $a,b,c$ are all equal to zero. In this case, $A_1=\Diag(1,\xi_5,1,1,1)$, $A_2=\Diag(1,1,\xi_5,1,1)$,  $A_3=\Diag(1,1,1,\xi_5,1)$. Then $A_i(F)=F,i=1,2,3$. By computing invariant monomials, $F=a_1 x_2^5+a_2x_3^5+a_3x_4^5+G(x_1,x_5).$ Then by suitable changing of coordinates, the defining equation of $X$ belongs to the case (i) or (v) in Theorem \ref{thms5}. 

Case(2) exactly one of $a,b,c$ is not equal to zero. We may assume $a\neq 0,b=0,c=0$. Then $A_2=\Diag(1,1,\xi_5,1,1)$, $A_3=\Diag(1,1,1,\xi_5,1)$, and  $A_1=\Diag(1,\xi_5,1,1,\xi_5)$, $\Diag(1,\xi_5,1,1,\xi_5^2)$, $\Diag(1,\xi_5,1,1,\xi_5^3)$, or $\Diag(1,\xi_5,1,1,\xi_5^4)$. 

If $A_1=\Diag(1,\xi_5,1,1,\xi_5)$, then replacing $A_1$ by $\xi_5 A_1^4A_2^4A_3^4$, we are reduced to case (1).

If $A_1=\Diag(1,\xi_5,1,1,\xi_5^3)$, then $A_1^2=\Diag(1,\xi_5^2,1,1,\xi_5)$, then by symmetry (i.e. interchanging coordinates $x_2$ and $x_5$), we are reduced to the case $A_1=\Diag(1,\xi_5,1,1,\xi_5^2)$. If $A_1=\Diag(1,\xi_5,1,1,\xi_5^4)$, then $\xi_5A_1A_2^{-1}A_3^{-1}=\Diag(\xi_5,\xi_5^2,1,1,1)$, again by symmetry we are reduced to case $A_1=\Diag(1,\xi_5,1,1,\xi_5^2)$.

Therefore, in the case (2), we may assume $A_1=\Diag(1,\xi_5,1,1,\xi_5^2),$  $A_2=\Diag(1,1,\xi_5,1,1)$,  $A_3=\Diag(1,1,1,\xi_5,1)$. Then, $A_1(F)=A_2(F)=A_3(F)=F$. By computing  invariant monomials of $A_i$, $i=1,2,3$, we may assume $F=x_1^5+x_2^5+x_3^5+x_4^5+x_5^5+\lambda_1 x_1x_2^3x_5+\lambda_2x_1^2x_2x_5^2=0$.  So, $F$ belongs to case (i) or (iv) of Theorem \ref{thms5}.

Case (3) exactly two of $a,b,c$ are not equal to zero. We may assume $ab\neq 0,c=0$. By symmetry, we may assume $0< a\leq b\leq 4$. So $(a,b)=(1,1),(1,2),(1,3),(1,4)$, $(2,2),(2,3)$, $(2,4),(3,3),(3,4),$ or $(4,4)$.

Case (3)-(i) $(a,b)=(1,1),(1,2),(1,3),$ or $(1,4)$. By computing invariant monomials,  up to changing of coordinates, $F=x_1^5+x_2^5+x_3^5+x_4^5+x_5^5$, belonging to the case (i) of  Theorem \ref{thms5}.

Case (3)-(ii) $(a,b)=(2,2)$. Then, up to changing of coordinates, $F$ belongs to the case (iii) of  Theorem \ref{thms5}.

Case (3)-(iii) $(a,b)=(2,3)$. Up to changing of coordinates, $F$ belongs to the case (i) of  Theorem \ref{thms5}.

Case (3)-(iv) $(a,b)=(2,4),$ or $(3,3)$. Up to changing of coordinates, $F$ belongs to the case (i) or  (iv) of  Theorem \ref{thms5}.

Case (3)-(v) $(a,b)=(3,4),$ or $(4,4)$. Up to changing of coordinates, $F$ belongs to case (i) or  (iii) of  Theorem \ref{thms5}.

Case (4) $abc\neq 0$. Again, $A_1(F)=A_2(F)=A_3(F)=F$. Suppose a monomial $m=x_1^{i_1}x_2^{i_2}x_3^{i_3}x_4^{i_4}x_5^{i_5}\in F$. Then $i_1+i_2+i_3+i_4+i_5=5$, $i_2+ai_5\equiv i_3+bi_5\equiv i_4+ci_5\equiv 0(\Mod 5)$. If $i_5\equiv 0(\Mod 5)$, then $i_1\equiv i_2\equiv i_3\equiv i_4\equiv 0(\Mod 5)$. So, $M=x_1^5,x_2^5,x_3^5,x_4^5$ or $x_5^5$. If $0< i_5< 5$, then $0< i_2,i_3,i_4< 5$, and $M=x_2x_3x_4x_5x_j$ for some $1\leq j\leq 5$. Furthermore, if $x_2x_3x_4x_5x_{j_1}\in F$ and $x_2x_3x_4x_5x_{j_2}\in F$ then $j_1=j_2$. To sum up, in the case of $abc\neq 0$, up to changing of coordinates, $F=x_1^5+x_2^5+x_3^5+x_4^5+x_5^5+\lambda x_2x_3x_4x_5x_j$ for some $j$. Hence $F$ belongs to case (i), (ii) or (iii) of  Theorem \ref{thms5}.

This completes the proof.
\end{proof}

Next we prove the following theorem:

\begin{theorem}\label{thm:greaterthan125}
Suppose $|\Aut(X)_5|\ge 125$. Then, $\Aut(X)$ is isomorphic to a subgroup of one of the three groups in Examples (1), (2), (5)  in Example \ref{mainex}.
\end{theorem}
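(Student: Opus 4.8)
Write $P:=\Aut(X)_5$. By Theorem \ref{thm:noorder25} every nontrivial element of $P$ has order exactly $5$, so $P$ has exponent $5$ and each of its abelian subgroups is elementary abelian. I would first dispose of the case $|P|=125$: here Proposition \ref{leAb} forces $P$ to be abelian, hence $P\cong C_5^3$. If instead $|P|\ge 625$, I would take a maximal abelian normal subgroup $A\lhd P$; then $A\cong C_5^r$ and $C_P(A)=A$ (a standard fact for $p$-groups), so $P/A\inj \Aut(A)=\GL(r,\F_5)$. As $P/A$ is a $5$-group it embeds into a Sylow $5$-subgroup of $\GL(r,\F_5)$, of order $5^{\binom r2}$, whence $|P|\le 5^{r+\binom r2}=5^{\binom{r+1}2}$. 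Since $|P|\ge 5^4$ gives $\binom{r+1}2\ge 4$, i.e.\ $r\ge 3$, we obtain $C_5^3\le A\le \Aut(X)$. Thus in every case $C_5^3$ is a subgroup of $\Aut(X)$.

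\textbf{Normal forms.} With $C_5^3\le \Aut(X)$ in hand, Theorem \ref{thms5} tells us that after a linear change of coordinates $F$ is one of the five forms (i)--(v) listed there. The remaining work is to compute, or bound from above, $\Aut(X)$ for each of these and to exhibit an embedding into the group of Example (1), (2) or (5) in Example \ref{mainex}. Throughout I would use the differential method (Theorems \ref{lem:differentialmethod} and \ref{thm:Aut(X)1-16}) together with the eigenvalue and invariant-monomial bookkeeping furnished by Lemma \ref{lem:matrixshape} and Proposition \ref{pp:nonsmoothquintic}.

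\textbf{The five cases.} Form (i) is the Fermat quintic, i.e.\ Example (1), whose automorphism group is exactly the group of Example (1) by Theorem \ref{thm:Aut(X)1-16}. Forms (ii)--(iv) carry a large symmetric symmetry together with the diagonal torus: for (ii) an $S_5$ permuting the pure fifth powers, for (iii) and (iv) an $S_3$; via the differential method I would show $\Aut(X)$ is generated by semi-permutation matrices, isomorphic to $C_5^3\rtimes S_5$ for (ii) (which embeds into $C_5^4\rtimes S_5$, the group of Example (1)), and for (iii), (iv) a group with normal $5$-subgroup $C_5^3$ and complement inside $S_3$, which embeds into the group of Example (2). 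For form (v), $F=x_1^5+x_2^5+x_3^5+G(x_4,x_5)$ with $G$ not projectively a sum of two fifth powers, smoothness forces $G$ to be squarefree, and the differential method shows that no automorphism mixes the Fermat block $\{x_1,x_2,x_3\}$ with the binary block $\{x_4,x_5\}$. Hence $\Aut(X)$ splits as $(C_5^3\rtimes S_3)\times \Aut(G)$, where $\Aut(G)<\PGL(2,\C)$ is the finite stabilizer of the five distinct roots of $G$. Since $G$ is squarefree and not a sum of two fifth powers, $|\Aut(G)|$ is prime to $5$ (a $C_5\subset\PGL(2,\C)$ would force the five roots to be a single orbit, making $G$ a sum of two fifth powers), and a short orbit count on $\P^1$ shows $\Aut(G)$ is a subgroup of $C_4$ or of $S_3$; these yield precisely the groups of Examples (2) and (5).

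\textbf{Main obstacle.} The delicate part is form (v) (and the borderline subcases of (ii)--(iv)). One must (a) rule out block-mixing automorphisms for every admissible $G$; (b) establish the binary-form classification $\Aut(G)\le C_4$ or $\Aut(G)\le S_3$ uniformly in the parameters, in particular excluding groups such as $C_2^2$ (which cannot stabilize five distinct points of $\P^1$) and any group containing a $5$-element (which would force $G$ to be Fermat, returning us to case (i)/(ii) and Example (1)); and (c) verify that the resulting direct and semidirect products genuinely embed into the three target groups. Delimiting $\Aut(G)$ over all parameter values and matching the product structure to Examples (1), (2), (5) is where the real effort lies.
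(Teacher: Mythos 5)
Your proposal is correct and follows essentially the same route as the paper's own proof: the exponent-$5$ fact (Theorem \ref{thm:noorder25}) together with Proposition \ref{leAb} and the Burnside bound of Theorem \ref{thm:Burnsidemax} (which you re-derive via the standard self-centralizing maximal abelian normal subgroup argument rather than citing it) yields $C_5^3\le \Aut(X)$, and then Theorem \ref{thms5} plus the differential-method case analysis, ending with the five-points Lemma \ref{lem:Autfivepoints} for form (v), completes the argument exactly as in Theorem \ref{thm:greaterthan125}. The one caveat is form (iv): there the paper does not get semi-permutation generation from the differential method alone, but uses it only to reduce to the plane quintic $C:\, x_1^5+x_2^5+x_3^5+ax_1x_2^3x_3+bx_1^2x_2x_3^2=0$ and then invokes Harui's classification of automorphism groups of smooth plane curves (together with a lemma excluding $C_2^2$) to obtain $\Aut(C)\cong D_{10}$ and hence $\Aut(X)\cong C_5^3\rtimes C_2$; your stated conclusion for that case is right, but it requires this extra input beyond the differential method.
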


\begin{proof}
If $|\Aut(X)_5|=5^3$, then by Proposition \ref{leAb}, $\Aut(X)_5$ is abelian. If $|\Aut(X)_5|\geq 625$, then by a theorem of Burnside  (Theorem \ref{thm:Burnsidemax} below), $\Aut(X)_5$ has a maximal normal abelian subgroup of order $\geq 125$. So, we may consider the 5 cases in Theorem \ref{thms5}.

Case (i) $X: x_1^5+x_2^5+x_3^5+x_4^5+x_5^5=0$. Then $\Aut(X)\cong  C_5^4\rtimes S_5$.

Case (ii) $X:$  $x_1^5+x_2^5+x_3^5+x_4^5+x_5^5+ax_1x_2x_3x_4x_5=0$, $a\neq 0$. Then using the differential method introduced in Section \ref{ss:differentialmethod}, we can show that $\Aut(X)$ is generated by semi-permutation matrices. So $\{x_1^5+x_2^5+x_3^5+x_4^5+x_5^5=0\}$ is preserved by $\Aut(X)$. Hence $\Aut(X)$ is a subgroup of the group $C_5^4\rtimes S_5$.

Case (iii) $X:$  $x_1^5+x_2^5+x_3^5+x_4^5+x_5^5+ax_1x_2x_3x_5^2=0$,  $a\neq 0$. Then $\Aut(X)$ is a subgroup of $C_5^4\rtimes S_5$. The proof is the same as in the case (ii).

Let us consider the cases (iv), (v). In these cases, the differential method  reduces to problem of automorphisms of plane curves and points. First consider the case (iv). 

Case (iv) $X:$  $x_1^5+x_2^5+x_3^5+x_4^5+x_5^5+ax_1x_2^3x_3+bx_1^2x_2x_3^2=0$,  where either $a$ or $b$ not equal to zero. By the differential method, we can show that if $A\in \GL(5,\C)$ and $A(F)=\lambda F$, then $A=\Diag(B,\alpha,\beta)$, where $B\in \GL(3,\C)$ and $B(G)=\lambda G$,$G=x_1^5+x_2^5+x_3^5+ax_1x_2^3x_3+bx_1^2x_2x_3^2$. Notice that we have the following exact sequence:

$$1\rightarrow C_5^2\xrightarrow{\psi} \Aut(X)\xrightarrow{\varphi}\Aut(C)\rightarrow 1\, ,$$

where $\psi (a,b)=[\Diag(1,1,1,\xi_5^a,\xi_5^b)]$, $\varphi ([\Diag(B,\alpha,\beta)])=[B]\in \Aut(C).$

We can compute $\Aut(C)$ using \cite[Theorem~2.1]{Ha13}. First, notice that $D_{10}$ is a subgroup of $\Aut(C)$, which is generated by $[\Diag(1,\xi_5,\xi_5^2)]$ and interchanging $x_1$ and $x_3$. Notice the following Lemma:

\begin{lemma}
Suppose $C$ is a smooth plane curve of degree $5$, then $C_2^2$ can not be a subgroup of $\Aut(C).$
\end{lemma}

    \begin{proof}
    Similar to Section \ref{ss:Sylow2} (and in fact much easier). We leave details to the readers.
    \end{proof}
        
  So, $\Aut(C) $ doesn't belong to case (a-i), (b-ii), or (c) of \cite[Theorem~2.1]{Ha13}. If $\Aut(C)$ belongs to the case (a-ii) or (b-i) of \cite[Theorem~2.1]{Ha13}, then $\Aut(C)\cong D_{10}$.  Therefore, $\Aut(X)$ is an extension of $D_{10}$ by $C_5^2$. Actually, $\Aut(X)\cong C_5^3\rtimes C_2$, generated by $[\Diag(1,1,1,\xi_5,1)],[\Diag(1,1,1,1,\xi_5)], [\Diag(1,\xi_5,\xi_5^2,1,1)]$ and the involution $x_1 \leftrightarrow x_3$. In particular, $\Aut(X)$ is isomorphic to a subgroup of the group $C_5^4\rtimes S_5$.

Finally, we treat the case (v). In this case $X:$  $x_1^5+x_2^5+x_3^5+G(x_4,x_5)=0$, where $G$ can not be written as $x_4^5+x_5^5$ under any change of coordinates. Again, applying the differential method, we can easily show that if $A\in \GL(5,\mathbb{C})$ and $A(F)=F$, then $A=\Diag(\alpha,\beta,\gamma,B)$, where $B\in \GL(2,\mathbb{C})$ and $B(G)=G$. Then applying Lemma \ref{lem:Autfivepoints} below, one finally finds that $\Aut(X)$ is isomorphic to either $(C_5^3\rtimes S_3)\times D_6$, $(C_5^3\rtimes S_3)\times C_4$, $(C_5^3\rtimes S_3)\times C_2$, or $ C_5^3\rtimes S_3$. Here the factor $C_5^3\rtimes S_3$ generated by $[\Diag(\xi_5,1,1,1,1)]$, $[\Diag(1,\xi_5,1,1,1)]$, $[\Diag(1,1,\xi_5,1,1)]$ and permutations of the first three variables $x_1,x_2,x_3$ and the last factor $D_6$, $C_4$, $C_2$ is the corresponding automorphism groups in Lemma \ref{lem:Autfivepoints}. Hence $\Aut(X)$ is isomorphic to a subgroup of Examples (1),(2),(5) in Example \ref{mainex}.

This complete the proof.
\end{proof}

\begin{theorem}\label{thm:Burnsidemax}
(See, for example, \cite[Chapter 2,~Corollary 2~to~Theorem~1.17]{Su82}) Let $A$ be an abelian normal subgroup of maximal order of a $p$-group $G$. If $|G|=p^n$ and $|A|=p^a$, we have $2n\leq a(a+1)$.
\end{theorem}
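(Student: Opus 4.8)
The plan is to reduce the statement to the classical fact that a maximal abelian normal subgroup of a $p$-group is self-centralizing, and then to bound the $p$-part of $\Aut(A)$. Since $A$ has maximal \emph{order} among abelian normal subgroups of $G$, it is in particular maximal with respect to inclusion, so the first step is to prove $C_G(A)=A$. Suppose not; then $C:=C_G(A)$, which is normal in $G$ because $A$ is, strictly contains $A$, so $C/A$ is a nontrivial normal subgroup of the $p$-group $G/A$ and therefore meets its center. Picking $x\in C\setminus A$ with $xA$ central in $G/A$, I get that $x$ centralizes $A$ while $[G,x]\subseteq A$; hence $B:=\langle A,x\rangle$ is abelian (as $x$ commutes with the abelian $A$) and normal in $G$ (since $gxg^{-1}\in xA$ for all $g$), and strictly larger than $A$. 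This contradicts maximality, so $C_G(A)=A$.

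Next I would use the conjugation action. Because $A$ is abelian and normal, conjugation gives a homomorphism $G\to \Aut(A)$ whose kernel is exactly $C_G(A)=A$. Thus $G/A$ embeds into $\Aut(A)$, and since $|G/A|=p^{\,n-a}$ is a $p$-group, $n-a\le v_p(|\Aut(A)|)$, where $v_p$ denotes the exponent of $p$. The desired inequality $2n\le a(a+1)$ is equivalent to $n-a\le a(a-1)/2=\binom a2$, so the whole theorem comes down to the single bound $v_p(|\Aut(A)|)\le \binom a2$ for an abelian $p$-group $A$ of order $p^a$.

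To prove that bound I would analyze $\Aut(A)$ through the Frattini quotient. Reduction modulo $pA$ gives a surjection $\Aut(A)\to \GL(d,\F_p)$, where $d$ is the minimal number of generators of $A$; its kernel $K$ consists of the automorphisms of the form $\id+\varphi$ with $\varphi\in\Hom(A,pA)$, so $K$ is a $p$-group with $|K|=|\Hom(A,pA)|$. Since $v_p(|\GL(d,\F_p)|)=\binom d2$, writing $A\cong\prod_{i=1}^d \Z/p^{e_i}$ reduces the claim to the purely combinatorial inequality $\binom d2+\sum_{i,j}\min(e_i,e_j-1)\le \binom a2$ with $a=\sum_i e_i$, which I would settle by induction on $a$ (equivalently, by noting that the $p$-part of $\Aut$ is maximized by the elementary abelian group, for which $\Aut(A)=\GL(a,\F_p)$ realizes equality).

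Finally, combining $n-a\le v_p(|\Aut(A)|)\le\binom a2$ yields $2n\le 2a+a(a-1)=a(a+1)$, as claimed. The main obstacle is the last bound $v_p(|\Aut(A)|)\le\binom a2$: the self-centralizing lemma and the embedding argument are routine, but controlling the $p$-part of the automorphism group of an arbitrary (non-elementary) abelian $p$-group requires the explicit description of $\Aut(A)$ via its type $(e_1,\dots,e_d)$ together with the somewhat delicate combinatorial estimate above.
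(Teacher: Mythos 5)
The paper does not prove this theorem at all --- it is quoted from Suzuki's book [Su82] --- so there is no internal proof to compare with; your route (maximal abelian normal $\Rightarrow$ self-centralizing, then $G/A\hookrightarrow\Aut(A)$ by conjugation, then a bound on the $p$-part of $|\Aut(A)|$) is the standard literature approach. Your proof that $C_G(A)=A$ is correct, the embedding of $G/A$ into $\Aut(A)$ is correct, and so is the arithmetic equivalence of $2n\le a(a+1)$ with $n-a\le\binom{a}{2}$; everything therefore rests on the bound that the $p$-part of $|\Aut(A)|$ is at most $p^{\binom{a}{2}}$ when $|A|=p^a$.

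In that last step there are two flaws, one cosmetic and one substantive. Cosmetically, the reduction map $\Aut(A)\to\GL(d,\F_p)$ is \emph{not} surjective in general: for $A=\Z/p\times\Z/p^2$ with generators $x,y$ of orders $p,p^2$, every element of the coset $y+pA$ has order $p^2$, so no automorphism can send $x$ into that coset, and the transposition of $\bar x,\bar y$ does not lift (the image has index $p+1$ in $\GL(2,\F_p)$). This does not hurt you, since your inequality only needs that $\Aut(A)/K$ embeds in $\GL(d,\F_p)$, where $K$ is the kernel of the reduction --- which is true --- but the word ``surjection'' must go. Substantively, your justification of the combinatorial inequality $\binom{d}{2}+\sum_{i,j}\min(e_i,e_j-1)\le\binom{a}{2}$ is circular: ``the $p$-part of $\Aut$ is maximized by the elementary abelian group'' is a restatement of the very bound being proved, not an argument for it. The inequality is nevertheless true and has a short direct proof that closes the gap: order $e_1\le\cdots\le e_d$ and write $\binom{a}{2}=\sum_{i<j}e_ie_j+\sum_i\binom{e_i}{2}$; the diagonal terms of your double sum contribute $\sum_i\min(e_i,e_i-1)=\sum_i(e_i-1)\le\sum_i\binom{e_i}{2}$, while for each pair $i<j$ one has
\[
\min(e_i,e_j-1)+\min(e_j,e_i-1)\le e_ie_j-1,
\]
because if $e_i<e_j$ the left side is $2e_i-1$ and $e_ie_j-(2e_i-1)=e_i(e_j-2)+1\ge 1$, whereas if $e_i=e_j=e$ the left side is $2e-2$ and $e^2-(2e-2)=(e-1)^2+1\ge 1$. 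Summing the pair inequalities produces exactly the surplus $\binom{d}{2}$ needed to absorb the $\GL(d,\F_p)$ contribution, and the bound follows. (Alternatively, one can bypass the structure of $\Aut(A)$ entirely: $G/A$ stabilizes a chief series $A=A_0>A_1>\cdots>A_a=1$ of $G$ contained in $A$ and acts trivially on each quotient, and the group of chain-stabilizing automorphisms of an abelian group has order at most $p^{(a-1)+(a-2)+\cdots+1}=p^{\binom{a}{2}}$, as one sees by filtering $\sigma\mapsto\sigma-1$ according to how far down the chain it shifts.) With either repair, your proposal is a complete and correct proof.
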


\begin{lemma}\label{lem:Autfivepoints}
Let $H=H(x_1,x_2)$ be a degree five homogeneous polynomial. Suppose $\{H=0\}$ is a set of five distinct points in $\P^1.$  Let $G\subset \PGL(2,\C)$ be the group which is generated by matrices leaving $H$ invariant. Then, as an abstract group, $G$ has five possibilities:

1) $G\cong C_5\rtimes C_2$, an example of $H$: $x_1^5+x_2^5$;

2) $G\cong S_3$, an example of $H$: $x_1^4x_2+x_2^4x_1$;

3) $G\cong C_4$, an example of $H$: $x_1^4x_2+x_2^5$;

4)  $G\cong C_2$, an example of $H$: $x_1^4x_2+x_2^5+x_1^2x_3$;

5)  $G$ trivial group, an example of $H$: $x_1^5+x_2^5+x_1^4x_2$.
\end{lemma}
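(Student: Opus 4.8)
The plan is to identify $G$ with the full stabiliser in $\PGL(2,\C)$ of the set $S:=\{H=0\}$ of five distinct points of $\P^1$, and then to run the classical classification of finite subgroups of $\PGL(2,\C)$ against a simple orbit count. First I would observe that $G$ is finite and acts faithfully on $S$: an element of $\PGL(2,\C)$ fixing three distinct points of $\P^1$ is the identity, and $|S|=5\geq 3$, so the permutation action $G\hookrightarrow S_5$ is injective. Hence $G$ is one of the groups in the classical list, namely a cyclic group $C_n$, a dihedral group $D_{2n}$, or one of $A_4$, $S_4$, $A_5$.

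The heart of the argument is that $S$ is a disjoint union of $G$-orbits, so $5$ must be expressible as a sum of orbit sizes for the corresponding action on $\P^1$. I would recall the orbit sizes of each type: a cyclic rotation group $C_n$ has two fixed points and all remaining orbits of size $n$; the dihedral group $D_{2n}$ with $n\geq 3$ has one orbit of size $2$ (the two poles), two orbits of size $n$, and generic orbits of size $2n$; the group $D_4\cong C_2^2$ has three orbits of size $2$ and generic orbits of size $4$; and $A_4$, $S_4$, $A_5$ have no fixed points and minimal orbit sizes $4$, $6$, $12$ respectively. From this, $A_4$, $S_4$, $A_5$ are excluded at once, since $5$ is not a sum of the available sizes and there are no orbits of size one to supply the missing point. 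Among dihedral groups only $D_6$ (with $2+3=5$) and $D_{10}$ (one orbit of size $5$) survive, while $D_4\cong C_2^2$ is impossible because all its orbit sizes are even, and $D_{2n}$ for $n=4$ or $n\geq 6$ admits no subset of orbit sizes summing to $5$. For cyclic $C_n$ the equation $5=p+kn$, with $p\in\{0,1,2\}$ poles and $k\geq 0$ free orbits, forces $n\in\{1,2,3,4,5\}$ and pins down the configuration in each case.

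The delicate point, which I expect to be the main obstacle, is that $G$ is the \emph{full} stabiliser rather than merely some acting group, so I must show that $C_3$ and $C_5$ never occur as full stabilisers even though they can act. For $n=3$ the count forces both poles into $S$ together with a single free orbit, and for $n=5$ it forces $S$ to be a single free orbit; in either case, writing the rotation as $z\mapsto \xi_n z$ and the orbit as $\{w_0\xi_n^k\}$, the involution $z\mapsto w_0^2/z$ swaps the two poles and permutes the free orbit, hence stabilises $S$, so the full stabiliser contains a copy of $D_6$ (resp. $D_{10}$) and cannot equal $C_3$ (resp. $C_5$). By contrast, for $n=2$ and $n=4$ the count places exactly one of the two poles into $S$, so the analogous involution sends a point of $S$ to a point outside $S$ and fails to extend the group; a generic configuration of each type therefore realises $C_2$ and $C_4$ as full stabilisers. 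Collecting the survivors $C_1$, $C_2$, $C_4$, $D_6\cong S_3$, and $D_{10}\cong C_5\rtimes C_2$ yields exactly the five listed groups, and the binary forms exhibited in the statement realise each case, completing the proof.
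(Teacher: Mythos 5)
Your proof is correct, but it follows a genuinely different route from the paper. The paper's own proof is little more than a sketch: it asserts that the possible nontrivial Sylow subgroups of $G$ are $C_2$, $C_4$, $C_3$, $C_5$, states that the five possibilities then follow, and defers everything to direct (computer-assisted) computation, in line with the Sylow-based methodology used throughout the paper. You instead identify $G$ with the full setwise stabiliser of $S=\{H=0\}$ --- legitimate because any $[A]$ with $A(H)=\lambda H$ can be renormalised by a fifth root of $\lambda^{-1}$ so as to leave $H$ invariant, a one-line point you should state rather than assume --- embed $G$ into $S_5$ to get finiteness and faithfulness, and then play the classical classification of finite subgroups of $\PGL(2,\C)$ (cyclic, dihedral, $A_4$, $S_4$, $A_5$) against orbit counts, finishing with the observation that the configurations forced by $C_3$ and $C_5$ always admit the extending involution $z\mapsto w_0^2/z$, so these two groups never arise as full stabilisers. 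This buys a computation-free, geometrically transparent argument that moreover explains why $C_3$ and $C_5$ are absent from the list, which the paper's sketch leaves opaque. One sentence of yours is overstated: ruling out the single pole-swapping involution in the $C_2$ and $C_4$ configurations does not show that no extension whatsoever exists, so ``a generic configuration therefore realises $C_2$ and $C_4$'' is not yet proved. This affects only the existence half of the lemma, which the displayed examples settle: for instance the stabiliser of $x_1^4x_2+x_2^5$ contains $C_4$, and among your five surviving groups only $C_4$ contains an element of order $4$, so it equals $C_4$ (similarly the $D_{10}$ and $S_3$ examples are pinned down by containment); only the $C_2$ and trivial examples still need the short check that they admit no symmetry of order $3$, $4$ or $5$, a verification the paper likewise leaves to the computer.
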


\begin{proof}
First, it is easy to show that all possible non-trivial Sylow subgroups of $G$ are: $C_2$, $C_4$, $C_3$, $C_5$. And then it is easy to show $G$ has exactly five possibilities. Note that for a given \lq\lq{}nice\rq\rq{} (for example, those in the theorem) $H$, it is possible to directly compute (e.g. using Mathematica) all possible matrices which leave $H$ invariant. 

The detailed proof is by direct computation, so we skip it.
\end{proof}

\section{The cases where $|G|$ divides $2^63^25^2$}\label{ss:2^63^25^2}

{\it In this section, $X$ is a smooth quintic threefold defined by $F$.}

\subsection{Solvable groups with order smaller than $2000$}

In the proof of Theorem \ref{solvableliftable}, we use the following two results known in the group theory:

\begin{theorem}\label{thm:solvablesylow}
(See, for example, \cite[Chapter 4,~Theorem 5.6]{Su86}) Let $G$ be a solvable group. We can write $$|G|=mn\;\;\;\ (m,n)=1.$$

Then, the following propositions hold.

(i) There are subgroups of order $m$.

(ii) Any two subgroups of order $m$  are conjugate.

(iii) Any subgroup whose order divides $m$ is contained in a subgroup of order $m$.
\end{theorem}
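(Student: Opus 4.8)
The plan is to prove the three assertions simultaneously by induction on $|G|$, exploiting the feature of solvable groups that every minimal normal subgroup is elementary abelian. The base case $|G|=1$ is vacuous. For the inductive step, write $\pi$ for the set of primes dividing $m$, so that a subgroup of order $m$ is exactly a Hall $\pi$-subgroup, and choose a minimal normal subgroup $N\lhd G$. Since $G$ is solvable, $N$ is an elementary abelian $p$-group, say $|N|=p^a$; because $\Gcd(m,n)=1$, the prime $p$ divides exactly one of $m,n$, whence either $p^a\mid m$ or $p^a\mid n$. In either case I would pass to the solvable quotient $G/N$ of smaller order, apply the inductive hypothesis there, and pull the result back through the projection $G\to G/N$.

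For existence (i): if $p^a\mid m$, the quotient $G/N$ has order $(m/p^a)n$, so by induction it contains a Hall subgroup of order $m/p^a$, whose preimage in $G$ has order $(m/p^a)p^a=m$. If instead $p^a\mid n$, induction yields a subgroup $\bar K\le G/N$ of order $m$ with preimage $K$ of order $mp^a$; when $K\ne G$ (that is, $n>p^a$) I apply the inductive hypothesis inside the proper subgroup $K$, in which $N$ is normal of order coprime to $m=|K|/|N|$, to extract a subgroup of order $m$. The only case that does not reduce is $n=p^a$, where $N$ is itself a normal Hall subgroup and one must produce a complement of order $m$; here I would invoke the Schur--Zassenhaus splitting theorem, of which the existence of a complement to an \emph{abelian} normal subgroup of coprime index is the elementary case, following from the vanishing of $H^2(G/N,N)$ for coprime orders.

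For conjugacy (ii), let $H_1,H_2$ have order $m$ and consider their images in $G/N$. When $p\mid m$ one checks by comparing $p$-parts of $|H_iN|$ with $|G|$ that $N\le H_i$, so the images have order $m/p^a$ and are conjugate by induction; lifting the conjugating element forces $H_1=H_2$, since both contain $N$ and have equal image. When $p\mid n$ we have $H_i\cap N=1$, the images have order $m$ and are conjugate in $G/N$ by induction, which after conjugation places $H_1,H_2$ in a common preimage $K$ of order $mp^a$; if $K\ne G$ induction inside $K$ finishes, and if $K=G$ I use the conjugacy of complements, i.e.\ the vanishing of $H^1(G/N,N)$. Statement (iii) runs along identical lines: for $D$ with $|D|\mid m$ one verifies that $|DN/N|$ divides $m/p^a$ (if $p\mid m$) or $m$ (if $p\mid n$, where $D\cap N=1$), embeds $DN/N$ in a Hall subgroup of $G/N$ by induction, and lifts, invoking the boundary case $n=p^a$ exactly as before.

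The main obstacle I anticipate is not the induction itself but the boundary case $n=p^a$, where the reduction to a smaller group collapses and one genuinely needs the splitting and conjugacy of complements to the normal Hall subgroup $N$; the other point demanding care is the bookkeeping of orders and of the intersections $H_i\cap N$ and $D\cap N$ across the two cases $p\mid m$ and $p\mid n$, which is what makes the various images come out as Hall subgroups of $G/N$ of the predicted order.
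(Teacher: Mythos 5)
This statement is P.~Hall's classical theorem on Hall subgroups of solvable groups; the paper does not prove it at all, but simply cites \cite[Chapter~4, Theorem~5.6]{Su86}, so there is no in-paper argument to compare yours against. Your proposal is a correct reconstruction of the standard textbook proof: simultaneous induction on $|G|$ over all three assertions, using that a minimal normal subgroup $N$ of a solvable group is elementary abelian of order $p^a$, splitting into the cases $p\mid m$ and $p\mid n$, reducing to $G/N$ (or to the proper preimage $K$ of a Hall subgroup of $G/N$ when $n>p^a$), and handling the genuine boundary case $n=p^a$ --- where $N$ is an abelian normal Hall subgroup --- by the coprime vanishing of $H^2(G/N,N)$ (existence of a complement) and of $H^1(G/N,N)$ (conjugacy of complements). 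Your order bookkeeping is right: when $p\mid m$, the computation $|H_iN|=mp^a/|H_i\cap N|$ together with $|H_iN|\mid mn$ and $p\nmid n$ forces $N\le H_i$, and when $p\mid n$, coprimality gives $H_i\cap N=1$; likewise $|DN/N|$ divides $m/p^a$, resp.\ $m$, as you assert. The only step you leave compressed is the boundary case of (iii): for $D$ of order dividing $m$ and a complement $H$ of $N$ in $G$, the conjugacy of complements must be applied not in $G$ but inside the subgroup $DN$, where both $D$ and $H\cap DN$ are complements of $N$ (note $(H\cap DN)N=DN$ by Dedekind's identity), producing an element of $DN$ conjugating $D$ into $H$; this is routine and consistent with your phrase ``exactly as before,'' so it is a presentational rather than a mathematical gap.
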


\begin{theorem}\label{thm:Burnsidenormalp}
(Burnside normal p-complement Theorem, see \cite[ Theorem II,~Section 243]{Bu11}) If a Sylow $p$-subgroup of a finite group $G$ is in the center of its normalizer then $G$ has a normal $p$-complement. (Here,  a {\it normal $p$-complement} of a finite group for a prime $p$ is a normal subgroup of order coprime to $p$ and index a power of $p$. In other words the group is a semidirect product of the normal $p$-complement and any Sylow $p$-subgroup.)
\end{theorem}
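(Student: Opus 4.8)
The plan is to prove this classical fact via the \emph{transfer homomorphism} (Verlagerung), which is the standard route to normal $p$-complement theorems. First I would unpack the hypothesis: if the Sylow $p$-subgroup $P$ lies in the center of its normalizer $N_G(P)$, then in particular $P$ is abelian, and since every element of $N_G(P)$ commutes with every element of $P$ one gets $N_G(P)=C_G(P)$. The goal is then to produce a normal subgroup $K\lhd G$ with $K\cap P=\{1\}$ and $[G:K]=|P|$; such a $K$ is exactly a normal $p$-complement, because $|K|$ is then coprime to $p$ with index a power of $p$.

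To build $K$ I would consider the transfer homomorphism $V\colon G\longrightarrow P$, which is well-defined since $P$ is abelian, set up using a fixed system of right coset representatives of $P$ in $G$. The heart of the argument is to compute the restriction $V|_P$. Here I would invoke Burnside's fusion theorem: because $P$ is an abelian Sylow $p$-subgroup, $N_G(P)$ controls $G$-fusion in $P$, i.e. any two elements of $P$ that are conjugate in $G$ are already conjugate in $N_G(P)$. Since $N_G(P)=C_G(P)$ acts trivially on $P$ by conjugation, this says that distinct elements of $P$ are never $G$-conjugate inside $P$ — the fusion is trivial.

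Feeding this into the evaluation formula for the transfer, each factor appearing in $V(x)$ for $x\in P$ is a $G$-conjugate of a power of $x$ that again lies in $P$, hence by trivial fusion equals that same power of $x$; summing the exponents yields
\[
V(x)=x^{[G:P]}\qquad (x\in P).
\]
Because $\Gcd([G:P],|P|)=1$, the map $x\mapsto x^{[G:P]}$ is an automorphism of $P$. Consequently $V$ is surjective and $V|_P$ is injective, so $K:=\Ker V$ satisfies $K\cap P=\{1\}$ and $G/K\cong P$. Thus $|K|=[G:P]$ is coprime to $p$ and $K\lhd G$, which is precisely the desired normal $p$-complement.

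The main obstacle — really the technical core on which everything rests — is establishing the transfer evaluation formula together with Burnside's fusion theorem for abelian Sylow subgroups; once trivial fusion is in hand, the reduction to $V(x)=x^{[G:P]}$ and the conclusion are immediate. Since this is a standard theorem of finite group theory it suffices in the paper to cite it, as is done; the transfer argument sketched above is the self-contained proof I would supply.
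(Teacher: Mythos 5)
Your proof is correct, but there is nothing in the paper to compare it against: the paper does not prove this statement, it quotes it as a classical theorem with a citation to Burnside's 1911 book (Theorem II, Section 243) and then uses it as a black box (in the proofs of Theorem \ref{thm:abeliansylowandnormalp} and Theorem \ref{solvableliftable}). What you sketch is the standard modern transfer proof: from $P\leq Z(N_G(P))$ you correctly extract that $P$ is abelian and that $N_G(P)=C_G(P)$; Burnside's fusion theorem (applicable precisely because $P$ is an abelian Sylow $p$-subgroup) then makes $G$-fusion in $P$ trivial; the evaluation formula for the transfer $V\colon G\to P$ collapses to $V(x)=x^{[G:P]}$ for $x\in P$; and since $\Gcd([G:P],|P|)=1$ this is an automorphism of $P$, so $\Ker V$ is normal, meets $P$ trivially, and has index $|P|$, i.e.\ it is the desired normal $p$-complement. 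Each step of this chain is right, with the caveat that the two pillars you invoke --- the transfer evaluation lemma and the fusion theorem --- carry essentially all the technical weight, so what you have is a correct proof modulo those standard ingredients rather than a from-scratch argument. That is a reasonable trade-off: the paper's citation buys brevity appropriate for a result used only as an auxiliary tool, while your route would make the statement self-contained at the cost of developing the transfer machinery, which would be disproportionate here.
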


\begin{theorem}\label{solvableliftable}
 Let $G$ be a finite solvable group such that  $|G|$ divides $2^63^25^2$ and $|G|\leq 2000$. Suppose $G< \Aut(X)$ and $G_2\neq 1$. Then $G$ is $F$-liftable.
\end{theorem}

\begin{proof}
By Theorem \ref{liftable}, $G$ is $F$-liftable if $G_5$ is trivial.

From now on, we may suppose that $G_5$ not trivial and hence $G_5\cong C_5$ or $C_5^2$. Then $|G|=2^{a_2}3^{a_3}5^{a_5}$, $7> a_2> 0$ and $a_5=1$ or $2$. Since $G$ is solvable, by Theorem \ref{thm:solvablesylow}, $G$ has a subgroup of order $2^{a_2}5^{a_5}$, say $H$.

If $H$ has a subgroup of order $2\cdot 5^{a_5}$, $G$ is $F$-liftable by Lemma \ref{10or50liftable} and Theorem \ref{liftable}.

If  $H$ has no subgroup of order $2\cdot 5^{a_5}$, then $|H|=2^45^{a_5}$ and the normalizer of $H_5$ inside $H$ is $H_5$ itself  by Sylow Theorems and $|H|| 2^65^2$. Then by Theorem \ref{thm:Burnsidenormalp}, $H_2$ is normal in $H$. However, by classification of subgroups of $\Aut(X)$ of order $16$ (see Section \ref{ss:Sylow2}), we have the order of automorphism group (which can be quickly computed by GAP) of $H_2$ is not divided by $5$. Then $H\cong H_2\times H_5$, contradicting to  the assumption that $H$ has no subgroups of order $2\cdot 5^{a_5}$.

Therefore, $H$ always has a subgroup of order $2\cdot 5^{a_5}$, and we are done.

\end{proof}

Let $G$ be a finite solvable group such that  $|G|$ divides $2^63^25^2$ and $|G|\leq 2000$. Using GAP, one finds all possible groups $G$ (up to isomorphism) which also satisfy the condition: $G$ is isomorphic to a subgroup of the 22 groups in the Examples (1)-(22) in Example \ref{mainex}. In fact, there are $184$ such groups (including the trivial group). (Their GAP IDs and structure descriptions can be find on the website \cite{Yu}.)

\begin{theorem}\label{thm:solvableless2000}
Let $G$ be a finite solvable group such that  $|G|$ divides $2^63^25^2$ and $|G|\leq 2000$. Suppose $G< \Aut(X)$. Then $G$ is isomorphic to a subgroup of the 22 groups in the Examples (1)-(22) in Example \ref{mainex}.
\end{theorem}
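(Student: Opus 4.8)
The plan is to combine the liftability results of Section~\ref{ss:Schurandliftable} with an exhaustive but machine-assisted search over the $184$ candidate groups. First I would invoke Theorem~\ref{solvableliftable}: since $G$ is solvable with $|G|$ dividing $2^63^25^2$ and $|G|\leq 2000$, as soon as $G_2\neq 1$ the group $G$ is $F$-liftable, so there is a faithful $5$-dimensional representation $\rho$ of $G$ with $A(F)=F$ for all $A\in\rho(G)$. The two remaining cases $G_2=1$ are easy: if $G$ has odd order then $|G|=3^{a_3}5^{a_5}$ with $a_3\leq 2$, $a_5\leq 2$, and one checks directly (via the bounds $G_3\cong C_3$ or $C_3^2$ from Theorem~\ref{thm:Sylow3}, $G_5\cong C_5$ or $C_5^2$ from Proposition~\ref{leAb} and Theorem~\ref{thm:noorder25}, and Lemma~\ref{C_3^3}) that the only possibilities are already among the listed groups; in particular Theorem~\ref{liftable} makes such $G$ $F$-liftable as well. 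So in every case we may assume $G$ is $F$-liftable and work with a genuine linear representation.

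Next I would turn the problem into a finite verification. For each of the finitely many abstract solvable groups $G$ with $|G|\mid 2^63^25^2$, $|G|\leq 2000$ that are \emph{not} already isomorphic to a subgroup of one of the $22$ groups, the goal is to show $G\not<\Aut(X)$. The engine is \textbf{sub-test} (Remark~\ref{rmk:stepstoexcludegroups}): working upward by order, any $G$ containing a proper subgroup already excluded is itself excluded. Crucially, by the Sylow analysis of the previous sections the admissible $2$-groups, $3$-groups and $5$-groups are pinned down (the $25$ listed $2$-groups, $C_3,C_3^2$, and $C_5,C_5^2$), so any $G$ whose Sylow subgroups or smaller subgroups fall outside these lists dies immediately. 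I would run this inductively in GAP, exactly as in Section~\ref{ss:Sylow2}.

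For the groups that survive sub-test I would do case-by-case character-theoretic exclusion, again mirroring Section~\ref{ss:Sylow2}. The tools are: Lemma~\ref{lem:em3} (no eigenvalue of multiplicity $\geq 3$ unless it is a fifth root of unity), its specializations Lemma~\ref{lem:to2} (trace of an involution lift is positive), Lemma~\ref{lem:to4} (trace of an order-$4$ lift is $\neq -1$), and the eigenvalue constraints on elements of order $8$ (Lemma~\ref{lem:eoo8}). For a surviving $G$ one inspects its character table, notes that a faithful $5$-dimensional $F$-liftable $\rho$ must decompose compatibly with these trace/eigenvalue conditions, and derives a contradiction on some conjugacy class. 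Where an explicit diagonal/semi-permutation lift can be written down, smoothness exclusions via Proposition~\ref{pp:nonsmoothquintic} (i.e.\ $F\in(x_i)+(x_j,x_k)^2$ or $F\in(x_p,x_q)$) finish the job, just as in the Sylow-$2$ arguments.

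The main obstacle is bookkeeping rather than any single hard idea: one must be certain the induction covers \emph{all} $184$ admissible groups together with every non-admissible group of each order up to $2000$ dividing $2^63^25^2$, and that sub-test plus the handful of character-table exclusions really disposes of each non-admissible case. The delicate points are the genuinely non-abelian survivors (for instance groups involving $\SL(2,3)$-type $2$-Sylow structure, or groups mixing $C_3^2$ with a $2$-group acting on the last two coordinates), where one cannot simply diagonalize and must argue from irreducible constituents and their traces. I expect these few non-abelian cases, and the verification that the explicit representatives among the $22$ examples indeed realize the admissible groups, to absorb most of the work; the GAP computations (whose scripts are posted at~\cite{Yu}) handle the sheer enumeration so that the conclusion $G$ is isomorphic to a subgroup of one of the $22$ groups follows.
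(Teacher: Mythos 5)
Your proposal reproduces the paper's own proof: $F$-liftability via Theorem~\ref{solvableliftable}, GAP-based sub-test induction, then case-by-case exclusion of the survivors using character tables, the trace/eigenvalue constraints (Lemmas~\ref{lem:em3}, \ref{lem:to2}, \ref{lem:to4}, \ref{lem:eoo8}), and smoothness exclusions via Proposition~\ref{pp:nonsmoothquintic} --- exactly the two steps and six tricks of Remark~\ref{rmk:tricks}, down to identifying the $\SL(2,3)$-type survivors as the hard cases. One small caveat: your parenthetical claim that an odd-order $G$ is automatically $F$-liftable by Theorem~\ref{liftable} is an overclaim, since that theorem requires $G_5$ itself to be $F$-liftable, which can fail (Lemmas~\ref{lem:5notliftable} and \ref{lem:5^2notliftable}); this is harmless here because every odd-order group compatible with the Sylow restrictions already embeds in the $22$ examples, so no exclusion (hence no lifting) is ever needed in that case.
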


\begin{remark}\label{rmk:tricks}
In order to prove Theorem \ref{thm:solvableless2000}, we need to exclude all the other groups except those 184 groups mentioned above. Like in Section \ref{ss:Sylow2}, we exclude groups in two steps: sub-test and case by case consideration. (See Remark \ref{rmk:stepstoexcludegroups}.)

It turns out that we need to exclude 67 groups (not including $p$-groups since we already treated them in previous sections.) in the second step (case by case consideration). Again, their GAP IDs  and structure descriptions can be found on the website \cite{Yu}.

   Although our main strategy is essentially the same as in Section \ref{ss:Sylow2}, we need more tricks to exclude those 67 groups. Roughly speaking, besides the smoothness of $X$ and $F$-liftability of $G$, we combine the following six tricks in various ways:

a) small order elements or subgroups consideration;

b) character table observation;

c) large abelian subgroup consideration;

d) invariant quintic consideration;

e) group structure consideration;

f) no five dimensional faithful representation.\\[.1cm]

In the proof of Theorem \ref{thm:solvableless2000}, we will only select some typical examples (more precisely, 9 of them) among those 67 groups and show how to use the above six tricks to exclude them in details. However, all the other groups can be excluded in similar ways and more details can be found on the website \cite{Yu}.
\end{remark}

Let us prove Theorem \ref{thm:solvableless2000}. In the proof, we often denote groups by their GAP IDs.

\begin{proof}

As mentioned in Remark \ref{rmk:tricks} above, we exclude groups inductively and it turns out that we are reduced to exclude 67 groups. We will show in details how to exclude the following 9 groups among these 67 groups: $[12,5]$, $[24,11]$, $[40,3]$, $[40,7]$, $[48,5]$, $[72,39]$, $[150,9]$, $[400,50]$, and $[480,257]$.

\medskip

 \begin{lemma}\label{lem:3times2}
Let $G<\Aut(X)$. Suppose $G\cong C_2\times C_3$. Let $\widetilde{G}$ be an $F$-lifting of $G$. Then, up to change of coordinates,  $\widetilde{G}$ is generated by either

 (i) $\Diag(\xi_3,\xi_3^2,1,1,1)$ and $\Diag(1,1,-1,1,1)$;

 (ii)   $\Diag(\xi_3,\xi_3^2,\xi_3,\xi_3^2,1)$ and $\Diag(1,1,-1,-1,1)$; or
 
 (iii)  $\Diag(\xi_3,\xi_3^2,\xi_3,\xi_3^2,1)$ and $\Diag(1,1,-1,1,1)$.
 
 In particular, if $A(F)=F$ and $\Ord([A])=\Ord(A)=6$, then ${\rm tr}(A)=0,1,\xi_3^2-\xi_3$ or $-\xi_3^2+\xi_3$
\end{lemma}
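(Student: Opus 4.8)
The plan is to exploit that $G\cong C_2\times C_3\cong C_6$ is cyclic, so any $F$-lifting $\widetilde{G}$ is an abelian subgroup of $\GL(5,\C)$ isomorphic to $C_6$. Writing $\widetilde{G}=\langle A_2\rangle\times\langle A_3\rangle$, where $A_2$ is the unique involution and $A_3$ an element of order $3$, we automatically get $A_2A_3=A_3A_2$ and $A_2(F)=A_3(F)=F$. First I would normalize $A_3$: by Lemma~\ref{order3power} we may assume, after a change of coordinates, that $A_3=\Diag(\xi_3,\xi_3^2,1,1,1)$ or $A_3=\Diag(\xi_3,\xi_3^2,\xi_3,\xi_3^2,1)$. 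Since $A_2$ commutes with the diagonal $A_3$, Lemma~\ref{lem:matrixshape} forces $A_2$ to be block-diagonal with respect to the eigenspaces of $A_3$; as $A_2^2=I_5$, each block is a diagonalizable involution, so after a further coordinate change inside each $A_3$-eigenspace (which preserves the displayed form of $A_3$, that matrix being scalar on each eigenspace) we may assume $A_2=\Diag(\epsilon_1,\dots,\epsilon_5)$ with every $\epsilon_i\in\{\pm1\}$ and $A_2\neq I_5$.

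Next I would pin down the sign pattern using smoothness. By Lemma~\ref{lem:to2} the trace of $A_2$ is positive, so at most two of the $\epsilon_i$ equal $-1$. In each of the two cases for $A_3$ I would combine the invariance conditions $A_2(F)=A_3(F)=F$ with Proposition~\ref{pp:smoothnessandmonomial}: for every $i$ some $x_i^4x_j\in F$, and this monomial must be invariant under both $A_2$ and $A_3$, which severely restricts the admissible $\epsilon_i$. For $A_3=\Diag(\xi_3,\xi_3^2,1,1,1)$ the monomials $x_1^4x_2$ and $x_2^4x_1$ are forced, giving $\epsilon_1=\epsilon_2=1$, and a single $-1$ among the last three coordinates yields case (i); the remaining possibility of two $-1$'s is ruled out by Proposition~\ref{pp:nonsmoothquintic}, since one checks that then $F$ lies in a forbidden ideal (up to relabelling $F\in(x_5)+(x_3,x_4)^2$), forcing a singular point. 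For $A_3=\Diag(\xi_3,\xi_3^2,\xi_3,\xi_3^2,1)$ the same bookkeeping shows first that $\epsilon_5=1$ (otherwise no invariant monomial $x_5^4x_j$ exists, contradicting smoothness), then that two $-1$'s lying in the \emph{same} $A_3$-eigenspace give $F\in(x_5)+(x_1,x_3)^2$ and are excluded, so that the surviving patterns are exactly one $-1$ (case (iii)) or one $-1$ in each of the two nontrivial eigenspaces (case (ii)). This produces precisely the three listed normal forms.

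Finally, the trace assertion follows from the classification by a direct computation. If $A(F)=F$ with $\Ord(A)=\Ord([A])=6$, then $\langle A\rangle$ is an $F$-lifting of $\langle[A]\rangle\cong C_2\times C_3<\Aut(X)$, so $A=A_2A_3^{\pm1}$ with $(A_2,A_3)$ as in one of (i)--(iii). Evaluating ${\rm tr}(A_2A_3^{\pm1})$ gives $0$ in case (i), $1$ in case (ii), and $\xi_3^2-\xi_3$ or $-\xi_3^2+\xi_3$ in case (iii), which is exactly the claimed list.

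The only delicate part is the elimination of the spurious sign patterns: it is entirely mechanical (enumerate the $A_3$-invariant degree-$5$ monomials compatible with each $\epsilon$-pattern and apply the smoothness criteria of Section~\ref{ss:differentialmethod}), but it requires care to organize the subcases and to exhibit, in each excluded configuration, the explicit ideal membership that produces a singular point. Everything else is formal once commutativity of $\widetilde{G}$ and the normal form for $A_3$ are in place.
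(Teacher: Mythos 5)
Your strategy is essentially the paper's own proof: normalize the order-$3$ generator to one of the two forms of Lemma~\ref{order3power}, reduce the involution to a diagonal sign matrix (you justify this via Lemma~\ref{lem:matrixshape} and block-wise diagonalization, a step the paper leaves implicit), bound the number of $-1$'s by Lemma~\ref{lem:to2}, and eliminate the bad sign patterns by smoothness. Your second case is correct as written (the ideal $(x_5)+(x_1,x_3)^2$ for two $-1$'s in one $A_3$-eigenspace, and the reduction of the surviving patterns to (ii) and (iii)), and your derivation of the trace values $0$, $1$, $\pm(\xi_3^2-\xi_3)$ from the three normal forms is exactly the computation the paper leaves to the reader.

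There is, however, one concrete error: in the first case the ideal you cite is wrong, and the step as written would fail. With $A_3=\Diag(\xi_3,\xi_3^2,1,1,1)$ and two $-1$'s, say $A_2=\Diag(1,1,-1,-1,1)$ after relabelling, you claim $F\in(x_5)+(x_3,x_4)^2$. But you yourself noted that smoothness and invariance force $x_1^4x_2\in F$, and $x_1^4x_2\notin(x_5)+(x_3,x_4)^2$, so this membership can never hold (the same failure occurs for any relabelling of the two $-1$'s). The correct forbidden ideal is $(x_5)+(x_1,x_2)^2$: an invariant monomial with $x_5$-exponent $0$ and degree $\le 1$ in $\{x_1,x_2\}$ either has degree $5$ in $\{x_3,x_4\}$, which is odd and contradicts $A_2$-invariance, or has $A_3$-weight $1$ or $2$ modulo $3$, contradicting $A_3$-invariance; then Proposition~\ref{pp:nonsmoothquintic}(3) gives the singularity. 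This matches the paper, which with its labelling $A_2=\Diag(1,1,-1,1,-1)$ obtains $F\in(x_4)+(x_1,x_2)^2$. So the exclusion you want is true, but only after replacing your ideal by the correct one; note the general pattern is that the squared part of the ideal is spanned by the variables on which $A_3$ acts nontrivially, not by the variables carrying the $-1$'s.
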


\begin{proof}
As before, the essential idea is to use $F$-liftablility of $G$, smoothness of $X$ (Proposition \ref{pp:nonsmoothquintic}) and Mathematica.

Since $\widetilde{G}$ is an $F$-lifting of $G$, we may assume $\widetilde{G}=\langle [A_1],[A_2]\rangle $,  where $A_1,A_2$ are diagonal matrices and $\Ord (A_1)=3, \Ord(A_2)=2$. Then by Lemma \ref{order3power}, we may assume $A_1=\Diag(\xi_3,\xi_3^2,1,1,1)$ or $\Diag(\xi_3,\xi_3^2,\xi_3,\xi_3^2,1)$.

Case 1) $A_1=\Diag(\xi_3,\xi_3^2,1,1,1)$.

  Then by $A_1(F)=F$ and the smoothness of $F$,  both $x_1^4x_2$ and $x_2^4x_1$ are in $F$. Then by $A_2(F)=F$ we have $A_2=\Diag(1,1,\pm 1,\pm 1,\pm 1)$. Then we may assume $A_2=\Diag(1,1, -1,1,\pm 1)$ and $x_3^4x_4\in F$. If $A_2=\Diag(1,1, -1,1,-1)$ then $A_1(F)=A_2(F)=F$ implies $F\in ( x_4) +( x_1,x_2) ^2$. This is a contradiction by Proposition \ref{pp:nonsmoothquintic}. So $A_2=\Diag(1,1, -1,1,1)$.

Case 2) $A_1=\Diag(\xi_3,\xi_3^2,\xi_3,\xi_3^2,1)$ and $A_2$ has $-1$ as eigenvalue of multiplicity $2$. 

By $A_1(F)=F$ and the smoothness of $X$ we have $x_5^5\in F$. So $A_2=\Diag(\pm 1,\pm 1,\pm 1,\pm 1, 1)$.

By $A_1(F)=F$ we may assume $x_1^4x_2\in F$. Then $A_2=\Diag(\pm 1, 1,\pm 1,\pm 1, 1)$. Then either $x_2^4x_1$ or $x_2^4x_3\in F$. 

If $x_2^4x_1\in F$ then $A_2=\Diag( 1, 1,-1,- 1, 1)$. So $A_1=\Diag(\xi_3,\xi_3^2,\xi_3,\xi_3^2,1)$ and $A_2=\Diag( 1, 1,-1,- 1, 1)$, which is the same as in case ii) of the Lemma \ref{lem:3times2}. 

If $x_2^4x_3\in F$ then $A_2=\Diag( -1, 1,1,- 1, 1)$. Interchanging coordinates $x_1$ and $x_3$, we also get case ii) of the Lemma \ref{lem:3times2}

Case 3): Suppose $A_1=\Diag(\xi_3,\xi_3^2,\xi_3,\xi_3^2,1)$ and $A_2$ has $-1$ as eigenvalue of multiplicity $1$. Clearly, interchanging coordinates if necessary we may assume  $A_1=\Diag(\xi_3,\xi_3^2,\xi_3,\xi_3^2,1)$ and $A_2=\Diag(1,1,-1,1,1)$, which is case iii) of the Lemma \ref{lem:3times2}

\end{proof}

\begin{lemma}\label{C_2^2C_3}
The group $[12,5]\cong C_2^2\times C_3$ is not a subgroup of $\Aut(X)$.
\end{lemma}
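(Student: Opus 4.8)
The plan is to assume for contradiction that $G \cong C_2^2 \times C_3 < \Aut(X)$, and to study an $F$-lifting $\widetilde{G}$ of $G$, which exists by Theorem \ref{liftable} (since the order $12 = 2^2 \cdot 3$ is coprime to $5$, the hypothesis $G_5 = 1$ makes $G$ automatically $F$-liftable). The key observation is that $G$ contains the subgroup $C_2 \times C_3 \cong C_6$ in (at least) three distinct ways: writing $C_2^2 = \langle s, t\rangle$ and $C_3 = \langle r \rangle$, the three order-$6$ cyclic subgroups $\langle sr\rangle$, $\langle tr\rangle$, $\langle str\rangle$ all share the common $C_3 = \langle r\rangle$. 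The idea is to apply the structural classification of $C_2 \times C_3$-liftings from Lemma \ref{lem:3times2} simultaneously to the whole group, pinning down the diagonal matrices $A_1$ (lifting $r$, of order $3$) and the two commuting involutions lifting $s$ and $t$, and then deriving a smoothness contradiction via Proposition \ref{pp:nonsmoothquintic}.

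\textbf{Carrying out the reduction.} First I would diagonalize: since $G$ is abelian, all of $\widetilde{G}$ can be simultaneously diagonalized, so I may take $A_1, B_1, B_2$ diagonal, where $[A_1]$ generates the $C_3$ factor and $[B_1], [B_2]$ generate the $C_2^2$ factor, with each $B_i$ having entries $\pm 1$ after scaling. By Lemma \ref{order3power}, up to coordinates $A_1 = \Diag(\xi_3,\xi_3^2,1,1,1)$ or $\Diag(\xi_3,\xi_3^2,\xi_3,\xi_3^2,1)$. Now I would apply Lemma \ref{lem:3times2} to the subgroup $\langle [A_1],[B_1]\rangle \cong C_6$, which forces $(A_1, B_1)$ into one of the three listed normal forms, and likewise constrain $B_2$. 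The crux is that $B_1$ and $B_2$ are two \emph{independent} commuting involutions (their classes in $C_2^2$ are distinct), so together with $A_1$ they generate a group whose invariants are very restricted. I expect that combining the eigenvalue patterns of $A_1$, $B_1$, $B_2$ forces $F$ to miss too many monomials: concretely, the simultaneous invariance conditions $A_1(F)=B_1(F)=B_2(F)=F$ should confine $F$ to an ideal of the form $(x_i)+(x_j,x_k)^2$ or $(x_p,x_q)$, contradicting smoothness by Proposition \ref{pp:nonsmoothquintic}.

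\textbf{The main obstacle.} The hard part will be organizing the case analysis so that it stays finite and transparent. Since Lemma \ref{lem:3times2} already gives three shapes for $(A_1, B_1)$, and $B_2$ must be a $\pm 1$-diagonal involution commuting with and independent from $B_1$, there are a handful of combinations to check; for each I must verify that the common invariant monomials cannot contain, for every index $i$, a monomial $x_i^4 x_j$ (Proposition \ref{pp:smoothnessandmonomial}), or else that $F$ falls into one of the singular configurations of Proposition \ref{pp:nonsmoothquintic}. The essential point is that $C_2^2$ acts on the five coordinates with sign characters, and a nontrivial $C_2^2$-action with all characters realized forces at least one coordinate $x_i$ to sit in a ``bad'' sign-eigenspace where no admissible leading monomial $x_i^4 x_j$ can be $G$-invariant; tracking this carefully is where the real work lies. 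I would streamline the bookkeeping by noting (as in Lemma \ref{lem:to2} and Lemma \ref{lem:[8,5]}) that each involution must have positive trace, which immediately kills the sign patterns with too many $-1$ entries and cuts the casework substantially.
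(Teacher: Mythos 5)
Your proposal is correct and follows essentially the same route as the paper's proof: lift $G$ via Theorem \ref{liftable}, diagonalize the abelian lifting, use Lemma \ref{lem:3times2} to reduce the pair $(A_1,B_1)$ to the three normal forms, constrain the second involution by invariance of the forced monomials, and in each case conclude that $F$ lies in an ideal of the form $(x_i)+(x_j,x_k)^2$, contradicting Proposition \ref{pp:nonsmoothquintic}. The paper carries out exactly this finite case analysis (obtaining $F\in (x_4)+(x_1,x_2)^2$ in the first case and $F\in (x_1)+(x_3,x_5)^2$ in the other two), so your plan matches it in both structure and key tools.
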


\begin{proof}
We mainly  use tricks a) and d) in Remark \ref{rmk:tricks}.

\medskip

Assume to the contrary that $G< \Aut(X)$ and $G\cong C_2^2\times C_3$.

By Theorem \ref{liftable}, $G$ has an $F$-lifting, say $\widetilde{G}$. We may assume  $\widetilde{G}=\langle A_1,A_2,A_3\rangle $, $\Ord(A_1)=3$, $\Ord(A_2)=\Ord(A_3)=2$, and $A_i$ are diagonal matrices for all $i$. 

By Lemma \ref{lem:3times2}, we may consider three cases.

Case i) $A_1=\Diag(\xi_3,\xi_3^2,1,1,1)$ and $A_2=\Diag(1,1,-1,1,1)$. 

Then $x_1^4x_2, x_2^4x_1\in F$. We may assume $x_3^4x_4\in F$. Then $A_3=\Diag(1,1,\pm 1,1,\pm 1)$. Replacing $A_3$ by $A_2A_3$ if necessary, we may assume $A_3=\Diag(1,1,1,1,- 1)$. Then $A_1(F)=A_2(F)=A_3(F)=F$ implies that $F\in ( x_4) +( x_1,x_2) ^2$, which is a contradiction by Proposition \ref{pp:nonsmoothquintic}.

Case ii)  $A_1=\Diag(\xi_3,\xi_3^2,\xi_3,\xi_3^2,1)$ and $A_2=\Diag(1,1,-1,-1,1)$.

  Then $x_1^4x_2,x_2^4x_1,x_3^4x_2,x_4^4x_1$ and $x_5^5\in F$. Clearly $A_3=\Diag(1,1,\pm 1,\pm 1,1)$ and hence we may assume $A_3=\Diag(1,1, 1, -1,1)$. Then $A_1=\Diag(\xi_3,\xi_3^2,\xi_3,\xi_3^2,1)$, $A_2=\Diag(1,1,-1,-1,1)$ and $A_3=\Diag(1,1, 1, -1,1)$. Then $A_i(F)=F,i=1,2,3$ implies $F\in ( x_1) +( x_3,x_5) ^2$, a contradiction.

Case iii) $A_1=\Diag(\xi_3,\xi_3^2,\xi_3,\xi_3^2,1)$ and $A_2=\Diag(1,1,-1,1,1)$.

  Then $x_2^4x_1,x_4^4x_1$ and $x_5^5\in F$. Then we may assume $A_3=\Diag(1,\pm 1,1,\pm 1,1)$. Since either $x_1^4x_2$ or $x_1^4x_4\in F$, we may assume $x_1^4x_2\in F$ and $A_3=\Diag(1, 1,1,- 1,1)$. Then  $A_1=\Diag(\xi_3,\xi_3^2,\xi_3,\xi_3^2,1)$, $A_2=\Diag(1,1,-1,1,1)$ and $A_3=\Diag(1, 1,1,- 1,1)$. By $A_1(F)=A_2(F)=A_3(F)=F$, as in case ii),  we get $F\in ( x_1) +( x_3,x_5) ^2$, a contradiction.
\end{proof}

\begin{lemma}\label{C_3XC_4}
Suppose $C_3\times C_4 \cong G < \Aut(X)$. Let $\widetilde{G}$ be an $F$-lifting of $G$. Then, up to change of coordinates, we may assume $\widetilde{G}$ is generated by either one of (i)-(v) below:

i) $\Diag(\xi_3,\xi_3^2,1,1,1),\Diag(1,1,\xi_4,1,1)$;

ii) $\Diag(\xi_3,\xi_3^2,1,1,1),\Diag(1,1,\xi_4,-1,1)$;

iii) $\Diag(\xi_3,\xi_3^2,\xi_3,\xi_3^2,1),\Diag(1,1,\xi_4,1,1)$;

iv)  $\Diag(\xi_3,\xi_3^2,\xi_3,\xi_3^2,1),\Diag(1,1,\xi_4,\xi_4^3,1)$;

v)  $\Diag(\xi_3,\xi_3^2,\xi_3,\xi_3^2,1),\Diag(1,1,\xi_4,\xi_4,1)$.

\end{lemma}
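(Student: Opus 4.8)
The statement to prove is Lemma~\ref{C_3XC_4}, which classifies the $F$-liftings of a subgroup $G \cong C_3 \times C_4$ of $\Aut(X)$ into five explicit normal forms. Let me plan a proof.

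=== PROOF PROPOSAL ===

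\noindent\textbf{Plan of proof.} The strategy mirrors the proof of Lemma~\ref{lem:3times2} exactly: exploit $F$-liftability together with the smoothness criteria of Proposition~\ref{pp:nonsmoothquintic} to pin down the diagonal generators up to change of coordinates. Since $G \cong C_3 \times C_4$ is abelian, by Theorem~\ref{liftable} (and the remark following Theorem~\ref{thm:noorder25}, which guarantees liftability holds since $G$ contains a Sylow $5$-subgroup that is trivial here) $G$ admits an $F$-lifting $\widetilde{G}$, and I may choose commuting generators $A_1, A_2$ with $\Ord(A_1)=3$, $\Ord(A_2)=4$, and $A_1(F)=A_2(F)=F$. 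Because $A_1$ and $A_2$ commute, they are simultaneously diagonalizable, so after a change of coordinates both are diagonal. Thus $A_1$ is a diagonal order-$3$ matrix, and by Lemma~\ref{order3power} I may assume $A_1 = \Diag(\xi_3,\xi_3^2,1,1,1)$ or $A_1 = \Diag(\xi_3,\xi_3^2,\xi_3,\xi_3^2,1)$. This splits the argument into two main cases, and within each I determine the possible shapes of $A_2 = \Diag(\alpha_1,\dots,\alpha_5)$ with each $\alpha_i \in \{1,-1,\xi_4,\xi_4^3\}$.

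\noindent First I would treat $A_1 = \Diag(\xi_3,\xi_3^2,1,1,1)$. By $A_1(F)=F$ and smoothness (Proposition~\ref{pp:smoothnessandmonomial}), the monomials $x_1^4 x_2$ and $x_2^4 x_1$ lie in $F$, which forces $A_2(x_1)$ and $A_2(x_2)$ to carry the same eigenvalue; combined with $\Ord(A_2)=4$ this means the first two diagonal entries of $A_2$ are equal, and after scaling $A_2$ by a root of unity (replacing $A_2$ by $\lambda A_2$, which does not change $[A_2]$) I may normalize them to $1$. I then apply Proposition~\ref{pp:smoothnessandmonomial} to the remaining variables $x_3,x_4,x_5$ and use $A_2(F)=F$ to constrain the entries $\alpha_3,\alpha_4,\alpha_5$; up to permuting coordinates and multiplying $A_2$ by $-1$, the surviving possibilities are precisely normal forms (i) and (ii). The smoothness exclusions here are the same kind as in Lemma~\ref{lem:3times2}: any configuration that would put $F$ into an ideal of the form $(x_i)+(x_j,x_k)^2$ is discarded via Proposition~\ref{pp:nonsmoothquintic}(3).

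\noindent Next I would treat $A_1 = \Diag(\xi_3,\xi_3^2,\xi_3,\xi_3^2,1)$. Here $A_1(F)=F$ and smoothness force $x_5^5 \in F$, so $\alpha_5 = 1$, and the $\{x_1,x_2\}$-block and $\{x_3,x_4\}$-block each receive constraints from invariant monomials such as $x_1^4x_2$, $x_3^4 x_4$, etc. After normalizing via scaling and coordinate changes within the two $\xi_3$-eigenspaces, the allowed values of $(\alpha_1,\alpha_2,\alpha_3,\alpha_4)$ reduce to those giving normal forms (iii), (iv), and (v); again Proposition~\ref{pp:nonsmoothquintic} rules out every other sign/fourth-root pattern by producing a singular $F$. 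Throughout, whenever two normalized generators happen to coincide under an allowed permutation of coordinates fixing $A_1$'s eigenspace structure, I identify them so as not to overcount.

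\noindent\textbf{Main obstacle.} The genuine work, and the only place where care is needed, is the bookkeeping of the finitely many sign-and-$\xi_4$ patterns for $A_2$ in each case and verifying which are genuinely distinct after the residual coordinate freedom (permutations preserving $A_1$ and rescaling of $A_2$) has been used up. This is a finite check best done with Mathematica as in the surrounding lemmas, using Proposition~\ref{pp:nonsmoothquintic} to eliminate the singular configurations; no new idea beyond the techniques already established in Lemmas~\ref{order3power}, \ref{lem:3times2} and \ref{C_2^2C_3} is required. I expect the verification that exactly five normal forms remain --- rather than collapsing further or admitting a sixth --- to be the delicate endpoint, but it is purely computational.
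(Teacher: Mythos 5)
Your proposal takes essentially the same route as the paper, whose entire proof of this lemma is ``Similar to the proof of Lemma \ref{lem:3times2}'': use $F$-liftability and simultaneous diagonalization of the commuting generators, invoke Lemma \ref{order3power} to put the order-$3$ generator in one of the two normal forms, and then run a finite case check on the diagonal order-$4$ generator using invariance of the monomials forced into $F$ and the smoothness criteria of Proposition \ref{pp:nonsmoothquintic}. One small repair: your step of ``rescaling $A_2$ by a root of unity'' is not legitimate (for a nontrivial scalar $\lambda$ of finite order one has $(\lambda A_2)(F)=\lambda^5F\neq F$, and the order of $\lambda A_2$ may change, so $\lambda A_2$ is no longer part of an $F$-lifting), but it is also unnecessary, since invariance of $x_1^4x_2$ and $x_2^4x_1$ under $A_2$ together with $A_2^4=I_5$ gives $\alpha_1^4\alpha_2=\alpha_2^4\alpha_1=1$, forcing $\alpha_1=\alpha_2=1$ outright.
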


\begin{proof}
Similar to the proof of Lemma \ref{lem:3times2}.
\end{proof}

\begin{lemma}
The group $[24,11]=C_3\times Q_8$ is not a subgroup of $\Aut(X)$.
\end{lemma}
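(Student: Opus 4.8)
The plan is to argue entirely through representation theory, using the available trace restrictions together with the classification of $C_3\times C_4$-liftings just established in Lemma~\ref{C_3XC_4}. Since $|C_3\times Q_8|=24$ is coprime to $5$, condition (2) of Theorem~\ref{liftable} is vacuous, so if $G\cong C_3\times Q_8<\Aut(X)$ then $G$ is $F$-liftable: there is a faithful $5$-dimensional representation $\rho$ with $A(F)=F$ for every $A\in\rho(G)$. First I would decompose $\rho$ into irreducibles of $C_3\times Q_8$. These are exactly the $\psi\boxtimes W$, where $\psi$ runs over the three characters of $C_3$ and $W$ over the irreducibles of $Q_8$; in particular the only $2$-dimensional ones are $\psi\boxtimes W_0$ with $W_0$ the unique $2$-dimensional irreducible of $Q_8$, on which the central involution $z\in Q_8$ (the element $-1$) acts as $-I_2$, while $z$ acts trivially on every $1$-dimensional constituent.

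Next I would pin down the number of $2$-dimensional constituents. The non-abelian group $Q_8$ cannot act faithfully through $1$-dimensional representations, so at least one summand $\psi\boxtimes W_0$ occurs. If two occurred, $\rho(z)$ would have $-1$ as an eigenvalue of multiplicity $4$, hence trace $-3$, contradicting Lemma~\ref{lem:to2} (the trace of an order-$2$ $F$-lifting is positive). Therefore exactly one $2$-dimensional summand $V_2=\psi\boxtimes W_0$ appears, the remaining three dimensions being $1$-dimensional, and $\rho(z)=\Diag(-1,-1,1,1,1)$ in suitable coordinates; write $E_{-1}$ for its $(-1)$-eigenspace, which equals $V_2$. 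The crucial structural fact I would record is that the generator $c$ of the $C_3$-factor acts on $V_2=\psi\boxtimes W_0$ as the \emph{scalar} $\psi(c)I_2$, so that $\rho(c)|_{E_{-1}}$ is scalar.

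Finally I would exploit an order-$4$ element $i\in Q_8$. On $W_0$ its eigenvalues are $\xi_4,\xi_4^{3}$, so $\rho(i)$ has $\xi_4$ and $\xi_4^{3}$ each with multiplicity one (coming from $V_2$) together with three eigenvalues in $\{\pm1\}$. The subgroup $\langle c,i\rangle\cong C_3\times C_4$ is $F$-liftable, so by Lemma~\ref{C_3XC_4} the pair $(\rho(c),\rho(i))$ is, up to change of coordinates, one of the five listed types. Since the eigenvalue multiset of the order-$4$ generator $\rho(i)$ contains exactly one $\xi_4$ and one $\xi_4^{3}$, only type (iv) is possible. But in type (iv) the element $c$ carries the two \emph{distinct} eigenvalues $\xi_3,\xi_3^2$ on the $(-1)$-eigenspace of $\rho(i)^2=\rho(z)$, i.e.\ on $E_{-1}$, contradicting the scalarity of $\rho(c)|_{E_{-1}}$ found above. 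This contradiction shows that $C_3\times Q_8$ is not a subgroup of $\Aut(X)$. I expect the main obstacle to be exactly this last step: one must check that matching $(\rho(c),\rho(i))$ to type (iv) is legitimate, and that the quantity being compared — the eigenvalues of $c$ restricted to the $(-1)$-eigenspace of $z$ — is genuinely conjugation-invariant, so that the explicit form in Lemma~\ref{C_3XC_4} may be set against the scalar action forced by the $\psi\boxtimes W_0$ structure.
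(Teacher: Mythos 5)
Your proof is correct, and it rests on the same two pillars as the paper's: $F$-liftability of $G$ (via Theorem~\ref{liftable}, exactly as the paper uses it) and Lemma~\ref{C_3XC_4}, which both arguments combine with the fact that a faithful representation of $Q_8$ forces a $\xi_4,\xi_4^3$ pair of eigenvalues into every order-$4$ element, thereby singling out type (iv). The two proofs diverge at the finish. The paper, having placed $A_1=\Diag(\xi_3,\xi_3^2,\xi_3,\xi_3^2,1)$ and $A_2=\Diag(1,1,\xi_4,\xi_4^3,1)$ inside $\widetilde{G}$, examines the \emph{other} order-$4$ matrices of $\widetilde{G}$: each must have the form $\Diag(I_2,B,1)$, and since it commutes with $A_1$, whose restriction $\Diag(\xi_3,\xi_3^2)$ to that block has distinct eigenvalues (Lemma~\ref{lem:matrixshape}), $B$ must be diagonal; hence $\rho(Q_8)$ would be abelian, which is absurd. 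You instead extract the contradiction from the \emph{single} subgroup $\langle c,i\rangle\cong C_3\times C_4$: the external tensor structure $V_2=\psi\boxtimes W_0$ forces $c$ to act as a scalar on $E_{-1}=V_2$, while type (iv) forces the non-scalar action $\Diag(\xi_3,\xi_3^2)$ there. The invariance issue you flag is real but routine: a change of coordinates carries the $(-1)$-eigenspace of $\rho(z)$ to that of the conjugated matrix and intertwines the restrictions; moreover every order-$3$ element of the type-(iv) group acts non-scalarly on that eigenspace, and both of its order-$4$ elements have the same square, so the generator ambiguity in Lemma~\ref{C_3XC_4} is harmless. What each approach buys: yours needs no analysis of the remaining order-$4$ elements and isolates the representation-theoretic heart of the matter (the $C_3$ factor must be scalar on the $2$-dimensional block), while the paper's stays closer to bare matrix manipulation and uses only the irreducibles of $Q_8$, not of the product group; you also count the $2$-dimensional constituents with the trace bound of Lemma~\ref{lem:to2}, whereas the paper reads that count off from the explicit form of $A_2$ — both are fine.
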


\begin{proof}
We mainly  use tricks a), b) d), e) in Remark \ref{rmk:tricks}.

\medskip

Assume to the contrary that $G< \Aut(X)$ and $G\cong C_3\times Q_8$. Let $\widetilde{G}$ be an $F$-lifting of $G$. Then $\widetilde{G}$ can be naturally viewed as a five dimensional faithful representation of $C_3\times Q_8$. 

Every  matrix in  $\widetilde{G}$ with order $4$ must have both $\xi_4$ and $\xi_4^3$ as eigenvalues by representation theory of $Q_8$.

Let $A_1$ and $A_2$ be  $\Diag(\xi_3,\xi_3^2,\xi_3,\xi_3^2,1)$ and $\Diag(1,1,\xi_4,\xi_4^3,1)$ respectively. By Lemma \ref{C_3XC_4}, we may assume $\langle A_1,A_2\rangle $ is contained in $\widetilde{G}$. Therefore, the five dimensional faithful representation of $Q_8$ induced by $\widetilde{G}$ is of type $2\oplus1\oplus 1\oplus 1$. 

Then all other matrices in $\widetilde{G}$  with order $4$ must of form $\Diag(I_2,B,1)$, where $I_2$ is the $2\times2$ identity matrix and $B\in \GL(2,\C)$. 

On the other hand, $A_1$ commutes with  $\Diag(I_2,B,1)$, hence $B$ is a diagonal matrix. Then we have a contradiction since $Q_8$ is not an abelian group.
\end{proof}

\begin{lemma}\label{lem:2times5}
Let $G< \Aut(X)$. Suppose $G\cong C_2\times C_5$. Let $\widetilde{G}$ be an $F$-lifting of $G$. Then, up to change of coordinates,  $\wt{G}$ is generated by either

 (i) $\xi_5^i\cdot\Diag(1,1,\xi_5,\xi_5^a,\xi_5^b)$ and $\Diag(-1,1,1,1,1)$ for some $i,a,b$; or
 
 (ii) $\xi_5^i\cdot\Diag(1,1,\xi_5^a,\xi_5^a,\xi_5^b)$ and $\Diag(-1,1,-1,1,1)$ for some $i,a,b$.
 \end{lemma}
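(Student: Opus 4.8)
The plan is to reduce everything to simultaneously diagonal matrices and then read off the two normal forms from smoothness. Since $G\cong C_2\times C_5\cong C_{10}$, its $F$-lifting $\widetilde{G}$ is cyclic of order $10$; write $\widetilde{G}=\langle A_1,A_2\rangle$ with $\Ord(A_1)=5$, $\Ord(A_2)=2$ and $A_1A_2=A_2A_1$ (so $A_1^5=A_2^2=I_5$ as matrices). As $A_1$ and $A_2$ commute they can be diagonalized simultaneously, so after a change of coordinates I may take $A_1=\Diag(\xi_5^{c_1},\dots,\xi_5^{c_5})$ and $A_2=\Diag(\epsilon_1,\dots,\epsilon_5)$ with $\epsilon_i\in\{1,-1\}$, and $A_1(F)=A_2(F)=F$. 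By Lemma \ref{lem:to2} the trace of $A_2$ is $1$ or $3$, and since $A_2\neq I_5$ it has exactly one or exactly two eigenvalues equal to $-1$.

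The key step is a \emph{partner} observation. For an index $p$ with $\epsilon_p=-1$, Proposition \ref{pp:smoothnessandmonomial} gives $x_p^4x_j\in F$ for some $j$. Applying $A_2$ shows $A_2(x_p^4x_j)=\epsilon_j\,x_p^4x_j$, so $A_2(F)=F$ forces $\epsilon_j=1$; in particular $j\neq p$. Applying $A_1$ gives $4c_p+c_j\equiv 0\ (\Mod 5)$, i.e. $c_j\equiv c_p$. Thus every $(-1)$-index $p$ admits a $(+1)$-index $\sigma(p)$ with the same $A_1$-eigenvalue and $x_p^4x_{\sigma(p)}\in F$. I would first dispose of the case where $A_2$ has a single $-1$: relabel so that this index is $1$ and $\sigma(1)=2$, so $c_1=c_2$ and $A_2=\Diag(-1,1,1,1,1)$. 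Factoring out the scalar writes $A_1=\xi_5^{c_1}\Diag(1,1,\xi_5^{c_3-c_1},\xi_5^{c_4-c_1},\xi_5^{c_5-c_1})$; since $[A_1]$ has order $5$, $A_1$ is non-scalar, so after permuting $x_3,x_4,x_5$ I may assume $c_3\neq c_1$, and replacing $A_1$ by the power $A_1^m$ with $m(c_3-c_1)\equiv 1\ (\Mod 5)$ turns the third entry into $\xi_5$. This is form (i).

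The substantive case is two $-1$'s, at indices $p,q$. If their partners can be chosen distinct, then $p,q,\sigma(p),\sigma(q)$ are four distinct indices and the fifth, $r$, is a $(+1)$-index; relabelling these to $1,\dots,5$ with the $-1$'s placed at $1$ and $3$ yields $A_2=\Diag(-1,1,-1,1,1)$ and, after factoring $\xi_5^{c_p}$, $A_1=\xi_5^{c_p}\Diag(1,1,\xi_5^{c_q-c_p},\xi_5^{c_q-c_p},\xi_5^{c_r-c_p})$, which is form (ii). I expect the main obstacle to be the remaining configuration, in which both $-1$'s are \emph{forced} to share a single partner $j$; this occurs exactly when $\gamma:=c_p=c_q=c_j$ has multiplicity $\geq 3$ as an eigenvalue of $A_1$. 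If that multiplicity is $\geq 4$ there is a second $(+1)$-index with eigenvalue $\gamma$, so a distinct partner is available and we are back in form (ii) (with $a=0$). So the real task is to exclude multiplicity exactly $3$.

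To rule out the multiplicity-$3$ configuration I would show it forces $X$ to be singular. Let $x_r,x_s$ be the two coordinates whose $A_1$-eigenvalues differ from $\gamma$, and consider the line $L=\{x_j=x_r=x_s=0\}$. No monomial of $F$ is supported only on $x_p,x_q$ (such a monomial has degree $5$, hence odd degree in $\{x_p,x_q\}$, violating $A_2$-invariance), so $L\subset X$. On $L$ the partials $\partial F/\partial x_p$ and $\partial F/\partial x_q$ vanish for the same reason, while $\partial F/\partial x_r$ and $\partial F/\partial x_s$ vanish because a surviving term would come from a monomial $x_p^ax_q^bx_r$ (resp. with $x_s$) with $a+b=4$, which would require $4\gamma+c_r\equiv 0$, i.e. $c_r\equiv\gamma$, contrary to $c_r\neq\gamma$. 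The only partial that can survive, $\partial F/\partial x_j|_L$, is a binary quartic in $x_p,x_q$ (here $4\gamma+c_j=5\gamma\equiv 0$, so those monomials \emph{are} allowed), and a binary quartic vanishes somewhere on $L\cong\P^1$; at such a point the full gradient vanishes, so $X$ is singular, a contradiction. This excludes the obstructing case and leaves only forms (i) and (ii), completing the proof.
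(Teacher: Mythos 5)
Your proof is correct, and it is in essence the proof the paper intends: for this lemma the paper only writes ``Similar to the proof of Lemma~\ref{lem:3times2}'', i.e.\ take an $F$-lifting, diagonalize simultaneously, and run a case analysis driven by smoothness, which is exactly your skeleton, and every tool you invoke (Proposition~\ref{pp:smoothnessandmonomial}, Lemma~\ref{lem:to2}, the order of $[A_1]$ in $\PGL$) is the paper's own. The one genuine difference is organizational: where the template of Lemma~\ref{lem:3times2} enumerates normal forms and checks invariant monomials case by case (with Mathematica), your ``partner'' observation --- $x_p^4x_j\in F$ forces $\epsilon_j=1$ and $c_j\equiv c_p \;(\Mod 5)$ --- compresses all of those computations into a single structural step, and your exclusion of the multiplicity-three configuration is precisely an application of Proposition~\ref{pp:nonsmoothquintic}(3) (equivalently Lemma~\ref{lem:nonsmooth} with $a=1$, $b=2$, since you show $F\in (x_j)+(x_r,x_s)^2$), which you chose to re-derive by hand via the line $L$ rather than cite. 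Both routes prove the same statement; yours has the merit of being self-contained and computer-free, and could have been shortened slightly by citing the paper's singularity criterion instead of reproving it.
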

 
 \begin{proof}
Similar to the proof of Lemma \ref{lem:3times2}.
\end{proof}

 \begin{lemma}\label{lem:[40,3]}
$[40,3]\cong C_5\rt C_8$ is not a subgroup of $\Aut(X)$.
\end{lemma}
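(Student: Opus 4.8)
The plan is to show that if $[40,3]\cong C_5\rtimes C_8=\langle s,t\mid s^5=t^8=1,\ tst^{-1}=s^2\rangle$ were a subgroup $G<\Aut(X)$, then the smoothness of $X$ together with the liftability machinery already developed forces a contradiction. First I would observe that $t^4$ centralizes $s$ (it lies in the kernel of the order-$4$ action of $C_8$ on $C_5$), so $\langle s,t^4\rangle\cong C_{10}$ is a subgroup of order $2\cdot 5$. By Lemma \ref{10or50liftable} the group $G$ is therefore $F$-liftable, and I fix an $F$-lifting $\widetilde G=\langle\widetilde s,\widetilde t\rangle$ with $\widetilde s^5=\widetilde t^8=I_5$, $\widetilde s(F)=\widetilde t(F)=F$, and (since the relation $tst^{-1}=s^2$ holds only projectively) $\widetilde t\widetilde s\widetilde t^{-1}=\lambda\widetilde s^2$ for some $\lambda\in\C^*$.

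The key first step is to pin down the eigenvalues of $\widetilde s$. Comparing the eigenvalue multisets of $\widetilde t\widetilde s\widetilde t^{-1}$ (which equals that of $\widetilde s$) and of $\lambda\widetilde s^2$ shows that $\lambda=\xi_5^m$ is a fifth root of unity and that the multiset of exponents of $\widetilde s$ is invariant under the affine map $a\mapsto 2a+m$ on $\Z/5$. Since the linear part $2$ has order $4$, this map is a single $4$-cycle together with one fixed point; as $\widetilde s$ is non-scalar, the only invariant multiset of size $5$ is $\{0,1,2,3,4\}$. Hence, up to change of coordinates, $\widetilde s=\Diag(1,\xi_5,\xi_5^2,\xi_5^3,\xi_5^4)$ with \emph{distinct} eigenvalues. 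By Lemma \ref{lem:matrixshape} the relation then forces $\widetilde t$ to be a semi-permutation (monomial) matrix whose underlying permutation fixes one $\widetilde s$-eigenline and cyclically permutes the other four. Consequently $\widetilde t^4$ is diagonal, acting by a scalar $\nu I$ on the four cycled coordinates and by $\mu^4$ on the fixed coordinate, where $\nu^2=1$ and $\mu^8=1$.

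I would then split into two cases according to $\nu$. If $\nu=-1$, the four cycled eigenvalues of $\widetilde t$ are the primitive eighth roots of unity, so by Lemma \ref{lem:eoo8}(i) the remaining eigenvalue must be $\mu=-1$; diagonalizing $\widetilde t$ and checking which monomials $x_i^4x_j$ are $\widetilde t$-invariant, one finds that the eigencoordinate with eigenvalue $-1$ admits no such invariant monomial, contradicting smoothness via Proposition \ref{pp:smoothnessandmonomial} (equivalently Proposition \ref{pp:nonsmoothquintic}(1)). If $\nu=+1$, then the cycled eigenvalues are fourth roots of unity and, for $\widetilde t$ to have order $8$, one must have $\mu^4=-1$; thus $B:=\widetilde t^4=\Diag(1,1,1,1,-1)$ in the $\widetilde s$-eigenbasis, a single $-1$ on $\widetilde t$'s fixed line. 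Now $\langle[\widetilde s],[B]\rangle\cong C_2\times C_5$ admits the $F$-lifting $\langle\widetilde s,B\rangle$, but Lemma \ref{lem:2times5} shows that the order-$5$ generator of any $F$-lifting of $C_2\times C_5$ has an eigenvalue of multiplicity $\ge 2$, whereas $\widetilde s$ has five distinct eigenvalues, a contradiction.

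The main obstacle is the first step: extracting the precise eigenvalue structure of $\widetilde s$ from the single conjugation relation, and then organizing the eigenvalues of the order-$8$ element $\widetilde t$ into the dichotomy $\nu=\pm1$. Once this is in place, the two endings are short, one reducing to the smoothness criterion through Lemma \ref{lem:eoo8} and the other directly to Lemma \ref{lem:2times5}. I note that the argument uses that $C_8$ acts on $C_5$ through an order-$4$ automorphism (the relation $tst^{-1}=s^2$); this is exactly what makes $a\mapsto 2a+m$ a $4$-cycle and hence forces the distinct-eigenvalue shape of $\widetilde s$.
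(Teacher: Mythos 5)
Your proof is correct, and it reaches the paper's contradiction by the same final step: Lemma \ref{lem:2times5} applied to the $F$-lifting of $\langle s,t^4\rangle\cong C_2\times C_5$, whose order-$5$ elements would have to carry a repeated eigenvalue. What differs is the middle. The paper gets the eigenvalue structure from GAP (trick b) of Remark \ref{rmk:tricks}): the character table of $[40,3]$ forces any faithful five-dimensional representation preserving $F$ to be of type $4\oplus 1$ with a prescribed four-dimensional irreducible part, and its restriction to the normal $C_5$ then consists of the five distinct fifth roots of unity. You prove that same restriction statement by hand: since $[40,3]$ is the semidirect product with the order-$4$ action $tst^{-1}=s^2$ (this is indeed what distinguishes $[40,3]$ from $[40,1]$), the exponent multiset of $\widetilde s$ must be invariant under the affine map $a\mapsto 2a+m$ on $\Z/5$, whose orbits are one fixed point and one $4$-cycle, so the only non-scalar invariant multiset is $\{0,1,2,3,4\}$. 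This is exactly the fact the paper reads off the table, derived instead in the elementary style of Lemmas \ref{lem:matrixshape} and \ref{lem:5^2notliftable}; your route buys a self-contained, computer-free argument, while the paper's buys brevity. Two minor points. First, your case split on $\nu=\pm1$ is redundant: in either case $\widetilde t^4$ is a non-scalar diagonal involution commuting with $\widetilde s$ (it cannot equal $-I_5$ because $\pi$ is injective on an $F$-lifting), so $\langle\widetilde s,\widetilde t^4\rangle$ is already an $F$-lifting of $C_2\times C_5$ and Lemma \ref{lem:2times5} closes both branches at once; the excursion through Lemma \ref{lem:eoo8} and Proposition \ref{pp:smoothnessandmonomial} in the $\nu=-1$ branch is valid but unnecessary. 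Second, the scalar $\lambda$ is automatically $1$: by definition an $F$-lifting maps isomorphically onto $G$ under $\pi$, so the relation $tst^{-1}=s^2$ lifts exactly; allowing $\lambda\in\C^*$ is harmless, but the stated justification (that the relation ``holds only projectively'') is not the right reason.
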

 
 \begin{proof}
We mainly  use tricks a) and b) in Remark \ref{rmk:tricks}.

\medskip
 
 Assume to the contrary. Then there exists a five dimensional faithful representation, say $\rho$, of $[40,3]$ such that $\rho$ leaves $F$ invariant.
 
 By character table of $[40,3]$ (see Figure \ref{table:[40,3]}), $\rho$ must be of type $4\oplus 1$. The four dimensional irreducible component of $\rho$ must be $X.10$. Then by Lemma \ref{lem:2times5}, $[40,3]$ is  excluded.
 
 \begin{figure}[htbp]
\begin{center}

\includegraphics[width=20cm, height=22cm]{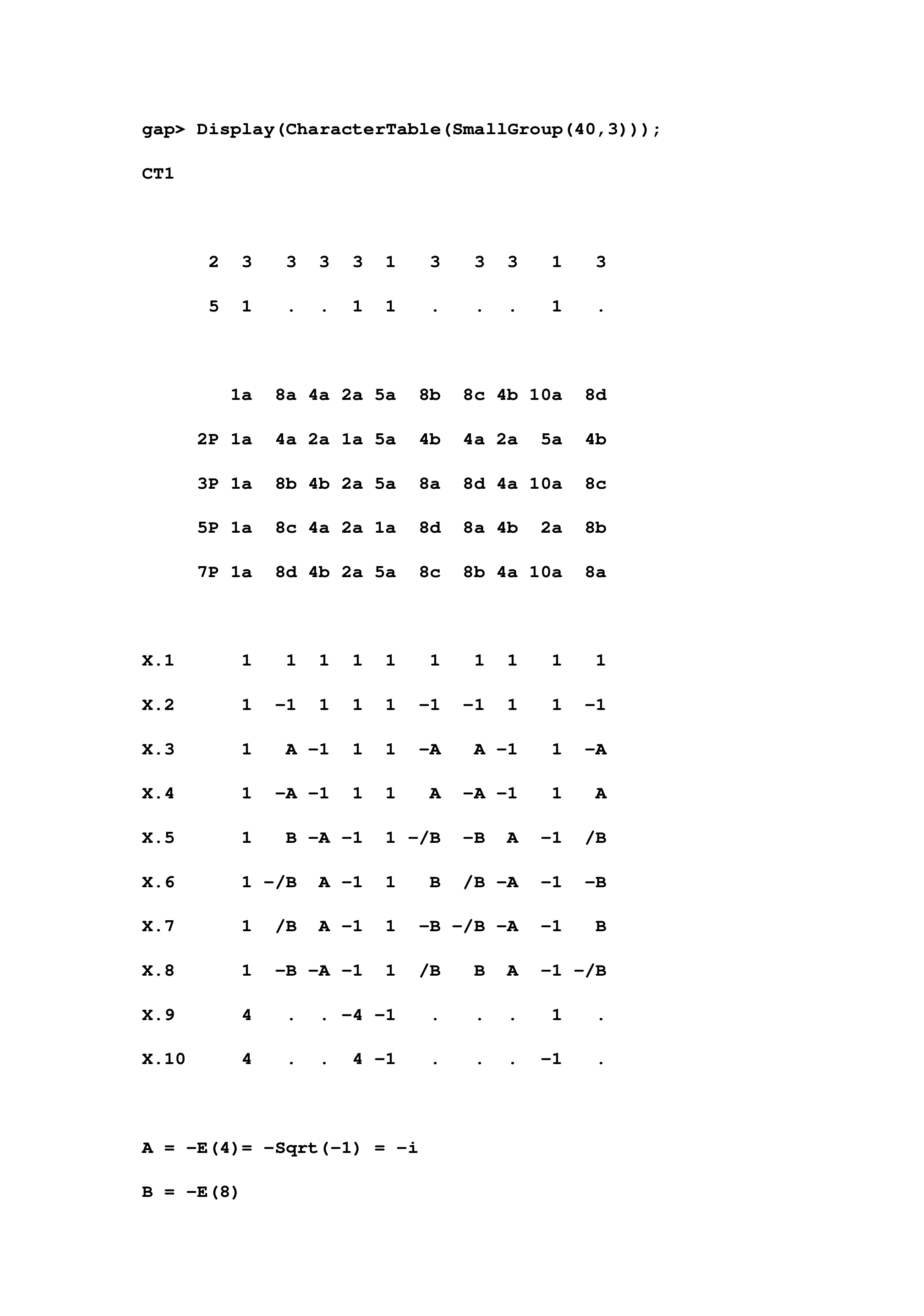}

\caption{Character table of $[40,3]$}
\label{table:[40,3]}
\end{center}
\end{figure}

  \end{proof}

\begin{lemma}\label{lem:2times2times5}
Let $G< \Aut(X)$. Suppose $G\cong C_2\times C_2\times C_5$. Let $\widetilde{G}$ be an $F$-lifting of $G$. Then, up to change of coordinates,  $\wt{G}$ is generated by either

 (i) $\xi_5^i\cdot\Diag(1,1,\xi_5,\xi_5,\xi_5^a)$, $\Diag(-1,1,1,1,1)$ and $\Diag(1,1,-1,1,1)$ for some $i,a$;
 
 (ii) $\xi_5^i\cdot\Diag(1,1,\xi_5,1,1)$, $\Diag(-1,1,1,1,1)$ and $\Diag(1,1,1,-1,1)$ for some $i$; or
 
 (iii) $\xi_5^i\cdot\Diag(1,1,1,1,\xi_5)$, $\Diag(-1,1,-1,1,1)$ and $\Diag(1,1,-1,-1,1)$ for some $i$.
\end{lemma}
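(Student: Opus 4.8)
The plan is to follow the pattern of Lemmas \ref{lem:3times2} and \ref{lem:2times5}: use commutativity to diagonalize, constrain the involutions by the positive-trace condition of Lemma \ref{lem:to2}, and then read off the eigenvalue pattern of the order-$5$ generator from the smoothness of $X$. Since $G\cong C_2^2\times C_5$ is abelian and $\pi$ restricts to an isomorphism $\widetilde{G}\to G$, the group $\widetilde{G}$ is an abelian subgroup of $\GL(5,\C)$ and may be simultaneously diagonalized; after a coordinate change I assume $\widetilde{G}=\langle A,B_1,B_2\rangle$ with $A$ diagonal of order $5$ (so $A^5=I_5$), $B_1,B_2$ diagonal of order $2$, all commuting, and $A(F)=B_1(F)=B_2(F)=F$. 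I write $A=\Diag(\xi_5^{c_1},\dots,\xi_5^{c_5})$, so $A$ induces a partition of $\{1,\dots,5\}$ into eigenspaces (indices with equal $c_k$).

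The key tool is a pairing observation. Let $B=\Diag(\e_1,\dots,\e_5)$ be an involution in $\widetilde{G}$ and $k$ a position with $\e_k=-1$. Then $B(x_k^5)=-x_k^5$ gives $x_k^5\notin F$, so by Proposition \ref{pp:smoothnessandmonomial} there is $j\ne k$ with $x_k^4x_j\in F$; now $B$-invariance forces $\e_j=1$ and $A$-invariance forces $4c_k+c_j\equiv 0\ (\Mod 5)$, i.e. $c_j=c_k$. Hence every $(-1)$-position of each $B_i$ sits in an $A$-eigenspace of dimension $\ge 2$, paired with a $(+1)$-position of the same eigenvalue. This is precisely what couples the supports of the involutions to the eigenspace decomposition of $A$.

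By Lemma \ref{lem:to2} each nontrivial element of $\langle B_1,B_2\rangle\cong C_2^2$ has positive trace, so its $(-1)$-support has size at most $2$; writing $S_1,S_2,S_3$ for the supports of $B_1,B_2,B_1B_2$ with $S_3=S_1\triangle S_2$, an elementary enumeration leaves, up to relabeling, only the supports $\{1\},\{3\},\{1,3\}$ (Type A) or $\{1,3\},\{3,4\},\{1,4\}$ (Type B). Applying Lemma \ref{lem:2times5} to the order-$10$ subgroup $\langle A,B_1\rangle$ already normalizes $A$ to one of $\xi_5^i\Diag(1,1,\xi_5,\xi_5^a,\xi_5^b)$ or $\xi_5^i\Diag(1,1,\xi_5^a,\xi_5^a,\xi_5^b)$; I then introduce $B_2$ and, in each configuration, use the pairing observation together with the smoothness obstructions of Proposition \ref{pp:nonsmoothquintic} (that $F\notin(x_p,x_q)$ and $F\notin(x_i)+(x_j,x_k)^2$) to fix the remaining exponents and the exact support of $B_2$. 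The Type A case splits according to whether the supports of $B_1$ and $B_2$ lie in distinct two-dimensional $A$-eigenspaces, giving (i), or in one common larger eigenspace, giving (ii); the Type B case forces all supports into a single four-dimensional eigenspace and yields (iii).

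The main obstacle will be the bookkeeping in this final step: organizing the sub-cases for the placement of the support of $B_2$ relative to the eigenspaces of $A$ and systematically discarding every arrangement that violates smoothness. As in the proofs of Lemmas \ref{order3power}, \ref{C_3^3} and \ref{lem:eoo8}, each discarded case reduces to computing the monomials left invariant by the diagonal matrices in play and checking them against Proposition \ref{pp:nonsmoothquintic}; beyond the pairing observation and the positive-trace constraint no new idea is required, only careful case management.
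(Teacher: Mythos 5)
Your proposal is correct and takes essentially the same route as the paper: the paper's entire proof is ``Similar to the proof of Lemma \ref{lem:3times2}'', i.e.\ simultaneously diagonalize the abelian $F$-lifting and pin down the generators by playing invariant monomials against the smoothness criteria of Proposition \ref{pp:nonsmoothquintic}, which is exactly what you do. Your ``pairing observation'' and the support bound via Lemma \ref{lem:to2} are just a cleaner packaging of that same case analysis, and the resulting case split (supports in distinct eigenspaces, a common $4$-dimensional eigenspace, or the Type B configuration) correctly reproduces cases (i)--(iii).
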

 
  \begin{proof}
Similar to the proof of Lemma \ref{lem:3times2}.
\end{proof}

 \begin{lemma}\label{lem:225cent}
 Let $H< G< \Aut(X)$ and $K\lhd G$. Suppose $H\cong C_2\times C_2$ and $K\cong C_5$  (so that $H\cap K=1$, and $H\times K$ can be viewed as a subgroup of $G$). Then $C_G(H)=C_G(H\times K)$.
 \end{lemma}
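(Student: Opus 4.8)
The inclusion $C_G(H\times K)\subseteq C_G(H)$ is immediate, so the entire content is the reverse inclusion; and since centralizing both $H$ and $K$ is the same as centralizing the subgroup they generate, it suffices to show that every $g\in C_G(H)$ also commutes with $K$. The plan is to first put $H\times K$ into an explicit diagonal normal form, and then to argue that commuting with $H$ already constrains $g$ enough to force it to centralize the normal factor $K$. To set up the normal form I would observe that $H\times K\cong C_2^2\times C_5$ contains a subgroup $\cong C_{10}$, so by Lemma \ref{10or50liftable} it is $F$-liftable and Lemma \ref{lem:2times2times5} applies: after one global change of coordinates (transporting $G$, $H$, $K$ and $C_G(H)$ simultaneously) the lift of $H\times K$ is one of the three diagonal forms (i)--(iii) of that lemma. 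I write $\tilde P,\tilde Q$ for the diagonal order-$2$ generators lifting $H$ and $\tilde C$ for the diagonal order-$5$ generator lifting $K$.

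Next I would show that any $g=[B]\in C_G(H)$ must preserve the joint eigenspaces of $H$. For each involution we have $B\tilde P B^{-1}=\mu\tilde P$ with $\mu=\pm1$ (as $\tilde P^2=I$). By Lemma \ref{lem:to2} the trace of $\tilde P$ is $1$ or $3$, so the eigenvalue $+1$ occurs with multiplicity strictly larger than that of $-1$; since $B\tilde P B^{-1}$ is conjugate to $\tilde P$ and must have the same multiplicities, the choice $\mu=-1$ (which would interchange them) is impossible, forcing $\mu=1$. Thus $B$ genuinely commutes with $\tilde P$ and $\tilde Q$, and by Lemma \ref{lem:matrixshape} $B$ is block-diagonal with respect to the coordinate decomposition into joint $H$-eigenspaces $V_\chi$; in particular $B$ preserves each one-dimensional $V_\chi$.

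Finally, since $K\lhd G$ there are a scalar $\rho$ and an exponent $j\in\{1,2,3,4\}$ with $B\tilde C B^{-1}=\rho\,\tilde C^{\,j}$, and the goal is to prove $j=1$, for then $g$ commutes with $[\tilde C]$. Here I split according to the normal forms. In forms (ii) and (iii) the matrix $\tilde C$ has eigenvalue $\xi_5$ with multiplicity one and $1$ with multiplicity four; since the eigenvalue multisets of $\tilde C$ and $\rho\,\tilde C^{\,j}$ must coincide, the multiplicity-four eigenvalue forces $\rho=1$ and then the singleton forces $j=1$. In form (i), where $\tilde C=\Diag(1,1,\xi_5,\xi_5,\xi_5^{a})$ has the less rigid pattern $2+2+1$, I would instead use the block structure: $B$ preserves the two coordinate lines on which $\tilde C$ acts by the scalars $1$ and $\xi_5$, and evaluating $B\tilde C B^{-1}=\rho\,\tilde C^{\,j}$ on these lines gives $1=\rho$ from the first and $\xi_5=\rho\,\xi_5^{\,j}$ from the second, whence $j=1$. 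In every case $g\in C_G(K)$, so $g\in C_G(H)\cap C_G(K)=C_G(H\times K)$, which finishes the argument.

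The step I expect to be the genuine obstacle is form (i): there the normal cyclic generator has no eigenvalue of multiplicity one, so the purely numerical multiset comparison that settles forms (ii) and (iii) breaks down. One really needs the block-diagonal constraint extracted from $C_G(H)$ — concretely, the fact that the two one-dimensional $H$-eigenspaces carry the \emph{distinct} $\tilde C$-eigenvalues $1$ and $\xi_5$ — in order to pin down the conjugation exponent $j$; isolating this observation is the crux of the proof.
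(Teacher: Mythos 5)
Your proof is correct and follows essentially the same route as the paper's: reduce to the three normal forms of Lemma \ref{lem:2times2times5}, show that an element of $C_G(H)$ genuinely commutes with the lifted involutions so that Lemma \ref{lem:matrixshape} forces a block-diagonal shape, and then use normality of $K$ to write $B\tilde C B^{-1}=\rho\,\tilde C^{\,j}$ and force $j=1$. In fact you supply the details the paper compresses into ``it follows that $BA_iB^{-1}=A_iB$,'' ``special form,'' and ``direct computation'' --- including the correct observation that in form (i) the eigenvalue-multiset comparison alone is insufficient (e.g.\ for $a=3$) and the one-dimensional joint $H$-eigenspaces must be used.
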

 
 \begin{proof}
 Let $\wt{H}$ and $\wt{K}$ be $F$-liftings of $H$ and $K$ respectively. Then,  we have three cases, according to the $3$ cases (i)-(iii) in Lemma \ref{lem:2times2times5}: 
 
 Case i)  $\wt{H}=\langle A_1:=\Diag(-1,1,1,1,1)$, $A_2:=\Diag(1,1,-1,1,1)\rangle$  and $\wt{K}=\langle A_3:=\xi_5^i\cdot\Diag(1,1,\xi_5,\xi_5,\xi_5^a) \rangle$.
 
 Let $[B]\in C_G(H)$. It follows that $BA_1=A_1B$, and $BA_2=A_2B$. Then the matrix $B$ must be of \lq\lq{}special\rq\rq{} form. Since $K$ is normal in $G$, we have $BA_3B^{-1}=\xi_5^j A_3^k$. Then by a direct computation, we must have $j=0,k=1$. So $[B]\in C_G(H\times K)$.
 
 The remaining two cases (ii) and (iii) are similar.
 \end{proof}

\begin{lemma}\label{lem:[40,7]}
$[40,7]\cong C_2\times (C_5\rt C_4)$ is not a subgroup of $\Aut(X)$.
\end{lemma}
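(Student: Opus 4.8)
The plan is to mimic the proof of Lemma \ref{lem:[40,3]}, reducing the problem to the $C_2\times C_5$ classification provided by Lemma \ref{lem:2times5}. First, since the Sylow $5$-subgroup of $[40,7]$ is $C_5$, Theorem \ref{liftable} (together with Theorem \ref{thm:noorder25} and the remark following it) guarantees that any copy $G\cong[40,7]$ inside $\Aut(X)$ is $F$-liftable. Thus, assuming for contradiction that $G<\Aut(X)$, I obtain a faithful five–dimensional representation $\rho=\widetilde{G}<\GL(5,\C)$ with $A(F)=F$ for every $A\in\widetilde{G}$.

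Next I would analyze the representation theory of $[40,7]\cong C_2\times(C_5\rtimes C_4)$. Writing $F_{20}=C_5\rtimes C_4$ with $C_4$ acting faithfully on $C_5$, the commutator subgroup is the normal $C_5$, so the abelianization is $C_2\times C_4$; hence $[40,7]$ has eight one–dimensional irreducibles and, by the order count $40=8\cdot 1+2\cdot 4^2$, exactly two four–dimensional irreducibles $W$ and $W\otimes\epsilon$. Since a one–dimensional representation kills the commutator subgroup and in particular the subgroup $C_5$, faithfulness of $\rho$ forces a four–dimensional constituent, so $\rho$ is of type $4\oplus 1$. On either four–dimensional irreducible the restriction to $C_5$ is the sum of the four nontrivial characters, while on the one–dimensional summand $C_5$ acts trivially. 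Consequently a generator $r$ of $C_5$ lifts to a matrix $\tilde{r}\in\widetilde{G}$ whose eigenvalue multiset is exactly $\{1,\xi_5,\xi_5^2,\xi_5^3,\xi_5^4\}$, so $\tilde{r}$ has five distinct eigenvalues.

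To reach the contradiction I would restrict attention to the subgroup generated by the direct factor $C_2$ and the normal $C_5$: these commute, so they span a subgroup $H\cong C_2\times C_5$, and $\langle\tilde{r},\tilde{t}\rangle$, with $\tilde{t}$ a lifting of the involution, is an $F$-lifting of $H$ which is faithful, because $\tilde{r}$ has distinct eigenvalues and $\tilde{t}$ is a nonscalar involution commuting with it. Lemma \ref{lem:2times5} then applies to $H$ and asserts that, up to change of coordinates, the order–five generator of this lifting is $\xi_5^i\Diag(1,1,\xi_5,\xi_5^a,\xi_5^b)$ or $\xi_5^i\Diag(1,1,\xi_5^a,\xi_5^a,\xi_5^b)$; both of these, and all of their powers, carry a repeated eigenvalue. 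Since a coordinate change preserves the multiset of eigenvalues of $\tilde{r}$, this contradicts the fact that $\tilde{r}$ has five distinct eigenvalues, and therefore $[40,7]$ cannot embed in $\Aut(X)$.

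The argument is essentially a transcription of the $[40,3]$ case, and the only point requiring care is the bookkeeping in the first two paragraphs, namely confirming that every faithful five–dimensional representation of $[40,7]$ splits as $4\oplus 1$ and that the order–five generator therefore has no repeated eigenvalue. I do not expect a genuine obstacle here; the potential pitfall is merely to verify that the $C_2\times C_5$ sublifting is honestly faithful, so that Lemma \ref{lem:2times5} is applicable, rather than collapsing onto the scalars, and this is immediate from the distinct–eigenvalue property of $\tilde{r}$.
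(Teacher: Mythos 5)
There is a genuine gap, and it occurs at the very first step: you have misidentified the group. $\SmallGroup(40,7)$ is $C_2\times(C_5\rtimes C_4)$ with $C_4$ acting on $C_5$ \emph{through its quotient of order two}, i.e.\ $C_2\times{\rm Dic}_5$ where ${\rm Dic}_5$ is the dicyclic group of order $20$; the group you analyze, $C_2\times F_{20}$ with $C_4$ acting faithfully on $C_5$, is $\SmallGroup(40,12)$. The paper's own proof shows the difference is essential: $[40,7]$ contains a subgroup isomorphic to $C_2\times C_2\times C_5$ (impossible in $C_2\times F_{20}$, where the centralizer of $C_5$ is $C_2\times C_5$ and contains a unique involution), and every faithful five-dimensional representation of $[40,7]$ has type $2\oplus 2\oplus 1$ (impossible for $C_2\times F_{20}$, which has no $2$-dimensional irreducibles at all). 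Note also that your order count is not forced: knowing only that there are eight $1$-dimensional characters, the remaining $32$ could be $2\cdot 4^2$ or $8\cdot 2^2$; for the actual $[40,7]$ it is the latter, eight $2$-dimensional irreducibles.

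Consequently your contradiction never materializes for the true $[40,7]$: in a representation of type $2\oplus 2\oplus 1$ the order-$5$ generator acts on each $2$-dimensional summand with eigenvalues $\{\xi_5^j,\xi_5^{-j}\}$, and it can perfectly well have repeated eigenvalues — indeed Lemma \ref{lem:2times2times5} forces a lifting of the shape $\Diag(\xi_5^2,\xi_5^2,\xi_5^3,\xi_5^3,1)$ (up to normalization), exactly the matrix the paper writes down, so there is no conflict with Lemma \ref{lem:2times5}. An essentially different argument is needed, and the paper supplies one: it normalizes an $F$-lifting of a $C_2\times C_2\times C_5$ subgroup via Lemma \ref{lem:2times2times5}, reads off from the character table that in any faithful representation of type $2\oplus 2\oplus X.1$ leaving $F$ invariant all three involution classes $2a,2b,2c$ must have $-1$ as an eigenvalue of multiplicity two, and then observes that the explicit lifting contains exactly one such involution — a contradiction. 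What your argument does prove, essentially verbatim from Lemma \ref{lem:[40,3]}, is that $C_2\times F_{20}=[40,12]$ cannot act on a smooth quintic threefold; that is a different group. As a minor additional point, your appeal to Theorem \ref{liftable} for $F$-liftability is incomplete as stated: one needs the Sylow $5$-subgroup to be $F$-liftable as a hypothesis, which here follows from Lemma \ref{10or50liftable} because $[40,7]$ contains a subgroup of order $10$.
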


\begin{proof}

We mainly  use tricks a), b), and e) in Remark \ref{rmk:tricks}.

\medskip

Assume to the contrary, $[40,7]\cong G< \Aut(X)$. $\wt{G}$ is an $F$-lifting of $G$ and $\rho$ is the corresponding 5 dimensional representation of $[40,7]$.

Notice that $[40,7]$ has a subgroup $H\cong C_2\times C_2\times C_5$ such that $H$ contains elements belongs to conjugacy classes $2a$, $2b$, $2c$, $5a$ in the  character table of $[40,7]$ (see Figure \ref{table:[40,7]}).

 \begin{figure}[htbp]
\begin{center}

\includegraphics[width=20cm, height=22cm]{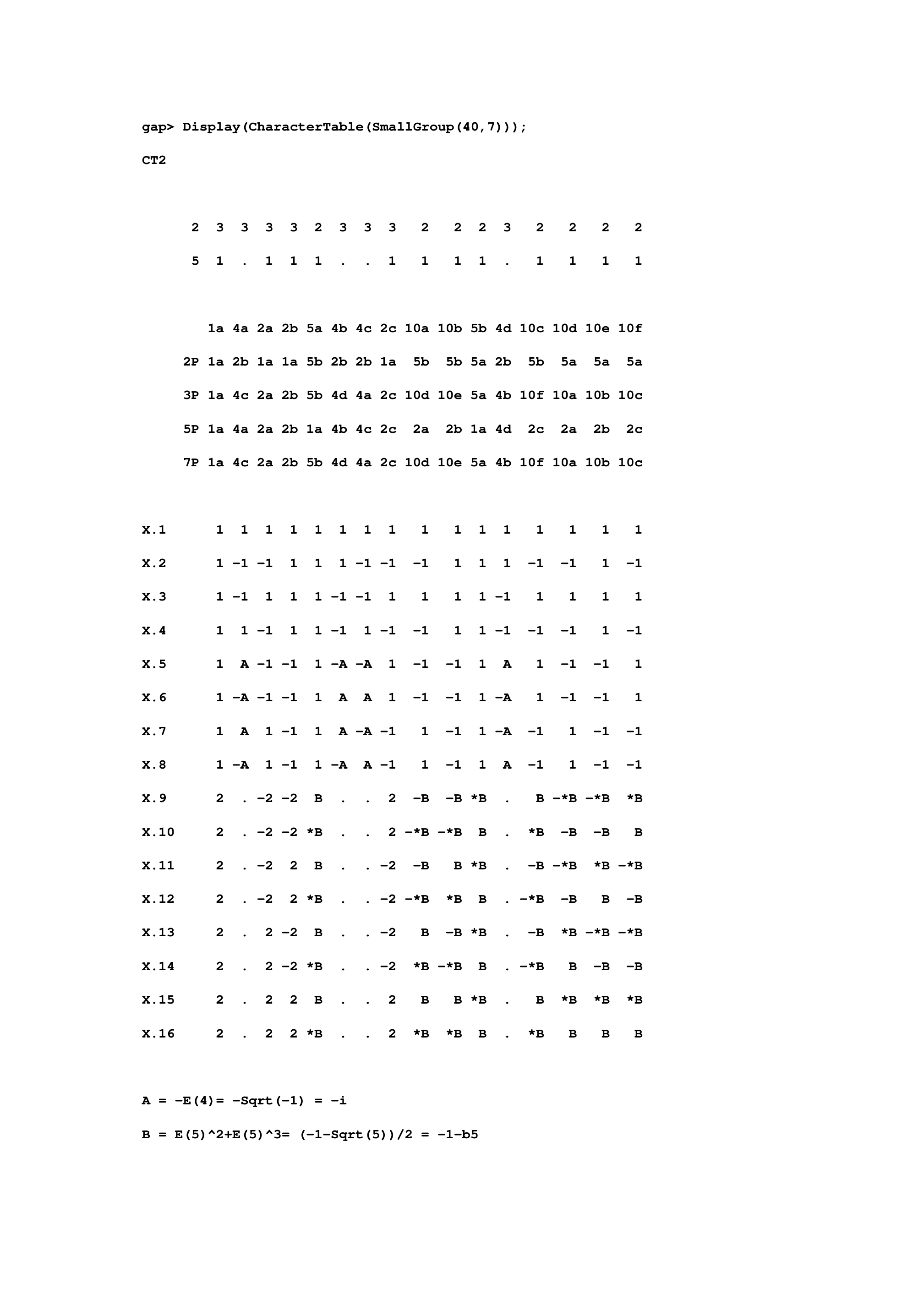}

\caption{Character table of $[40,7]$}
\label{table:[40,7]}
\end{center}
\end{figure}

By Lemma \ref{lem:2times2times5}, we may assume that $H$ is generated by $\Diag(-1,1,1,1,1)$, $\Diag(1,1,-1,1,1)$ and $\Diag(\xi_5^2,\xi_5^2,\xi_5^3,\xi_5^3,1)$.

Then $\rho$ must be of type $2\oplus 2\oplus 1$. Notice that by values of $4a$ the one dimensional representation can not be $X.3$. Therefore $\rho$ is of the form $2\oplus 2\oplus X.1$. Then by the character table of $[40,7]$, $2a,2b,2c$ all have $-1$ as eigenvalue of multiplicity two. 

 But $H$ has exactly one matrix which is of order 2 and has $-1$ as eigenvalue of multiplicity two, a contradiction. 
\end{proof}

\begin{lemma}\label{lem:3times4}

Suppose $C_3\times C_4 \cong G < \Aut(X)$. Let $\widetilde{G}$ be an $F$-lifting of $G$. Then, up to change of coordinates, we may assume $\widetilde{G}$ is generated by either one of:

i) $\Diag(\xi_3,\xi_3^2,1,1,1),\Diag(1,1,\xi_4,1,1)$.

ii) $\Diag(\xi_3,\xi_3^2,1,1,1),\Diag(1,1,\xi_4,-1,1)$.

iii) $\Diag(\xi_3,\xi_3^2,\xi_3,\xi_3^2,1),\Diag(1,1,\xi_4,1,1)$.

iv) $\Diag(\xi_3,\xi_3^2,\xi_3,\xi_3^2,1),\Diag(1,1,\xi_4,\xi_4^3,1)$.

v) $\Diag(\xi_3,\xi_3^2,\xi_3,\xi_3^2,1),\Diag(1,1,\xi_4,\xi_4,1)$.
\end{lemma}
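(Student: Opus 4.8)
The plan is to follow the argument of Lemma \ref{lem:3times2} almost verbatim, merely replacing the order-$2$ generator there by an order-$4$ one. Since $G\cong C_3\times C_4$ is abelian, any $F$-lifting $\widetilde{G}$ is an abelian subgroup of $\GL(5,\C)$, hence can be simultaneously diagonalized after a change of coordinates. Thus I may write $\widetilde{G}=\langle A_1,A_2\rangle$ with $A_1,A_2$ diagonal, $\Ord(A_1)=3$, $\Ord(A_2)=4$, and $A_1(F)=A_2(F)=F$. By Lemma \ref{order3power} the first generator is, up to coordinate change, either $A_1=\Diag(\xi_3,\xi_3^2,1,1,1)$ or $A_1=\Diag(\xi_3,\xi_3^2,\xi_3,\xi_3^2,1)$, so it remains only to determine the admissible shapes of $A_2$ in each of these two cases.

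First I would pin down the monomials forced into $F$ by $A_1$-invariance together with smoothness. When $A_1=\Diag(\xi_3,\xi_3^2,1,1,1)$, Proposition \ref{pp:smoothnessandmonomial} combined with the congruence $i_1-i_2\equiv 0\ (\Mod 3)$ characterizing $A_1$-invariant monomials forces $x_1^4x_2,x_2^4x_1\in F$; writing $A_2=\Diag(a_1,\dots,a_5)$ with $a_i^4=1$ and evaluating $A_2$ on these two monomials yields $a_1=a_2=1$, so $A_2=\Diag(1,1,a_3,a_4,a_5)$. Since $\Ord(A_2)=4$, some $a_i$ is a primitive fourth root of unity, and permuting the last three coordinates (which fixes $A_1$) I may assume it lies in slot $3$; smoothness applied to $x_3^4x_j,x_4^4x_j,x_5^4x_j$ through Propositions \ref{pp:smoothnessandmonomial} and \ref{pp:nonsmoothquintic} then leaves exactly the two possibilities (i) and (ii). When $A_1=\Diag(\xi_3,\xi_3^2,\xi_3,\xi_3^2,1)$, the same congruence analysis forces $x_5^5\in F$, whence $a_5^5=1$ and therefore $a_5=1$; the remaining four slots split into the two $A_1$-eigenspaces, and, exploiting the coordinate permutations available within each eigenspace, the constraints coming from the $A_1$-invariant quintic monomials together with Proposition \ref{pp:nonsmoothquintic} reduce $A_2$ to one of (iii), (iv), (v).

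The main obstacle is the second case: with $A_1=\Diag(\xi_3,\xi_3^2,\xi_3,\xi_3^2,1)$ there are several a priori placements of the fourth roots of unity among the four slots carrying nontrivial $A_1$-eigenvalues, and ruling out the non-smooth configurations requires computing the simultaneously $A_1$- and $A_2$-invariant degree-five monomials (most cleanly with Mathematica) and testing them against the ideal-membership criteria of Proposition \ref{pp:nonsmoothquintic}. The bookkeeping is identical in spirit to Cases 2 and 3 of Lemma \ref{lem:3times2}. Indeed, the statement coincides with Lemma \ref{C_3XC_4}, so one may alternatively simply invoke that proof; I have recorded the plan above for completeness.
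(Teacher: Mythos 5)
Your proposal is correct and follows exactly the route the paper intends: the paper's own proof of this lemma is the one-line remark ``Similar to the proof of Lemma \ref{lem:3times2}'', and your argument (simultaneous diagonalization of the abelian lifting, Lemma \ref{order3power} to normalize the order-$3$ generator, then forced monomials via Proposition \ref{pp:smoothnessandmonomial} and exclusion of the remaining eigenvalue patterns of the order-$4$ generator via Proposition \ref{pp:nonsmoothquintic}) is precisely that adaptation spelled out. Your closing observation is also accurate: the statement is a verbatim repetition of Lemma \ref{C_3XC_4}, which the paper proves by the same one-line reference, so citing it is a legitimate shortcut.
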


  \begin{proof}
Similar to the proof of Lemma \ref{lem:3times2}.
\end{proof}

\begin{lemma}\label{lem:3times8}

Suppose $C_3\times C_8 \cong G < \Aut(X)$. Let $\widetilde{G}$ be an $F$-lifting of $G$. Then, up to change of coordinates, we may assume $\widetilde{G}$ is generated by $\Diag(\xi_3,\xi_3^2,1,1,1),\Diag(1,1,\xi_8,\xi_8^{-4},1)$.
\end{lemma}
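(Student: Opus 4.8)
The plan is to mirror the structure of Lemma \ref{lem:3times2} and Lemma \ref{lem:3times4}: reduce to a pair of commuting diagonal generators, constrain their shapes using the order-$3$ and order-$8$ normal forms already established, and then invoke smoothness to discard all but the asserted configuration. First I would note that $G\cong C_3\times C_8$ is abelian of order $24$, so $G_5=1$ and $G$ is $F$-liftable by Theorem \ref{liftable}. Choosing an $F$-lifting $\widetilde{G}$, which is abelian, all of its elements are simultaneously diagonalizable, so after a change of coordinates I may write $\widetilde{G}=\langle A_1,A_2\rangle$ with $A_1,A_2$ diagonal, $\Ord(A_1)=3$, $\Ord(A_2)=8$, and $A_1(F)=A_2(F)=F$. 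By Lemma \ref{order3power} there are only two possibilities, $A_1=\Diag(\xi_3,\xi_3^2,1,1,1)$ or $A_1=\Diag(\xi_3,\xi_3^2,\xi_3,\xi_3^2,1)$, and the proof splits accordingly.

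In the first case I would argue as in Case 1 of Lemma \ref{lem:3times2}: smoothness (Proposition \ref{pp:nonsmoothquintic}) together with $A_1(F)=F$ forces $x_1^4x_2,\,x_2^4x_1\in F$. Writing $A_2=\Diag(\xi_8^{e_1},\dots,\xi_8^{e_5})$, invariance of these two monomials gives $4e_1+e_2\equiv 4e_2+e_1\equiv 0\pmod 8$, whence $e_1\equiv e_2$ and then $5e_1\equiv 0\pmod 8$, so $e_1=e_2=0$ and $A_2=\Diag(1,1,\xi_8^{e_3},\xi_8^{e_4},\xi_8^{e_5})$. Since $A_2$ has order $8$, some $e_i$ with $i\in\{3,4,5\}$ is odd; permuting the last three coordinates (which fixes $A_1$) and replacing $A_2$ by an odd power, I may take $e_3=1$. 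Running the order-$8$ bookkeeping of Lemma \ref{lem:eoo8} inside the block $\{x_3,x_4,x_5\}$ — that is, using Proposition \ref{pp:smoothnessandmonomial} to produce monomials $x_3^4x_j,\,x_4^4x_k\in F$ and reading off $4+e_j\equiv 0$ and $16+e_k\equiv 0\pmod 8$ — forces $e_4=4$ and $e_5=0$. This yields $A_2=\Diag(1,1,\xi_8,\xi_8^{-4},1)$ (recall $\xi_8^{-4}=-1$), exactly the asserted normal form.

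The main obstacle is to rule out the second case $A_1=\Diag(\xi_3,\xi_3^2,\xi_3,\xi_3^2,1)$, which — in contrast with $C_3\times C_4$ in Lemma \ref{lem:3times4} — cannot occur. Here I would first observe that no monomial is supported on $\{x_2,x_4\}$, since $A_1$-invariance of $x_2^{i_2}x_4^{i_4}$ would demand $2(i_2+i_4)\equiv 0\pmod 3$ while $i_2+i_4=5$; hence $F\in(x_1,x_3,x_5)$. Next, a bipartite ``arrow'' analysis — each coordinate $i$ supplies some $x_i^4x_{j(i)}\in F$, where $A_1$-invariance forces the partner $j(i)$ to lie in the opposite $\xi_3/\xi_3^2$ block and $A_2$-invariance forces $e_{j(i)}\equiv -4e_i\pmod 8$ — combined with the fact that $A_2$ has order $8$ pins $A_2$ down, up to relabelling, to $\Diag(\xi_8,-1,1,1,1)$; the crux is that $(-4)^2\equiv 0\pmod 8$, so an odd exponent cannot sit on a cycle, whence there is a unique odd exponent. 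Finally I would exhibit a singular point on the line $\ell=\{x_1=x_3=x_5=0\}\subset X$: every monomial of $F$ keeps a factor $x_1,x_3$ or $x_5$ after differentiating by $x_1,x_2,x_4$ or $x_5$, so those four partials vanish identically along $\ell$, while $\partial F/\partial x_3|_\ell$ equals a binary quartic $c x_4^4+d\,x_2^2x_4^2+c' x_2^4$ whose extreme coefficients $c,c'$ are nonzero (forced by $x_4^4x_3,\,x_2^4x_3\in F$). This nonzero quartic has a root $[x_2^0:x_4^0]$, giving a singular point $[0:x_2^0:0:x_4^0:0]$ of $X$, a contradiction. The delicate point is exactly this interplay: $A_1$-invariance alone admits smooth quintics, so it is the order-$8$ element $A_2$ that forbids $x_2^4x_1$ and thereby routes the offending $x_2^4$ into $\partial F/\partial x_3$ rather than $\partial F/\partial x_1$, manufacturing the singularity.
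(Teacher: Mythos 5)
Your proof is correct, but it takes a genuinely different route from the paper's. The paper's entire proof of this lemma is the one-line reduction ``Use Lemma \ref{lem:3times4}'': one passes to the subgroup $\langle A_1,A_2^2\rangle$, which is an $F$-lifting of $C_3\times C_4$, invokes the five normal forms listed there, and then (implicitly) checks which of those forms admit a compatible order-$8$ square root of the order-$4$ generator --- in particular, forms (iii)--(v) with $A_1=\Diag(\xi_3,\xi_3^2,\xi_3,\xi_3^2,1)$ still have to be killed, and the paper suppresses all of those details. You never use Lemma \ref{lem:3times4}; instead you start from the two normal forms of Lemma \ref{order3power} and constrain the exponents of the order-$8$ generator directly. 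In the first case your exponent bookkeeping ($e_1\equiv e_2$, then $5e_1\equiv 0$, then the partner chain $e_3=1$, $e_4=4$, $e_5=0$) is exactly right and lands on the asserted pair. In the second case your bipartite analysis does pin things down as claimed: since $-4e_i\bmod 8\in\{0,4\}$, no partner exponent can be odd, which rules out two odd exponents in either configuration and forces, up to relabelling, $e_1$ odd, $e_2,e_4$ even, $e_3=e_5=0$; and then your singularity construction is clean and correct --- monomials supported on $\{x_2,x_4\}$ and monomials $x_5x_2^{i_2}x_4^{i_4}$ die by $A_1$-invariance, monomials $x_1x_2^{i_2}x_4^{i_4}$ die because their $A_2$-weight is odd, so all partials except $\partial F/\partial x_3$ vanish identically on the line $\{x_1=x_3=x_5=0\}$, and the binary quartic $\partial F/\partial x_3$ restricted to that line has a zero (even if it were identically zero the contradiction persists), contradicting smoothness via the same principle as Lemma \ref{lem:nonsmooth}. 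What each approach buys: the paper's is shorter because the $C_3\times C_4$ case analysis is already on the books; yours is self-contained, supplies precisely the details the paper omits, and replaces the paper's usual ``compute invariant monomials with Mathematica, then apply Proposition \ref{pp:nonsmoothquintic}'' pattern with an explicit, hand-checkable singular point.
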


  \begin{proof}
Use Lemma \ref{lem:3times4}.
\end{proof}

\begin{lemma}\label{lem:[48,5]}
$[48,5]\cong C_{24}\rtimes C_2$ is not a subgroup of $\Aut(X)$.
\end{lemma}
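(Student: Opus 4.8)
The plan is to argue in the spirit of Lemmas \ref{lem:C16} and \ref{lem:C32}: exhibit a cyclic $2$-subgroup that is forced to be central and then contradict the structure of $[48,5]$. So suppose for contradiction that $G\cong[48,5]$ is a subgroup of $\Aut(X)$. Since $5\nmid|G|=48$, the Sylow $5$-subgroup of $G$ is trivial, hence $F$-liftable, and so by Theorem \ref{liftable} the whole group $G$ is $F$-liftable; fix an $F$-lifting $\widetilde{G}<\GL(5,\C)$.

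Recall that $[48,5]\cong C_{24}\rtimes C_2$ contains a normal cyclic subgroup $C_{24}\cong C_3\times C_8$. The restriction of $\widetilde{G}$ is an $F$-lifting of this $C_3\times C_8$, so by Lemma \ref{lem:3times8} I would change coordinates so that it is generated by $A_1=\Diag(\xi_3,\xi_3^2,1,1,1)$ and $A_2=\Diag(1,1,\xi_8,\xi_8^{-4},1)$, where $\xi_8^{-4}=-1$. Then $[A_2]$ has order $8$ in $\PGL(5,\C)$ and $\langle[A_2]\rangle\cong C_8$ is the Sylow $2$-subgroup of $C_{24}$; being characteristic in the normal subgroup $C_{24}\lhd G$, it is itself normal in $G$.

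The next step is to compute how $G$ conjugates $[A_2]$. For any $[B]\in G$, normality of $\langle[A_2]\rangle$ gives $BA_2B^{-1}=\lambda A_2^k$ for some scalar $\lambda\in\C^*$ and some integer $k$. The eigenvalue multiset of $A_2$ is $\{1,1,1,\xi_8,-1\}$, in which the value $1$ occurs with multiplicity three, whereas that of $\lambda A_2^k$ is $\{\lambda,\lambda,\lambda,\lambda\xi_8^k,\lambda(-1)^k\}$. Comparing the unique eigenvalue of multiplicity $\geq 3$ forces $\lambda=1$, and then matching the remaining eigenvalues $\{\xi_8,-1\}$ with $\{\xi_8^k,(-1)^k\}$ forces $\xi_8^k=\xi_8$, i.e. $k\equiv 1\pmod 8$. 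Hence $BA_2B^{-1}=A_2$ for every $[B]\in G$, so $\langle[A_2]\rangle\cong C_8$ lies in the center of $G$. The final step is purely group-theoretic: in $[48,5]\cong C_{24}\rtimes C_2$ the complementary involution acts nontrivially on the $C_8$-factor of $C_{24}$ (equivalently $[48,5]\not\cong C_8\times S_3$), so the $2$-part of $Z(G)$ is strictly smaller than $C_8$ and $G$ has no central subgroup isomorphic to $C_8$. This contradicts the previous paragraph, proving that $[48,5]$ is not a subgroup of $\Aut(X)$.

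I expect the only genuinely delicate point to be the structural input in the last step, namely confirming from the description of $[48,5]$ that the order-$2$ complement acts nontrivially on the cyclic $C_8$ so that $C_8\not\leq Z(G)$; I would verify this directly from the GAP data for the group. Were the action trivial (the case $C_8\times S_3$), the $C_8$ could be central and one would instead have to exploit the action on the $C_3$-factor, as in tricks (e) and (d) of Remark \ref{rmk:tricks}. The only other place to be careful is the $\PGL$-scalar ambiguity in the relation $BA_2B^{-1}=\lambda A_2^k$, but this is dispatched cleanly by the multiplicity-three eigenvalue $1$ of $A_2$, exactly as in the proofs of Lemmas \ref{lem:C16} and \ref{lem:C32}.
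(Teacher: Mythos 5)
Your proof is correct and follows essentially the same route as the paper: both normalize the normal $C_{24}\cong C_3\times C_8$ via Lemma \ref{lem:3times8}, use the eigenvalue multiset $\{1,1,1,\xi_8,-1\}$ of $A_2$ to force conjugation to fix $A_2$, and then contradict the structure of $[48,5]$ (you phrase this as the absence of a central $C_8$, the paper as the non-abelian Sylow $2$-subgroup — the same fact). Your extra care with the $\PGL$-scalar $\lambda$ and the characteristic-subgroup argument are fine refinements of what the paper leaves implicit by working directly inside the lifting $\widetilde{G}\subset\GL(5,\C)$.
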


\begin{proof}
We mainly  use tricks a) and e) in Remark \ref{rmk:tricks}.

\medskip

Assume to the contrary, $[48,5]\cong G< \Aut(X)$. Let $\wt{G}$ be an $F$-lifting of $G.$ By the structure of $[48,5]$, we see that $\wt{G}\cong (C_3\times C_8)\rtimes C_2$  and Sylow $2$-subgroup of $\wt{G}$ is not abelian. 

Let $H\cong C_3\times C_8$ be a normal subgroup of $\wt{G}$. Then by Lemma \ref{lem:3times8}, we may assume $H$ is generated by $\Diag(\xi_3,\xi_3^2,1,1,1),A:=\Diag(1,1,\xi_8,\xi_8^{-4},1)$. 

Let $B\in \GL(5,\C)$. Then, $a=1$ if $BAB^{-1}=A^{a}$ by eigenvalue considerations. However, then $\wt{G}$ must have an abelian Sylow $2$-subgroup, a contradiction.
\end{proof}

\begin{lemma}\label{lem:[72,39]}
$[72,39]\cong C_3^2\rt C_8$ is not a subgroup of $\Aut(X)$.
\end{lemma}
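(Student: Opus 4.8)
The plan is to assume $[72,39]\cong G<\Aut(X)$ and to force a contradiction from the interplay between the shape of the normal Sylow $3$-subgroup $N\cong C_3^2$ and the order-$8$ automorphism of $N$ coming from the quotient $G/N\cong C_8$.

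First I would lift everything to $\GL(5,\C)$. Since $5\nmid |G|=2^3\cdot 3^2$, Theorem~\ref{liftable} provides an $F$-lifting $\widetilde G<\GL(5,\C)$, which I regard as a faithful five-dimensional representation of $G$ with $A(F)=F$ for every $A\in\widetilde G$. Let $\widetilde N<\widetilde G$ be the lift of the normal subgroup $N\cong C_3^2$. As $N$ is abelian, $\widetilde N$ consists of commuting, hence simultaneously diagonalizable, matrices; after a change of coordinates every $A\in\widetilde N$ becomes $\Diag(\xi_3^{w_1},\dots,\xi_3^{w_5})$, and the exponent map $A\mapsto (w_1,\dots,w_5)$ realizes $\widetilde N$ as a two-dimensional $\F_3$-linear subspace (code) $\mathcal C\subset \F_3^5$.

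The key local input is a weight dichotomy. Each nonzero $v\in\mathcal C$ is the exponent vector of the lift of some order-$3$ element of $\Aut(X)$ fixing $F$, so by Lemma~\ref{order3power} (with Lemma~\ref{lem:em3} ruling out an eigenvalue of multiplicity $\ge 3$) its diagonal entries form the multiset $\{\xi_3,\xi_3^2,1,1,1\}$ or $\{\xi_3,\xi_3^2,\xi_3,\xi_3^2,1\}$. In coding terms, the Hamming weight $\mathrm{wt}(v)$ of every nonzero codeword is either $2$ or $4$. Next I would feed in the action of the quotient. The group $[72,39]$ is built so that an element $c$ of order $8$ acts on $N$ as an order-$8$ automorphism, i.e. as a Singer cycle of $\Aut(C_3^2)=\GL(2,\F_3)$; consequently $c$ permutes the four order-$3$ subgroups of $N$, equivalently the four lines of $\mathcal C$, in a single $4$-cycle. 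Choosing a lift $C\in\widetilde G$ of $c$, the conjugation $A\mapsto CAC^{-1}$ stabilizes $\widetilde N$ and preserves eigenvalue multisets, hence preserves weights; since $\mathrm{wt}$ is constant on each line, all four lines — that is, all eight nonzero codewords — must share a single common weight $w\in\{2,4\}$.

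Finally I would close with an elementary counting contradiction. For a $[5,2]_3$ code each coordinate functional is either identically zero or surjective, so the total weight $\sum_{0\neq v\in\mathcal C}\mathrm{wt}(v)$ equals $6\ell$, where $\ell$ is the number of coordinates on which $\mathcal C$ does not vanish; in particular it is divisible by $6$. If all eight nonzero codewords had weight $w$, this total would be $8w$, forcing $6\mid 8w$, i.e. $3\mid w$, which is impossible for $w\in\{2,4\}$. This contradiction shows that $[72,39]$ cannot be a subgroup of $\Aut(X)$. The main obstacle is the structural claim that the $C_8$ acts transitively (as a single $4$-cycle) on the four order-$3$ subgroups of $N$: this transitivity is exactly what collapses the weights to one value, whereas a non-transitive action would permit the balanced $2+2$ split that realizes an honest $C_3^2<\Aut(X)$. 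I would verify this transitivity directly from the structure of $[72,39]$ (e.g. via GAP, as elsewhere in this section), and it is the only place where the specific isomorphism type of the group, rather than merely its order and Sylow data, is used.
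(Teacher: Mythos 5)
Your proof is correct, but it takes a genuinely different route from the paper's. Both arguments begin identically: since $5\nmid 72$, Theorem \ref{liftable} provides an $F$-lifting, i.e.\ a faithful $5$-dimensional representation preserving $F$. From there the paper (tricks b) and f) of Remark \ref{rmk:tricks}) is purely representation-theoretic: the character table of $[72,39]$ (Figure \ref{table:[72,39]}) shows that its irreducible representations have degrees $1,1,1,1,1,1,1,1,8$ (the commutator subgroup is the whole $C_3^2$), so every representation of degree at most $7$ is a sum of linear characters, factors through the abelianization $C_8$, and kills $C_3^2$; hence no faithful $5$-dimensional representation exists at all, and smoothness of $X$ is never used beyond liftability. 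You instead keep only the geometric input --- the eigenvalue dichotomy of Lemma \ref{order3power}, which gives Hamming weight $2$ or $4$ for every nonzero codeword of your $2$-dimensional ternary code --- and replace the character table by the structural fact that the order-$8$ element acts on the normal $C_3^2$ as a Singer cycle, hence as a $4$-cycle on the four lines. Your counting step is sound ($6\ell=8w$ forces $3\mid w$, impossible for $w\in\{2,4\}$), and the Singer claim is indeed true for $[72,39]$; it is in fact equivalent to what the paper reads off the character table (a unique nonlinear irreducible, of degree $8$). What the paper's route buys is brevity and the stronger conclusion that $[72,39]$ cannot even embed into $\GL(5,\C)$; what yours buys is an explanation of exactly where the group structure bites: the transitive action on lines is what forbids the balanced $2+2$ weight split realized by honest $C_3^2$-actions such as Example (14) in Example \ref{mainex}. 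Do record the GAP verification of the Singer action explicitly, since it carries real content: for the other semidirect products $C_3^2\rtimes C_8$ (with $C_8$ acting through $C_2$ or $C_4$) faithful $5$-dimensional representations do exist and the four lines split into two orbits, so both your contradiction and the paper's would evaporate for those groups.
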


\begin{proof}
We mainly  use tricks b) and f) in Remark \ref{rmk:tricks}.

\medskip

If otherwise, then $[72,39]$ must have a 5 dimensional faithful representation, say $\rho$. However, by the character table of  $[72,39]$ (see Figure \ref{table:[72,39]}), $\rho$ can not exist, a contradiction.

 \begin{figure}[htbp]
\begin{center}

\includegraphics[width=20cm, height=22cm]{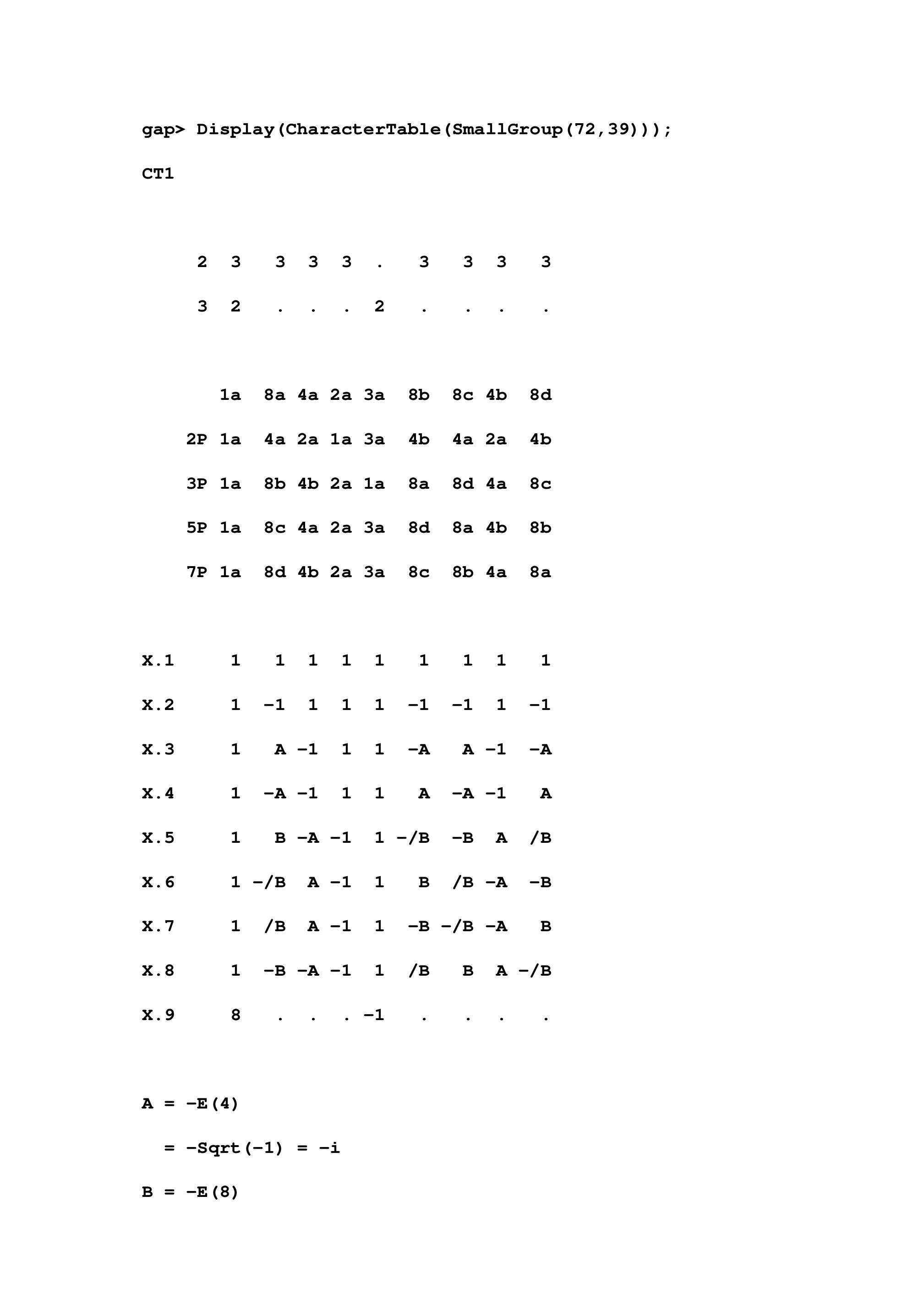}

\caption{Character table of $[72,39]$}
\label{table:[72,39]}
\end{center}
\end{figure}

\end{proof}

 \begin{lemma}\label{lem:355}
Let $G< \Aut(X)$. Suppose $G\cong C_3\times C_5\times C_5$. Let $\widetilde{G}$ be an $F$-lifting of $G$. (Existence of $\wt{G}$ can be proved similarly as before.) Then, up to change of coordinates,  $\wt{G}$ is generated by 

 (i) $\xi_5^i\cdot\Diag(1,1,\xi_5,1,\xi_5^a)$, $\xi_5^j\cdot\Diag(1,1,1,\xfi,\xi_5^b)$ and $\Diag(\xi_3,\xi_3^2,1,1,1)$ for some $i,j,a,b$; or
 
(ii) $\xi_5^i\cdot\Diag(1,1,\xfi,\xfi,1)$, $\xfi^j\cdot\Diag(1,1,1,1,\xfi)$ and $\Diag(\xi_3,\xi_3^2,\xi_3,\xi_3^2,1)$ for some $i,j$.

  In the case (ii), $X$ is isomorphic to Example (14) in Example \ref{mainex}. 

\end{lemma}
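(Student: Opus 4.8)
**The plan is to follow exactly the same "diagonalize, use commutation, then use smoothness" recipe that established Lemmas \ref{lem:3times2}, \ref{lem:2times5}, \ref{lem:2times2times5}, and \ref{lem:355}(i), and then do a final invariant-monomial computation to identify Example (14).**

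First I would set up the $F$-lifting. Since $C_3 \times C_5^2$ is abelian and $F$-liftable (its existence is asserted in the statement, and follows as before from Theorem \ref{liftable} together with the results of Section \ref{ss:Sylow5}), we may take $\widetilde{G} = \langle A_1, A_2, A_3 \rangle$ with all three $A_i$ simultaneously diagonalizable, $A_1, A_2$ of order $5$, $A_3$ of order $3$, and each $A_i$-semi-invariance of $F$ upgraded to $A_i(F)=\lambda_i F$. By Lemma \ref{order3power} we may assume $A_3 = \Diag(\xi_3,\xi_3^2,1,1,1)$ or $A_3 = \Diag(\xi_3,\xi_3^2,\xi_3,\xi_3^2,1)$; these two shapes will produce cases (i) and (ii) respectively. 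The $C_5^2$ subgroup $\langle [A_1],[A_2]\rangle$ is, by Lemma \ref{lem:5^2notliftable}, either in the diagonalizable ("liftable") situation or the cyclic-permutation situation; but the latter is incompatible with commuting with the order-$3$ element $A_3$ of the above shape (a permutation-type $A_2$ cannot commute with a non-scalar diagonal $A_3$ unless the diagonal entries are arranged compatibly, which forces $A_3$ scalar on the permuted block), so I expect only the diagonalizable case survives. Then $A_1, A_2$ are diagonal with fifth-root-of-unity entries, and I normalize them so that the exponent patterns match the two displayed forms.

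Next, for \textbf{case (i)} ($A_3 = \Diag(\xi_3,\xi_3^2,1,1,1)$), the commutation with $A_3$ and smoothness (via Proposition \ref{pp:nonsmoothquintic}) force $x_1^4x_2, x_2^4x_1 \in F$, so $A_1, A_2$ must fix the first two coordinates up to the global scalar, giving $A_1 = \xi_5^i\Diag(1,1,\xi_5,1,\xi_5^a)$ and $A_2 = \xi_5^j\Diag(1,1,1,\xi_5,\xi_5^b)$ after relabeling $x_3,x_4$; this matches (i). For \textbf{case (ii)} ($A_3 = \Diag(\xi_3,\xi_3^2,\xi_3,\xi_3^2,1)$), the invariance of $F$ under $A_3$ plus smoothness forces $x_5^5 \in F$ and the monomials $x_1^4x_2, x_2^4x_1, x_3^4x_4, x_4^4x_3$ (or a coordinate permutation thereof), pinning $A_1, A_2$ to fix coordinates $x_1,x_2,x_3,x_4$ blockwise up to scalar and giving $A_1 = \xi_5^i\Diag(1,1,\xfi,\xfi,1)$, $A_2 = \xfi^j\Diag(1,1,1,1,\xfi)$; this matches (ii). In each normalization step I would use the trick of replacing $A_1$ or $A_2$ by a product $A_1^p A_2^q A_3^r$ times a scalar (as in the proof of Theorem \ref{thms5}) to kill unwanted exponents.

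Finally, in case (ii), I would compute the $\widetilde{G}$-invariant degree-$5$ monomials. The surviving monomials are exactly those appearing in $x_1^4x_2 + x_2^4x_1 + x_3^4x_4 + x_4^4x_3 + x_5^5$ together with the cross term(s) compatible with all three characters; matching this against the defining equation of Example (14) and comparing the generating matrices $A_1,\dots,A_7$ there shows $X$ is projectively equivalent to Example (14). \textbf{I expect the main obstacle to be case (ii)}: ruling out the cyclic-permutation alternative for $C_5^2$ when an order-$3$ element is present, and then carefully verifying that the invariant-monomial computation forces precisely the Example (14) equation (rather than a strictly smaller or larger invariant space) — this is where one must invoke smoothness repeatedly and where a short Mathematica check of invariant monomials, as used throughout the paper, is the cleanest route.
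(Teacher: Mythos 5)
Your proposal is correct and takes essentially the same approach as the paper, whose entire proof reads ``Similar to the proof of Lemma \ref{lem:3times2}'' together with a Mathematica computation of invariant monomials for the final identification with Example (14) --- i.e.\ exactly the diagonalize, apply Lemma \ref{order3power}, use smoothness via Proposition \ref{pp:nonsmoothquintic}, normalize generators, compute invariants recipe you describe. One small simplification: since $\wt{G}$ is by definition an abelian subgroup of $\GL(5,\C)$, its elements are automatically simultaneously diagonalizable, so the cyclic-permutation alternative of Lemma \ref{lem:5^2notliftable} never arises for the lifting itself and need not be ruled out via commutation with the order-$3$ element.
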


  \begin{proof}
Similar to the proof of Lemma \ref{lem:3times2}. For the last statement, we  use Mathematica to compute invariant monomials.
\end{proof}

\begin{lemma}\label{lem:[150,9]}
$[150,9]\cong C_3\ti ((C_5\ti C_5)\rt C_2)$ is not a subgroup of $\Aut(X)$.
\end{lemma}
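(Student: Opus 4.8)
The plan is to assume for contradiction that $G \cong [150,9] < \Aut(X)$ and to run the argument through the normal subgroup $N \cong C_3 \times C_5^2$ of index $2$ (the central $C_3$ together with the normal $C_5^2$). Note first that $G$ contains the subgroup $\mathrm{Dih}(C_5^2)=C_5^2\rtimes C_2$ of order $50=2\cdot 5^2$, so by Lemma \ref{10or50liftable} the group $G$ is $F$-liftable; fix an $F$-lifting $\widetilde{G}$ and let $\widetilde{N}<\widetilde{G}$ be the induced $F$-lifting of $N$. Since $N\cong C_3\times C_5\times C_5$, Lemma \ref{lem:355} applies, so after a change of coordinates $\widetilde{N}$ is of type (i) or (ii) there. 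I would then treat these two cases separately.

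Case (ii) should be the easy one. There Lemma \ref{lem:355} already tells us that $X$ is isomorphic to Example (14) in Example \ref{mainex}, whence $\Aut(X)\cong (C_5^2\times C_3^2)\rtimes D_8$ by Theorem \ref{thm:Aut(X)1-16}. It then suffices to verify (trick e), a short GAP check) that $[150,9]$ is \emph{not} isomorphic to a subgroup of $(C_5^2\times C_3^2)\rtimes D_8$; conceptually this holds because every involution of that group acts on its normal $C_5^2$ through a permutation matrix, which can never invert $C_5^2$, whereas the involution of $\mathrm{Dih}(C_5^2)\le [150,9]$ must invert $C_5^2$. This contradicts $G<\Aut(X)$.

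Case (i) is the substantive one. Here $\widetilde{N}$ is generated by $\xi_5^i\,\Diag(1,1,\xi_5,1,\xi_5^a)$, $\xi_5^j\,\Diag(1,1,1,\xi_5,\xi_5^b)$ and $A_3=\Diag(\xi_3,\xi_3^2,1,1,1)$, so the $C_3$ acts only on $x_1,x_2$ and the $C_5^2$ only on $x_3,x_4,x_5$. Let $h=[B]$ be a lift of the order-two generator, chosen with $B(F)=\lambda F$. Since $C_3$ is a direct factor of $G$, $h$ centralizes $[A_3]$, and comparing eigenvalue multisets forces $BA_3B^{-1}=A_3$; by Lemma \ref{lem:matrixshape} this gives $B=\Diag(b_1,b_2,B')$ with $B'\in\GL(3,\C)$ acting on $(x_3,x_4,x_5)$. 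The involution inverts the normal $C_5^2$, so $B'$ must conjugate the two diagonal order-$5$ matrices $\Diag(\xi_5,1,\xi_5^a)$ and $\Diag(1,\xi_5,\xi_5^b)$ to scalar multiples of their inverses. The hard part will be to push this through: matching eigenvalue multisets severely restricts the admissible exponents $(a,b)$ and shows $B'$ must permute the axes $x_3,x_4,x_5$ up to scalars, after which I would combine the resulting shape of $B$ with the $N$-semi-invariance of $F$ and the smoothness criterion of Proposition \ref{pp:nonsmoothquintic} to force $F$ into an ideal of the form $(x_p)+(x_q,x_r)^2$ or $(x_p,x_q)$, contradicting smoothness of $X$. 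The main obstacle is thus the bookkeeping in Case (i): finitely many configurations of $(a,b)$ and of the induced permutation must each be ruled out (conveniently with Mathematica, as elsewhere in the paper), and the crux is verifying that in every surviving configuration the extra involution is incompatible with smoothness.
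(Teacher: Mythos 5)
Your overall architecture coincides with the paper's own proof: $F$-liftability of $G$ via Lemma \ref{10or50liftable}, reduction to the two cases of Lemma \ref{lem:355}, and in Case (ii) the identification of $X$ with Example (14) followed by the observation that $[150,9]$ cannot embed into $\Aut(X_{14})\cong (C_5^2\times C_3^2)\rtimes D_8$. Your conceptual argument there is sound: an embedded copy of $[150,9]$ would have to contain the unique (normal) Sylow $5$-subgroup $C_5^2$ of that group, whose conjugation action factors through $D_8$ acting by coordinate permutations, so no element inverts it, whereas the involution of $[150,9]$ does. (One small point: that the $C_2$ in $[150,9]$ acts on $C_5^2$ by inversion is a fact about this particular GAP ID and needs to be recorded; the paper certifies it by the GAP observation that $C_2\times C_5$ is not a subgroup of $[150,9]$, which forces the involution to act fixed-point-freely, hence as $-I$.) The setup of your Case (i) --- $B=\Diag(b_1,b_2,B')$ from centralizing $[A_3]$, with $B'$ conjugating $D_1=\Diag(\xi_5,1,\xi_5^a)$ and $D_2=\Diag(1,\xi_5,\xi_5^b)$ into scalar multiples of their inverses --- is also exactly the paper's.

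However, Case (i) is where the proof actually has to happen, and you have left it as a plan ("the hard part", "bookkeeping\dots conveniently with Mathematica"), so as written there is a genuine gap; moreover, the route you sketch is aimed at the wrong target. No smoothness argument and no case analysis over "surviving configurations" is needed, because nothing survives the linear algebra. Concretely: comparing the first diagonal entry on both sides of $BA_1B^{-1}=\lambda_1A_1^{-1}$ gives $\lambda_1=\xi_5^{2i}$, so the scalar cancels and $B'D_1B'^{-1}=D_1^{-1}$ exactly (likewise $B'D_2B'^{-1}=D_2^{-1}$). Equality of the eigenvalue multisets $\{\xi_5,1,\xi_5^a\}=\{\xi_5^{-1},1,\xi_5^{-a}\}$ forces $a\equiv -1\ (\Mod 5)$, and similarly $b\equiv -1\ (\Mod 5)$. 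Now apply Lemma \ref{lem:matrixshape} twice: the relation $B'D_1=D_1^{-1}B'$ allows nonzero entries of $B'$ only in positions $(3,1)$, $(2,2)$, $(1,3)$, while $B'D_2=D_2^{-1}B'$ allows them only in positions $(1,1)$, $(3,2)$, $(2,3)$. The two patterns are disjoint, so $B'=0$, contradicting invertibility of $B$. This is precisely the paper's assertion that "such $B$ cannot exist", and it closes Case (i) in a few lines; in particular, the smoothness criterion of Proposition \ref{pp:nonsmoothquintic}, which your plan relies on as the crux, never enters.
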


\begin{proof}
We mainly  use tricks a), d), and e) in Remark \ref{rmk:tricks}.

\medskip

By GAP, [150,9] isomorphic to $C_3\ti ((C_5\ti C_5)\rt C_2)$ and $C_2\ti C_5$ is not a subgroup of [150,9].

It suffices to consider two cases: (i), (ii) in the Lemma \ref{lem:355}.

Case (i) $C_3\ti C_5 \ti C_5$ generated by  $\aod\xi_5^i\cdot\Diag(1,1,\xi_5,1,\xi_5^a)$, $\atd\xi_5^j\cdot\Diag(1,1,1,\xfi,\xi_5^b)$ and $\ahd\Diag(\xi_3,\xi_3^2,1,1,1)$ for some $i,j,a,b$.

  By the structure of $[150,9]$ there exists $[B]\in [150,9]$ such that $A_3B=BA_3, BA_1B^{-1}=A_1^{-1}$ and  $BA_2B^{-1}=A_2^{-1}$. It is easy to check such $B$ can not exist, a contradiction.

Case (ii)  $[150,9]$ is not a subgroup of $\Aut(X)$ if $X$ is isomorphic to Example (14) in Example \ref{mainex}, a contradiction.
 \end{proof}

\begin{lemma}\label{lem:558}
Suppose $C_5\ti C_5\ti C_8 \cong G< \Aut(X)$. Then $\Aut(X)$ is isomorphic to a subgroup of the group in Example 4 in Example \ref{mainex}.
\end{lemma}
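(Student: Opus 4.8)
The plan is to follow the three-step pattern used throughout this section: first lift the action to $\GL(5,\C)$ and simultaneously diagonalize it, then exploit smoothness to pin down the defining polynomial $F$, and finally feed the resulting $F$ into the differential method to compute $\Aut(X)$ and bound it by the Example (4) group.

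First I would secure $F$-liftability and diagonalize. Write $G=\langle [A_1],[A_2]\rangle\times\langle[C]\rangle$ with $\langle[A_1],[A_2]\rangle\cong C_5^2$ and $\Ord([C])=8$. By Lemma \ref{lemma1} the subgroup $C_5^2$ is either $F$-liftable or, up to coordinates, the Heisenberg pair with $A_2=\Diag(1,\xi_5,\xi_5^2,\xi_5^3,\xi_5^4)$ and $A_1$ the cyclic shift. I claim the second case cannot occur: since $[C]$ commutes with the order-$5$ diagonalizable element $[A_2]$, I may simultaneously diagonalize $C$ and $A_2$, whereupon $A_1$ becomes the $5$-cycle permutation and Lemma \ref{lem:5^2notliftable} gives $x_1^4x_2,\dots,x_5^4x_1\in F$. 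Comparing the $2$-power weights of $C=\Diag(\xi_8^{w_1},\dots,\xi_8^{w_5})$ on these monomials forces $w_{i+1}\equiv 4w_i\pmod 8$; running this relation around the $5$-cycle yields $w_i\equiv 0$ for all $i$, i.e. $[C]=1$, contradicting $\Ord([C])=8$. Hence $C_5^2$ is $F$-liftable, so by Theorem \ref{liftable} together with the remark following Theorem \ref{thm:noorder25} (condition (2) being automatic) the whole group $G$ is $F$-liftable. As $\wt G\cong C_5^2\times C_8$ is a finite abelian group of diagonalizable matrices, I may take $A_1,A_2,C$ simultaneously diagonal, each fixing $F$.

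Next I would determine $F$. Writing $A_1=\Diag(\xi_5^{a_i})$, $A_2=\Diag(\xi_5^{b_i})$, $C=\Diag(\xi_8^{w_i})$, Proposition \ref{pp:smoothnessandmonomial} gives for each $i$ a monomial $x_i^4x_{j(i)}\in F$, and invariance forces $a_{j(i)}\equiv a_i$, $b_{j(i)}\equiv b_i\pmod 5$ and $w_{j(i)}\equiv 4w_i\pmod 8$; in particular the $5$-weights are constant on each connected component of the graph $i\mapsto j(i)$. Since $C$ has order $8$, some $w_i$ is odd, and then $w_{j(i)}\equiv 4$ and $w_{j(j(i))}\equiv 0$ are forced, giving three distinct weights and hence three distinct nodes, so this component has at least three nodes; the weight relation rules out closing it to the odd- or $4$-weight node, leaving only the self-loop $x_3^5$. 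Faithfulness of the $C_5^2$-action requires the constant-per-component $5$-weights to take at least three distinct values, hence at least three components among five coordinates; this caps the order-$8$ component at exactly three nodes and makes the two remaining coordinates singleton fixed points $x_4^5,x_5^5$. Thus, up to relabeling, $F$ contains the core $F_0:=x_1^4x_2+x_2^4x_3+x_3^5+x_4^5+x_5^5$ of Example (4) in Example \ref{mainex}, together with possibly a few extra $\wt G$-invariant monomials which, by the $2$-weight bookkeeping, can only be of the shapes $x_2^2m_3$ with $m_3$ a cubic in $x_3,x_4,x_5$ or a degree-$5$ monomial in $x_3,x_4,x_5$ — a finite list enumerated by a routine Mathematica computation.

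Finally, for each admissible $F$ I would run the differential method exactly as in Theorem \ref{thm:Aut(X)1-16}: applying $\partial/\partial x_i$ and comparing ranks of the first-order differential maps shows every $L\in\Aut(X)$ is a semi-permutation matrix preserving the monomial core $F_0$, so $\Aut(X)$ embeds into $\Aut(\{F_0=0\})$, which is the group $C_{16}\times(C_5^2\rtimes C_2)$ of Example (4) computed in Theorem \ref{thm:Aut(X)1-16}. The main obstacle will be precisely this last step when the extra monomials are present: one must verify, case by case through the finite list produced above, that these terms do not disturb the rank computation and still force semi-permutation automorphisms fixing $F_0$. Excluding the non-liftable Heisenberg configuration for $C_5^2$ is the other delicate point, but the $2$-weight argument around the forced $5$-cycle disposes of it cleanly.
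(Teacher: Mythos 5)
Your overall route is the one the paper takes: establish $F$-liftability of $G$, simultaneously diagonalize a lifting $\wt{G}$, use smoothness plus weight bookkeeping to pin the equation down to $F=x_1^4x_2+x_2^4x_3+x_3^5+x_4^5+x_5^5+\lambda x_2^2x_3^3$ (the paper gets exactly this by computing the invariant monomials with Mathematica; your graph-and-weights argument is a correct hand-made version of that computation), and then invoke the differential method to conclude that $\Aut(X)$ embeds in $\Aut(X_4)\cong C_{16}\times(C_5^2\rtimes C_2)$, where $X_4$ is the quintic of Example (4) in Example \ref{mainex}. Your second and third steps are sound and agree with the paper's proof.

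The gap is in your first step, the exclusion of the non-liftable (Heisenberg) configuration of $C_5^2$. You assert that ``Lemma \ref{lem:5^2notliftable} gives $x_1^4x_2,\dots,x_5^4x_1\in F$''. It does not: that lemma only normalizes the generators $[A_1],[A_2]$ of a non-liftable $C_5^2$; the monomial conclusion belongs to Lemma \ref{lem:5notliftable}, which applies to a non-liftable \emph{cyclic} group of order $5$, and in the Heisenberg configuration the diagonal generator can perfectly well span an $F$-liftable $C_5$, in which case no such monomials are forced. Concretely, the Fermat quintic contains a Heisenberg $C_5^2$ inside $C_5^4\rtimes S_5$, yet its equation contains no monomial $x_i^4x_j$ at all. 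So the $2$-weight computation you run around the ``forced'' $5$-cycle rests on monomials that need not exist, and this part of the argument fails as written. Fortunately the conclusion is true and has two quick proofs. (a) A centralizer argument: with $A_1=\Diag(1,\xi_5,\xi_5^2,\xi_5^3,\xi_5^4)$ and $A_2$ the cyclic shift, any $C$ with $[C]$ commuting with both satisfies $CA_1C^{-1}=\lambda A_1$ and $CA_2C^{-1}=\mu A_2$ with $\lambda^5=\mu^5=1$ (take determinants); if $\lambda\neq 1$ then $C$ permutes the five eigenlines of $A_1$ in a $5$-cycle, forcing $5\mid \Ord([C])$, so $\lambda=1$ and $C$ is diagonal by Lemma \ref{lem:matrixshape}; then $CA_2C^{-1}=\mu A_2$ forces $C=c\,\Diag(1,\mu^{-1},\mu^{-2},\mu^{-3},\mu^{-4})$, whence $\Ord([C])$ divides $5$ --- incompatible with $\Ord([C])=8$. (b) Simpler, and this is what the paper's phrase ``by a similar argument as before'' points to: $G\cong C_5^2\times C_8$ contains a subgroup of order $2\cdot 5^2$, so $G$ is $F$-liftable by Lemma \ref{10or50liftable} together with Theorem \ref{liftable}, and no discussion of the Heisenberg case is needed at all.
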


\begin{proof}
By a similar argument as before, we may assume $\wt{G}$ is generated by $\xi_5^i\cdot\Diag(1,1,1,\xfi,1)$, $\xfi^j\cdot\Diag(1,1,1,1,\xfi)$ and $\Diag(\xi_8,\xi_8^{-4},1,1,1)$ for some $i,j$.

Then using Mathematica we can compute the invariant monomials of $\wt{G}$ and obtain $F=x_1^4x_2+x_2^4x_3+x_3^5+x_4^5+x_5^5+\lambda x_2^2x_3^3$. Now, we may apply the differential method in Section \ref{ss:differentialmethod} to get the result.
 \end{proof}
 
 \begin{lemma}\label{lem:[400,50]}
$[400,50]\cong C_5^2\rt C_{16}$ is not a subgroup of $\Aut(X)$.
\end{lemma}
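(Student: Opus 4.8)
The plan is to reduce to the $F$-liftable situation and then exclude $G\cong[400,50]\cong C_5^2\rt C_{16}$ by a dimension obstruction, i.e.\ trick f) of Remark \ref{rmk:tricks}.

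First I would observe that the Sylow $5$-subgroup $C_5^2$ is normal in $G$, being the kernel of the projection $G\to C_{16}$. Choosing any involution $t\in G$, the subgroup $\langle C_5^2,t\rangle$ has order $2\cdot 5^2=50$, since $t\notin C_5^2$ (their orders are coprime). As $|G|=5^2\cdot 16$ with $\Gcd(16,5)=1$, Lemma \ref{10or50liftable} applies with $n=2$ and shows that $G$ is $F$-liftable. Consequently $G$ admits a faithful five-dimensional linear representation all of whose matrices leave $F$ invariant; in particular $[400,50]$ must possess a faithful representation of degree $5$.

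Next I would derive a contradiction from the representation theory of $[400,50]$ (read off from its character table via GAP, cf.\ \cite{Yu}). Since $G$ is nonabelian, $C_{16}$ acts nontrivially on the normal subgroup $C_5^2$ through a cyclic image in $\Aut(C_5^2)=\GL(2,\F_5)$. Because $\GL(2,\F_5)$ has no element of order $16$ (its elements have $2$-part of order at most $8$, coming from $\F_{25}^{\ast}$), this image is a proper quotient of $C_{16}$; for the group $[400,50]$ it is a Singer cyclic subgroup $C_8\le\F_{25}^{\ast}$ acting fixed-point-freely, as one checks in GAP. The $24$ nontrivial linear characters of $C_5^2$ then split into three $C_{16}$-orbits of size $8$, so that every irreducible representation of $G$ which is nontrivial on $C_5^2$ has dimension $8$. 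Any faithful representation of $G$ must contain such a constituent in order to be faithful on $C_5^2$, hence has dimension $\geq 8>5$. This contradicts the $F$-liftability established above, and therefore $[400,50]$ is not a subgroup of $\Aut(X)$.

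The step I expect to be the main obstacle is the verification that the minimal faithful degree of $[400,50]$ exceeds $5$, which is precisely where GAP is used; it is clean here only because the image of $C_{16}$ in $\GL(2,\F_5)$ has order $8$, forcing the large orbits above. Had the image been of order $4$, a faithful representation of type $4\oplus 1$ could a priori exist, and one would then have to finish by combining the eigenvalue and trace constraints coming from smoothness of $X$ (Lemmas \ref{lem:to2}, \ref{lem:to4} and \ref{lem:eoo8}) to rule out such a $\rho$.
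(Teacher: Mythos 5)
Your liftability step is sound: $[400,50]$ contains a subgroup of order $2\cdot 5^2$, so Lemma \ref{10or50liftable} and Theorem \ref{liftable} do give an $F$-lifting and hence a faithful $5$-dimensional representation preserving $F$. The fatal problem is your structural claim about the group. In $[400,50]$ the image of $C_{16}$ in $\Aut(C_5^2)\cong\GL(2,\F_5)$ is \emph{not} a Singer $C_8$: it has order $2$ (the generator of $C_{16}$ acts by inversion), so the index-two subgroup $C_8\le C_{16}$ centralizes $C_5^2$. This is exactly the fact the paper's own proof records: by GAP, $[400,50]$ contains a subgroup isomorphic to $C_5\times C_5\times C_8$, which would be impossible under a Singer action, whose kernel in $C_{16}$ is only $C_2$. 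With the inversion action, the $24$ nontrivial characters of $C_5^2$ fall into twelve orbits of size $2$, every irreducible representation nontrivial on $C_5^2$ has dimension $2$ (induced from the inertia group $C_5^2\times C_8$), and $[400,50]$ has many faithful $5$-dimensional representations of type $2\oplus 2\oplus 1$. So the dimension obstruction you invoke (trick f)) simply does not exist, and your fallback scenario (ruling out a $4\oplus 1$ representation by Lemmas \ref{lem:to2}, \ref{lem:to4}, \ref{lem:eoo8}) also addresses the wrong decomposition type.

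Since the group does admit faithful degree-$5$ representations preserving smooth quintics' symmetry constraints at the character-table level, no purely representation-theoretic argument can finish; geometric input is required. The paper's route is: the abelian subgroup $C_5\times C_5\times C_8<\Aut(X)$ is $F$-liftable, its invariant monomials force $F=x_1^4x_2+x_2^4x_3+x_3^5+x_4^5+x_5^5+\lambda x_2^2x_3^3$ up to coordinates, and the differential method then shows $\Aut(X)$ embeds in the group of Example (4) in Example \ref{mainex} (this is Lemma \ref{lem:558}), i.e.\ in $C_{16}\times(C_5^2\rtimes C_2)$ of order $800$. That group contains no copy of $[400,50]$: its subgroups of order $400$ are $C_{16}\times C_5^2$, $C_5\times(C_5\rtimes C_{16})$ and $C_8\times(C_5^2\rtimes C_2)$, all with center of order at least $40$, whereas $[400,50]$ has center $C_8$. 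This invariant-quintic and group-structure step -- tricks c) and e) of Remark \ref{rmk:tricks}, not trick f) -- is the missing core of the argument, and without it your proof does not go through.
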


\begin{proof}
We mainly  use tricks c), and e) in Remark \ref{rmk:tricks}.

\medskip

By GAP, $[400,50]$ contains a subgroup isomorphic to $C_5\ti C_5\ti C_8$. Then the result follows from Lemma \ref{lem:558}.

\end{proof}

\begin{lemma}\label{lem:o8}
Let $A\in \GL(5,\C)$. Suppose $[A]\in \Aut(X)$ and both $A$ and $[A]$ have order $8$ (as elements in $\GL(5,\C)$ and $\PGL(5,\C)$ respectively). Then, up to change of coordinates and up to odd power of $A$, $A$ is one of the followings: (i) $\Diag(\xi_8,-1,1,1,1)$, (ii) $\Diag(\xi_8,-1,1,1,\xi_8^2)$, (iii) $\Diag(\xi_8,-1,1,1,\xi_8^3)$, (iv) $\Diag(\xi_8,-1,1,1,-1)$, (v) $\Diag(\xi_8,-1$, $1,1,\xi_8^5)$, (vi) $\Diag(\xi_8,-1,1,1,\xi_8^6)$, (vii) $\Diag(\xi_8,-1,1,1,\xi_8^7)$, (viii) $\Diag(\xi_8,-1,1,\xi_8^2,\xi_8^3)$, (ix) $\Diag(\xi_8,-1,1,\xi_8^2,\xi_8^5)$, (x) $\Diag(\xi_8,-1,1,\xi_8^2,\xi_8^6)$, (xi) $\Diag(\xi_8,-1,1,\xi_8^2,\xi_8^7)$, (xii) $\Diag(\xi_8,-1,1$, $\xi_8^5,\xi_8^6)$.
\end{lemma}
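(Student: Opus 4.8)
The plan is to mimic the structure of Lemma \ref{lem:eoo8}, since that result already does the analogous bookkeeping for order-$8$ elements. First I would invoke Lemma \ref{lem:primeorder} (with $p=2$, $a=3$) to reduce to the case $g=[A]$ with $A(F)=F$ and $A$ diagonal, say $A=\Diag(\xi_8,\xi_8^{a_2},\xi_8^{a_3},\xi_8^{a_4},\xi_8^{a_5})$, $0\le a_i\le 7$. Since $A(F)=F$ forces $x_1^5\notin F$, smoothness (Proposition \ref{pp:smoothnessandmonomial}) lets me assume $x_1^4x_2\in F$, which pins $a_2=4$, i.e. the second eigenvalue is $-1$. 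This is exactly part (i) of Lemma \ref{lem:eoo8}, so I may take $A=\Diag(\xi_8,-1,\xi_8^{a_3},\xi_8^{a_4},\xi_8^{a_5})$, and after reordering the last three coordinates I can arrange $a_3\le a_4\le a_5$ in a suitable sense.

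Next I would use the two structural constraints already proved in Lemma \ref{lem:eoo8}: that $\xi_8$ (and by taking odd powers, any primitive $8$th root) occurs with multiplicity exactly one as an eigenvalue of $A$ (part (ii)), and the normalization in part (iii) governing the multiplicity of $-1$. Because $A$ has order $8$ in $\GL(5,\C)$, at least one eigenvalue must be a primitive $8$th root of unity; up to replacing $A$ by an odd power I fix this to be $\xi_8$ in the first slot. Part (ii) then says no other entry equals $\xi_8$, and more generally no two entries are primitive $8$th roots of the same value. Smoothness of $X$ must also supply the value $1$ as an eigenvalue (there must be a monomial $x_i^4 x_j$ chain closing up), so I would argue that one of $a_3,a_4,a_5$ equals $0$; placing it in the third slot gives $A=\Diag(\xi_8,-1,1,\xi_8^{a_4},\xi_8^{a_5})$.

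The remaining task is a finite enumeration of the exponents $(a_4,a_5)$, which I would cut down using the nonsmoothness criteria of Proposition \ref{pp:nonsmoothquintic} together with the multiplicity restrictions. For each candidate pair I compute (e.g. with Mathematica, as the paper does elsewhere) the $A$-invariant monomials and check whether $F$ is forced into an ideal of the form $(x_i)+(x_j,x_k)^2$ or $(x_p,x_q)$, or whether some variable fails to have an $x_i^4 x_j$ term; any such case is excluded by Proposition \ref{pp:nonsmoothquintic}. Symmetry arguments (permuting the free coordinates $x_4,x_5$ and replacing $A$ by odd powers, which fixes $\xi_8$ up to the stated ambiguity) collapse many pairs together. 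The surviving pairs are precisely those listed as (i)--(xii), where (i)--(vii) correspond to $a_4=0$ (a single free exponent $a_5$) and (viii)--(xii) to $a_4=2$.

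The main obstacle I anticipate is the bookkeeping in the enumeration step: ensuring that the list is genuinely exhaustive, that no smooth configuration is mistakenly discarded, and that the ``up to odd power of $A$ and up to change of coordinates'' reductions are applied consistently so that cases such as $(a_4,a_5)$ and its image under odd-power/permutation symmetries are not double-counted or wrongly merged. In particular one must be careful that raising $A$ to an odd power permutes the primitive-$8$th-root entries among $\{\xi_8,\xi_8^3,\xi_8^5,\xi_8^7\}$ and the order-$4$ entries among $\{\xi_8^2,\xi_8^6\}$ while fixing $\{1,-1\}$, so the normalization $\xi_8$ in the first slot is only well defined up to this action; verifying that the twelve listed matrices form a complete and irredundant set of representatives under this combined symmetry is the delicate part. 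The smoothness exclusions themselves are routine given Proposition \ref{pp:nonsmoothquintic} and the invariant-monomial computation.
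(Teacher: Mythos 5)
Your proposal is correct and follows essentially the paper's own route: reduce to $A=\Diag(\xi_8,-1,1,\xi_8^a,\xi_8^b)$ with $A(F)=F$ (the eigenvalues $-1$ and $1$ being forced by $x_1^4x_2\in F$ and then $x_2^4x_j\in F$, exactly as in Lemma \ref{lem:eoo8}), and then finish by a finite enumeration of the pairs $(a,b)$ using invariant-monomial computations and Proposition \ref{pp:nonsmoothquintic}; the paper's only extra ingredient is a shortcut you omit, namely that one of $a,b$ must be even, since otherwise $A^4$ is an order-$2$ lifting with trace $-1$, contradicting Lemma \ref{lem:to2}. One slip in your closing summary: case (xii) is $\Diag(\xi_8,-1,1,\xi_8^5,\xi_8^6)$, i.e.\ $(a,b)=(5,6)$, so the surviving pairs are not exhausted by $a=0$ and $a=2$ --- a correctly executed enumeration still produces it, but your grouping of the final list is inaccurate.
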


\begin{proof}
Since $A(F)=F$ and $A$ has order 8, we may assume $A=\Diag(\xi_8,\xi_8^4,1,\xi_8^a,\xi_8^b)$, $0\leq a\leq b \leq 7$. Then one of $a$ and $b$ must be even, otherwise trace of $A^4$ is $-1$, a contradiction to Lemma \ref{lem:to2}.

Then for all possible pairs $(a,b)$, we compute the monomials invariant by $A$ and use the smoothness of $X$ and Proposition \ref{pp:nonsmoothquintic}. In this way, the lemma can be proved.

\end{proof}

\begin{lemma}\label{lem:[96,67]}
Suppose $[96,67]\cong G <\Aut(X)$. Then, up to change of coordinates, an $F$-lifting of $G$ is generated by the following four matrices: $$A_1=\begin{pmatrix} 
    0&1&0&0&0 \\ 
    -1&0&0&0&0 \\ 
    0&0&1&0&0 \\ 
    0&0&0&1&0 \\ 
    0&0&0&0&1 \\ 
    \end{pmatrix}, A_2=\begin{pmatrix} 
    \xi_4^3&0&0&0&0 \\ 
    0&\xi_4&0&0&0 \\ 
    0&0&1&0&0 \\ 
    0&0&0&1&0 \\ 
    0&0&0&0&1 \\ 
    \end{pmatrix}, A_3=\begin{pmatrix} 
    -\frac{1}{\sqrt{2}}\xi_8&\frac{1}{\sqrt{2}}\xi_8&0&0&0 \\ 
    \frac{1}{\sqrt{2}}\xi_8^3&\frac{1}{\sqrt{2}}\xi_8^3&0&0&0 \\ 
    0&0&\xi_3&0&0 \\ 
    0&0&0&\xi_3^2&0 \\ 
    0&0&0&0&1 \\ 
    \end{pmatrix},$$ $$A_4=\begin{pmatrix} 
    \xi_4&0&0&0&0 \\ 
    0&1&0&0&0 \\ 
    0&0&0&1&0 \\ 
    0&0&1&0&0 \\ 
    0&0&0&0&1 \\ 
    \end{pmatrix}\, .$$
\end{lemma}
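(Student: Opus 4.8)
The plan is to realize the group $[96,67]$ as an explicit $F$-liftable subgroup of $\GL(5,\C)$ by locating, inside $[96,67]$, a highly rigid chain of subgroups whose liftings have already been pinned down in the preceding lemmas, and then propagating that rigidity to the remaining generators. First I would analyze the abstract structure of $[96,67]$ using GAP: identify its Sylow $2$-subgroup (of order $32$), its unique or characteristic subgroups of order $3$, and how the $3$-part and $2$-part interact. The presence of the block $\Diag(\xi_3,\xi_3^2,1,1,1)$-type factor together with the $2\times 2$ rotation/reflection blocks in $A_1,A_2,A_3,A_4$ strongly suggests that $[96,67]$ contains a copy of $\SL(2,3)$ (or a closely related binary group) acting on the first two coordinates, extended by a $2$-element; indeed the matrices $A_1,A_2,A_3$ are recognizable generators of an $\SL(2,3)$-type action on $\langle x_1,x_2\rangle$ twisted by a cube-root-of-unity action on $\langle x_3,x_4\rangle$, with $A_4$ supplying the extra involution that swaps $x_3\leftrightarrow x_4$ and scales $x_1$.

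Second, I would use Theorem \ref{liftable} (noting that $5\nmid|G|$, so $F$-liftability is automatic) to pass to an $F$-lifting $\widetilde{G}<\GL(5,\C)$, viewed as a faithful $5$-dimensional representation $\rho$ of $[96,67]$. From the character table of $[96,67]$ (computed in GAP, as in the earlier lemmas) I would determine the possible types of $\rho$: the trace constraints of Lemma \ref{lem:to2} (every order-$2$ lifting has positive trace), Lemma \ref{lem:to4} (no order-$4$ lifting has trace $-1$), and Lemma \ref{lem:o8} (the explicit list of order-$8$ diagonal shapes) severely restrict which irreducible constituents can occur. I expect this to force $\rho$ to decompose as a sum of a $2$-dimensional piece (carrying the $\SL(2,3)$-action on $x_1,x_2$), a $2$-dimensional piece on $x_3,x_4$, and a $1$-dimensional piece on $x_5$, matching exactly the block structure displayed in $A_1,\dots,A_4$. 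The order-$3$ elements must then act with eigenvalues $\xi_3,\xi_3^2$ on each $2$-dimensional block by Lemma \ref{lem:em3} (no eigenvalue of multiplicity $\ge 3$ unless it is a fifth root of unity), which here forces the shapes $\Diag(\cdots,\xi_3,\xi_3^2,\cdots)$.

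Third, having fixed the decomposition type of $\rho$, I would normalize coordinates block by block. On the $\langle x_1,x_2\rangle$ block the $\SL(2,3)$-subrepresentation is (up to conjugacy) the unique faithful $2$-dimensional irreducible representation of $\SL(2,3)$, which after a basis change is generated precisely by $A_1$ and $A_2$ (the order-$4$ rotation and the diagonal $\Diag(\xi_4^3,\xi_4)$); the element of order $3$ in $\SL(2,3)$ then becomes the upper-left block of $A_3$, and the normalization $-\tfrac{1}{\sqrt2}\xi_8,\ \tfrac{1}{\sqrt2}\xi_8^3$ is the standard matrix form. On $\langle x_3,x_4\rangle$ the cube-root action together with the swap forces the $\xi_3,\xi_3^2$ block of $A_3$ and the permutation block of $A_4$. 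The one-dimensional factor on $x_5$ must be trivial because of the trace positivity constraints, giving the final entry $1$ throughout. The remaining freedom—diagonal rescalings of the coordinates commuting with the fixed action—is absorbed exactly as in the proof of Lemma \ref{lem:5^2notliftable}, reducing the off-diagonal scalars to the canonical values in $A_1,\dots,A_4$.

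The main obstacle I anticipate is the bookkeeping in the second step: disentangling which combination of irreducible constituents of $[96,67]$ yields a faithful $5$-dimensional $\rho$ compatible simultaneously with all of Lemmas \ref{lem:to2}, \ref{lem:to4}, \ref{lem:o8}, and \ref{lem:em3}. In particular one must rule out alternative decompositions (for instance a $4$-dimensional irreducible plus a $1$-dimensional one, or a $2$-dimensional piece realized on a different pair of coordinates) before the rigid block form is guaranteed; this is where the explicit eigenvalue lists of Lemma \ref{lem:o8} and the smoothness-driven exclusions of Proposition \ref{pp:nonsmoothquintic} must be invoked repeatedly. Once the type of $\rho$ is forced, the coordinate normalization is routine and follows the pattern established in the earlier lemmas of this section, so I would present that part briefly and refer the reader to the website \cite{Yu} for the full GAP-assisted verification.
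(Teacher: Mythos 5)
Your proposal follows essentially the same route as the paper's proof: pass to an $F$-lifting $\widetilde{G}$ (Theorem~\ref{liftable}), read off the character table of $[96,67]\cong \SL(2,3)\rtimes C_4$, use the trace/eigenvalue constraints (Lemmas~\ref{lem:to2}, \ref{lem:to4}, \ref{lem:o8}, together with the order-$3$/order-$6$ constraints) to exclude the decomposition types $4\oplus 1$, $3\oplus 1\oplus 1$, $3\oplus 2$, $2\oplus 1\oplus 1\oplus 1$ and force $\rho$ to be of type $2\oplus 2\oplus 1$ with trivial one-dimensional factor, and then normalize block by block. One small correction to your third step: $\SL(2,3)$ has three faithful $2$-dimensional irreducibles, not one, and correspondingly $[96,67]$ has four candidate constituents $X.7$--$X.10$; the paper resolves this ambiguity by observing that these are related by complex conjugation and by tensoring with the linear character $X.2$, so all choices yield the same subgroup of $\GL(5,\C)$ up to change of coordinates.
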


\begin{proof}
Note that $[96, 67]\cong \SL(2,3)\rt C_4$.  Let $\wt{G}$ be an $F$-lifting of $G$.

Then $\wt{G}$ induces a five dimensional faithful  representation of $[96,67]$, say  $\rho: [96,67] \longrightarrow \GL(5,\C)$ such that Image of $\rho=\wt{G}$. We look at the character table of $[96,67]$ (see Figure \ref{table:[96,67]})

 \begin{figure}[htbp]
\begin{center}

\includegraphics[width=20cm, height=22cm]{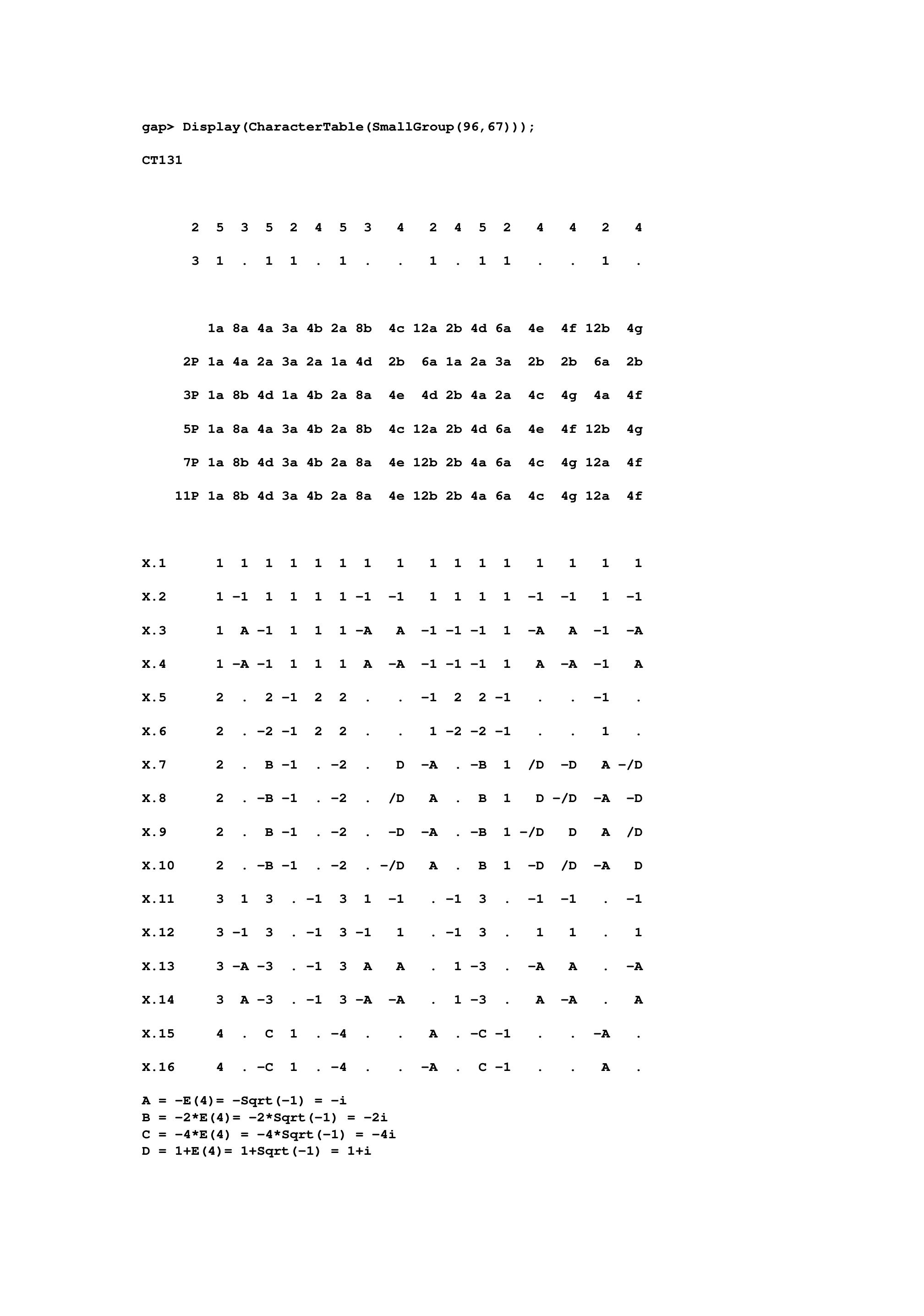}

\caption{Character table of $[96,67]$}
\label{table:[96,67]}
\end{center}
\end{figure}

Since $\wt{G}$ is not abelian, $\rho$  is one of the following types: (i) $4\oplus 1$, (ii) $3\oplus 1\oplus 1$, (iii) $3\oplus 2$, (iv) $2\oplus 1\oplus 1\oplus 1$, (v) $2\oplus 2\oplus 1$.

Case (i) $\rho$ of type $4\oplus 1$.  This case is impossible by consideration of conjugacy class \lq\lq{}2a\rq\rq{} and by Lemma \ref{lem:to2}.

Case (ii)  $\rho$ of type $3\oplus 1\oplus 1$. This case is impossible by consideration of conjugacy class \lq\lq{}2a\rq\rq{} and faithfulness of $\rho$.

Case (iii)  $\rho$ of type $3\oplus 2$. Faithfulness of $\rho$ and consideration of conjugacy class \lq\lq{}2a\rq\rq{} imply trace of conjugacy class \lq\lq{}4b\rq\rq{} is $-1$, a contradiction to Lemma \ref{lem:to4}. So this case is also impossible.

Case (iv)  $\rho$ of type $2\oplus 1\oplus 1\oplus 1$. By faithfulness of $\rho$ and consideration of conjugacy class \lq\lq{}2a\rq\rq{}, the $2$ dimensional component of $\rho$ must be one of $X.7$-$X.10$. Then trace of conjugacy class \lq\lq{}6a\rq\rq{} is 4, a contradiction to Lemma \ref{lem:3times2}. So, this case is impossible.

In sum, $\rho$ must be of type $2\oplus 2\oplus 1$. Furthermore, by consideration of conjugacy classes \lq\lq{}2a\rq\rq{} and \lq\lq{}2b\rq\rq{}, we have $\rho=\sigma \oplus X.5\oplus \tau$, $\sigma$ is one of $X.7$-$X.10$ and $\tau$ is one of $X.1$-$X.4$. By Lemma \ref{lem:o8}, $\tau$ can not be $X.2$. Then by Lemma \ref{lem:3times4} and by consideration of conjugacy classes \lq\lq{}4a\rq\rq{}, \lq\lq{}3a\rq\rq{}, $\tau$ can not be $X.3$ or $X.4$, either. Therefore, $\rho= \sigma \oplus X.5 \oplus X.1$, and  $\sigma$ is one of $X.7$-$X.10$.

Note that as characters of $[96,67]$, the complex conjugate of $X.7$ is $X.8$,  $X.7\otimes X.2$=$X.9$,  and the complex conjugate of $X.7\otimes X.2$ is $X.10$. Then, up to change of coordinates, we may assume $\wt{G}$ is generated by the four matrices $A_i$ in Lemma \ref{lem:[96,67]}. (Notice that $\langle A_1, A_2, A_3\rangle\cong \SL(2,3)$, and $\wt{G}=\langle A_1, A_2, A_3\rangle\rt \langle A_4 \rangle$.)
\end{proof}

\begin{lemma}\label{lem:[480,257]}
$[480,257]\cong (\SL(2,3)\rt C_4)\times C_5$ can not be a subgroup of $\Aut(X).$
\end{lemma}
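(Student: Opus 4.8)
The plan is to leverage the structural relationship between $[480,257]$ and the already-excluded group $[96,67]$. Since $[480,257]\cong (\SL(2,3)\rt C_4)\times C_5 = [96,67]\times C_5$, the group $[96,67]$ sits inside $[480,257]$ as a direct factor. If $[480,257]\cong G<\Aut(X)$, then $G$ contains a subgroup isomorphic to $[96,67]$, and by Lemma \ref{lem:[96,67]} we know precisely, up to change of coordinates, what an $F$-lifting of that subgroup looks like: it is generated by the four explicit matrices $A_1,A_2,A_3,A_4$ displayed there. This pins down $F$ considerably. So first I would invoke $F$-liftability (Theorem \ref{liftable}, which applies since the condition (2) is automatic for subgroups of $\Aut(X)$ by the remark following Theorem \ref{thm:noorder25}) to obtain an $F$-lifting $\widetilde{G}$, and normalize coordinates so that the copy of $[96,67]$ inside $\widetilde{G}$ is generated exactly by $A_1,A_2,A_3,A_4$.

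Next I would exploit the $C_5$ direct factor. Because $[96,67]$ and $C_5$ commute inside $[480,257]$, any $F$-lifting $B$ of a generator of the $C_5$ factor must commute with each of $A_1,A_2,A_3,A_4$ (up to scalars, but the scalars are forced to be trivial by comparing eigenvalue sets and using $\Det$ or the order considerations, exactly as in Lemma \ref{10or50liftable}). The key tool is Lemma \ref{lem:matrixshape}: commuting with the diagonal matrix $A_2=\Diag(\xi_4^3,\xi_4,1,1,1)$ forces $B$ to respect the eigenspace decomposition, and commuting with $A_4$ (which has the $2$-cycle $x_3\leftrightarrow x_4$ structure) and with $A_3$ (which acts nontrivially mixing $x_1,x_2$ and carries $\Diag(\xi_3,\xi_3^2)$ in the $x_3,x_4$ block) further constrains the shape of $B$. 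I expect these commutation constraints to force $B$ to be block-diagonal of a very restricted form, and in fact to conclude that no order-$5$ matrix can both commute with the whole $\SL(2,3)\rt C_4$ action and preserve the relevant quintic.

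The main obstacle, and hence the crux of the argument, will be the eigenspace analysis for $B$: I must show that the simultaneous centralizer of $\{A_1,A_2,A_3,A_4\}$ in $\GL(5,\C)$, intersected with the order-$5$ locus and the $F$-invariants, is trivial (or contains no element projecting to order $5$ in $\PGL$). Since $A_1,A_2,A_3$ generate $\SL(2,3)$ acting irreducibly in a $2$-dimensional block on $\la x_1,x_2\ra$ and $A_4$ permutes $x_3,x_4$, Schur's lemma applied to the irreducible $2$-dimensional $\SL(2,3)$-representation forces $B$ to act as a scalar on $\la x_1,x_2\ra$; the $A_3$-action with $\Diag(\xi_3,\xi_3^2)$ on $\la x_3,x_4\ra$ together with the $A_4$-swap forces $B$ to be scalar on $\la x_3,x_4\ra$ as well, and the remaining coordinate $x_5$ gives at most a one-dimensional freedom. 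Thus $B$ lies in at most a rank-two torus of scalars, whose image in $\PGL(5,\C)$ is too small to contain an order-$5$ element that is independent of the $[96,67]$ part, contradicting the existence of a genuine $C_5$ direct factor.

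\begin{proof}
Assume to the contrary that $[480,257]\cong G<\Aut(X)$. Since $G<\Aut(X)$, by the remark following Theorem \ref{thm:noorder25} condition (2) of Theorem \ref{liftable} is automatically satisfied, so $G$ is $F$-liftable; let $\widetilde{G}$ be an $F$-lifting. As $[480,257]\cong [96,67]\times C_5$, the group $\widetilde{G}$ contains an $F$-lifting of a subgroup isomorphic to $[96,67]$. By Lemma \ref{lem:[96,67]}, up to change of coordinates this subgroup is $\langle A_1,A_2,A_3,A_4\rangle$ with $A_1,A_2,A_3,A_4$ as displayed there.

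Let $B\in \widetilde{G}$ be an $F$-lifting of a generator of the $C_5$ factor, chosen so that $\Ord(B)=5$ as in Lemma \ref{10or50liftable}. Since the $C_5$ factor is central in $[480,257]$, the element $[B]$ commutes with each $[A_i]$, so $BA_i=\lambda_i A_iB$ for scalars $\lambda_i\in\C^*$. Comparing eigenvalue sets of $BA_iB^{-1}$ and $A_i$, together with $\Det$ considerations exactly as in the proof of Lemma \ref{10or50liftable}, forces $\lambda_i=1$, so $B$ commutes with each $A_i$.

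Now $\langle A_1,A_2,A_3\rangle\cong \SL(2,3)$ acts on $\la x_1,x_2\ra$ through its unique $2$-dimensional irreducible representation. Since $B$ commutes with this action, Schur's lemma forces $B$ to act as a scalar on $\la x_1,x_2\ra$. On $\la x_3,x_4\ra$, the matrix $A_3$ restricts to $\Diag(\xi_3,\xi_3^2)$ and $A_4$ swaps $x_3$ and $x_4$; commuting with both, together with Lemma \ref{lem:matrixshape}, forces $B$ to act as a scalar on $\la x_3,x_4\ra$. Hence
$$B=\Diag(\mu,\mu,\nu,\nu,\rho)$$
for some $\mu,\nu,\rho\in\C^*$. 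The image $[B]\in\PGL(5,\C)$ therefore lies in the rank-two torus given by such diagonal matrices modulo scalars, and any such $[B]$ of order $5$ together with $\langle[A_1],[A_2],[A_3],[A_4]\rangle$ generates a group in which the $C_5$ fails to be an independent direct factor of the full $[96,67]$; more precisely, since $B$ is scalar on each of the three blocks $\la x_1,x_2\ra$, $\la x_3,x_4\ra$, $\la x_5\ra$, it already lies in the center of $\langle A_1,\ldots,A_4\rangle$ enlarged by diagonal scalars, so $\langle A_1,A_2,A_3,A_4,B\rangle$ has abelianization and center incompatible with $[96,67]\times C_5$. This contradicts $\widetilde{G}\cong [480,257]$, and the lemma follows.
\end{proof}
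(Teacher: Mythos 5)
Your argument is correct up to the point where you establish that an $F$-lifting $B$ of the central $C_5$ must commute with $A_1,\dots,A_4$ and hence, by Schur's lemma on the irreducible $2$-dimensional block and by Lemma \ref{lem:matrixshape}, must have the block-scalar form $B=\Diag(\mu,\mu,\nu,\nu,\rho)$. But the final step — the claim that such a $B$ makes $\la A_1,A_2,A_3,A_4,B\ra$ \lq\lq{}incompatible\rq\rq{} with $[96,67]\times C_5$ — is false, and this is a fatal gap. Take $B=\Diag(1,1,1,1,\xi_5)$: it has order $5$, commutes with every $A_i$, and since $|\la A_1,\dots,A_4\ra|=96$ is coprime to $5$, we get $\la A_1,\dots,A_4,B\ra=\la A_1,\dots,A_4\ra\times\la B\ra\cong[96,67]\times C_5=[480,257]$ inside $\GL(5,\C)$, projecting faithfully to $\PGL(5,\C)$. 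So there is no group-theoretic or representation-theoretic obstruction at all: $[480,257]$ embeds in $\GL(5,\C)$ in exactly the block shape you derived, and no centralizer/abelianization analysis can exclude it. (Indeed, the paper's own proof introduces precisely such an element $A_5=\Diag(1,1,1,1,\xi_5)$.)

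The missing ingredient is the smoothness of $X$, which your proof never uses beyond quoting Lemma \ref{lem:[96,67]}. The paper's route, after reaching the same configuration $\la A_1,\dots,A_4\ra\times\la A_5\ra$, is to compute all degree-five monomials invariant under the action (there are exactly $16$), use $A_1(F)=F$, $A_3(F)=F$, $A_4(F)=F$ to relate and kill coefficients, then show that $A_5$ must be $\Diag(1,1,1,1,\xi_5)$ up to scalar, which forces three further coefficients to vanish. After rescaling coordinates one finds that $F$ is forced to be the single polynomial in equation (\ref{eq:[480,257]}), and a direct computation shows that this quintic is \emph{singular}, contradicting the smoothness of $X$ (cf. Proposition \ref{pp:nonsmoothquintic}). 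In short: the contradiction lives in the invariant-theory/geometry of $F$, not in the group theory, and your proof as written stops exactly where the real work begins.
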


\begin{proof}
Assume to the contrary, $[480,257]\cong G<\Aut(X)$.

By Theorem \ref{solvableliftable}, $G$ has an $F$-lifting, say $\wt{G}$. By Lemma \ref{lem:[96,67]}, we may assume $\wt{G}=\langle A_1,A_2, A_3, A_4\rangle \times \langle A_5\rangle$, where $A_1$, $A_2$, $A_3$, $A_4$ are as in Lemma \ref{lem:[96,67]}, and $A_5$ is of order $5$.

Notice that a degree five monomial $M=x_1^{a_1}...x_5^{a_5}$ is in $F$ only if $M$ satisfies both of the following two conditions:

(i)  $a_1+3a_2\equiv 0(\Mod 4)$, $a_1\equiv 0(\Mod 2)$ (as $A_2(F)=F$ and $(A_4^2)(F)=F$);

(ii) If $a_1=a_2=0$, then $a_3+2a_4\equiv 0(\Mod 3)$ (as $A_3(F)=F$).

There are exactly 16 different monomials satisfying both (i) and (ii): $x_1^4x_3$, $x_1^4x_4$, $x_1^4x_5$, $x_1^2x_2^2x_3$, $x_1^2x_2^2x_4$, $x_1^2x_2^2x_5$, $x_2^4x_3$, $x_2^4x_4$, $x_2^4x_5$, $x_3^4x_4$, $x_3^3x_5^2$, $x_3^2x_4^2x_5$, $x_3x_4^4$, $x_3x_4x_5^3$, $x_4^3x_5^2$, $x_5^5$.

Then we may write $F$ as: $$F =\lmd_1x_1^4x_3+\lmd_2x_2^4x_3+\lmd_3 x_1^2x_2^2x_3+\lmd_4 x_1^4x_4+\lmd_5x_2^4x_4+\lmd_6 x_1^2x_2^2x_4+ \lmd_7x_1^4x_5+\lmd_8x_2^4x_5+$$ $$\lmd_9 x_1^2x_2^2x_5+  \lmd_{10}x_3^4x_4+\lmd_{11}x_4^4x_3+\lmd_{12} x_3^3x_5^2+  \lmd_{13}x_4^3x_5^2+\lmd_{14}x_3x_4x_5^3+\lmd_{15} x_3^2x_4^2x_5+\lmd_{16}x_5^5.$$ 

By $A_1(F)=F$, we have $\lmd_1=\lmd_2$, $\lmd_4=\lmd_5$, $\lmd_7=\lmd_8$.

By $A_4(F)=F$, we have $\lmd_9=0$, $\lmd_{10}=\lmd_{11}$, $\lmd_{12}=\lmd_{13}$.

Notice that $A_3(x_1^4+x_2^4)$=$-\frac{x_1^4+6x_1^2x_2^2+x_2^4}{2}$. Then by $A_3(F)=F$, we have $\lmd_7=0$. Again, by $A_3(F)=F$, we have $A_3((\lmd_1(x_1^4+x_2^4)+\lmd_3x_1^2x_2^2)x_3)$=$(\lmd_1(x_1^4+x_2^4)+\lmd_3x_1^2x_2^2)x_3$, which implies $$-\lmd_1\frac{x_1^4+6x_1^2x_2^2+x_2^4}{2}+\lmd_3\frac{x_1^4-2x_1^2x_2^2+x_2^4}{4}=\xi_3^2(\lmd_1(x_1^4+x_2^4)+\lmd_3x_1^2x_2^2)$$

Then $\lmd_3=(2+4\xi_3^2)\lmd_1$. Similarly, $\lmd_6=-(2+4\xi_3^2)\lmd_4$. Then $A_4(F)=F$ implies $\lmd_1=\lmd_4$.

In sum, we may rewrite $F$ as: $$F=\lmd_1((x_1^4+x_2^4+(2+4\xi_3^2)x_1^2x_2^2)x_3+(x_1^4+x_2^4-(2+4\xi_3^2)x_1^2x_2^2)x_4)+$$ $$\lmd_{10}(x_3^4x_4+x_4^4x_3)+\lmd_{12}(x_3^3+x_4^3)x_5^2+\lmd_{14}x_3x_4x_5^3+\lmd_{15}x_3^2x_4^2x_5+\lmd_{16}x_5^5.$$

By the smoothness of $X$ and Proposition \ref{pp:nonsmoothquintic}, $\lmd_1\lmd_{10}\lmd_{16}\neq 0$.

Then  by adjusting variables $x_i$ by suitable nonzero constants, we may assume $\lmd_1=\lmd_{10}=\lmd_{16}=1$.

 Recall that, $A_5$ is of order $5$, and $A_5A_i=A_iA_5$, for $i=1,2,3,4$. Then by Lemma \ref{lem:matrixshape}, $A_5$ is diagonal, and we may assume $A_5$=$\Diag(\xi_5^a,\xi_5^b,\xi_5^c,\xi_5^d,\xi_5^e)$, for some $0\leq a, b,c,d,e\leq 4$.  
 
 Then $A_5(F)=F$ implies $A_5(x_1^4x_3)$=$x_1^4x_3$,  $A_5(x_2^4x_3)$=$x_2^4x_3$,  $A_5(x_3^4x_4)$=$x_3^4x_4$. So $a=b=c=d$. So we may assume $A_5=\Diag(1,1,1,1,\xi_5)$. Then $A_5(F)=F$ implies $\lmd_{12}=\lmd_{14}=\lmd_{15}=0$.
 
 To sum up above, we have proved that if $[480,257]$ is a subgroup of $\Aut(X)$, up to change of coordinates, we may assume:
 
 \begin{equation}\label{eq:[480,257]}
 F=(x_1^4+x_2^4+(2+4\xi_3^2)x_1^2x_2^2)x_3+(x_1^4+x_2^4-(2+4\xi_3^2)x_1^2x_2^2)x_4+x_3^4x_4+x_4^4x_3+x_5^5.
\end{equation}

However, then $X$ is singular by a direct computation. Therefore, $[480,257]$ is excluded.

\end{proof}

In this way, we exclude the $67$ groups remained after sub-test to obtain Theorem \ref{thm:solvableless2000}  (see the website \cite{Yu} for details of the rest $67-9=58$ groups).

\end{proof}

\begin{remark}
Recall that as we mentioned before, in the proof of Theorem \ref{thm:solvableless2000}, we need to exclude 67 groups. As we see from the proof of Theorem \ref{thm:solvableless2000} (especially, the proof of Lemma \ref{lem:[480,257]}), $[480,257]\cong \SL(2,3)\rt C_4$ is  very hard (probably the hardest one!) to exclude. Notice that the polynomial in Example (17) in Example \ref{mainex} and  the polynomial in the equation (\ref{eq:[480,257]}) are quite similar. Mysteriously, the first one is smooth, but the second one is singular. There might be some deep reason behind this phenomenon.

We also point out that the proofs of Lemma \ref{lem:[96,67]} and Lemma \ref{lem:[480,257]} contain useful strategies to find explicit examples of smooth hypersurfaces with expected group actions.

\end{remark}

\subsection{Other cases}

\begin{theorem}\label{5X5X16unique}
Let $G< \Aut (X)$. If $G\cong C_{16}\times C_5^2$, then, up to change of coordinates, $X$ is the Example (4) in Example \ref{mainex}.
\end{theorem}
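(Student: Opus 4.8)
The plan is to show that if $G\cong C_{16}\times C_5^2$ acts faithfully on $X$, then after a suitable change of coordinates the defining polynomial $F$ must be exactly the one in Example (4), so that $X$ is that example. First I would invoke $F$-liftability: since $G_5\cong C_5^2$ and by Theorem \ref{thm:noorder25} there is no element of order $25$, the group $G$ is $F$-liftable once $G_5$ is (via Theorem \ref{liftable} and the remark following it). In fact, because $C_{16}$ supplies an element of order $16$ whose square has order $8$ and whose fourth power has order $4$, the order-$2$ element of $G$ together with a generator of a $C_5$ gives a subgroup of order $10$; by Lemma \ref{10or50liftable} (with $n=1$ or, more directly, the $n=2$ version it asserts) one gets that the whole $G$ is $F$-liftable. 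So I may pass to an $F$-lifting $\widetilde{G}=\langle A_1, A_2, A_3\rangle$ with $A_1$ of order $16$, $A_2,A_3$ of order $5$, all mutually commuting and all fixing $F$.

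The next step is to pin down the shapes of the generators. Since $G$ is abelian and $F$-liftable, the commuting matrices $A_1,A_2,A_3$ can be simultaneously diagonalized, so I may take $A_1=\Diag(\xi_{16}^{\,e_1},\dots,\xi_{16}^{\,e_5})$ and each $A_i$ ($i=2,3$) diagonal with fifth-root-of-unity entries. Following the now-standard method of Lemmas \ref{order3power}, \ref{C_3^3} and Theorem \ref{thm:Sylow13} (``first assume diagonal, then use smoothness plus Mathematica to kill most possibilities''), I would use Proposition \ref{pp:smoothnessandmonomial} and Proposition \ref{pp:nonsmoothquintic} to constrain $A_1$. Concretely, because $A_1(F)=F$ and $A_1$ has order $16$, the monomial divisibility forced by smoothness (each $x_i^4 x_{j(i)}\in F$) chains the exponents $e_i$ together modulo $16$ exactly as in the proof of Theorem \ref{thm:primaryorders}: the relation $e_{j}\equiv -4 e_{i}\ (\Mod 16)$ propagates along a chain $x_1^4 x_2, x_2^4 x_3,\dots$, and the constraint that some variable must satisfy $x_k^4 x_l$ with $e_k\equiv 0$ forces the pattern to terminate. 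This should force $A_1$, up to relabeling and odd powers, into the form $\Diag(\xi_{16},\xi_{16}^{-4},1,1,1)$ (the eigenvalue data of $A_1$ in Example (4)), with the three trivial eigenvalues carrying the $C_5^2$ action and permutation involution on $x_4,x_5$.

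Once $A_1$ is fixed, I would compute the monomials left invariant by $A_1$: with eigenvalues $(\xi_{16},\xi_{16}^{-4},1,1,1)$ the invariant degree-$5$ monomials are governed by the congruence $a_1-4a_2\equiv 0\ (\Mod 16)$ on exponents, which (using Mathematica as the paper repeatedly does) yields only $x_1^4x_2$, $x_2^4x_3$ (or $x_2^4 x_4$, $x_2^4x_5$), and monomials purely in $x_3,x_4,x_5$. Smoothness via Proposition \ref{pp:nonsmoothquintic} then forces $x_1^4x_2$ and $x_2^4x_j$ present and pins the $x_3,x_4,x_5$ part. The $C_5^2=\langle A_2,A_3\rangle$ action, which must fix this $F$, then forces the remaining block to be $x_3^5+x_4^5+x_5^5$ up to coordinate change on $x_3,x_4,x_5$, giving $F=x_1^4x_2+x_2^4x_3+x_3^5+x_4^5+x_5^5$, which is Example (4). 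I expect the main obstacle to be the bookkeeping in the first reduction: ruling out the numerically many candidate exponent vectors $(e_1,\dots,e_5)$ for the order-$16$ element, since unlike the prime-order cases several residues mod $16$ survive the naive congruence and each must be eliminated by a separate smoothness check (an $F\in (x_i)+(x_j,x_k)^2$ or $F\in(x_p,x_q)$ argument); this is exactly the kind of finite but delicate case analysis the paper offloads partly to Mathematica, and I would organize it by first fixing the ``$\xi_{16},\xi_{16}^{-4}$'' chain and then showing the three remaining eigenvalues cannot be nontrivial $16$-th roots without violating Proposition \ref{pp:nonsmoothquintic}.
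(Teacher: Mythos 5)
Your overall route coincides with the paper's: obtain an $F$-lifting via Lemma \ref{10or50liftable} and Theorem \ref{liftable}, simultaneously diagonalize the commuting generators, normalize the order-$16$ generator to $\Diag(\xi_{16},\xi_{16}^{-4},1,1,1)$, and read off $F$ from the invariant monomials. (One small correction on the first step: since $|G|=2^4\cdot 5^2$, Lemma \ref{10or50liftable} must be applied with $n=2$, so the subgroup you need has order $2\cdot 5^2=50$, not $10$; it exists because $G$ is abelian.)

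However, your plan for the key normalization step has a genuine gap. The chain argument only yields $A_1=\Diag(\xi_{16},\xi_{16}^{-4},1,\xi_{16}^{e_4},\xi_{16}^{e_5})$, and you propose to eliminate nontrivial $e_4,e_5$ by smoothness checks (Proposition \ref{pp:nonsmoothquintic}) applied to $A_1$-invariant quintics alone. This cannot succeed: the paper's own Example (6), $F=x_1^4x_2+x_2^4x_3+x_3^5+x_4^4x_5+x_5^5$, is a \emph{smooth} quintic invariant under $\Diag(\xi_{16},\xi_{16}^{-4},1,\xi_{16}^{e_4},1)$ for every $e_4\in\{0,4,8,12\}$, so the residues $e_4=4,8,12$ (and their symmetric variants) survive every smoothness test. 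These cases can only be excluded by bringing in the $C_5^2$ factor, specifically the fact that $G$ embeds in $\PGL(5,\C)$. Since $-4\equiv 1\ (\Mod 5)$, invariance of $x_i^4x_{j(i)}$ under $C_5^2$ forces $x_{j(i)}$ to carry the same $C_5^2$-character as $x_i$; hence the chain variables $x_1,x_2,x_3$ all lie in one character class. Projective faithfulness of $C_5^2$ requires at least three distinct characters among the five coordinates (the pairwise differences of only two characters generate a cyclic group, not all of $(\Z/5)^2$), so $x_4$ and $x_5$ must each form a singleton class. Then smoothness forces $x_4^5,x_5^5\in F$, whose $A_1$-invariance gives $5e_4\equiv 5e_5\equiv 0\ (\Mod{16})$, i.e. $e_4=e_5=0$. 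With this step supplied (it is also the missing justification behind the paper's terse ``we may assume''), the rest of your argument --- computing the invariant monomials and pinning down the $x_3,x_4,x_5$ block --- goes through as you describe.
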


\begin{proof}
Suppose $G\cong C_{16}\times C_5^2$.

By Lemma \ref{10or50liftable} and Theorem \ref{liftable}, $G$ has an $F$-lifting, say $\widetilde{G}$. 

Let $A_1=\Diag (\xi_{16},\xi_{16}^{-4},1,1,1), A_2=\Diag(1,1,1,\xi_5,1)$ and $A_3=\Diag(1,1,1,1,\xi_5)$.

Using linear change of coordinates if necessary, we may assume $\widetilde{G}=\langle [A_1],[A_2],[A_3]\rangle $. Then by computing the invariant monomials of $\widetilde{G}$, we have $$F=a x_1^4x_2+bx_2^4x_3+cx_3^5+dx_4^5+ex_5^5\, ,$$

where $a,b,c,d,e$ are nonzero complex numbers. Clearly, adjusting the coordinates by nonzero multiples if necessary, we may assume $a=b=c=d=e=1$. Then $X$ is just the Example (4) in Example \ref{mainex}.
\end{proof}

\begin{theorem}\label{5X5X4X4unique}
Let $G< \Aut (X)$. If $G\cong C_{4}^2\times C_5^2$, then, up to change of coordinates, $X$ is the Example (3) in Example \ref{mainex}.
\end{theorem}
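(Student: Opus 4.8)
The plan is to follow the strategy of Theorem \ref{5X5X16unique}, with the extra work concentrated in pinning down the weights of the two factors of order $4$ and the two of order $5$. First I would establish $F$-liftability and diagonalize. Since $C_4^2\times C_5^2$ contains $C_2\times C_5^2$, a subgroup of order $2\cdot 5^2$, Lemma \ref{10or50liftable} together with Theorem \ref{liftable} produces an $F$-lifting $\widetilde{G}<\GL(5,\C)$. As $G$ is abelian, the commuting matrices of $\widetilde{G}$ are simultaneously diagonalizable, so after a linear change of coordinates every element of $\widetilde{G}$ is diagonal. Write $\widetilde{G}=\langle B_1,B_2\rangle\times\langle C_1,C_2\rangle$ with $B_i$ of order $4$ and $C_j$ of order $5$. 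Each coordinate $x_i$ then carries a character of $G$, splitting into a $C_4^2$-weight $u_i\in(\Z/4)^2$ and a $C_5^2$-weight $v_i\in(\Z/5)^2$, and a degree-$5$ monomial lies in $F$ if and only if its total weight vanishes.

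Second, I would determine the weight configuration. By Proposition \ref{pp:smoothnessandmonomial} each $i$ admits $x_i^4x_{j(i)}\in F$; reading this on the two factors, $4u_i=0$ forces $u_{j(i)}=0$, while $-4\equiv 1\pmod 5$ gives $v_{j(i)}=v_i$. Let $J=\{\ell:u_\ell=0\}$ be the coordinates fixed by $\langle B_1,B_2\rangle$. Every target $j(i)$ lies in $J$, so every $v_i$ equals some $v_\ell$ with $\ell\in J$; hence the distinct $C_5^2$-weights form the set $\{v_\ell:\ell\in J\}$, of size at most $|J|$. Faithfulness of $G$ in $\PGL(5,\C)$ means the weights must separate $G$: the differences of the $v_i$ must generate $(\Z/5)^2$, which requires at least three distinct values, so $|J|\ge 3$; dually the differences of the $u_i$ must generate $(\Z/4)^2$, which requires at least two coordinates outside $J$, so $|J|\le 3$. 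Thus $|J|=3$, the three fixed coordinates carry three distinct spanning $C_5^2$-weights, and $v_{j(\ell)}=v_\ell$ with distinct values forces $j(\ell)=\ell$, i.e. $x_\ell^5\in F$ for each $\ell\in J$. The two remaining coordinates are pendant: each satisfies $x_m^4x_{j(m)}\in F$ with $j(m)\in J$ and $v_{j(m)}=v_m$. The crucial exclusion is that the two pendants cannot point to the same fixed coordinate: if they did, say $x_1^4x_3,x_2^4x_3\in F$, then every invariant monomial lies in $(x_3)+(x_4,x_5)^2$ after relabelling, so $X$ is singular by Proposition \ref{pp:nonsmoothquintic}(3). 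Hence the two pendants attach to distinct fixed coordinates and the third fixed coordinate is an isolated fifth power.

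Third, I would compute the invariant monomials and conclude. With $|J|=3$, three self-looping fixed coordinates of spanning weight, and two pendants attached to distinct fixed coordinates, any invariant monomial supported on the fixed coordinates satisfies a weight relation whose coefficients sum to $0\bmod 5$; a nontrivial such relation would make the three weights collinear, contradicting that they span, so the only such monomials are the three fifth powers. Together with the two pendant monomials, this forces $F$ to have exactly the five terms $x_{m_1}^4x_{f_1},\,x_{m_2}^4x_{f_2},\,x_{f_1}^5,\,x_{f_2}^5,\,x_{f_3}^5$, each with nonzero coefficient by Proposition \ref{pp:smoothnessandmonomial}. Rescaling the coordinates makes all coefficients equal to $1$, and a permutation of coordinates identifies $F$ with the equation of Example (3) in Example \ref{mainex}. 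The main obstacle is the weight bookkeeping of the second step—in particular proving $|J|=3$ and eliminating the singular double-pendant configuration—while the final monomial count and normalization are routine once the configuration is fixed.
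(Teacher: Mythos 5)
Your proposal is correct and follows essentially the same route as the paper: the paper's proof of this theorem is literally ``similar to Theorem \ref{5X5X16unique}'', i.e.\ obtain an $F$-lifting via Lemma \ref{10or50liftable} and Theorem \ref{liftable}, diagonalize the abelian group, compute the invariant monomials, and invoke smoothness (Propositions \ref{pp:smoothnessandmonomial} and \ref{pp:nonsmoothquintic}) to pin down $F$ up to rescaling. Your weight-bookkeeping argument (faithfulness forcing $|J|=3$ with spanning $C_5^2$-weights, and the exclusion of the double-pendant configuration via Proposition \ref{pp:nonsmoothquintic}(3)) is exactly the content the paper leaves implicit in the phrase ``we may assume $\widetilde{G}=\langle\cdots\rangle$'', so your write-up is, if anything, more complete than the paper's.
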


\begin{proof}
Similar to Theorem \ref{5X5X16unique}.
\end{proof}

To prove Theorem \ref{orderdivides2^63^25^2} below, we need the following purely group theoretical results:

\begin{theorem}\label{thm:abeliansylowandnormalp}
Let $p$ be a prime. Let $G$ be a finite group of order $p^{\alpha}n$, $(p,n)=1$. If $G_p$ is abelian and the order of the automorphism group of $G_p$ is coprime to $n$, then $G$ has a normal $p$-complement.
\end{theorem}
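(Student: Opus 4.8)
The plan is to reduce this to Burnside's normal $p$-complement theorem (Theorem \ref{thm:Burnsidenormalp}), so the entire task becomes showing that a Sylow $p$-subgroup lies in the center of its normalizer. Write $P := G_p$ for the (abelian) Sylow $p$-subgroup and $N := N_G(P)$ for its normalizer in $G$. Conjugation of $N$ on its normal subgroup $P$ gives a homomorphism $N \to \Aut(P)$ whose kernel is $C_N(P)$, so the standard $N/C$-argument yields an embedding $N/C_N(P) \hookrightarrow \Aut(P)$. In particular $|N/C_N(P)|$ divides $|\Aut(P)|$.

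Next I would control the order of $N/C_N(P)$ from the other side. Since $P$ is a Sylow $p$-subgroup of $G$ it is also Sylow in $N$, so $|N| = p^{\alpha} m$ with $m := [N:P]$ dividing $n$; in particular $\Gcd(m,p)=1$. Because $P$ is \emph{abelian}, we have $P \subseteq C_N(P)$, hence $[N:C_N(P)]$ divides $[N:P] = m$, which divides $n$. Thus $|N/C_N(P)|$ is simultaneously a divisor of $n$ and of $|\Aut(P)|$. The hypothesis that $|\Aut(P)|$ is coprime to $n$ then forces $|N/C_N(P)| = 1$, i.e. $N = C_N(P)$.

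Finally, $N = C_N(P)$ means every element of $N$ commutes with every element of $P$; since $P \subseteq N$, this says precisely $P \subseteq Z(N) = Z(N_G(P))$, that is, the Sylow $p$-subgroup lies in the center of its normalizer. Applying Theorem \ref{thm:Burnsidenormalp} gives that $G$ has a normal $p$-complement, completing the proof.

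I do not expect a genuine obstacle here: the argument is a clean combination of the $N/C$ embedding, the coprimality bookkeeping on the orders, and the cited Burnside criterion. The only point requiring care is keeping the two divisibility constraints on $|N/C_N(P)|$ straight—that it divides $n$ (from abelianness of $P$ together with $[N:P] \mid n$) and divides $|\Aut(P)|$ (from the embedding)—so that their coprimality collapses the quotient to the trivial group.
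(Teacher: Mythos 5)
Your proof is correct and follows the same route as the paper: both reduce to Burnside's normal $p$-complement theorem by showing that the abelian Sylow $p$-subgroup lies in the center of its normalizer. The only difference is that the paper asserts this centrality in one sentence, while you supply the underlying $N/C$-argument and the divisibility bookkeeping ($[N_G(G_p):C_{N_G(G_p)}(G_p)]$ divides both $|\Aut(G_p)|$ and $n$, hence equals $1$) that justifies it.
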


\begin{proof}
Since $G_p$ is abelian and the order of the automorphism group of $G_p$ is coprime to $n$, then $G_p$ is in the center of its normalizer $N_{G}(G_p)$. Then by Theorem \ref{thm:Burnsidenormalp}, $G$ must have a normal $p$-complement. 
\end{proof}

\begin{theorem}\label{thm:autofabelian}
(\cite[Theorem~4.1]{HR07}) Let $p$ be a prime. Let $H$ be the abelian $p$-group $C_{p^{e_1}}\times \cdots\times C_{p^{e_n}}$, $1\leq e_1\leq \cdots \leq e_n$. Define the following $2n$ numbers: $$d_k=\Max\{r | e_r=e_k\}, \; c_k=\Min\{r|e_r=e_k\}$$

then one has in particular $d_k\geq k$, $c_k\leq k$, and 

$$|\Aut(H)|=\prod_{k=1}^{n}(p^{d_k}-p^{k-1})\prod_{j=1}^{n}(p^{e_j})^{n-d_j}\prod_{i=1}^{n}(p^{e_{i-1}})^{n-c_i+1}.$$

\end{theorem}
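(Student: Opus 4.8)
The plan is to follow the matrix-theoretic approach of Hillar--Rhea, realizing $\mathrm{End}(H)$ explicitly and reading off $\Aut(H)$ via reduction modulo $p$. First I would fix the decomposition $H = \bigoplus_{i=1}^n \Z/p^{e_i}\Z$ and represent an endomorphism $\phi$ by the matrix $(a_{ij})$, where $a_{ij}$ is the $(i,j)$ component of $\phi$ viewed in $\Hom(\Z/p^{e_j}\Z, \Z/p^{e_i}\Z)$. Since a homomorphism $\Z/p^{e_j}\Z \to \Z/p^{e_i}\Z$ must send a generator to an element of order dividing $p^{e_j}$, one has $\Hom(\Z/p^{e_j}\Z,\Z/p^{e_i}\Z)\cong \Z/p^{\min(e_i,e_j)}\Z$, and concretely $a_{ij}$ may be taken in $\Z/p^{e_i}\Z$ subject to the divisibility condition $p^{\max(e_i-e_j,0)}\mid a_{ij}$. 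Composition of endomorphisms corresponds to matrix multiplication, so $\mathrm{End}(H)$ is identified with this ring of matrices and $|\mathrm{End}(H)| = \prod_{i,j} p^{\min(e_i,e_j)}$.

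The key step is the automorphism criterion. Because $H$ is a finite abelian $p$-group, $\phi$ is an automorphism if and only if the induced map $\bar\phi$ on $H/pH \cong \F_p^n$ is invertible: surjectivity of $\bar\phi$ forces $\phi(H)+pH=H$, hence $\phi(H)=H$ by a Nakayama-type argument, and a surjective endomorphism of a finite group is bijective. Under reduction modulo $p$ every entry $a_{ij}$ with $e_i>e_j$ vanishes, since it is divisible by $p$; ordering the $e_i$ increasingly, the reduced matrix $\bar\phi=(\bar a_{ij})$ therefore lies in the subalgebra $S\subset M_n(\F_p)$ of block upper-triangular matrices whose diagonal blocks correspond to the maximal runs of equal exponents. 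Thus $\phi\in\Aut(H)$ exactly when $\bar\phi\in S\cap \GL_n(\F_p)$.

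With this in hand I would count $|\Aut(H)|$ multiplicatively. The reduction map $\mathrm{End}(H)\to S$ is a surjective group homomorphism, so $|\Aut(H)| = |\ker|\cdot |S\cap\GL_n(\F_p)|$, where $\ker$ consists of the endomorphisms reducing to zero and $|\ker| = |\mathrm{End}(H)|/|S|$ with $|S| = p^{\#\{(i,j):\,e_i\le e_j\}}$. The factor $|S\cap \GL_n(\F_p)|$ is the number of invertible block upper-triangular matrices over $\F_p$: its diagonal blocks, one for each distinct exponent value, contribute $\prod_s|\GL(n_s,\F_p)|$ while the strictly-upper blocks are free. The final step is bookkeeping: the indices $r$ with $e_r=e_k$ form the contiguous block $[c_k,d_k]$ of size $d_k-c_k+1$, and regrouping the $\GL$-factors over this block turns $\prod_s |\GL(n_s,\F_p)|$, together with the accompanying powers of $p$ from $|\ker|$ and from the free blocks, into the advertised three products; the numbers $d_k$ and $c_k$ are precisely the data recording the block boundaries, with the convention $e_0=0$ in the last factor.

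I expect the main obstacle to be this last reassembly rather than any conceptual point: one must verify that the powers of $p$ coming from $|\ker|=\prod_{i,j}p^{\min(e_i,e_j)}/p^{\#\{(i,j):\,e_i\le e_j\}}$ and from the free upper blocks combine correctly with the $\GL$-factors $\prod_{k=c}^{d}(p^{d}-p^{k-1})=p^{\binom{s}{2}}\prod_{u=1}^{s}(p^u-1)$ of each diagonal block to produce exactly $\prod_{k}(p^{d_k}-p^{k-1})\prod_j(p^{e_j})^{n-d_j}\prod_i(p^{e_{i-1}})^{n-c_i+1}$. Since this identity is the content of \cite[Theorem~4.1]{HR07}, I would ultimately cite that reference for the statement, having reduced it to the transparent count above.
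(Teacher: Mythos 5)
The paper offers no internal proof of this theorem: it is quoted verbatim from \cite[Theorem~4.1]{HR07}. Your sketch is precisely the Hillar--Rhea argument --- the matrix model of ${\rm End}(H)$ with divisibility constraints $p^{\max(e_i-e_j,0)}\mid a_{ij}$, the criterion that $\phi\in\Aut(H)$ if and only if its reduction mod $p$ lies in $S\cap\GL(n,\F_p)$ (Nakayama plus finiteness of $H$), and the fiber count $|\Aut(H)|=|\ker|\cdot|S\cap\GL(n,\F_p)|$ --- and all of these steps are correct.

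Two concrete repairs are needed in your final bookkeeping, however. (1) You parse the last factor as $(p^{e_{i-1}})^{n-c_i+1}$ with the convention $e_0=0$. Under that reading the displayed formula is false: for $H=C_{p^2}$ (so $n=1$, $e_1=2$, $c_1=d_1=1$) it yields $(p-1)\cdot 1\cdot (p^{e_0})^{1}=p-1$, whereas $|\Aut(C_{p^2})|=p(p-1)$. The exponent in \cite{HR07} is $e_i-1$ (that is, ``$e_i$ minus one''), not the subscript $i-1$; the display in the present paper is a typo, and the count you set up, carried to the end, produces the $p^{e_i-1}$ version, so the verification you describe would fail as literally stated. (2) Your block identity $\prod_{k=c}^{d}(p^{d}-p^{k-1})=p^{\binom{s}{2}}\prod_{u=1}^{s}(p^{u}-1)$, with $s=d-c+1$, holds only for the first block ($c=1$); in general $\prod_{k=c}^{d}(p^{d}-p^{k-1})=p^{s(c-1)}\,|\GL(s,\F_p)|$, and these extra factors $p^{s(c-1)}$ are exactly what absorbs part of $|\ker|=\prod_{i,j}p^{\min(e_i,e_j)}/p^{\#\{(i,j)\,:\,e_i\le e_j\}}$ and of the free strictly-upper blocks. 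With these corrections the reassembly does close (for $C_p\times C_{p^2}$ both your count and the corrected formula give $p^{3}(p-1)^{2}$); and since you fall back on citing \cite{HR07} for the identity --- exactly what the paper itself does --- the overall logical structure of your proposal is sound.
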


\begin{proposition}\label{pp:normal2complement}
Let $G$ be a finite group. Suppose $$G_2\cong C_{2^{e_1}}\times \cdots \times C_{2^{e_n}},\; 1\leq e_1 <\cdots <e_n.$$ 

Then $G$ has a normal $2$-complement and $G$ is solvable.
\end{proposition}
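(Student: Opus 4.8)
The plan is to derive the normal $2$-complement directly from Theorem \ref{thm:abeliansylowandnormalp}, whose decisive hypothesis is that $|\Aut(G_2)|$ be coprime to the odd part of $|G|$; solvability will then follow formally from the resulting extension structure.

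First I would extract the key structural consequence of the hypothesis: because $e_1 < e_2 < \cdots < e_n$ are \emph{strictly} increasing, the exponents $e_1,\dots,e_n$ are pairwise distinct. Hence, applying Theorem \ref{thm:autofabelian} with $p = 2$ and $H = G_2$, each value $e_k$ is attained by exactly one index, so that $d_k = \Max\{r \mid e_r = e_k\} = k$ and $c_k = \Min\{r \mid e_r = e_k\} = k$ for every $k$. This is the only genuinely delicate point, and I expect it to be the main (indeed essentially the only) obstacle: once strict monotonicity is seen to force $d_k = c_k = k$, the rest is bookkeeping.

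Next I would substitute $d_k = c_k = k$ into the product formula of Theorem \ref{thm:autofabelian}. The first factor becomes $\prod_{k=1}^n (2^{k} - 2^{k-1}) = \prod_{k=1}^n 2^{k-1}$, while the remaining two products $\prod_{j=1}^n (2^{e_j})^{n-j}$ and $\prod_{i=1}^n (2^{e_{i-1}})^{n-i+1}$ (with the convention $e_0 = 0$) are patently powers of $2$. Thus every factor in the formula is a power of $2$, and therefore $|\Aut(G_2)|$ is itself a power of $2$. Writing $|G| = 2^{\alpha} n$ with $n$ odd, this shows $|\Aut(G_2)|$ is coprime to $n$. Since $G_2$ is abelian, the hypotheses of Theorem \ref{thm:abeliansylowandnormalp} (with $p = 2$) are satisfied, and I conclude that $G$ has a normal $2$-complement $N \lhd G$, that is, a normal subgroup of odd order with $G/N \cong G_2$.

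Finally, for solvability I would view $G$ as an extension of the odd-order group $N$ by the abelian group $G_2 \cong G/N$. The subgroup $N$ has odd order, hence is solvable by the Feit--Thompson theorem; in the setting of this section, where $|N|$ divides $3^2 5^2$, one may instead invoke Burnside's $p^a q^b$-theorem and dispense with the odd-order theorem altogether. Since $G/N$ is abelian and therefore solvable, $G$ is an extension of a solvable group by a solvable group and is consequently solvable, which completes the argument.
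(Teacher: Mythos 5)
Your proposal is correct and follows essentially the same route as the paper: apply Theorem \ref{thm:autofabelian} (noting that strict monotonicity of the $e_k$ forces $d_k=c_k=k$, and that $p-1=1$ when $p=2$) to conclude $|\Aut(G_2)|$ is a power of $2$, invoke Theorem \ref{thm:abeliansylowandnormalp} to get the normal $2$-complement $N$, and then deduce solvability of $G$ from Feit--Thompson applied to the odd-order group $N$ together with solvability of $G/N\cong G_2$. The only difference is that you spell out the computation inside the automorphism-order formula, which the paper leaves implicit.
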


\begin{proof}
By Theorem \ref{thm:autofabelian}, the order of automorphism group of $G_2$ is a power of $2$. Then by Theorem \ref{thm:abeliansylowandnormalp}, $G$ has a normal $2$-complement, say $N$. By  Feit-Thompson Theorem \cite{FT63}, $N$ is solvable. Then $G$ is solvable since both $N$ and $G/N$ are solvable.
\end{proof}

\begin{theorem}\label{orderdivides2^63^25^2}
Let $G$ be a subgroup of $\Aut(X)$ whose order divides $2^63^25^2$. Then $G$ is isomorphic to a subgroup of one of the groups appearing in the Example (1)-(22) in Example \ref{mainex}.
\end{theorem}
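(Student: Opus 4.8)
The plan is to prove Theorem \ref{orderdivides2^63^25^2} by reducing the statement, via Sylow theory, to the cases already handled earlier in the paper, so that the only genuinely new work is organizing the case division. Let $G < \Aut(X)$ with $|G|$ dividing $2^6 3^2 5^2$. By Theorems \ref{thm:Sylow3} and \ref{leAb}/\ref{thms5}, the Sylow subgroups $G_3$ and $G_5$ are already constrained ($G_3 \cong C_3$ or $C_3^2$, and $|G_5| \leq 25$ with $G_5$ abelian when $|G_5|=125$, which here cannot occur since $5^3 \nmid |G|$); moreover Section \ref{ss:Sylow2} gives the complete list of admissible $2$-groups $G_2$. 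The first step is therefore to record that each individual Sylow subgroup of $G$ is already known to be isomorphic to a subgroup of one of the $22$ groups, and that $|G| \leq 2^6 3^2 5^2 = 14400$.

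Next I would split according to the size of $|G|$. If $|G| \leq 2000$ and $G$ is solvable, then Theorem \ref{thm:solvableless2000} applies directly and we are done. So I first establish solvability: by Proposition \ref{pp:normal2complement}, whenever $G_2$ is one of the \emph{abelian} $2$-groups on the list with distinct exponents (in particular cyclic), $G$ has a normal $2$-complement $N$, and since $N$ has order dividing $3^2 5^2$ it is solvable by Burnside's $p^aq^b$-theorem, whence $G$ is solvable. For the remaining shapes of $G_2$ (the non-cyclic ones such as $C_4^2$, $C_8\times C_2$, $[16,\ast]$, etc.) I would argue solvability directly: $G$ has no composition factor isomorphic to $A_5$ or any larger simple group because, by the running hypothesis, the only non-solvable subgroups that can act are those containing $A_5$ (as noted in the Introduction), and $|A_5|=60$ forces $5^2 \mid |G|$ together with specific $2$- and $3$-parts that are incompatible with a $2$-group of the listed non-cyclic type unless $G$ lies inside one of the $22$ groups already. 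In practice this is a short finite check using GAP together with the Sylow constraints.

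The main obstacle is the range $2000 < |G| \leq 14400$, which falls outside the GAP small-groups library used in Theorem \ref{thm:solvableless2000}. The plan here is the PC-free reduction promised in the Introduction: given such a large $G$, I would locate a proper subgroup $H < G$ with $2000 < |G|$ but $|H| \leq 2000$ chosen so that $H$ itself is \emph{not} isomorphic to a subgroup of any of the $22$ groups whenever $G$ is not; concretely, one takes $H$ to be generated by a Sylow $5$-subgroup together with suitable Sylow $2$- and $3$-parts, and invokes the sub-test philosophy of Remark \ref{rmk:stepstoexcludegroups}. Because $G_5 \cong C_5$ or $C_5^2$ and $G_3 \cong C_3$ or $C_3^2$ are small and fixed, and $|G_2| \leq 2^6$, every $G$ of order exceeding $2000$ contains a proper subgroup of order $\leq 2000$ realizing the same "forbidden" Sylow combination, so that exclusion of $G$ follows from exclusion (already done) at the smaller order. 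Where $G$ is \emph{not} excluded, the same subgroup analysis pins down $F$ (via computation of invariant monomials as in Theorems \ref{5X5X16unique} and \ref{5X5X4X4unique}) and shows $X$ must be one of the listed examples, whence $\Aut(X)$ — and a fortiori $G$ — embeds into one of the $22$ groups.

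Finally I would assemble these pieces: solvability plus Theorem \ref{thm:solvableless2000} disposes of $|G| \leq 2000$; the sub-test reduction disposes of $2000 < |G| \leq 14400$; and the handful of maximal realizable groups of large order (those in Examples (1), (2), (5) with $5^2$-part, detected by Theorem \ref{thm:greaterthan125}) are explicitly among the $22$. The step I expect to be genuinely delicate is verifying that the sub-test always succeeds in the large-order range — i.e.\ that no sporadic group of order between $2000$ and $14400$ slips through by having \emph{all} its proper subgroups individually admissible while the whole group is not. I would handle this by exploiting that such a minimal counterexample would have to be an extension whose Sylow data is already tabulated, reducing it to a finite verification of the possible "combinations" of the admissible Sylow subgroups, exactly the step flagged as item (iii) in the Introduction.
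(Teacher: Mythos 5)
There is a genuine gap, and it is the non-solvable case. Your proposal tries to \emph{establish solvability} of every $G$ under consideration, but this is impossible: non-solvable groups really do act on smooth quintics ($A_5$ acts on the Fermat quintic, and $A_5\times C_5$, $S_5$, $S_5\times C_5$ occur as well), so no solvability argument can succeed in general. Worse, the justification you give for the ``remaining shapes of $G_2$'' is circular: the statement that every non-solvable automorphism subgroup contains $A_5$ is announced in the Introduction as a \emph{byproduct} of the classification being proved here, so it cannot be used as an input. The paper instead splits the proof into a solvable and a non-solvable branch. In the non-solvable branch it first uses Proposition \ref{pp:normal2complement} (distinct-exponent abelian Sylow $2$-subgroups force solvability) together with Burnside's $p^aq^b$-theorem to pin down $|G|=2^{a_2}3^{a_3}5^{a_5}$ with $2\leq a_2\leq 5$, $a_3,a_5\geq 1$, and then runs through the sixteen possible orders using GAP's classification of non-solvable groups of each order. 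Crucially, two groups survive every sub-test and must be killed by hand with character theory and the trace restrictions of Lemmas \ref{lem:to2}, \ref{traceof4} and \ref{lem:3times2}: $\SL(2,5)$ (Lemma \ref{lem:exclude SL(2,5)}) and $A_6$; and for non-solvable order $2400$ (outside GAP's library) the paper uses a maximal normal subgroup $N$ with $G/N$ simple, hence $G/N\in\{C_2,C_3,C_5,A_5\}$, to reduce to previously settled orders. None of this machinery appears in your proposal, and without it the groups $\SL(2,5)$ and $A_6$ — whose proper subgroups are all admissible — slip through.

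A second, smaller issue concerns your reduction in the solvable range $2000<|G|\leq 14400$. The correct tool is Theorem \ref{thm:solvablesylow} (Hall's theorem for solvable groups), which guarantees Hall subgroups of any coprime factorization of $|G|$; this is how the paper produces, e.g., a forbidden subgroup of order $576$ inside a solvable group of order $2880$, or a subgroup of order $800$ inside one of order $2400$ (the latter is \emph{not} forbidden, and the paper must then invoke Theorems \ref{5X5X16unique}, \ref{5X5X4X4unique} and \ref{thm:Aut(X)1-16} to conclude $|\Aut(X)|=800<2400$, a step you do correctly anticipate). Your proposed subgroup ``generated by a Sylow $5$-subgroup together with suitable Sylow $2$- and $3$-parts'' will in general be all of $G$, not a proper subgroup, and for non-solvable $G$ Hall subgroups need not exist at all — which is precisely why the two branches of the paper's proof use different arguments in this range.
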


\begin{proof}
First, we assume $G$ is solvable.

If $|G|\leq 2000$, then we are done by Theorem \ref{thm:solvableless2000}.

If $|G|> 2000$, then  $|G|=2400,2880,3600,4800,7200$, or $14400$. 

Suppose $|G|=2400$. Since $G$ is solvable, $G$ has a subgroup, say $H$, of order $800$ by Theorem \ref{thm:solvablesylow}. By Theorem \ref{thm:solvableless2000}, $H$ contains either $C_{16}\times C_5^2$ or $C_{4}^2\times C_5^2$. However, then by Theorem \ref{5X5X16unique}, Theorem \ref{5X5X4X4unique} and Theorem \ref{thm:Aut(X)1-16}, such $G$ does not exist.

Suppose $|G|=2880$. Since $G$ is solvable, $G$ has a subgroup of order $2^63^2=576$, a contradiction to Theorem \ref{thm:solvableless2000}.

Suppose $|G|=3600=2^43^25^2$. Since $G$ is solvable, $G$ has a subgroup of order $144=2^43^2$, a contradiction to Theorem \ref{thm:solvableless2000}.

Suppose $|G|=4800=2^63^15^2$. Since $G$ is solvable, $G$ has a subgroup of order $2^65^2=1600$, a contradiction to Theorem \ref{thm:solvableless2000}.

Suppose  $|G|=7200=2^53^25^2$ or $14400=2^63^25^2$. We get a contradiction similar to previous cases.

Next, we assume $G$ is non-solvable. 

By Proposition \ref{pp:normal2complement}, any finite group whose Sylow $2$-subgroups are isomorphic to $C_{2^m}\times C_{2^n}$, where $m\neq n$, must be solvable. By Burnside\rq{}s $p^aq^b$ theorem, $a_3> 0$ and $a_5 >0$. Therefore, $|G|=2^{a_2}3^{a_3}5^{a_5}$, where $2\leq a_2\leq 5$, $1\leq a_3\leq 2$, and $1\leq a_5\leq 2$. So $|G|$ has 16 possibilities. We will do case by case checking according to the order $|G|$:

1)  $|G|=2^2\cdot3\cdot5$: By classification (using GAP), there is only one non-solvable group of order 60: alternating group $A_5$, which is clearly a subgroup of $\Aut(X)$.   

2) $|G|=2^2\cdot3^2\cdot5$: By classification, there is only one non-solvable group of order 180: $A_5\times C_3$, which is not a subgroup of $\Aut(X)$ since it contains a subgroup isomorphic to $C_2^2\times C_3$.

3) $|G|=2^2\cdot3\cdot5^2$: By classification, there is only one non-solvable group of order 300:  $A_5\times C_5$, which could be a subgroup of $\Aut(X)$ (cf. the Example (21) in Example \ref{mainex}).

4)  $|G|=2^2\cdot3^2\cdot5^2$: By classification, there is only one non-solvable group of order 900: $A_5\times C_{15}$, which is not a subgroup of $\Aut(X)$ since it contains a subgroup isomorphic to $C_2^2\times C_3$.

5) $|G|=2^3\cdot3\cdot5=120$: By classification, there are three non-solvable groups of order 120, $A_6\times C_2$, $S_5$, and $\SL(2,5)$. By sub-test $A_6\times C_2$ is excluded (cf. Lemma \ref{lem:[8,5]}). We know the symmetric group $S_5$ is a subgroup of $\Aut(X)$ in  examples in Example \ref{mainex}. So we are reduced to exclude the group $\SmallGroup(120,5)\cong \SL(2,5).$

\begin{lemma}\label{traceof4}
 Let $A\in \GL(5,\mathbb{C})$ of order $4$. Suppose $[A]\in \Aut(X), \Ord([A])=4$, and $A(F)=F$. Then ${\rm tr}(A)\neq -1$.
   \end{lemma} 
   
   \begin{proof}
   Similar to Lemma \ref{lem:3times2}.
   \end{proof}

\begin{lemma}\label{lem:exclude SL(2,5)}
The group $\SL(2,5)$ is not a subgroup of $\Aut(X)$.
\end{lemma}

\begin{proof}
Assume to the contrary that $G< \Aut(X)$ and $G\cong \SL(2,5)$.

Since $|G|=2^3\cdot 3\cdot 5$ and $G$ has a subgroup of order $10$ (could be checked by GAP), by Lemma \ref{10or50liftable}, $G$ has an $F-$lifting, say $\widetilde{G}$. The group $\widetilde{G}$ corresponds to five dimension faithful linear representation of $\SL(2,5)$, say $\rho$. By the character table (see Figure \ref{[120,5]}). $\SL(2,5)$ has nine different characters $\rm X.1-X.9$. The representation $\rho$ can not be a 5-dimensional irreducible representation (i.e., $\rho=\rm X.8$ ) as $\rm X.8$ is not a faithful representation (note that $\rm X.8 (2a)=5$, i.e.,  the character  $\rm X.8$ takes value 5 at the conjugacy class $2a$).

$\rho$ can not be of type $4\oplus 1$. Indeed:

1) $X.7\oplus X.1$ is impossible by the value of $2a$; and 

2) $X.6\oplus X.1$ is impossible by the value of $2a$.

$\rho$ cannot be of type $3\oplus 1\oplus 1$ as this is not faithful, again because of values of $2a$.

$\rho$ cannot be of type $3\oplus 2$  by the value of $4a$ and Lemma \ref{traceof4}.

$\rho$ cannot be of type $2\oplus 2\oplus 1$ by the value of $2a$ (trace of order 2 matrices can not be negative).

$\rho$ can not be type $2\oplus 1\oplus 1\oplus 1$ by value of $6a$ and Lemma \ref{lem:3times2}.

$\rho$ can not be type $ 1\oplus 1\oplus 1 \oplus 1\oplus 1$ since $\SL(2,5)$ is not abelian. 

Therefore,  $\rho$ with required properties does not exist. So we are done.
\begin{figure}[htbp]
\begin{center}

\includegraphics[width=20cm, height=20cm]{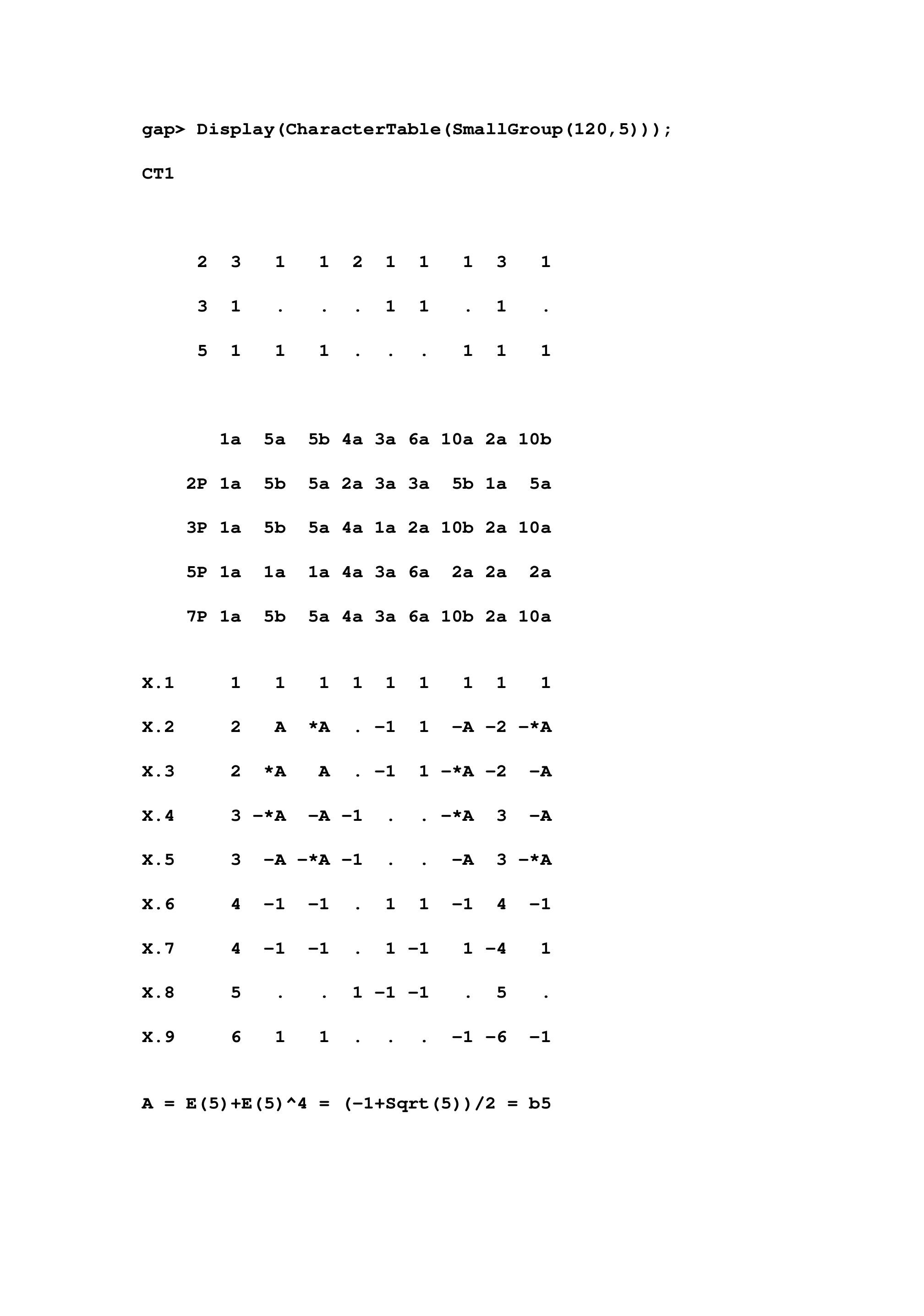}

\caption{Character table of $\SL(2,5)$}
\label{[120,5]}
\end{center}
\end{figure}
\end{proof}

6) $|G|=2^3\cdot3^2\cdot5=360$: By classification, there are six non-solvable groups of order 360. By sub-test, five of them are excluded and only $A_6$ survives. So we are reduced to exclude $A_6$.

\begin{lemma}
The alternating group $A_6$ is not a subgroup of $\Aut(X)$.
\end{lemma}

\begin{proof}

Assume to the contrary that $G< \Aut(X)$ and $G\cong A_6$. Notice that $A_6$ has a subgroup of order $10$, and then $G_5$ is $F-$liftable by Lemma \ref{10or50liftable}.

Then by Theorem \ref{liftable}, $G$ has an $F-$lifting, say  $\widetilde{G}$. So $\widetilde{G}$ corresponds to a five dimensional faithful linear representation of $A_6$. By linear representation theory of $A_6$,  the group $\widetilde{G}$ contains a matrix of order 4 whose trace is $-1$, a contradiction to Lemma \ref{traceof4}.
\end{proof}

7) $|G|=600,1800,240,720,1200,480,1440$: These orders are all less than 2000, so the methods to exclude groups are essentially the same as the methods for cases (1)-(6), and we omit the details.

8) $|G|=2^5\cdot3\cdot 5^2=2400$: Because in GAP library, groups of order 2400 are not approached by $\SmallGroup(-,-)$ function, we need to use slightly different methods.

\begin{lemma}
No non-solvable group of order $2400$ is a subgroups of $\Aut(X)$.
\end{lemma}

\begin{proof}
Assume to the contrary that $G< \Aut(X)$, and $G$ is non-solvable of order $2400$.

Let $N$ be a maximal proper normal subgroup of $G$. Then the quotient group $G/N$ must be a nontrivial simple group, and by classification of finite simple groups, $G/N\cong C_2,C_3,C_5$ or $A_5$.

If $G/N \cong C_2,C_3$, or  $C_5$,  then $N$ must be a non-solvable subgroup of $\Aut(X)$ of order $1200,800$, or $480$, which is impossible by previous results.

If $G/N\cong A_5$, then $N$ has order $40$, then $G$ has a subgroup of order $400$. On the other hand, by  the classification (which we are done before) of subgroups of $\Aut(X)$ of order $400$, we can explicitly compute (using the differential method in Section \ref{ss:differentialmethod}) $\Aut(X)$ if $\Aut(X)$ has a subgroup of order $400$. In particular, $\Aut(X)$ can not contain a non-solvable subgroup of order $2400$ when $400$ divides $|\Aut(X)|$.  

Therefore, $G/N\cong A_5$ is also impossible.
\end{proof}

9) $|G|=3600$, or $7200$: Impossible by similar arguments above.

Therefore, the theorem is proved.

\end{proof}

\section{Proof of main Theorem}\label{ss:proofmainthm}

In this section, we prove our main Theorem (Theorem \ref{thm:Main}).

Let $G< \Aut(X)$. Then, by our classification of Sylow subgroups of subgroups of $\Aut(X)$ done in previous sections, it follows that $$|G|=2^{a_2}3^{a_3}5^{a_5}13^{a_{13}}17^{a_{17}}41^{a_{41}}\, ,$$ where $0\leq a_2\leq 8$, $ 0\leq a_3\leq 2$, $0\leq a_5\leq 5$, $0\leq a_{13}\leq 1$,  $0\leq a_{17}\leq 1$,  $0\leq a_{41}\leq 1$.

If $a_{13}, a_{17}$, or $a_{41}$ is not zero, by Theorems \ref{thm:containorder13}, \ref{thm:containorder17} or \ref{thm:containorder41}, $G$ is isomorphic to a subgroup of one of the 22 groups in Example \ref{mainex}.

In the rest of the proof we assume $a_{13}=a_{17}=a_{41}=0$.

If $a_{5}\ge 3$, then by Theorem \ref{thm:greaterthan125}, $G$ is isomorphic to a subgroup of one of the groups in Example \ref{mainex}.

Then  we may furthermore assume $a_5< 3$, i.e.,  $$|G|=2^{a_2}3^{a_3}5^{a_5},$$ where $0\leq a_2\leq 8$, $ 0\leq a_3\leq 2$, $0\leq a_5\leq 2$.

If $a_2=7$ or $8$, then by Theorem \ref{thm:C128}, $G$ is isomorphic to a subgroup of one of the groups in Example \ref{mainex}.

If $0\leq a_2\leq 6$, then we may just apply Theorem \ref{orderdivides2^63^25^2}.

Theorem \ref{thm:Main} is thus proved.

\section{Application-Gorenstein automorphism groups}\label{ss:goren}

{\it In this section, $X$ is a smooth quintic threefold defined by $F$.} In this section, we study the {\it Gorenstein} automorphism group of $X$.

\begin{definition}
Let $Y$ be a Calabi-Yau threefold. Let $\omega_Y$ be a nonzero holomorphic 3-form on $Y$. An automorphism of $Y$ or an action of a group on $Y$ is called {\it Gorenstein } if it fixes $\omega_Y$.
\end{definition}

\begin{lemma}\label{lem:mukailemma}
Let $A\in \GL(5,\C)$. Suppose $[A]\in \Aut(X)$. Then the automorphism $[A]$ of $X$ is Gorenstein if and only if $A(F)=\Det(A) F$.
\end{lemma}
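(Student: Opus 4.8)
The plan is to represent the holomorphic $3$-form on $X$ by a Poincar\'e residue and then track how the scalar $\Det(A)$ and the semi-invariance factor of $F$ interact. Since $X=\{F=0\}\subset \P^4$ is a smooth hypersurface of degree $5=\dim\P^4+1$, the meromorphic $4$-form
$$\Omega \;=\; \frac{\Omega_0}{F},\qquad \Omega_0:=\sum_{i=1}^5 (-1)^{i-1}x_i\,dx_1\wedge\cdots\wedge\widehat{dx_i}\wedge\cdots\wedge dx_5,$$
descends to $\P^4$ (numerator and $F$ are both homogeneous of degree $5$, and $\Omega_0/F$ is basic for the quotient) and has a simple pole along $X$. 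Its Poincar\'e residue $\omega_X:=\Res_X\Omega$ is a nowhere-vanishing holomorphic $3$-form on $X$, hence a generator of $H^{3,0}(X)$; I will use it as the reference $3$-form in the definition of Gorenstein.

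Next I would compute the action of a representative $A$. Writing $g=[A]\colon \P^4\to\P^4$, $g([x])=[Ax]$, one has $g^*F=A(F)$, so the hypothesis $[A]\in \Aut(X)$ forces $A(F)=\lambda F$ for a unique $\lambda\in\C^*$. On $\C^5$ the linear map $x\mapsto Ax$ satisfies $A^*(dx_1\wedge\cdots\wedge dx_5)=\Det(A)\,dx_1\wedge\cdots\wedge dx_5$, while the Euler field $E=\sum_i x_i\,\partial/\partial x_i$ is $A$-invariant. Since $\Omega_0=\iota_E(dx_1\wedge\cdots\wedge dx_5)$ and interior product commutes with pullback, this yields $A^*\Omega_0=\Det(A)\,\Omega_0$. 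Combining with $A^*F=A(F)=\lambda F$ and pulling back along $q\colon \C^5\setminus\{0\}\to\P^4$ (using the identity $q\circ A=g\circ q$), I obtain $q^*(g^*\Omega)=\tfrac{\Det(A)}{\lambda}\,q^*\Omega$, and hence $g^*\Omega=\tfrac{\Det(A)}{\lambda}\,\Omega$.

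Finally, because $g$ preserves $X$, the Poincar\'e residue is equivariant, so $g^*\omega_X=\Res_X(g^*\Omega)=\tfrac{\Det(A)}{\lambda}\,\omega_X$. Therefore $[A]$ is Gorenstein, i.e.\ $g^*\omega_X=\omega_X$, precisely when $\lambda=\Det(A)$, which is exactly the equation $A(F)=\Det(A)F$. This settles both implications simultaneously.

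The conceptual content is short; the main obstacle is bookkeeping of conventions and scalars: orienting $g$ so that $g^*F=A(F)$ (rather than $F\circ A^{-1}$), confirming the factor $\Det(A)$ and \emph{not} $\Det(A)^{-1}$ in $A^*\Omega_0$, and justifying that the residue map is equivariant for automorphisms preserving $X$. Once these normalizations are fixed the computation is forced, and the stated equivalence reduces to the single condition that the two scalars $\Det(A)$ and $\lambda$ coincide.
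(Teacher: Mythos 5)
Your proof is correct. The paper gives no argument of its own for Lemma \ref{lem:mukailemma}—it simply cites \cite[Lemma~2.1]{Mu88}—and your Poincar\'e residue computation (pulling back $\Omega_0/F$, using $A^*\Omega_0=\Det(A)\,\Omega_0$ via the $A$-invariance of the Euler field, and equivariance of $\Res_X$) is exactly the standard argument behind that citation, so your write-up is a correct, self-contained version of the same approach.
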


\begin{proof}
See, for instance, \cite[Lemma~2.1]{Mu88}.
\end{proof}

\begin{lemma}\label{lem:Gorensteingroup}
Let $H<\Aut(X)$. Suppose $H$ has an $F$-lifting, say $\wt{H}$. Then $H$ is Gorenstein if and only if $\wt{H}\subset \SL(5,\C)$.
\end{lemma}

\begin{proof}
By definition of $F$-lifting, for all $A$ in $\wt{H}$, we have $A(F)=F$. Then just apply Lemma \ref{lem:mukailemma}.
\end{proof}

Let $G_i\subset\PGL(5,\C)$ and  $X_i$ ($i=1,2,...,22$) be  the finite group and the smooth quintic threefold defined in Example (i) in Example \ref{mainex}. Then by Lemma \ref{lem:mukailemma}, we can easily compute the Gorenstein subgroup (i.e., the subgroup consists of the Gorenstein automorphism of $X_i$), say $H_i$,  of $G_i\cong \Aut(X_i)$:

Example (1): $H_1$=$\la [A_4A_5^4], [A_4A_6^4], [A_4A_7^4], [A_2],[A_3]\ra\cong C_5^3\rt A_5$, and $|H_1|=2^2\cdot 3\cdot 5^4=7500$. (The matrices $A_i$ here are the same as those in Example (1) in Example \ref{mainex}. We use similar convention below.)

\medskip

Example (2): $H_2$=$\la [A_1^2A_2], [A_3], [A_4A_5^4], [A_4A_6^4]\ra\cong (C_5^2\rt C_3)\rt C_2$, and $|H_2|=2\cdot 3\cdot 5^2=150$.
\medskip

Example (3): $H_3$=$\la [A_1A_2^3], [A_3A_4^4], [A_5]\ra \cong D_{40}$, and $|H_3|=2^3\cdot 5=40$.
\medskip

Example (4): $H_4$=$\la [A_1^8A_4], [A_2A_3^4]\ra \cong D_{10}$, and $|H_4|=2\cdot 5=10$.

\medskip

Example (5): $H_5$=$\la [A_1A_7],[A_2], [A_3A_4^4], [A_3A_5^4],[A_6]\ra \cong (C_3\times (C_5^2\rt C_3))\rt C_2$, and $|H_5|=2\cdot 3^2\cdot 5^2=450$.

\medskip

Example (6): $H_6$=$\la [A_1^4A_2^3]\ra \cong C_{4}$, and $|H_6|=2^2=4$.
\medskip

Example (7): $H_7$=the trivial group.
\medskip

Example (8): $H_8$=$\la [A_1^2A_3], [A_2^2A_5], [A_4]\ra \cong C_5\times S_3$, and $|H_8|=2\cdot 3\cdot 5=30$.
\medskip

Example (9): $H_9$=$\la [A_1], [A_2], [A_4A_5^4]\ra \cong C_5\times(C_{13}\rt C_3)$, and $|H_9|=3\cdot 5\cdot 13=195$.
\medskip

Example (10): $H_{10}$=$\la [A_1^8A_2], [A_3]\ra \cong S_3$, and $|H_{10}|=2\cdot3=6$.

\medskip

Example (11): $H_{11}$=the trivial group.
\medskip

Example (12: $H_{12}$=$\la [A_3], [A_4]\ra \cong C_{13}\rt C_3$, and $|H_{12}|=3\cdot 13=39$.
\medskip

Example (13): $H_{13}$=$\la [A_1], [A_2], [A_3^2]\ra \cong C_3\times D_{34}$, and $|H_{13}|=2\cdot 3\cdot 17=102$.
\medskip

Example (14): $H_{14}$=$\la [A_1], [A_3], [A_2A_4^4],[A_5A_6],[A_5A_7]\ra \cong (C_5\times (C_3^2\rt C_2))\rt C_2$, and $|H_{14}|=2^2\cdot 3^2 \cdot 5=180$.

\medskip

Example (15): $H_{15}$=$\la [A_1], [A_3]\ra \cong C_{41}\rt C_5$, and $|H_{15}|=5\cdot 41=205$.

\medskip

Example (16): $H_{16}$=$\la [A_1], [A_2], [A_4]\ra \cong C_3\times (C_{13}\rt C_3)$, and $|H_{16}|=3^2\cdot 13=117$.

\medskip

Example (17): $H_{17}$=$\la [A_1], [A_2], [A_3]\ra \cong \GL(2,3)$, and $|H_{17}|=2^4\cdot 3=48$.

\medskip

Example (18): $H_{18}$=$\la [A_1], [A_2], [A_3]\ra \cong \SL(2,3)$, and $|H_{18}|=2^3\cdot 3=24$.

\medskip

Example (19): $H_{19}$=$\la [A_2], [A_4]\ra \cong C_{12}$, and $|H_{19}|=2^2\cdot 3=12$.
\medskip

Example (20):  $H_{20}$=$\la [A_2],[A_3], [A_4]\ra \cong D_{24}$, and $|H_{20}|=2^3\cdot 3=24$.
\medskip

Example (21):  $H_{21}$=$\la [A_2], [A_3]\ra \cong A_{5}$, and $|H_{21}|=2^2\cdot 3\cdot 5=60$.
\medskip

Example (22):  $H_{22}$=$\la [A_1^{16}A_2]\ra \cong C_2$, and $|H_{22}|=2$.

\medskip

It turns out that the above examples cover almost all maximal (with respect to inclusions) finite groups which can have an effective Gorenstein group action on a smooth quintic threefold:

\begin{theorem}\label{thm:goren}

Let $H$ be a finite group. If $H$ has an effective Gorenstein group action on a smooth quintic threefold, then $H$ is isomorphic to a subgroup of one of the following 11 groups: $ C_5^3\rt A_5$, $ D_{40}$, $(C_3\times (C_5^2\rt C_3))\rt C_2$, $C_5\times(C_{13}\rt C_3)$, $ C_3\times D_{34}$, $(C_5\times (C_3^2\rt C_2))\rt C_2$, $C_{41}\rt C_5$, $C_3\times (C_{13}\rt C_3)$, $\GL(2,3)$, $D_{24}$, which are isomorphic to $H_1$,$H_3$,$H_5$,$H_9$,$H_{13}$,$H_{14}$,$H_{15}$,$H_{16}$,$H_{17}$,$H_{20}$ defined above, and $C_4\ti C_2$.
\end{theorem}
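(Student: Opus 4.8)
The plan is to reduce the statement to a classification of determinant-one invariant group actions and then to read off the maximal ones from the data already assembled for Theorem \ref{thm:Main}. First I would record the structural reduction. If $H$ acts faithfully and Gorenstein-ly on a smooth quintic $X=\{F=0\}$, then $H<\Aut(X)$ and every element of $H$ is Gorenstein; by Lemma \ref{lem:mukailemma} the Gorenstein elements are precisely the $[A]$ with $A(F)=\Det(A)F$. Rescaling a lift $A\mapsto cA$ multiplies both $A(F)$ and $\Det(A)$ by $c^5$, so the assignment $g=[A]\mapsto \lambda_A/\Det(A)$, where $A(F)=\lambda_A F$, is a well-defined character $\chi\colon\Aut(X)\to\C^*$ (the action on the holomorphic $3$-form), and the Gorenstein subgroup is exactly $\Ker\chi$. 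Thus $\Aut_G(X):=\Ker\chi$ is normal in $\Aut(X)$ with cyclic quotient, and it suffices to classify the maximal groups of the form $\Aut_G(X)$; any Gorenstein $H$ embeds into one of these. By Theorem \ref{liftable} together with Theorem \ref{thm:noorder25}, the relevant subgroups are $F$-liftable, so by Lemma \ref{lem:Gorensteingroup} the problem becomes: classify the finite subgroups $\widetilde H<\SL(5,\C)$ admitting a smooth quintic $F$ with $\widetilde H(F)=F$.

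Next I would import the main classification. Since $\Aut_G(X)<\Aut(X)$ and $\Aut(X)$ is, by Theorem \ref{thm:Main}, isomorphic to a subgroup of one of the $22$ groups $G_i$, the order of any Gorenstein group divides some $|G_i|$. For the maximal realizations $X=X_i$ (where $\Aut(X_i)=G_i$ by Theorem \ref{thm:Aut(X)1-16} and its analogues) the determinant condition of Lemma \ref{lem:mukailemma} lets me compute $\Aut_G(X_i)=H_i$ explicitly; these are the groups $H_1,\dots,H_{22}$ tabulated above. A routine GAP check of inclusions then shows that each of the remaining $H_i$ (the indices $2,4,6,7,8,10,11,12,18,19,21,22$) is isomorphic to a subgroup of one of the ten groups $H_1,H_3,H_5,H_9,H_{13},H_{14},H_{15},H_{16},H_{17},H_{20}$; for instance $H_{18}\cong\SL(2,3)<\GL(2,3)\cong H_{17}$, $H_{19}\cong C_{12}<D_{24}\cong H_{20}$, $H_4\cong D_{10}<D_{40}\cong H_3$, and $H_{21}\cong A_5<C_5^3\rt A_5\cong H_1$.

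It remains to treat the quintics $X$ whose automorphism group is a proper subgroup of some $G_i$, where $\Aut_G(X)$ need not coincide with $H_i\cap\Aut(X)$. Here I would rerun the inductive \emph{sub-test} and case-by-case machinery of Sections \ref{ss:Sylow2} and \ref{ss:2^63^25^2}, now under the sharper determinant-one constraint: being Gorenstein forces $\widetilde H\subset\SL(5,\C)$, which tightens the trace and eigenvalue restrictions (for example the Gorenstein analogue of Lemma \ref{lem:to2}) and excludes most groups directly. The outcome is that every Gorenstein group is a subgroup of one of the ten $H_i$ above, with the single extra maximal type $C_4\ti C_2$. The group $C_4\ti C_2$ appears as the Gorenstein subgroup of an explicit smooth quintic of $2$-power automorphism order, and it must be listed separately because its order-$8$ abelian non-cyclic structure is not a subgroup of the relevant Sylow $2$-subgroups of the ten groups (the semidihedral group of order $16$ inside $\GL(2,3)$ and the $D_8$ inside $D_{40}$ and $D_{24}$ contain no copy of $C_4\ti C_2$).

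The main obstacle I anticipate is exactly this last step: proving completeness of the non-maximal analysis, i.e.\ that no Gorenstein group escapes the eleven listed types. Unlike the proof of Theorem \ref{thm:Main}, one cannot simply quote the ambient $G_i$, since the Gorenstein subgroup is cut out by the character $\chi$ and can vary with $F$; so each candidate abelian and small $2$- or $\{2,3\}$-group must be tested for $\SL(5,\C)$-liftability against a smooth quintic. Establishing both the realizability of $C_4\ti C_2$ (an explicit $F$) and its maximality (the $2$-Sylow obstruction above) is the delicate point, together with the finite but sizable GAP verification that the residual groups surviving sub-test are all accounted for.
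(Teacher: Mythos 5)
Your overall architecture coincides with the paper's: invoke Theorem \ref{thm:Main} to bound the candidate groups, compute the Gorenstein subgroups $H_i$ of the $22$ maximal examples via the determinant criterion of Lemma \ref{lem:mukailemma}, run the sub-test/GAP machinery plus case-by-case exclusions for the residual candidates, and realize $C_4\ti C_2$ by an explicit smooth quintic (the paper uses $x_1^4x_4+x_2^4x_5+x_3^4x_4+x_4^5+x_5^5+x_1x_2x_3^3=0$ with $\Diag(\xi_4,\xi_4,-1,1,1)$ and $\Diag(1,-1,-1,1,1)$). Your observation that the Gorenstein subgroup is $\Ker\chi$ for the character $\chi\colon\Aut(X)\to\C^*$, hence normal with cyclic quotient, is correct and is a nice structural remark the paper does not make.

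There is, however, a genuine flaw in your reduction step: the claim that ``by Theorem \ref{liftable} together with Theorem \ref{thm:noorder25}, the relevant subgroups are $F$-liftable'' is false. Theorem \ref{liftable} has two hypotheses, and Theorem \ref{thm:noorder25} only supplies the second (no elements of order $25$); the first hypothesis, $F$-liftability of the Sylow $5$-subgroup, is not automatic and actually fails for Gorenstein groups. Concretely, on the Fermat quintic both $A=\Diag(1,\xi_5,\xi_5^2,\xi_5^3,\xi_5^4)$ and the $5$-cycle permutation matrix $P$ have determinant $1$ and fix $F$, so $\la [A],[P]\ra\cong C_5^2$ acts Gorensteinly; yet by Lemma \ref{lem:5^2notliftable} this $C_5^2$ is not $F$-liftable (any lifting would force the commutator $PAP^{-1}A^{-1}=\xi_5^{\pm1}I_5$ to be trivial). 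Since this $C_5^2$ sits inside $H_1=C_5^3\rt A_5$, one of your own eleven target groups, $H_1$ is a Gorenstein group admitting no $F$-lifting, contradicting your reduction. The consequence is that your exclusion scheme (``no faithful representation into $\SL(5,\C)$ fixing a smooth $F$, hence no Gorenstein action'') is unsound exactly for candidates of order divisible by $5$: a Gorenstein action only yields a $\mu_5$-central extension of $H$ inside $\SL(5,\C)$ all of whose elements fix $F$, not a subgroup isomorphic to $H$. The paper avoids this by keeping the Gorenstein condition elementwise (Lemma \ref{lem:mukailemma}) and by verifying $F$-liftability \emph{separately} for each of the $21$ residual groups (using Lemma \ref{10or50liftable} and related results) before invoking Lemma \ref{lem:Gorensteingroup}; your proof needs this group-by-group liftability verification (or must work directly with the central extension in $\SL(5,\C)$) to be complete.
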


\begin{proof}

First, if $X: x_1^4x_4+x_2^4x_5+x_3^4x_4+x_4^5+x_5^5+x_1x_2x_3^3=0$, then $A_1:=\Diag(\xi_4,\xi_4,-1,1,1)$ and $A_2:=\Diag(1,-1,-1,1,1)$ act on $X$ Gorensteinly. Therefore, $C_4\ti C_2$ has an effective Gorenstein group action on a smooth quintic threefold.

Since the main ideas and strategies of the rest of the proof already appear in the previous sections, we only sketch it here.

Suppose $H$ has an effective Gorenstein group action on $X$. We identify $H$ with the corresponding subgroup of $\PGL(5,\C)$.

Of course, $H$ must be isomorphic to a subgroup of the 22 groups in Example \ref{mainex} by Theorem \ref{thm:Main}.

If $|H|$ is divided by 128, 125,41,17 or 13, then,  by using the results in previous sections, we can easily determine $H$ (more precisely, the matrices generate $H$) and the defining equation $F$ of $X$. Then by using Lemma \ref{lem:mukailemma} or \ref{lem:Gorensteingroup}, the theorem can be proved in these cases.

Now it remains to treat the cases where  $$|H|=2^{a_2}3^{a_3}5^{a_5}, 0\leq a_2\leq 6, 0\leq a_3\leq 2, 0\leq a_5\leq 2\, .$$ In these cases, like in Section \ref{ss:Sylow2} and Section \ref{ss:2^63^25^2}, we use GAP and the method explained in Remark \ref{rmk:stepstoexcludegroups} (how to exclude groups). In fact, by sub-test, we are reduced to exclude the following 21 groups (GAP IDs and their structure description): $[12,1]\cong C_3\rt C_4$, $[16,1]\cong C_{16}$,  $[16,2]\cong C_{4}\times C_4$,  $[16,5]\cong C_{8}\times C_2$,  $[16,6]\cong C_{8}\rt C_2$,  $[16,7]\cong D_{16}$,  $[16,9]\cong Q_{16}$ (generalized quaternion group),  $[16,13]\cong (C_{4}\times C_2)\rt C_2$,  $[20,1]\cong C_{5}\rt C_4$,  $[20,3]\cong C_{5}\rt C_4$,  $[20,5]\cong C_{10}\times C_2$,  $[24,1]\cong C_{3}\rt C_8$,  $[24,2]\cong C_{24}$,  $[24,12]\cong S_{4}$,  $[30,4]\cong C_{30}$,  $[36,9]\cong (C_{3}\times C_3)\rt C_4$,  $[40,1]\cong C_{5}\rt C_8$,  $[40,2]\cong C_{40}$,  $[40,11]\cong C_{5}\times Q_8$,  $[50,5]\cong C_{10}\times C_5$,  $[225,6]\cong C_{3}^2\times C_5^2$.

Notice that by results before about $F$-liftability we can easily show that if $H$ is isomorphic to one of the above 21 groups then $H$ is $F$-liftable. Then the task of excluding these groups is essentially reduced to show  that they can not have a five dimension faithful linear representation into $\SL(5,\C)$ which leaves the smooth polynomial $F$ invariant.  To give an example, we show how to exclude $D_{16}$ here:

\begin{lemma}
The group $D_{16}$ does not admit a Gorenstein action on a smooth quintic threefold.
\end{lemma}

\begin{proof}
Assume to the contrary that $H\cong D_{16}$ has a Gorenstein action on a smooth quintic threefold. By Theorem \ref{liftable}, $H$ has an $F$-lifting, say $\wt{H}$. Then there exist matrices $A_1,A_2$ in $\GL(5,\C)$ such that $\wt{H}=\la A_1,A_2\ra$, $\la A_1\ra \cong C_8$, $\la A_2\ra \cong C_2$, and $A_2A_1A_2^{-1}=A_1^{-1}$. By Lemma \ref{lem:mukailemma}, $\Det(A_1)=1$ as $A_1(F)=F$.

Then by Lemma \ref{lem:o8}, we may assume $A_1$=$\Diag(\xi_8,-1,1,1,\xi_8^3)$ or $\Diag(\xi_8,-1,1,\xi_8^5,\xi_8^6)$. Then $A_1$ and $A_1^{-1}$ have different sets of eigenvalues, a contradiction to $A_2A_1A_2^{-1}=A_1^{-1}$. 

Therefore, $D_{16}$ is excluded.

\end{proof}

More details about how to exclude other groups can be found on the website \cite{Yu}.

\end{proof}

\medskip

     \noindent
  {\bf Acknowledgement.}  \\[.2cm]
The second author would like to thank Max Planck Institute for Mathematics in Bonn for support and hospitality during his visit. He also would like to thank Professor Takayuki Hibi for financial support during his stay at Osaka University. It is our great pleasure to dedicate this paper to Professor  Shigeru Mukai. Our paper is in fact much inspired by his seminal work \cite{Mu88}.

\medskip


\end{document}